\let\csname ver@amsthm.sty\endcsname\relax
\let\theoremstyle\relax
\theoremstyle{plain}
\newtheorem{theorem}{Theorem}[section]
\newtheorem{proposition}[theorem]{Proposition}
\newtheorem{lemma}[theorem]{Lemma}
\newtheorem{corollary}[theorem]{Corollary}
\newtheorem{problem}[theorem]{Problem}
\theoremstyle{definition}
\newtheorem{definition}[theorem]{Definition}
\newtheorem{notation}[theorem]{Notation}
\newtheorem{remark}[theorem]{Remark}
\numberwithin{figure}{section}
\numberwithin{equation}{section}
\def\@myMR[#1 #2]{\relax\ifhmode\unskip\spacefactor3000 \space\fi
  \MRhref{#1}{MR\,#1}}
\renewcommand\MR[1]{\@myMR[#1 ]}
\renewcommand{\MRhref}[2]{{\tiny%
  \href{http://www.ams.org/mathscinet-getitem?mr=#1}{#2}}%
}
\renewcommand*{\backref}[1]{}
\renewcommand*{\backrefalt}[4]{%
    \tiny%
    ({
    \ifcase #1 not cited%
          \or cit.\ on p.~#2%
          \else cit.\ on pp.~#2%
    \fi%
    })\\[-.6em]}
\def\maketitle{\par
  \@topnum\z@ 
  \@setcopyright
  \thispagestyle{empty}
  \ifx\@empty\shortauthors \let\shortauthors\shorttitle
  \else \andify\shortauthors
  \fi
  \@maketitle@hook
  \begingroup
  \@maketitle
  \toks@\@xp{\shortauthors}\@temptokena\@xp{\shorttitle}%
  \toks4{\def\\{ \ignorespaces}}
  \edef\@tempa{%
    \@nx\markboth{\the\toks4
      \@nx\MakeUppercase{\the\toks@}}{\the\@temptokena}}%
  \@tempa
  \endgroup
  \c@footnote\z@
    \renewcommand{\footnoterule}{%
      \kern -3pt
      \hrule width \textwidth height .5pt
      \kern 2pt
    }
  {
    \renewcommand\thefootnote{}
    \vspace{-2em}
    \footnote{
      \par\vspace{-1.2em}\noindent
      \def\@footnotetext##1{\noindent{\footnotesize##1}\par}%
      \let\@makefnmark\relax  \let\@thefnmark\relax
      \ifx\@empty\@date\else \@footnotetext{\@setdate}\fi
      \ifx\@empty\@subjclass\else \@footnotetext{\@setsubjclass}\fi
      \ifx\@empty\@keywords\else \@footnotetext{\@setkeywords}\fi
      \ifx\@empty\thankses\else \@footnotetext{%
        \def\par{\let\par\@par}\@setthanks}%
      \fi
    }
    \addtocounter{footnote}{-1}
  }
  \@cleartopmattertags
}
\def\@adminfootnotes{\@empty}
\def\@settitle{\begin{center}%
  \baselineskip14\p@\relax
    \bfseries
\Large
  \@title
  \end{center}%
}
\def\@setauthors{%
  \begingroup
  \def\thanks{\protect\thanks@warning}%
  \trivlist
  \centering\footnotesize \@topsep30\p@\relax
  \advance\@topsep by -\baselineskip
  \item\relax
  \author@andify\authors
  \def\\{\protect\linebreak}%
  \large{\authors}%
  \ifx\@empty\contribs
  \else
    ,\penalty-3 \space \@setcontribs
    \@closetoccontribs
  \fi
  \endtrivlist
  \endgroup
}
\def\@setaddresses{\par
  \nobreak \begingroup
\footnotesize
  \def\author##1{\end{minipage}\hskip 1sp \begin{minipage}{.5\textwidth}\raggedright%
    ~\\[2em]{\bf##1}\\[.5em]%
  }%
  \interlinepenalty\@M
  \def\address##1##2{\begingroup
    {\ignorespaces##2}\endgroup\\[.5em]}%
  \def\curraddr##1##2{\begingroup
    \@ifnotempty{##2}{\nobreak\indent\curraddrname
      \@ifnotempty{##1}{, \ignorespaces##1\unskip}\/:\space
      ##2\par}\endgroup}%
  \def\email##1##2{\begingroup
    \@ifnotempty{##2}{\nobreak\indent
      \@ifnotempty{##1}{, \ignorespaces##1\unskip}
      \ttfamily##2\par}\endgroup}%
  \def\urladdr##1##2{\begingroup
    \def~{\char`\~}%
    \@ifnotempty{##2}{\nobreak\indent\urladdrname
      \@ifnotempty{##1}{, \ignorespaces##1\unskip}\/:\space
      \ttfamily##2\par}\endgroup}%
  \setlength{\parindent}{0pt}%
  \vfill%
  {
  \begin{minipage}{0mm}
  \addresses
  \end{minipage}
  }
  \endgroup
}
\renewcommand{\author}[2][]{%
  \ifx\@empty\authors
    \gdef\authors{#2}%
    \g@addto@macro\addresses{\author{#2}}%
  \else
    \g@addto@macro\authors{\and#2}%
    \g@addto@macro\addresses{\author{#2}}%
  \fi
  \@ifnotempty{#1}{%
    \ifx\@empty\shortauthors
      \gdef\shortauthors{#1}%
    \else
      \g@addto@macro\shortauthors{\and#1}%
    \fi
  }%
}
\edef\author{\@nx\@dblarg
  \@xp\@nx\csname\string\author\endcsname}
\def\@secnumfont{\@empty}
\def\section{\@startsection{section}{1}%
  \z@{.7\linespacing\@plus\linespacing}{.5\linespacing}%
  {\large\bfseries\centering}}
\newcommand{\C}{{\mathbb{C}}}
\newcommand{\largewedge}{\mbox{\Large $\wedge$}}
\newcommand{\psr}{{\mathbb C\llbracket t\rrbracket}}
\newcommand{\lsr}{{\mathbb C(\!( t )\!)}}
\newcommand{\GL}{{\rm GL}}
\newcommand{\Cont}{{\rm Cont}}
\newcommand{\codim}{\operatorname{codim}}
\newcommand{\ord}{\operatorname{ord}}
\newcommand{\val}{\operatorname{val}}
\newcommand{\lct}{\operatorname{lct}}
\begin{document}

\title{The arc space of the Grassmannian}

\author{Roi Docampo}

\address{
    Instituto de Ciencias Matemáticas (ICMAT)\\
    c/ Nicolás Cabrera, 13--15\\
    Campus de Cantoblanco, UAM\\
    28049 Madrid, Spain
}

\email{roi.docampo@icmat.es}

\author{Antonio Nigro}

\address{
    Instituto de Matemática e Estatística\\
    Universidade Federal Fluminense\\
    Rua Mário Santos Braga, s/n\\
    24020--140 Niterói, RJ, Brasil
}

\email{nigro@impa.br}

\subjclass[2010]{Primary 14E18, 14M15; Secondary 14B05.}
\keywords{Arc spaces, jet schemes, Grassmannian, Schubert varieties, plane
partitions, planar networks, log canonical threshold, Nash problem.}

\begin{abstract}
We study the arc space of the Grassmannian from the point of view of the
singularities of Schubert varieties. Our main tool is a decomposition of the
arc space of the Grassmannian that resembles the Schubert cell decomposition of
the Grassmannian itself. Just as the combinatorics of Schubert cells is
controlled by partitions, the combinatorics in the arc space is controlled by
plane partitions (sometimes also called 3d partitions). A combination of a
geometric analysis of the pieces in the decomposition and a combinatorial
analysis of plane partitions leads to invariants of the singularities. As an
application we reduce the computation of log canonical thresholds of pairs
involving Schubert varieties to an easy linear programming problem. We also
study the Nash problem for Schubert varieties, showing that the Nash map is
always bijective in this case.
\end{abstract}

\maketitle

\section*{Introduction}

Given a variety $X$, an arc on $X$ is a germ of a parametrized curve, and the
arc space $J_\infty X$ is a natural geometric object parametrizing arcs. Arc
spaces have been featured repeatedly in recent years in algebraic geometry,
from several points of view. They appear in singularity theory, mainly via the
study of Nash-type problems \cite{Nas95}, a tool to understand resolutions of
singularities. They are a key ingredient in the theory of motivic integration,
introduced by Kontsevich \cite{Kon95,DL99} and with many applications to the
study of invariants of varieties and of singularities. And they are used in
birational geometry in the study of singularities of pairs \cite{Mus02,EM09}.

The purpose of this paper is the study of the arc space of the Grassmannian
and of its Schubert varieties.

Recall that the Grassmannian $G(k,n)$ is the space parametrizing
$k$-dimensional vector subspaces in $\mathbb C^n$. This is a fundamental object
in geometry, a source of many examples, and used often as the starting point in
the construction of other varieties and invariants. The Schubert varieties
appear as natural sub-objects inside the Grassmannian, and they provide a rich
collection of singular varieties. These singularities have been studied
thoroughly in the literature, in many different contexts. 

The main tool that we propose is a decomposition of the arc space of the
Grassmannian into pieces that we call \emph{contact strata}. This
stratification can be defined in two equivalent ways: either using orders of
contact of arcs with respect to Schubert varieties, or using invariant factors
of lattices naturally associated to arcs. The resulting pieces, the contact
strata, should be thought as the arc space analogue of the Schubert cells of
the Grassmannian itself.
All subsets in the arc space which are relevant for the study of Schubert
varieties (for example contact loci of Schubert varieties) can be decomposed
using contact strata. This essentially reduces the computation of invariants to the
understanding of contact strata. 

For the study of contact strata we were inspired by previous work on the
totally positive Grassmannian (mainly \cite{FZ00}). In particular,
\emph{weighted planar networks} appear repeatedly in the paper. We use them to
construct arcs, and to control (in a combinatorial way) the orders of contact
of an arc with respect to Schubert varieties. Some special planar networks lead
to a very explicit description of contact strata, which is useful throughout
the paper. These combinatorial techniques allow us to classify contact strata,
understand their basic geometry, and determine their position with respect to
each other. 

As a main application of our study, we give an effective algorithm to compute
log canonical thresholds of pairs involving Schubert varieties. This is done by
reducing the problem to maximizing a linear function on a explicit rational
convex polytope, which we call the \emph{polytope of normalized Schubert
valuations}. After this is done, techniques from the theory of linear
programming provide fast algorithms for the actual computation of the log
canonical threshold.

For completeness, we also include the solution to the Nash problem for Schubert
varieties in the Grassmannian. This turns out not to need a deep understanding
of the arc space. We show that there exist resolutions of singularities for
which the exceptional components are in bijection with the irreducible
components of the singular locus. This immediately implies that the Nash map is
bijective, and that the Nash families are also in bijection with the components
of the singular locus.

In the remainder of the introduction we give a more precise overview of the
main results of the paper.

\subsection*{The stratification of the arc space}

The $\mathbb C$-valued points of the arc space $J_\infty G(k,n)$ can be
described using the defining universal property of the Grassmannian. They
correspond to lattices $\Lambda \subset \psr^n$ for which the quotient
$\psr^n/\Lambda$ is a free module of rank $n-k$. To get our stratification we
classify these lattices according to their position with respect to a flag.

More precisely, start with a full flag in $\mathbb C^n$ and consider the
corresponding flag of lattices $0 = F_0 \subsetneq F_1 \subsetneq \cdots
\subsetneq F_n = \psr^n$. For a given lattice $\Lambda \subset \psr^n$, the
isomorphism type of the quotient module $\psr^n/(\Lambda + F_i)$ is determined
by its invariant factors:
\[
    \frac{\psr^n}{\Lambda + F_i}
    \,\simeq\,
    \bigoplus_{j=1}^{n-k}
    \frac{\psr}{\left(t^{b_{i,j}}\right)},
\]
where the numbers $b_{i,j} \in \{ 0, 1, \ldots, \infty \}$ verify
$b_{i,j} \geq b_{i,j+1}$, and we use the convention
$t^\infty = 0$. We consider the numbers $\beta_{i,j}$ given by
\[
    \beta_{i+k-n+j,j} = 
    b_{i,j}.
\]
In \cref{schubert-cond-lattices} we see that $\beta_{i,j} = \infty$ for $i \leq
0$ and $\beta_{i,j}=0$ for $i>k$, so the only relevant values of $\beta_{i,j}$
occur for $1 \leq i \leq k$ and $1 \leq j \leq n-k$. In this way we obtain a
matrix $\beta = (\beta_{i,j})$ of size $k \times (n-k)$, which we call the
\emph{invariant factor profile} of the arc $\Lambda$. See
\cref{fig:schubert-cond-lattices} for a diagram explaining the meaning of the
numbers $\beta_{i,j}$ in the particular case of $G(2,5)$.

\begin{figure}[ht]
    \centering
    \includegraphics{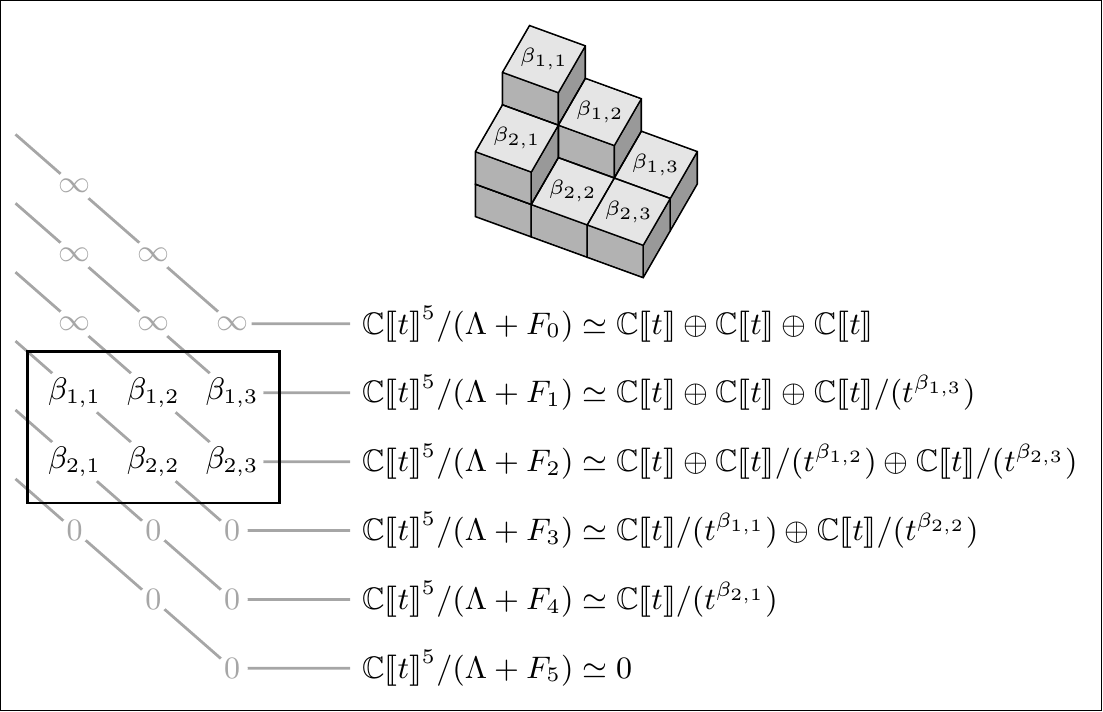}
    \caption{The meaning of the invariant factor profile for $G(2,5)$.}
    \label{fig:schubert-cond-lattices}
\end{figure}

We have a decomposition
\[
    J_\infty G(k,n)  = \bigcup_\beta \mathcal C_\beta,
\]
where $\mathcal C_\beta$ is the collection of arcs with invariant factor
profile $\beta$. The pieces $\mathcal C_\beta$ are called \emph{contact
strata}. The main facts about invariant factor profiles and contact strata are
summarized in the following \lcnamecref{intro-main}.

\begin{theorem}\label{intro-main}
\begin{enumerate}

    \item[{}]

    \item\label{fact-1} The invariant factor profile of an arc is determined by
        the orders of contact of the arc with respect to the Schubert
        varieties, and vice versa (\cref{schubert-cond-lattices}). 

    \item\label{fact-3} The invariant factor profile is a plane
        partition\footnote{In the literature, plane partitions are sometimes
        called 3d partitions.}, i.e., $\beta_{i,j} \geq \beta_{i+1,j}$ and
        $\beta_{i,j} \geq \beta_{i,j+1}$ (\cref{A-restr,if-prof}).

    \item\label{fact-4} Every plane partition is the invariant factor profile
        of some arc (\cref{if-prof,c-works}).

    \item\label{fact-5} Contact strata are irreducible (\cref{irred}).

\end{enumerate}
\end{theorem}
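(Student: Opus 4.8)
The four items are essentially independent, and I would prove them in the order (1), (3), (4), (5), using (1) afterwards to pass freely between the two descriptions of $\beta$. For (1), recall first that the Schubert variety $X_\lambda \subset G(k,n)$ is cut out by the incidence conditions $\dim(V\cap F_i)\ge r$ for the finitely many pairs $(i,r)$ encoded by $\lambda$. I would therefore begin with the building blocks, the special Schubert varieties $\Omega = \{V : \dim(V\cap F_i)\ge r\}$, and show by a local computation in a Pl\"ucker chart that the order of contact $\ord_\gamma(\Omega)$ of an arc $\gamma$, corresponding to a lattice $\Lambda$, is one of the invariant factors of $\psr^n/(\Lambda+F_i)$; the mechanism is that $\dim_{\mathbb C}\psr^n/(\Lambda+F_i+t^m\psr^n)=\sum_j\min(b_{i,j},m)$, which is exactly the combinatorics governing the jet truncations of $\Omega$. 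Since the ideal of $X_\lambda$ is generated by appropriate Pl\"ucker coordinates, $\ord_\gamma(X_\lambda)$ is the minimum of the contact orders of the relevant building blocks, hence an explicit piecewise-linear function of $\beta$; conversely each entry $\beta_{i,j}$ is itself the contact order of a suitable special Schubert variety, so $\beta$ is recovered from the family $(\ord_\gamma(X_\lambda))_\lambda$. The boundary behaviour recorded in \cref{schubert-cond-lattices} ($\beta_{i,j}=\infty$ for $i\le 0$, $\beta_{i,j}=0$ for $i>k$) falls out of the fact that $\psr^n/(\Lambda+F_i)$ is generated by at most $n-i$ elements and has free rank at least $n-k-i$.

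For (3), I would use the chain $\Lambda+F_0\subseteq\Lambda+F_1\subseteq\cdots\subseteq\Lambda+F_n$. Each inclusion enlarges the module by a single generator — the image of the next flag vector — so $\psr^n/(\Lambda+F_{i+1})$ is the quotient of $\psr^n/(\Lambda+F_i)$ by a cyclic submodule, and elementary divisor theory over the DVR $\psr$ gives the interlacing inequalities $b_{i,1}\ge b_{i+1,1}\ge b_{i,2}\ge b_{i+1,2}\ge\cdots$ (with the convention that $\infty$ tracks the free rank). Reading these through the reindexing $\beta_{i+k-n+j,j}=b_{i,j}$, the inequalities $b_{i,j}\ge b_{i+1,j}$ become $\beta_{a,j}\ge\beta_{a+1,j}$, and the inequalities $b_{i+1,j}\ge b_{i,j+1}$ become $\beta_{a,j}\ge\beta_{a,j+1}$, which are precisely the two inequalities defining a plane partition.

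For (4) and (5) I would argue together, via an explicit model. Given a plane partition $\beta$, I would write down a particular weighted planar network whose associated lattice $\Lambda_\beta$ — the row span of the network matrix — has invariant factor profile exactly $\beta$; the orders of vanishing of its Pl\"ucker coordinates can be computed combinatorially by Lindstr\"om--Gessel--Viennot as minimal path weights, which one checks against $\beta$ using (1). This proves (4). For (5) I would then either show that letting the weights of this network vary over their natural parameter space (an affine space) surjects onto $\mathcal C_\beta$, exhibiting $\mathcal C_\beta$ as the image of an irreducible scheme; or show that the arc group $J_\infty B$ of the Borel $B\subset\GL_n$ stabilising the reference flag — which acts on $J_\infty G(k,n)$ and, since it preserves the flag of lattices, preserves the invariant factor profile — acts transitively on $\mathcal C_\beta$, so that $\mathcal C_\beta$ is a single $J_\infty B$-orbit and hence the image of the irreducible scheme $J_\infty B$ under the orbit map.

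The technical heart, and the step I expect to be hardest, is precisely this model in (4)--(5): one must choose the planar network — equivalently, a normal form for arcs of profile $\beta$ — transparently enough that the invariant factor computation is tractable and the surjectivity (or transitivity) can be verified, while correctly accommodating profiles with infinite entries, which correspond to arcs lying inside a Schubert variety under the convention $t^\infty=0$, and while controlling the reduction of an arbitrary arc of profile $\beta$ to the normal form step by step along the flag. By contrast, (1) is a bounded local computation and (3) is essentially immediate once the cyclic-quotient observation is made.
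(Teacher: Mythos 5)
Your treatment of parts (1), (3), and (4) is sound, and part (3) in fact takes a genuinely different and arguably cleaner route than the paper's: the paper proves the plane-partition inequalities for the essential contact profile (\cref{A-restr}) via a somewhat delicate use of the Plücker relations (\cref{plucker}), whereas your interlacing argument reads them off directly from the module interpretation of \cref{schubert-cond-lattices}, since quotienting a finitely generated $\psr$-module by a cyclic submodule removes a horizontal strip from the invariant factors, giving exactly $b_{i,j}\ge b_{i+1,j}\ge b_{i,j+1}$. This requires establishing (1) before (3), unlike the paper's ordering, but that is harmless since the proof of \cref{schubert-cond-lattices} does not use the plane-partition property. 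One small slip in (1): an entry $\beta_{i,j}$ is not itself a contact order but the difference $\alpha_{i,j}-\alpha_{i+1,j+1}$ of two contact orders.

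The genuine gap is in part (5), where neither of your two proposed routes works on its own. The second option --- that $J_\infty B$ acts transitively on $\mathcal C_\beta$ --- is false: \cref{sec:orbits} exhibits, already in $G(2,4)$, a one-parameter family of arcs $\Lambda_u$ lying in a single contact stratum but in pairwise distinct $J_\infty B$-orbits, so contact strata decompose into uncountably many orbits. The first option --- that the weight-varying map $\Phi_\beta$ surjects onto $\mathcal C_\beta$ --- is also false: by \cref{precise-irred}, the image of $\Phi_\beta$ consists of the arcs of profile $\beta$ in the opposite big cell for which each order $\ord_\Lambda(\Omega_{(j^i)})$ is realized on the final minor $M_{i,j}$, and an arbitrary arc of profile $\beta$ need not satisfy this (the arc $\Lambda_1$ of \cref{sec:orbits} does not, and may not even lie in $\mathcal U$). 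The paper's proof of \cref{irred} needs both ingredients together: every arc in $\mathcal C_\beta$ has a generic translate under the \emph{finite} Borel $B$ landing in the image of $\Phi_\beta$, so the combined map $\Psi_\beta\colon B\times J_\infty(\mathbb C^\times)^{k(n-k)}\to J_\infty G(k,n)$, $(b,u)\mapsto\Lambda(\Gamma_0,w(u,\beta))\cdot b$, surjects onto $\mathcal C_\beta$, and irreducibility of the domain does the rest. Varying the weights alone, or acting by a Borel alone, each leaves a hole.
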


Because of fact (\ref{fact-1}), the decomposition into contact strata is very
relevant for the study of the singularities of Schubert varieties. For example,
for the computation of log canonical thresholds (\cref{intro-lct}) we use that
contact loci of Schubert varieties are unions of contact strata.

Among the above facts, the most delicate is (\ref{fact-4}). For its proof, we
need to produce arcs with prescribed invariant factor profile, and we do not
know of a simple way of achieving this. \cref{sec:planar-networks} is devoted
to this issue. Here is where we start using weighted planar networks (as
mentioned above, inspired by \cite{FZ00}). With this construction, we are able
to use combinatorial techniques to control the orders of contact with respect
to Schubert varieties. The resulting description of contact strata is very
explicit. For example, to prove fact (\ref{fact-5}) we use planar networks to
describe the generic point of each contact stratum.

There is another natural stratification of $J_\infty G(k,n)$, considering
orbits of the action of the group of arcs $J_\infty B$, where $B \subset \GL_n$
is the Borel subgroup. An analysis of this orbit decomposition would be in the
lines of previous approaches to the study of arc spaces. For example, this is
the main idea used in the cases of toric varieties \cite{IK03, Ish04} and of
determinantal varieties \cite{Doc13}. But in our case we found that the
structure of the orbits is too complex for our study (see \cref{sec:orbits}).
Contact strata provide a coarser decomposition, simpler to understand, and
enough for our purposes.

\subsection*{Geometry of contact strata}

In order to compute invariants it is not enough to just stratify the arc space,
we need to understand the geometry of each of the strata, and to study how
these pieces are placed with respect to each other. From our point of view, we
consider the following to be the main question.

\begin{problem}[Nash problem for contact strata]
    \label{problem:intro-nash}
    Given two plane partitions $\beta$ and $\beta'$, determine whether there is
    a containment $\overline{\mathcal C}_\beta \supset \overline{\mathcal
    C}_{\beta'}$.
\end{problem}

\cref{sec:gen-nash} is devoted to the study of \cref{problem:intro-nash}. We
are able to give an answer in several cases by analyzing the combinatorics of
plane partitions. The main results are \cref{nash-trivial}, which gives a
necessary condition for a containment to exist, \cref{nash-hard,nash-c}, which
give sufficient conditions, and \cref{nash-2}, which gives a complete answer
for $G(2,4)$. Again, for these results we often use planar networks to
transform geometric questions into combinatorics. A general answer to
\cref{problem:intro-nash} seems very difficult. 

Despite the fact that our answer to \cref{problem:intro-nash} is only partial,
we are able to use it effectively to compute invariants of contact strata.
Namely, we prove the following result.

\begin{theorem}
    \label{intro-codim}
    The codimension of a contact stratum $\mathcal C_\beta$ in $J_\infty
    G(k,n)$ is the number of boxes in the plane partition $\beta$:
    \[
        \codim(\mathcal C_\beta, J_\infty G(k,n)) = \sum_{i,j} \beta_{i,j}
    \]
\end{theorem}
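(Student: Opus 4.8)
The plan is to pass to a finite jet level, where the problem becomes a genuine dimension count, and then to read off that count from the planar-network description of $\mathcal C_\beta$.

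If some entry of $\beta$ equals $\infty$ the right-hand side is $\infty$, and $\mathcal C_\beta$ lies in a countable intersection of proper closed conditions of the form ``$t^m$ divides a fixed invariant factor of $\psr^n/(\Lambda+F_i)$'', whose codimensions are unbounded, so $\codim \mathcal C_\beta=\infty$ as well. Assume henceforth that $\beta$ is finite and set $N=1+\max_{i,j}\beta_{i,j}$. Since the invariant factor profile of $\Lambda$ depends only on $\Lambda\bmod t^N$, the stratum $\mathcal C_\beta$ is a cylinder: writing $\psi_N\colon J_\infty G(k,n)\to J_N G(k,n)$ for the truncation, we have $\mathcal C_\beta=\psi_N^{-1}(C_\beta^{(N)})$ for a constructible — and, by \cref{irred}, irreducible — subset $C_\beta^{(N)}\subseteq J_N G(k,n)$. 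As $G(k,n)$ is smooth, $J_N G(k,n)$ is smooth irreducible of dimension $(N+1)k(n-k)$ and the truncation morphisms between jet schemes of $G(k,n)$ are Zariski-locally trivial affine-space bundles, so the codimension of a cylinder equals that of its base; hence
\[
  \codim\big(\mathcal C_\beta,\,J_\infty G(k,n)\big)
  \;=\;\codim\big(C_\beta^{(N)},\,J_N G(k,n)\big)
  \;=\;(N+1)k(n-k)-\dim C_\beta^{(N)},
\]
and it suffices to prove $\dim C_\beta^{(N)}=(N+1)k(n-k)-\sum_{i,j}\beta_{i,j}$.

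For this I would use the weighted planar network attached to $\beta$ in \cref{sec:planar-networks}. Its combinatorial parameters are power-series weights, one for each box $(i,j)$ of the $k\times(n-k)$ rectangle (for $\beta=0$ these $k(n-k)$ unconstrained weights parametrize a dense subset of $J_\infty G(k,n)$, recovering its dimension), and for general $\beta$ the weight at box $(i,j)$ is constrained to vanish to order exactly $\beta_{i,j}$; the resulting map is dominant onto $\mathcal C_\beta$ and realizes its generic point, which is how \cref{irred,c-works} are proved. Truncating at level $N$, the weight at box $(i,j)$ retains only its coefficients of $t^{\beta_{i,j}},\dots,t^{N}$, i.e.\ $N+1-\beta_{i,j}$ free parameters, so the parameter space at level $N$ has dimension $\sum_{i,j}(N+1-\beta_{i,j})=(N+1)k(n-k)-\sum_{i,j}\beta_{i,j}$. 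Since the parametrization is birational onto $C_\beta^{(N)}$ once $N$ is large, this equals $\dim C_\beta^{(N)}$, and the formula follows.

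Of the two resulting inequalities, $\codim(\mathcal C_\beta,J_\infty G(k,n))\ge\sum_{i,j}\beta_{i,j}$ is the soft one: it uses only that a parameter space of codimension $\sum_{i,j}\beta_{i,j}$ dominates $\mathcal C_\beta$. The content — and what I expect to be the main obstacle — is the reverse inequality: that imposing the prescribed orders of vanishing really drops the dimension by the \emph{full} amount $\sum_{i,j}\beta_{i,j}$, equivalently that the network parametrization of $\mathcal C_\beta$ is generically finite, so that the edge weights modulo $t^N$ restrict to algebraically independent functions on $C_\beta^{(N)}$. This is exactly where the combinatorics of the network enters: the path-weight analysis of \cref{sec:planar-networks} should express enough invariant factors of the arc (equivalently, enough of its orders of contact with Schubert varieties) as explicit monomials in the weights to invert the parametrization rationally, which simultaneously secures irreducibility and pins down the dimension.
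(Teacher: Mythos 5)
Your approach is genuinely different from the paper's, and it contains a concrete error plus a gap that you yourself flag.

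The error: you assert that in the parametrization via the planar network, the weight at box $(i,j)$ vanishes to order exactly $\beta_{i,j}$. This is not what \cref{c-works,irred} use. The essential weighting in \cref{sec:planar-networks,sec:valuations} is $w_{i,j}=t^{c_{i,j}}u_{i,j}$, where $(c_{i,j})$ are the \emph{weight exponents} $c(\beta)$, not the entries of $\beta$. In general $c_{i,j}\neq\beta_{i,j}$; for instance in $G(2,4)$ with $\beta$ the constant plane partition of height $1$, one has $c(\beta)=\left(\begin{smallmatrix}1&0\\0&1\end{smallmatrix}\right)$, so $\sum c_{i,j}=2$ while $\sum\beta_{i,j}=4$. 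Thus if one carries out the count you propose with the correct orders, the naive answer would be $\sum_{i,j}(N+1-c_{i,j})$, which is \emph{not} $(N+1)k(n-k)-\sum\beta_{i,j}$. The two errors partially cancel in the examples I checked (the coefficient of $u_{i,j}$ that actually affects the truncation at level $N$ turns out to be governed by $\beta_{i,j}$ rather than $c_{i,j}$, because $w_{i,j}$ always enters the matrix entries multiplied by other weights carrying extra powers of $t$), but this compensating mechanism is precisely the ``subtle combinatorial analysis'' you defer, and it is nowhere established in your proposal. Likewise, the claim that the truncated parametrization is ``birational onto $C_\beta^{(N)}$ once $N$ is large'' is not true if read literally — the map from level-$N$ parameters to level-$N$ jets has positive-dimensional fibers (high-degree coefficients of a weight that carries a $t^{c_{i,j}}$ factor simply drop out) — and making it true requires choosing, per weight, the correct truncation level, which is again the unverified step. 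You acknowledge this as the main obstacle, so what you have is a plausible program rather than a proof.

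For comparison, the paper's proof of \cref{codim} avoids a direct jet-level dimension count altogether. It produces, via \cref{nash-hard}, a maximal chain of strict containments of irreducible closed contact strata
\[
\overline{\mathcal C}_{\beta^0}\supsetneq\overline{\mathcal C}_{\beta^1}\supsetneq\cdots\supsetneq\overline{\mathcal C}_{\beta^N},\qquad \beta^c=\beta,\ N=hk(n-k),
\]
obtained by adding one box at a time (lowest floor first), interpolating between the empty plane partition and the constant one $\beta^N$ of height $h$. Strict inclusions of irreducibles give $\codim\mathcal C_{\beta^r}\ge r$ for every $r$, and the top term is identified with the contact locus $\Cont^{\geq h}(\Omega_\mu)$ of the Borel-fixed point, which is irreducible of codimension exactly $N$. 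This pins down $\codim\mathcal C_{\beta^r}=r$ for all $r$ simultaneously, and in particular for $r=c=\sum\beta_{i,j}$. That route buys you the theorem from the already-proved containment lemma and a single easy contact-locus computation, without ever having to control fibers of the network parametrization at finite jet level; your route, if completed, would give an independent and arguably more explicit proof, but the fiber-dimension control it needs is genuinely nontrivial and is not supplied.
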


This \lcnamecref{intro-codim} is proven studying chains of containments of
closures of contact strata, and these are provided by our answers to
\cref{problem:intro-nash}. The codimensions of contact strata immediately give
log discrepancies of the corresponding valuations (which we call Schubert
valuations, see \cref{sec:valuations}), and the computation of log canonical
thresholds gets reduced to the analysis of the combinatorics of plane
partitions.

\subsection*{Log canonical thresholds of Schubert varieties}

The Schubert varieties inside $G(k,n)$ are indexed by partitions $\lambda =
(\lambda_1 \lambda_2 \cdots)$ with at most $k$ parts of size at most $n-k$. We
denote them $\Omega_\lambda$. A partition of the form $\lambda = (b^a) = (b
\,\overset{a}\dots\, b)$ is called rectangular. The following result is proven
in \cref{sec:lct}. 

\begin{theorem}
    \label{intro-lct-sq}
    Let $\Omega_\lambda$ be a Schubert variety in $G(k,n)$, and assume that
    $\lambda = (b^a)$ is rectangular. Consider $\lambda^s = ((b+s)^{a+s})$, and
    let $|\lambda^s| = (a+s)(b+s)$ denote the number of boxes in $\lambda^s$.
    Let $r = \min\{k-a, n-k-b\}$. Then the log canonical threshold of the pair
    $(G(k,n), \Omega_\lambda)$ is
    \[
        \lct(G(k,n), \Omega_\lambda) 
        = \min_{s=0 \ldots r} 
        \left\{
            \frac {|\lambda^s|}{s+1}
        \right\}
        = \min_{s=0 \ldots r} 
        \left\{
            \frac {(a+s)(b+s)}{s+1}
        \right\}.
    \]
\end{theorem}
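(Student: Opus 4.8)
The plan is to convert the question into a linear optimization over plane partitions and then solve it explicitly for rectangular shapes. Since $G(k,n)$ is smooth, the arc-space formula for log canonical thresholds \cite{Mus02, EM09} gives
\[
    \lct(G(k,n), \Omega_\lambda)
    = \inf_{m \geq 1} \frac{1}{m}\, \codim\bigl(\Cont^{\geq m}(\Omega_\lambda),\, J_\infty G(k,n)\bigr).
\]
By fact (\ref{fact-1}) of \cref{intro-main} the order of contact of an arc with $\Omega_\lambda$ depends only on its invariant factor profile; write $\ord_\beta(\Omega_\lambda)$ for the common value on $\mathcal C_\beta$. Each contact locus $\Cont^{\geq m}(\Omega_\lambda)$ is then the union of the strata $\mathcal C_\beta$ with $\ord_\beta(\Omega_\lambda)\geq m$, so \cref{intro-codim} gives $\codim(\Cont^{\geq m}(\Omega_\lambda)) = \min\{\,|\beta| : \ord_\beta(\Omega_\lambda)\geq m\,\}$ with $|\beta|=\sum_{i,j}\beta_{i,j}$, and therefore
\[
    \lct(G(k,n), \Omega_\lambda) = \min_\beta \frac{|\beta|}{\ord_\beta(\Omega_\lambda)},
\]
the minimum over nonzero plane partitions $\beta$ in the $k\times(n-k)$ box with $\ord_\beta(\Omega_\lambda)>0$. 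Since $\ord_\beta(\Omega_\lambda)$ is a minimum of finitely many nonnegative integral linear functionals of $\beta$ --- one for each diagonal term of each Plücker generator of the ideal of $\Omega_\lambda$ --- it is concave and positively homogeneous, so after normalizing $\ord_\beta(\Omega_\lambda)=1$ this is exactly the linear program of minimizing $\sum_{i,j}\beta_{i,j}$ over the polytope of normalized Schubert valuations $\{\beta\geq 0 : \beta_{i,j}\geq\beta_{i+1,j},\ \beta_{i,j}\geq\beta_{i,j+1},\ \ord_\beta(\Omega_\lambda)\geq 1\}$.

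Now take $\lambda=(b^a)$, so that $\Omega_{(b^a)}$ is the single rank condition $\{\dim(V\cap F_{n-k+a-b})\geq a\}$, with ideal generated by the $(k-a+1)\times(k-a+1)$ minors computing it. For the bound ``$\leq$'' I would feed into the formula, for each $s=0,\dots,r$, the $0/1$ profile $\beta^{(s)}=\mathbf{1}_{(a+s)\times(b+s)}$ supported on the rectangle $(b+s)^{a+s}$; here $a+s\leq k$ and $b+s\leq n-k$ hold precisely because $s\leq r$. From the description of contact orders one checks that $\ord_{\beta^{(s)}}(\Omega_{(b^a)})=s+1$: a generic arc of $\mathcal C_{\beta^{(s)}}$ has its center in the Schubert cell indexed by $(b+s)^{a+s}$, which lies $s$ strata deep inside $\Omega_{(b^a)}$, so the order of contact equals the multiplicity $s+1$ of the determinantal locus along that stratum. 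As $|\beta^{(s)}|=(a+s)(b+s)=|\lambda^s|$, this yields $\lct(G(k,n),\Omega_{(b^a)})\leq |\lambda^s|/(s+1)$ for every $s$, hence ``$\leq$'' in the statement.

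The core of the argument, and the step I expect to be the main obstacle, is the reverse inequality: $|\beta|\geq q\cdot\ord_\beta(\Omega_{(b^a)})$ for every plane partition $\beta$, where $q=\min_{s=0}^r(a+s)(b+s)/(s+1)$. Here I would analyze the polytope of normalized Schubert valuations directly --- write down the linear functionals whose minimum is $\ord_\beta(\Omega_{(b^a)})$, coming from the minimal Plücker generators $p_\mu$ with $\mu\not\supseteq(b^a)$; determine the vertices of the polytope (they should be the rescalings of the $\beta^{(s)}$); and show that the linear objective $\sum_{i,j}\beta_{i,j}$ attains its minimum at a vertex. Equivalently, one produces a dual certificate expressing $\sum_{i,j}\beta_{i,j}$ as a nonnegative combination of the inequalities $\beta_{i,j}\geq\beta_{i+1,j}$, $\beta_{i,j}\geq\beta_{i,j+1}$, $\beta_{i,j}\geq 0$ and $q$ copies of the normalization $\ord_\beta(\Omega_{(b^a)})\geq 1$, saturated exactly on the $\beta^{(s)}$. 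I expect the delicate points to be pinning down the ``$+1$'' in $\ord_{\beta^{(s)}}(\Omega_{(b^a)})=s+1$ and showing that no non-rectangular or taller plane partition beats all of the $\beta^{(s)}$ --- this is where the combinatorics of plane partitions and the facet structure of the polytope genuinely enter. Together with the upper bound this gives $\lct(G(k,n),\Omega_{(b^a)})=\min_{s=0}^r|\lambda^s|/(s+1)=\min_{s=0}^r(a+s)(b+s)/(s+1)$.
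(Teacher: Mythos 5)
Your setup is sound and matches the paper's strategy: converting the log canonical threshold via the arc-space formula and \cref{intro-main}(\ref{fact-1}), \cref{intro-codim} into the linear program $\operatorname{Arnold-mult}=\max_\beta \ord_\beta(\Omega_\lambda)/|\beta|$ over plane partitions, and feeding in the one-floor test partitions $\beta^{(s)}$ with $\ord_{\beta^{(s)}}(\Omega_{(b^a)})=s+1$ and $|\beta^{(s)}|=(a+s)(b+s)$ to get the upper bound on the lct. (Your justification of $\ord_{\beta^{(s)}}(\Omega_{(b^a)})=s+1$ via ``multiplicity of the determinantal locus'' is heuristic; the paper proves it directly from the diagonal-sum formula $\ord_\beta(\Omega_{(b^a)})=\beta_{a,b}+\beta_{a+1,b+1}+\cdots$ of \cref{eq:ord-lambda}, which you should cite instead.)

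The genuine gap is the reverse inequality, which you explicitly leave as a plan (``I would analyze the polytope \dots determine the vertices \dots''). Moreover the plan as stated contains an error: the vertices of the polytope are \emph{not} the rescalings of the $\beta^{(s)}$. Under the paper's volume normalization $|\beta|=1$ (not your $\ord_\beta\geq 1$ normalization, which changes the facet structure), ${\rm SV}(k,n)$ has as its vertices the normalizations $\mu/|\mu|$ of \emph{every} one-floor linear partition $\mu$, and this is what makes the rest of the argument a two-line affair. The paper's \cref{arnold} does the heavy lifting you are sketching: it shows the Arnold multiplicity is the max of $\ord(\lambda)$ over ${\rm SV}(k,n)\cap H_\lambda$. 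When $\lambda=(b^a)$ is rectangular, $\ord(\lambda)$ is a single linear functional (one diagonal sum), so $H_\lambda=\mathbb R^{k(n-k)}$ and the max over the convex polytope ${\rm SV}(k,n)$ is attained at a vertex --- this is exactly the convexity statement that dispatches your ``taller plane partition'' worry, since any height-$h$ plane partition is a nonnegative combination of its floors along a chain, and the ratio $\ord(\lambda)(\beta)/|\beta|$ is then a weighted average of the ratios at those floors. Finally, to dismiss the non-rectangular one-floor partitions $\mu$, take the largest $s$ with $\lambda^s\subset\mu$: then $\ord(\lambda)(\mu)=s+1=\ord(\lambda)(\lambda^s)$ while $|\mu|\geq|\lambda^s|$, so $\mu$ never beats $\lambda^s$. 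Those two short observations are what your proposal is missing; without them, you have an upper bound and an outline, not a proof.
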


\begin{figure}[ht]
    \centering
    \includegraphics{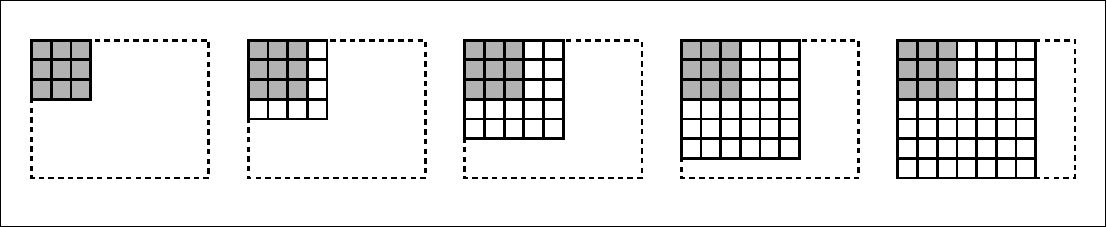}
    \caption{The partitions $\lambda^s$ in \cref{intro-lct-sq} for
    $\lambda=(333)$ in $G(7,16)$.}
    \label{fig:thick-sq}
\end{figure}

It should be noted that Schubert varieties corresponding to rectangular
$\lambda$ are essentially generic determinantal varieties, and therefore their
log canonical thresholds were already known, see \cite{Joh03,Doc13}. But the
proof that we provide is new, and also gives a natural combinatorial
interpretation for the numbers appearing in the formula.

The partitions $\lambda^s$ in \cref{intro-lct-sq} are obtained from $\lambda$
by adding rims of boxes, without exceeding the maximal allowed size (the
rectangle with $k$ rows and $(n-k)$ columns). For example, the case of
$\lambda=(333)$ in $G(7,16)$ appears in \cref{fig:thick-sq}. For the log
canonical threshold we get:
\[
    \lct(G(7,16),\Omega_{(333)})
    =
    \min\left\{
        \frac{9}{1},
        \frac{16}{2},
        \frac{25}{3},
        \frac{36}{4},
        \frac{49}{5}
    \right\}
    =
    \frac{16}{2} = 8.
\]

For more general Schubert varieties (when $\lambda$ is not necessarily
rectangular), we have an analogue version of \cref{intro-lct-sq} which
expresses the Arnold multiplicity (the reciprocal of the log canonical
threshold) as the maximum of a linear function on (the extremal points of) a
rational convex polytope.

Let $\mathbb R{\rm PP}(k,n)$ the convex hull of the set of plane partitions
inside $\mathbb R^{k(n-k)}$. Then $\mathbb R{\rm PP}(k,n)$ is a pointed
rational convex polyhedral cone with vertex at the origin. For a point $\beta
\in \mathbb R{\rm PP}(k,n)$, we denote by $|\beta| = \sum \beta_{i,j}$ the
\emph{volume} of $\beta$, and we let ${\rm SV}(k,n)$ be the subset of $\mathbb
R{\rm PP}(k,n)$ containing elements of volume $1$:
\[
    {\rm SV}(k,n)
    =
    \{\,\,
    \beta \in \mathbb R{\rm PP}(k,n) 
    \,\,\mid\,\,
    |\beta| = 1 
    \,\,\}.
\]
We call ${\rm SV}(k,n)$ the \emph{polytope of normalized Schubert valuations}.
The structure of ${\rm SV}(k,n)$ is very explicit. It is a bounded rational
convex polytope whose vertices are in bijection with non-empty partitions with
at most $k$ parts of size at most $n-k$. It has a natural simplicial structure,
where the $r$-dimensional simplices correspond to chains of partitions
$\lambda^0 \subsetneq \lambda^1 \subsetneq \cdots \subsetneq \lambda^r$. See
\cref{fig:polytope} for the example of $G(2,4)$.

\begin{figure}[ht]
    \centering
    \includegraphics{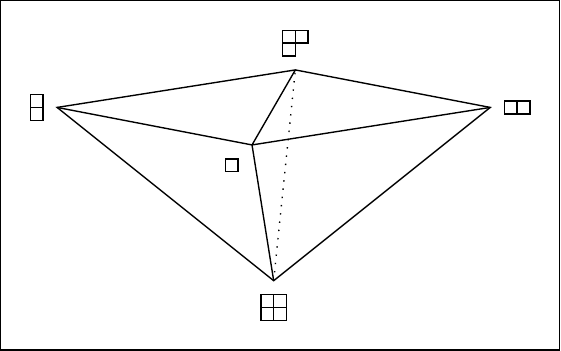}
    \caption{The simplicial structure on ${\rm SV}(2,4)$.} 
    \label{fig:polytope}
\end{figure}

Given a partition $\lambda$, the order of contact with respect to
$\Omega_\lambda$ induces a function on ${\rm SV}(k,n)$:
\[
    \ord(\lambda) \colon {\rm SV}(k,n)  \to \mathbb R,
    \qquad
    \beta \mapsto \ord(\lambda)(\beta) = \ord_\beta(\Omega_\lambda).
\]
The function $\ord(\lambda)$ can be described explicitly, by considering the
corners of the partition and the half diagonals emanating from these corners.
We refer to \cref{sec:lct} for details; see \cref{fig:diagonals} for some
examples. From this description it follows that $\ord(\lambda)$ is a concave
piecewise-linear function. We denote by $H_\lambda \subset \mathbb R^{k(n-k)}$
the linear subspace obtained as the zero locus of the linear equations defining
$\ord(\lambda)$. Notice that $H_\lambda$ is the biggest linear space contained
in the corner locus of $\ord(\lambda)$, and in particular $\ord(\lambda)$ is
linear on $H_\lambda$.

\begin{figure}[ht]
    \centering
    \includegraphics{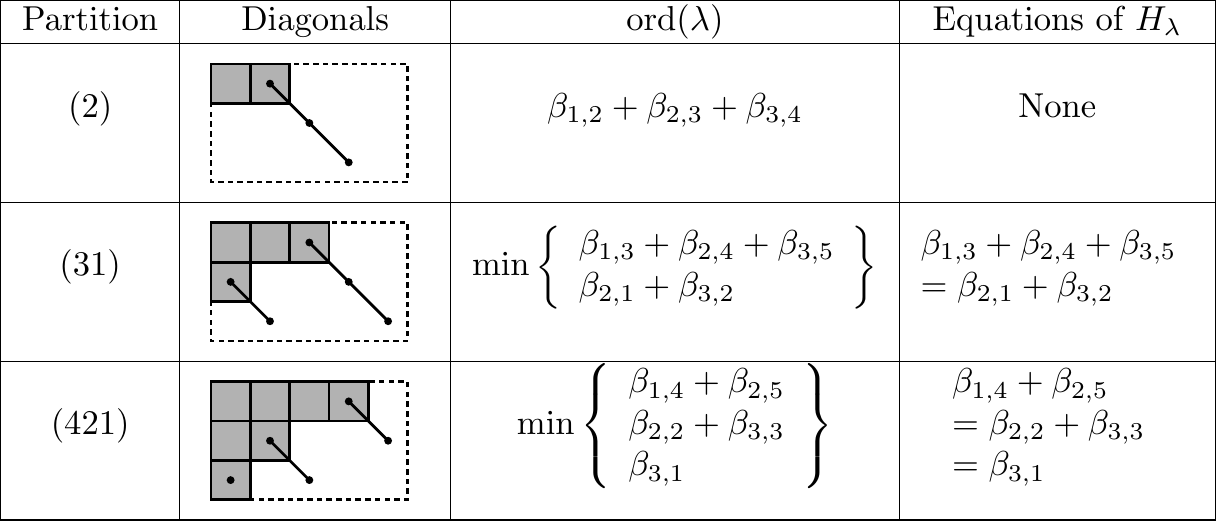}
    \caption{Examples of $\ord(\lambda)$ and $H_\lambda$ in $G(3,8)$.} 
    \label{fig:diagonals}
\end{figure}

\begin{theorem}
    \label{intro-lct}
    Let $\Omega_\lambda$ be a Schubert variety in $G(k,n)$. Then the Arnold
    multiplicity of the pair $(G(k,n),\Omega_\lambda)$ is the maximum of
    $\ord(\lambda)$ on ${\rm SV}(k,n) \cap H_\lambda$.
\end{theorem}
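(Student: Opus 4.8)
The plan is to reduce the computation of the Arnold multiplicity to a finite optimization over the contact strata, and then to recognize that optimization as a linear program on the polytope $\mathrm{SV}(k,n)$. Recall that the log canonical threshold of the pair $(G(k,n),\Omega_\lambda)$ can be computed from the arc space via the formula of Mustaţă and Ein–Mustaţă: writing $\Cont^{\geq m}(\Omega_\lambda)$ for the contact locus of order at least $m$, one has
\[
    \lct(G(k,n),\Omega_\lambda)
    = \inf_{m \geq 1}
    \frac{\codim\!\left(\Cont^{\geq m}(\Omega_\lambda),\, J_\infty G(k,n)\right)}{m},
\]
so that the Arnold multiplicity is $\sup_{m \geq 1} m / \codim(\Cont^{\geq m}(\Omega_\lambda))$. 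By fact (\ref{fact-1}) of \cref{intro-main}, the order of contact of an arc with $\Omega_\lambda$ is a function of its invariant factor profile alone; hence $\Cont^{\geq m}(\Omega_\lambda)$ is a union of contact strata, namely the union of all $\mathcal C_\beta$ with $\ord_\beta(\Omega_\lambda) \geq m$. Since the codimension of a union is the minimum of the codimensions of its irreducible pieces (by irreducibility of contact strata, fact (\ref{fact-5})), and since $\codim \mathcal C_\beta = |\beta|$ by \cref{intro-codim}, we obtain
\[
    \codim\!\left(\Cont^{\geq m}(\Omega_\lambda)\right)
    = \min\{\, |\beta| \;:\; \beta \text{ a plane partition},\ \ord(\lambda)(\beta) \geq m \,\}.
\]

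The next step is to pass from plane partitions to the cone $\mathbb R\mathrm{PP}(k,n)$. Both $|\cdot|$ and $\ord(\lambda)(\cdot)$ are positively homogeneous of degree $1$ (the latter because it is defined by counting lattice points along half-diagonals, which scales linearly — see the description in \cref{sec:lct}), and $\ord(\lambda)$ is concave piecewise-linear. Therefore the discrete quantity $\sup_m m/\codim(\Cont^{\geq m})$ equals the continuous supremum
\[
    \sup\left\{\, \frac{\ord(\lambda)(\beta)}{|\beta|} \;:\; \beta \in \mathbb R\mathrm{PP}(k,n),\ \beta \neq 0 \,\right\}
    = \max\left\{\, \ord(\lambda)(\beta) \;:\; \beta \in \mathrm{SV}(k,n) \,\right\},
\]
where in the last equality we normalized to $|\beta| = 1$ and used that $\mathrm{SV}(k,n)$ is compact while $\ord(\lambda)$ is continuous. (One must check that the integral optimum over plane partitions agrees with the real optimum over the cone; this follows because $\mathbb R\mathrm{PP}(k,n)$ is a rational cone generated by plane partitions, $\ord(\lambda)$ and $|\cdot|$ are linear on each simplicial face, and a ratio of linear functionals on a simplex attains its extremum at a vertex — which is an actual plane partition.)

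It remains to replace the maximization of $\ord(\lambda)$ over all of $\mathrm{SV}(k,n)$ by the maximization over $\mathrm{SV}(k,n) \cap H_\lambda$. Here $H_\lambda$ is the maximal linear subspace on which $\ord(\lambda)$ is linear — the zero locus of all the linear forms whose minimum defines $\ord(\lambda)$ — so $\ord(\lambda)$ is genuinely linear on $\mathrm{SV}(k,n) \cap H_\lambda$, and the restricted problem is a bona fide linear program. The content of this step is that the unconstrained maximum is attained on $H_\lambda$: given any maximizing $\beta \in \mathrm{SV}(k,n)$, I would produce a deformation staying inside $\mathrm{SV}(k,n)$ along which $\ord(\lambda)$ does not decrease and which terminates in $H_\lambda$. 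Concretely, one uses the explicit geometry of $\ord(\lambda)$: the function is built from the corners of $\lambda$ and the half-diagonals they emit, and the defining linear forms of $\ord(\lambda)$ correspond to partial sums along these diagonals; averaging $\beta$ over the symmetry that permutes entries along a diagonal (or more precisely, flowing $\beta$ toward the ``diagonally constant'' locus) preserves both $|\beta|$ and membership in $\mathbb R\mathrm{PP}(k,n)$ — this is where the plane-partition inequalities must be checked to survive the averaging — while it can only increase each of the concave pieces, hence increases $\ord(\lambda)$, and the limit lies in $H_\lambda$. I expect \textbf{this last step — showing the optimum is attained on $H_\lambda$, and in particular verifying that the relevant averaging keeps $\beta$ a (real) plane partition — to be the main obstacle}; everything before it is a relatively formal assembly of \cref{intro-main}, \cref{intro-codim}, the Ein–Mustaţă formula, and the homogeneity of the orders of contact.
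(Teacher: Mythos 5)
Your reduction to the optimization problem is correct and parallels the paper's proof of \cref{arnold}: via \cref{eq:arnold-formula}, the Arnold multiplicity is $\max_\beta \ord(\lambda)(\beta)/|\beta|$ over plane partitions $\beta$ (the paper simply observes that the irreducible components of the contact loci $\Cont^{\geq p}(\Omega_\lambda)$ are closed contact strata and applies \cref{intro-codim}; your detour through $\inf_m \codim(\Cont^{\geq m})/m$ lands in the same place), and the passage to the real polytope ${\rm SV}(k,n)$ via homogeneity and rationality is sound. The genuine gap is in the last step, which you yourself flag as the main obstacle, and the mechanism you propose does not work. Diagonal averaging does not preserve membership in $\mathbb R{\rm PP}(k,n)$: in $G(3,6)$, averaging the plane partition $\left(\begin{smallmatrix} 2 & 2 & 0 \\ 2 & 0 & 0 \\ 0 & 0 & 0 \end{smallmatrix}\right)$ along its diagonals produces $\left(\begin{smallmatrix} 2/3 & 1 & 0 \\ 1 & 2/3 & 1 \\ 0 & 1 & 2/3 \end{smallmatrix}\right)$, which violates the column condition since $2/3 < 1$. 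Moreover the ``diagonally constant'' locus is not $H_\lambda$: by definition $H_\lambda$ is cut out by equating the half-diagonal sums $D_s$ emanating from the \emph{distinct} corners of $\lambda$, which lie on different diagonals of generally different lengths, so diagonal constancy neither implies nor is implied by membership in $H_\lambda$. Finally, if an averaging operation did preserve each $D_s$ (the hypothesis under which Jensen would apply), it would preserve $\ord(\lambda) = \min_s D_s$, not increase it, so there is no gain to extract.

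The paper argues in the dual direction and more simply (proof of \cref{arnold}): instead of fixing $|\beta| = 1$ and trying to raise $\ord(\lambda)$, work in the cone and decrease $|\beta|$ while keeping $\ord(\lambda)$ fixed. If $\beta \notin H_\lambda$, then $D_{s_0}(\beta) > \min_s D_s(\beta)$ for some corner $(a_{s_0},b_{s_0})$; since half-diagonals from distinct corners are pairwise disjoint (they lie on distinct values of $j-i$), one can remove boxes along the half-diagonal of $(a_{s_0},b_{s_0})$ to shrink $|\beta|$ strictly without changing $\ord(\lambda)(\beta) = \min_s D_s(\beta)$. This strictly increases the ratio, so the maximum is attained on ${\rm PP}(k,n) \cap H_\lambda$, and the rationality of ${\rm SV}(k,n) \cap H_\lambda$ finishes.
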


Notice that ${\rm SV}(k,n) \cap H_\lambda$ is a rational convex polytope, and
in particular the maximum of $\ord(\lambda)$, which is linear on the polytope,
is achieved on an extremal point. For example, when $\lambda = (b^a)$ is
rectangular, the partitions $\lambda^0, \ldots, \lambda^r$ appearing in
\cref{intro-lct-sq} give some of the extremal points of ${\rm SV}(k,n) \cap
H_\lambda$ (in the rectangular case, the other extremal points are easy to
discard).

From \cref{intro-lct}, to obtain an actual value for the Arnold multiplicity
(and hence for the log canonical threshold) one would normally use a computer.
The problem of maximizing a linear function on a convex polytope is the subject
of \emph{linear programming}. This is a highly developed theory, providing
several very efficient algorithms to calculate both approximate and exact
solutions. Both the polytope ${\rm SV}(k,n) \cap H_\lambda$ and the linear
function $\ord(\lambda)$ are straightforward to describe to a computer, and in
practice we found that the standard libraries dedicated to linear programming
are very fast at computing log canonical thresholds, even for large values of
$k$ and $n$ and complicated partitions $\lambda$.

\section{Generalities on arc spaces}

\label{sec:arcs}

In this section we review the theory of arc spaces. For a full treatment,
including proofs, we refer the reader to
\cite{ELM04,Voj07,Ish08,dFEI08,Mor09,EM09}.

\subsection*{Basic conventions} We work over the complex numbers $\mathbb C$,
although most of our results would be valid after replacing $\mathbb C$ with an
arbitrary algebraically closed field. All schemes are quasi-compact,
quasi-separated, and defined over $\mathbb C$, but not necessarily Noetherian.
By variety we mean a separated, reduced, and irreducible scheme of finite type
over $\mathbb C$. All morphisms of schemes are defined over $\mathbb C$.

\subsection*{Arcs and jets} Fix a scheme $X$. An \emph{arc} $\gamma$ on $X$ is
a morphism
\[
    \gamma \colon {\rm Spec}\,\psr \to X.
\]
Similarly, for a non-negative integer $m$, a \emph{jet} $\gamma$ on $X$ of
order $m$ is a morphism
\[
    \gamma \colon {\rm Spec}\,\mathbb C[t]/(t^{m+1}) \to X.
\]
Notice that a jet of order $1$ is a tangent vector. More generally, for a
$\mathbb C$-algebra $A$, morphisms of the type
\[
    {\rm Spec}\,A\llbracket t \rrbracket \to X
    \qquad\text{and}\qquad
    {\rm Spec}\,A[t]/(t^{m+1}) \to X
\]
are called $A$-valued arcs and jets on $X$. An $A$-valued arc/jet should be
thought as a family of arcs/jets parametrized by ${\rm Spec}\,A$.

We denote by $0$ the closed points of ${\rm Spec}\,\psr$ and ${\rm
Spec}\,\mathbb C[t]/(t^{m+1})$, and by $\eta$ the generic point of ${\rm
Spec}\,\psr$. For an arc or jet $\gamma$, the point $\gamma(0)$ is called the
\emph{center}, \emph{origin}, or \emph{special point} of $\gamma$. If $\gamma$
is an arc, $\gamma(\eta)$ is called the \emph{generic point} of $\gamma$. This
terminology is also used for $K$-valued arcs and jets, where $K$ is a field
extension of $\mathbb C$. A $K$-valued $\gamma$ arc is called \emph{fat} if its
generic point $\gamma(\eta)$ is the generic point of $X$; otherwise it is
called \emph{thin}.

Let $\mathfrak a \subseteq \mathcal O_X$ be a sheaf of ideals in $X$. For an
arc or jet $\gamma$, the inverse image $\gamma^{-1}(\mathfrak a)$ is an ideal
of the form $(t^e)$, for some number $e \in \{0,1,\ldots,\infty\}$. Here we use
the convention $t^\infty = 0$ to cover the case where the inverse image is the
zero ideal. This number $e$ is called the \emph{order of contact} of $\gamma$
along the ideal $\mathfrak a$, and denoted $\ord_\gamma(\mathfrak a)$. If $Y$
is the closed subscheme of $X$ defined by $\mathfrak a$, we also write
$\ord_\gamma(Y)$ for this order.

\subsection*{Arc spaces and jet schemes} The \emph{arc space} of $X$ is the
universal object parametrizing families of arcs on $X$. It is denoted $J_\infty
X$ and it is characterized\footnote{Usually the arc space is defined as the
projective limit of the jet schemes, so its functor of points is in principle
different from the one written here. But it follows from
\cite[Corollary~1.2]{Bha} that both definitions agree.} by its functor of
points:
\[
    J_\infty X(A)
    =
    {\rm Hom}_{\mathbb C-{\rm Schemes}}
    \left(
        {\rm Spec}\,A\llbracket t \rrbracket,
        \,
        X
    \right).
\]
Similarly, the \emph{jet scheme} of order $m$ of $X$, denoted $J_m X$ is given
by
\[
    J_m X(A)
    =
    {\rm Hom}_{\mathbb C-{\rm Schemes}}
    \left(
        {\rm Spec}\,A[t]/(t^{m+1}),
        \,
        X
    \right).
\]
It can be shown that the arc space and the jet schemes are schemes. If $X$ is
of finite type, the jet schemes are also of finite type, but the arc space is
not (unless $X$ is zero-dimensional).

The natural quotient maps at the level of algebras
\[
    A\llbracket t \rrbracket 
    \to
    A[t]/(t^{m+1})
    \to
    A
\]
induce morphisms of schemes:
\[
    J_\infty X \to J_m X \to X
\]
These morphisms are affine, and are called the \emph{truncation maps}. The arc
space is the projective limit of the jet schemes via the truncation maps.

There are natural sections of the truncation maps at level zero:
\[
    X \to J_\infty X
    \qquad\text{and}\qquad
    X \to J_m X.
\]
The images of these sections are called the \emph{constant} arcs and jets. In
general there are no natural sections of the truncations $J_\infty X \to J_m X$
for $m \geq 1$.

The construction of arc spaces and jet schemes is functorial. Given a morphism
of schemes $f \colon X \to Y$, composition with $f$ induces natural morphisms
at the level of arc spaces and jet schemes. These morphisms are compatible with
the truncation maps:
\[
    \xymatrix{
        J_\infty X \ar[r] \ar[d]
        & J_\infty Y \ar[d]
        \\ J_m X \ar[r] \ar[d]
        & J_m Y \ar[d]
        \\ X \ar[r]
        & Y
    }
\]

As a consequence of functoriality, if $G$ is a group scheme, the arc space
$J_\infty G$ and jet schemes $J_m G$ are also groups. Moreover, if $G$ acts on
$X$, we get induced actions of $J_\infty G$ on $J_\infty X$, and of $J_m G$ on
$J_m X$. All these groups structures and actions are compatible with the
truncation maps.

\subsection*{Constructible sets and contact loci} From now on we assume that
$X$ is a variety. For the general definition of constructible set in a scheme,
we refer the reader to \cite[$0_{\rm III}$, \S9.1]{EGA3.1}. In the finite type
case (for the variety $X$ and for the jet schemes $J_m X$) this is the familiar
notion: a set is constructible if it is a finite boolean combination of Zariski
closed subsets. For the arc space a constructible set turns out to be the same
as a \emph{cylinder} \cite{ELM04}: the inverse image via a truncation map of a
constructible set in some jet scheme $J_m X$.

The most important examples of constructible sets in the arc space are contact
loci. Given a closed subscheme $Y \subset X$ and a number $p \in
\{0,1,\ldots\}$, we define
\[
    \Cont^{\geq p}(Y)
    =
    \left\{
        \,
        \gamma \in J_\infty X
        \,\mid\,
        \ord_\gamma(Y) \geq p
        \,
    \right\}.
\]
We also define $\Cont^{=p}(Y)$ in the obvious way, and analogous versions in
the jet schemes: $\Cont^{\geq p}_m(Y)$ and $\Cont^{=p}_m(Y)$. We call these
types of sets \emph{contact loci}. Notice that a contact locus in the arc space
is the inverse image of a contact locus in a jet scheme of high enough order
(the order of an arc is determined by the order of a high enough truncation).
In particular, contact loci are constructible.

\subsection*{Valuations} Let $R$ be an integral domain containing $\mathbb C$.
A (discrete, rank at most one) \emph{semi-valuation}
on $R$ is a function $v \colon R \to \mathbb \{0,1,\ldots,\infty\}$ satisfying the
following properties:
\begin{enumerate}
    \item $v(f g) = v(f) + v(g)$ for all elements $f,g \in R$,
    \item $v(f + g) \geq \min\{v(f), v(g)\}$ for all elements $f,g \in R$,
    \item $v(z) = 0$ for all non-zero constants $z \in \mathbb
        C\setminus\{0\}$, and
    \item $v(0) = \infty$.
\end{enumerate}
We say that $v$ is a \emph{valuation} if furthermore:
\begin{enumerate}
    \setcounter{enumi}{4}
    \item $v(f) = \infty$ if and only if $f=0$.
\end{enumerate}
We say that $v$ is \emph{trivial} if its only values are $0$ and $\infty$. The
greatest common divisor of the non-zero values of a non-trivial semi-valuation
$v$ is called its \emph{multiplicity}, and denoted $q_v$. The prime ideals
\[
    \mathfrak b_v = \{ f \in R \mid v(f) = \infty \}
    \qquad\text{and}\qquad
    \mathfrak c_v = \{ f \in R \mid v(f) > 0 \}
\]
are called the \emph{home} and \emph{center} of $v$. A semi-valuation is a
valuation precisely when its home is zero. A semi-valuation $v$ induces a
valuation in the standard sense in the field of fractions ${\rm
Frac}(R/\mathfrak b_v)$. The corresponding \emph{valuation ring} is denoted by
$\mathcal O_v \subset {\rm Frac}(R/\mathfrak b_v)$. Notice that $\mathcal O_v$
is either a field (if $v$ is trivial) or a discrete valuation ring of rank one.

Let $R_f$ be the localization of $R$ obtained by inverting $f$. Then the set of
semi-valuations of $R_f$ is in natural bijection with the set of
semi-valuations of $R$ for which $f$ has value zero. This allows us to glue
this construction, and talk about \emph{semi-valuations} and \emph{valuations}
on a variety $X$. Geometrically, a semi-valuation on $X$ can be thought as a
choice of a subvariety $Y \subset X$ (the home of the semi-valuation) and a
valuation (in the standard sense) on $Y$.

Semi-valuations and arcs are closely related. Let $\gamma$ be a point of
$J_\infty X$ in the sense of schemes, and let $K_\gamma$ be its residue field.
It corresponds to a $K_\gamma$-valued arc on $X$:
\[
    \gamma \colon {\rm Spec}\,K_\gamma\llbracket t \rrbracket
    \to X.
\]
Then $\ord_\gamma$ is a semi-valuation on $X$. Its home is $\gamma(\eta)$, the
generic point of the arc. Its center in the sense of semi-valuations agrees
with $\gamma(0)$, the center of $\gamma$ in the sense of arcs. It is trivial if
and only if $\gamma$ is a constant arc, and it is a valuation if and only if
$\gamma$ is fat. More geometrically, this construction can be reduced to use
only $\mathbb C$-valued arcs. We consider the closure of $\gamma$ in the arc
space, denoted $\mathcal C = \overline{\{\gamma\}} \subset J_\infty X$. Then
the semi-valuation $\ord_\gamma$ can be recovered from the semi-valuations of
the arcs in the family $\mathcal C$:
\[
    \ord_\gamma
    =
    \ord_{\mathcal C}
    =
    \min \{\, \ord_\alpha \,\mid\, \alpha \in \mathcal C \,\}.
\]

Conversely, every semi-valuation is induced by some arc. Let $v$ be a
non-trivial semi-valuation on $X$ with multiplicity $q_v$, and consider its
valuation ring $\mathcal O_v$. The completion $\widehat{\mathcal O}_v$ is
isomorphic to the power series ring $\widehat K_v\llbracket t \rrbracket$,
where $\widehat K_v$ is the residue field of $\widehat{\mathcal O}_v$. For a
choice $\varphi$ of any such isomorphism we get a $\widehat K_v$-valued arc
$\gamma_{v,\varphi}$:
\[
    \xymatrix{
        *[l]{
            {\rm Spec}\,\widehat K_v\llbracket t \rrbracket
        }
        \ar[r]^-{t\,\mapsto\,t^{q_v}}
        \ar@(dr,dl)[rrrr]_{\gamma_{v,\varphi}}
        & {\rm Spec}\,\widehat K_v\llbracket t \rrbracket
        \ar[r]^-\varphi
        & {\rm Spec}\,\widehat{\mathcal O}_v
        \ar[r]
        & {\rm Spec}\,\mathcal O_v
        \ar[r]
        & X.
        \\
    }
\]
It is straightforward to check that ${\ord_{\gamma_{v,\varphi}}} = v$. Trivial
valuations can be written as $v = \ord_{\gamma}$, where $\gamma$ is any
constant arc for which $\gamma(0)$ is the home of $v$.

Among all arcs giving the same semi-valuation $v$, there is a distinguished
one, characterized by being maximal with respect to specialization in the arc
space. Namely, we consider the family
\[
    \mathcal C_v
    =
    \overline{\{\,
        \gamma \in J_\infty X
        \,\mid\,
        \ord_\gamma = v
    \,\}}.
\]
Then $\mathcal C_v$ is irreducible \cite{ELM04,Ish08,dFEI08,Mor09}, and its
generic point $\gamma_v$ verifies $\ord_{\gamma_v}=v$. Any other arc inducing
$v$ is a specialization of $\gamma_v$. Following the terminology of
\cite{Mor09}, we call $\mathcal C_v$ the \emph{maximal arc set} associated to
$v$. If $v$ is trivial, we have that $\mathcal C_v = J_\infty Y$, where $Y$ is
the home of $v$.

\subsection*{Divisorial valuations} Among all valuations on a variety $X$, the
\emph{divisorial} ones are of particular importance. Let $f \colon Y \to X$ be
a proper birational map with $Y$ smooth, and let $E$ be a prime divisor on $Y$.
Then computing orders of vanishing along $E$ gives a valuation on $X$, which we
denote $\val_E$. Any valuation of the form $q\cdot\val_E$, where $q$ is a
positive integer, is called a \emph{divisorial valuation} on $X$. The maximal
arc sets associated to divisorial valuations are called \emph{maximal
divisorial sets}.

One of the main results of \cite{ELM04} and \cite{dFEI08} is a characterization
of divisorial valuations among all semi-valuations using contact loci. In
precise terms, they prove that the following are equivalent:
\begin{enumerate}
    \item $v$ is a divisorial valuation;
    \item there exists a contact locus $\mathcal C$ such that $v =
        \ord_{\mathcal C}$; and
    \item there exists a constructible set $\mathcal C$ such that $v =
        \ord_{\mathcal C}$.
\end{enumerate}
Moreover, for a subset $\mathcal C \subset J_\infty X$, the following are also
equivalent:
\begin{enumerate}
    \item $\mathcal C$ is a maximal divisorial set; and
    \item $\mathcal C$ is a fat irreducible component of a contact locus.
\end{enumerate}

For a divisorial valuation $v = q \cdot \val_E$, the corresponding maximal
divisorial set $\mathcal C_v$ has an explicit geometric interpretation. It is
the closure of the set of arcs whose lift to $Y$ is tangent to $E$ with order
$\geq q$. In symbols:
\[
    \mathcal C_v =
    \overline{\{
        f(\gamma)
        \,\mid\,
        \gamma \in J_\infty Y,\,
        \ord_\gamma(E) \geq q
    \}}.
\]

\subsection*{Discrepancies and log canonical thresholds} The importance of arc
spaces from the point of view of the minimal model program resides in a formula
that computes discrepancies of divisorial valuations in terms of arcs. We
restrict ourselves to the smooth case, which will be enough for our purposes.
Let $X$ be a smooth variety, and consider a divisorial valuation $v = q_v \cdot
\val_E$, where $E$ is a prime divisor in some smooth birational model $f \colon
Y \to X$. Then the \emph{discrepancies} of $E$ and $v$ are defined as
\[
    k_{E}(X) = \ord_E(K_{Y/X}),
    \qquad\text{and}\qquad
    k_{v}(X) = q_v \cdot \ord_E(K_{Y/X}),
\]
where $K_{Y/X} \sim K_Y - f^*(K_X)$ is the relative canonical divisor. A
standard computation shows that $k_v(X)$ does not depend on the choice of model
$Y$. We have the following formula \cite{Mus01,ELM04,dFEI08}:
\begin{equation}
    \label{eq:discrepancy-formula}
    q_v + k_v(X) = \codim(\mathcal C_v, J_\infty X).
\end{equation}
Since we assume that $X$ is smooth, the codimension in the above formula can be
computed either in the sense of the Zariski topology of $J_\infty X$, or in the
sense of cylinders (as the codimension of a high enough truncation).

Using discrepancies we can define the \emph{log canonical threshold}, an
invariant of the singularities of a pair which is central in the minimal model
program. Let $Z \subset X$ be a subscheme, and consider a log resolution of the
pair $(X,Z)$. This consists of a proper birational map $f \colon Y \to X$ where
$Y$ is smooth, the scheme theoretic inverse image of $Z$ is a divisor $A$, and
$A + {\rm Ex}(f)$ is a divisor with simple normal crossings. Then the \emph{log
canonical threshold} of the pair $(X,Z)$ is defined as
\[
    \lct(X,Z) = \min_E \left\{ \frac{1 + k_E(X)}{\val_E(Z)} \right\}.
\]
In this formula $E$ ranges among the prime exceptional divisors of $f$. As
above, one can show that $\lct(X,Z)$ does not depend on the choice of log
resolution $Y$. Using arc spaces we can express the formula for the log
canonical threshold in the following way:
\begin{equation}
    \label{eq:lct-formula}
    \lct(X,Z) = \min_{\mathcal C}
    \left\{
        \frac{\codim(\mathcal C, J_\infty X)}{\ord_{\mathcal C}(Z)}
    \right\}.
\end{equation}
Here $\mathcal C$ ranges in principle among all maximal divisorial sets of
$J_\infty X$, but one can easily show that it is enough to consider only the
fat irreducible components of all the contact loci $\Cont^{\geq p}(Z)$.

For us it will sometimes be more convenient to deal with the \emph{Arnold
multiplicity}, which is just the reciprocal of the log canonical threshold:
\begin{equation}
    \label{eq:arnold-formula}
    \operatorname{Arnold-mult}(X,Z)
    = \max_E \left\{ \frac{\val_E(Z)}{1 + k_E(X)} \right\}
    = \max_{\mathcal C}
    \left\{
        \frac{\ord_{\mathcal C}(Z)}{\codim(\mathcal C, J_\infty X)}
    \right\}.
\end{equation}

\subsection*{Nash-type problems} Let $X$ be a variety, and denote by ${\rm
Sing}(X)$ its singular locus. The fat irreducible components of the contact
locus
\[
    \Cont^{\geq 1}({\rm Sing}(X)) \subset J_\infty X
\]
are called the \emph{Nash families} of arcs of $X$. From the above discussion,
we see that the Nash families are the maximal divisorial sets associated to
some divisorial valuations, which we call the \emph{Nash valuations} of $X$.
The Nash valuations, apart from being divisorial, are also \emph{essential}, in
the sense that they appear as irreducible components of the exceptional locus
of every resolution of singularities of $X$. This is what is known as the
\emph{Nash map}:
\[
    \{\, \text{Nash valuations of $X$} \,\}
    \subseteq
    \{\, \text{essential valuations of $X$} \,\}.
\]
The \emph{Nash problem}, in its more general form, asks for a geometric
characterization of the image of the Nash map. The \emph{Nash conjecture}
asserts that the Nash map is a bijection.

The Nash problem has a long history. The Nash conjecture turns out to be true
for curves, for surfaces \cite{FdBPP12,dFD15}, and for several special families
of singularities in higher dimensions, including toric varieties
\cite{IK03,Ish05,Ish06,GP07,PPP08,LJR12,LA11,LA16}. But there are
counterexamples to the Nash conjecture in all dimensions $\geq 3$
\cite{IK03,dF13,JK13}. For an approach to the Nash problem in higher dimensions
using the minimal model program, see \cite{dFD15}.

We will also be interested in a variant of the Nash problem that we call the
\emph{generalized Nash problem}. The above construction of the maximal arc set
associated to a semi-valuation can be thought as an inclusion:
\[
    \{\, \text{semi-valuations on $X$} \,\}
    \subseteq
    J_\infty X,
\]
where a semi-valuation $v$ gets sent to the generic arc in $\mathcal C_v$. This
endows the set of semi-valuations with a geometric structure. As the Nash
problem and the formula for discrepancies show, this structure is relevant from
the point of view of singularity theory. A basic question in this context is
the following: given two semi-valuations $v_1$ and $v_2$, determine whether
there is an inclusion $\mathcal C_{v_1} \supseteq \mathcal C_{v_2}$. This is
what we call the \emph{generalized Nash problem}.

We understand the generalized Nash problem for invariant valuations on toric
varieties \cite{Ish08} and on determinantal varieties \cite{Doc13}. But beyond
this, very little is known, even for valuations on the plane
\cite{Ish08,FdBPPPP}.

\section{The Grassmannian and its Schubert varieties}

\label{sec:grass}

In this section we discuss generalities about Grassmannians and Schubert
varieties. Our main purpose is to fix notation and recall basic results that
will be used in the rest of the paper. All results are well-known, and we
mostly enumerate them without proof. For details we refer the reader to any of
the standard texts in the subject, for example \cite[Chapter~II]{ACGH85},
\cite{BV88}, or \cite{Ful97}.

\subsection*{Grassmannians}

Fix integers $0 < k < n$. The \emph{Grassmannian} of $k$-planes in $\C^n$ is
denoted by $G(k,n)$. A point $V \in G(k,n)$ can be described as the row span of
a full-rank matrix with $k$ rows and $n$ columns:
\[
    V = 
    \operatorname{row\ span}
    \begin{pmatrix}
    v_{11} & v_{12} & \hdots & v_{1n} \\
    v_{21} & v_{22} & \hdots & v_{2n} \\
    \vdots & \vdots & \ddots & \vdots \\
    v_{k1} & v_{k2} & \hdots & v_{kn}
    \end{pmatrix},
    \qquad
    v_{ij} \in \C.
\]
Such a matrix is determined by $V$ only up to left multiplication by an element
of $\GL_k$. This way we obtain an identification of $G(k,n)$ with the GIT
quotient $\GL_k\bbslash {\rm Mat}_{k \times n}$.

The group $\GL_n$ has a natural right action on $G(k,n)$. This identifies the
Grassmannian with the quotient $P_{k,n} \backslash \GL_n$, where $P_{k,n}$ is
the parabolic subgroup of $\GL_n$ whose elements have zeros in the lower-left
block of size $(k) \times (n-k)$.

Three subgroups of $\GL_n$ will be featured prominently in the rest of the
paper. The first one is the torus $T = (\C^*)^n$, the subgroup of diagonal
matrices. The other two are the Borel subgroup $B = B^+$ and the opposite Borel
subgroup $B^-$, containing, respectively, the upper- an lower-triangular
matrices. Also relevant is the Weyl group $W = S_n$, the symmetric group on $n$
letters, naturally embedded in $\GL_n$ as the group of permutation matrices.

We denote by $\{e_1, \ldots, e_n\}$ the standard basis for $\C^n$. The
torus-fixed points of $G(k,n)$ are determined by the $k$-element subsets of
$\{e_1, \ldots, e_n\}$. More precisely, given a multi-index $I = [i_1 \dots
i_k]$, where $1 \leq i_1 < \dots < i_k \leq n$, we can consider the following
point in $G(k,n)$:
\[
    V_I = \langle e_{i_1}, \ldots, e_{i_k} \rangle.
\]
Then the $V_I$ are all the torus-fixed points in $G(k,n)$.

\subsection*{Schubert varieties}

The \emph{Schubert cells} are the Borel orbits in $G(k,n)$. The \emph{Schubert
varieties} are the closures of the Schubert cells. Each Borel orbit contains
exactly one torus-fixed point. For a multi-index $I = [i_1 \dots i_k]$, we
denote by $\Omega^\circ_I$ the Schubert cell containing $V_I$, and by
$\Omega_I$ the closure of $\Omega^\circ_I$. The Schubert cell $\Omega^\circ_{[1
\ldots k]}$ is called the \emph{big cell}.

Schubert varieties can be described more explicitly as follows. We consider the
flag
\[
    F_1 \subset F_2 \subset \dots \subset F_n = \C^n,
\]
where $F_i$ is spanned by the last $i$ vectors in the standard basis of $\C^n$.
Notice that the Borel subgroup $B$ is the stabilizer of $F_\bullet$. For a
multi-index $I = [i_1 \dots i_k]$, the Schubert variety associated to $I$ is
the subset of $G(k,n)$ given by
\[
    \Omega_I = 
    \{ 
        V \in G(k,n) 
    \mid
        \dim V \cap F_{n+1-i_s} \geq k+1-s, \,\, 1 \leq s \leq k
    \}.
\]

\subsection*{Bruhat order}

To a multi-index $I = [i_1 \dots i_k]$ we associate the partition $\lambda =
(\lambda_1 \dots \lambda_k)$ given by
\[
    i_s = s + \lambda_{k+1-s}.
\]
Notice that $n-k \geq \lambda_1 \geq \dots \geq \lambda_k \geq 0$. This
association induces a bijection between multi-indexes of length $k$ in the
range $\{1, \dots, n\}$, and partitions with at most $k$ parts of size at most
$n-k$. It is helpful to visualize partitions via the associated Ferrers-Young
diagrams; some examples in $G(3,6)$ are given in \cref{fig:young}.

\begin{figure}[ht]
    \centering
    \includegraphics{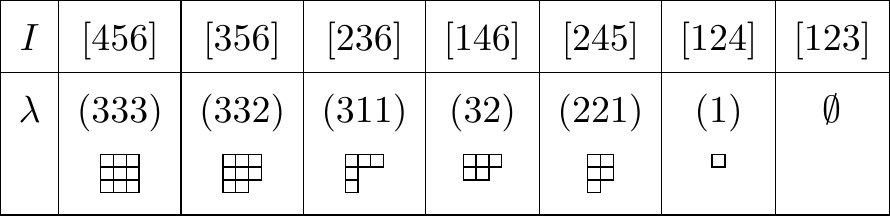}
    \caption{Some multi-indexes, partitions, and diagrams in $G(3,6)$.} 
    \label{fig:young}
\end{figure}

If $\lambda$ is the partition associated to a multi-index $I$, we also use the
notations $\Omega_\lambda = \Omega_I$ and $\Omega_\lambda^\circ =
\Omega_I^\circ$. Given two partitions $\lambda$ and $\mu$, the containment of
the Schubert varieties $\Omega_\lambda \subseteq \Omega_\mu$ corresponds to the
reversed containment of the (Ferrers-Young diagrams of the) partitions $\lambda
\supseteq \mu$. In terms of multi-indexes, given $I=[i_1 \dots i_k]$ and
$J=[j_1 \dots j_k]$, the containment  $\Omega_I \subseteq \Omega_J$ corresponds
to $i_s \geq j_s$; in this situation we write $I \geq J$. See \cref{fig:poset}
for an example: the first two diagrams show the poset of Schubert varieties in
$G(2,4)$ using multi-indexes and partitions.

\begin{figure}[ht]
    \centering
    \includegraphics{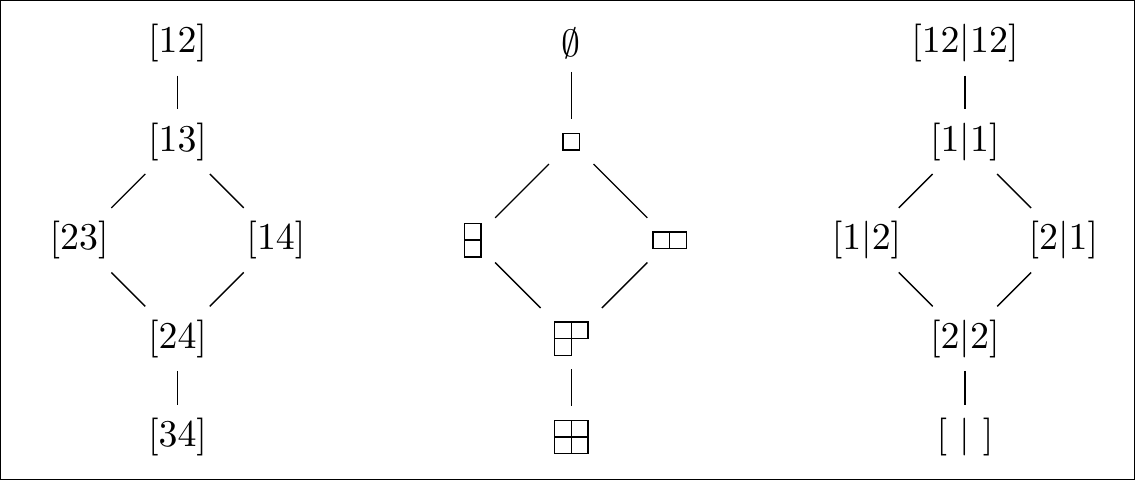}
    \caption{The poset of Schubert varieties in $G(2,4)$.} 
    \label{fig:poset}
\end{figure}

The codimension of $\Omega_\lambda$ in $G(k,n)$ is $|\lambda| = \lambda_1 +
\dots + \lambda_k$, that is, the number of boxes in the diagram of $\lambda$.
Moreover, $\Omega_\lambda^\circ$ is isomorphic to $\mathbb
A^{k(n-k)-|\lambda|}$.

\subsection*{Plücker coordinates}

We consider a matrix
\[
    X = 
    \begin{pmatrix}
        X_{1 1} & \cdots & X_{1 n} \\
        \vdots  & \ddots & \vdots  \\
        X_{k 1} & \cdots & X_{k n}
    \end{pmatrix}
\]
where the entries are indeterminates. The $k \times k$ minors of this matrix
are called the \emph{Plücker coordinates} of $G(k,n)$. Given a tuple of indexes
$I = [i_1 \dots i_k]$ (not necessarily distinct or in increasing order), the
minor determined by the columns in $I$ will also be denoted by $[i_1 \dots
i_k]$. If $I = [i_1 \dots i_k]$ is a multi-index, then $[i_1 \dots i_k]$ is a
Plücker coordinate. This abuse of notation (using the same symbols to denote a
multi-index and a Plücker coordinate) will not cause problems.

Given a $k$-plane $V \subset \C^n$, we obtain a line $\largewedge^k V \subset
\largewedge^k \C^n$. This induces the Plücker embedding $G(k,n) \hookrightarrow
\mathbb P(\largewedge^k\C^n)$. The homogeneous coordinate ring of $G(k,n)$
corresponding to the Plücker embedding will be denoted by $\C[G(k,n)]$; it is
isomorphic to the subring of the polynomial ring $\C[X_{ij}]$ generated by the
Plücker coordinates. 

For a multi-index $I$ with associated partition $\lambda$, we denote by
$\mathcal I_I = \mathcal I_\lambda$ the ideal of $\Omega_I$ in $G(k,n)$. We
think of $\mathcal I_I$ as an ideal in $\C[G(k,n)]$. The following result is
classic%
\footnote{%
    The proof can be found in many places, for example in
    \cite[Theorem~1.4]{BV88}. But notice that the notation in \cite{BV88} for
    Schubert varieties differs from ours. What they denote $\Omega(a_1, \ldots,
    a_k)$ corresponds to our $\Omega_I$, where $I = [i_1 \ldots i_k]$ is given
    by $i_s = n+1-a_{k+1-s}$.
}.

\begin{theorem}
    \label{ideal-gens}
    Let $[i_1 \dots i_k]$ be a Plücker coordinate. Then the Plücker coordinates
    $[j_1 \dots j_k]$ such that $[j_1 \dots j_k] \ngeq [i_1 \dots i_k]$
    generate the ideal $\mathcal I_{[i_1 \dots i_k]}$.
\end{theorem}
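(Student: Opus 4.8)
The plan is to identify $\mathcal I_I$ with the ideal $\mathfrak a \subseteq \C[G(k,n)]$ generated by those Plücker coordinates $[j_1 \dots j_k]$ with $[j_1 \dots j_k] \ngeq [i_1 \dots i_k]$. The inclusion $\mathfrak a \subseteq \mathcal I_I$ is the geometric part, and I would prove it by checking directly that each such coordinate vanishes on $\Omega_I$. Given $V \in \Omega_I$, the defining conditions $\dim(V \cap F_{n+1-i_s}) \geq k+1-s$ together with the nesting $F_{n+1-i_1} \supseteq \cdots \supseteq F_{n+1-i_k}$ allow one to build a basis $w_1, \dots, w_k$ of $V$ with $w_t \in F_{n+1-i_t} = \langle e_{i_t}, \dots, e_n \rangle$. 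If $[j_1 \dots j_k] \ngeq [i_1 \dots i_k]$, pick $s$ with $j_s < i_s$; then in the $k \times k$ matrix $M$ whose $(t,a)$-entry is the $j_a$-th coordinate of $w_t$ one has $M_{t,a} = 0$ whenever $a \leq s \leq t$, since in that case $j_a \leq j_s < i_s \leq i_t$ while $w_t$ is supported on columns $\geq i_t$. Thus $M$ carries a zero block of size $(k-s+1) \times s$, and a Laplace expansion along the first $s$ columns forces $[j_1 \dots j_k](V) = \det M = 0$.

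For the reverse inclusion $\mathcal I_I \subseteq \mathfrak a$ I would argue by comparing dimensions degree by degree. There is a graded surjection $\C[G(k,n)]/\mathfrak a \twoheadrightarrow \C[G(k,n)]/\mathcal I_I = \C[\Omega_I]$, and it suffices to show it is an isomorphism. Here I invoke standard monomial theory. By Hodge's straightening law the standard monomials $[A_1] \cdots [A_m]$ with $A_1 \leq \cdots \leq A_m$ span $\C[G(k,n)]$, and any such monomial with $A_1 \ngeq I$ has a factor lying in $\mathfrak a$; hence $\C[G(k,n)]/\mathfrak a$ is spanned by the images of the standard monomials all of whose factors are $\geq I$. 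The second pillar of the theory, linear independence of standard monomials on a Schubert variety, says precisely that the images of those same monomials in $\C[\Omega_I]$ are linearly independent. A graded surjection that carries a spanning set to a linearly independent set is an isomorphism, and therefore $\mathfrak a = \mathcal I_I$.

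The substantive obstacle is the standard monomial machinery itself: the straightening law, and above all the linear independence of standard monomials on $\Omega_I$, which is the step that genuinely uses the geometry of the Schubert variety. This is classical (Hodge, and later Doubilet--Rota--Stein, Lakshmibai--Musili--Seshadri), and a complete self-contained account, in notation close to ours, is \cite[Theorem~1.4]{BV88}, which is what I would cite. As an alternative one may pass to the flat toric (Hibi) degeneration of $G(k,n)$ associated with the distributive lattice of multi-indexes: under it $\mathfrak a$ degenerates to a squarefree monomial ideal, $\C[G(k,n)]/\mathfrak a$ degenerates to the ring spanned by multichains of multi-indexes $\geq I$, and the dimension comparison becomes the purely combinatorial statement that such multichains are enumerated by the same formula as $\dim_\C \C[\Omega_I]_m$.
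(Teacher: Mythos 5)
The paper supplies no proof of this statement: it declares it classical and cites \cite[Theorem~1.4]{BV88} in a footnote, with a remark about the notational translation. Your sketch is the standard argument and is correct as far as it goes: the vanishing direction (build the adapted basis $w_t \in F_{n+1-i_t}$, observe the zero block of size $(k-s+1)\times s$ with $(k-s+1)+s>k$, conclude $\det M=0$) is a complete and correct elementary computation, and for the reverse inclusion you rightly identify that the content is the straightening law plus linear independence of standard monomials on $\Omega_I$, which you cite to the same \cite[Theorem~1.4]{BV88} that the paper defers to. So this is essentially the paper's approach, just with the easy half written out and the hard half explicitly located in the same reference; the Hibi degeneration remark is a valid alternative route to the dimension count but is not needed once \cite{BV88} is invoked.
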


In particular, $\Omega_\boxempty$ is the divisor with equation $[1 \ldots k]$,
the determinant of the first $k$ columns of $X$. The big cell
$\Omega^\circ_\emptyset$ is given by the non-vanishing of $[1 \ldots k]$.

\subsection*{Plücker relations}

For our analysis of the arc space of $G(k,n)$ we will need some understanding
of the structure of products of ideals of Schubert varieties. In this study,
the Plücker relations play an important role. For our purposes it will be
enough to consider the following special case. For a proof, we refer the reader
to \cite[Lemma~4.4]{BV88}.

\begin{theorem}
    \label{plucker}
    Consider tuples of indexes $[i_1 \dots i_k]$ and $[j_1 \dots j_k]$, and let
    $u$ be an integer such that $1 \leq u \leq k$. Then
    \[
        [i_1 \dots i_k] \cdot [j_1 \dots j_k]
        =
        \sum_{v=1}^k \pm \,
        [i_1 \dots i_{u-1} j_v i_{u+1} \dots i_k] \cdot
        [j_1 \dots j_{v-1} i_u j_{v+1} \dots j_k].
    \]
\end{theorem}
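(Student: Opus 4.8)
The plan is to deduce this special case of the Plücker relations from the standard straightening identities, following the approach in \cite[Lemma~4.4]{BV88}. The key observation is that the identity is really a statement about the exterior algebra: if $V$ is the vector space spanned by the columns (thought of abstractly, or working universally with the columns of the generic matrix $X$), then a Plücker coordinate $[a_1 \dots a_k]$ is the image of $x_{a_1} \wedge \cdots \wedge x_{a_k}$ under the Plücker map, and the claimed identity expresses a quadratic relation in $\bigwedge^k V \otimes \bigwedge^k V$.

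First I would set up the relevant piece of the exterior algebra machinery. Consider the comultiplication-type map that sends $\omega \otimes \omega' \in \bigwedge^k V \otimes \bigwedge^k V$ to the element obtained by ``transferring one slot'': fixing the index $u$, one extracts the $u$-th wedge factor of $\omega$ and sums over reinserting it into each slot of $\omega'$ (with signs), while symmetrically pulling out the corresponding factor of $\omega'$. The crucial algebraic fact is that this transfer map, applied to $(x_{i_1} \wedge \cdots \wedge x_{i_k}) \otimes (x_{j_1} \wedge \cdots \wedge x_{j_k})$, equals $k$ times the same tensor --- or, after dividing, that the difference of the left-hand side and right-hand side of the asserted identity is zero. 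Concretely, the right-hand side
\[
    \sum_{v=1}^k \pm\,
    [i_1 \dots i_{u-1} j_v i_{u+1} \dots i_k] \cdot
    [j_1 \dots j_{v-1} i_u j_{v+1} \dots j_k]
\]
is recognized as the result of applying the standard ``straightening'' shuffle that exchanges the single entry $i_u$ with the entries $j_1, \dots, j_k$; the Laplace-type expansion of the relevant $2k \times 2k$ determinant (or equivalently the vanishing of an alternating sum over a $(k{+}1)$-element multiset of column indices) yields exactly the difference with $[i_1 \dots i_k]\cdot[j_1 \dots j_k]$.

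The main obstacle is bookkeeping the signs correctly and making sure the degenerate cases cause no trouble: here $[i_1 \dots i_k]$ and $[j_1 \dots j_k]$ are arbitrary tuples, not multi-indices, so repeated or unordered entries are allowed, and both sides may involve Plücker symbols with repeated columns (which are zero) or out-of-order columns (which are $\pm$ a genuine coordinate). I would handle this by working first with the generic matrix $X$ over $\mathbb Z[X_{ij}]$, where all the $[a_1 \dots a_k]$ are honest polynomials (minors, extended multilinearly and alternatingly to arbitrary tuples), proving the polynomial identity there, and then specializing. Over the generic matrix the identity is a classical consequence of the Sylvester/Laplace expansion; once it holds as a polynomial identity in $\mathbb Z[X_{ij}]$ it holds after any specialization and in particular in $\mathbb C[G(k,n)]$. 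Alternatively, and more in the spirit of the references, one invokes \cite[Lemma~4.4]{BV88} directly, since the statement there is precisely this identity; our contribution is only to record the special case in the notation used in the rest of the paper.
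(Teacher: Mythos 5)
The paper gives no proof of this statement; it is recorded as a special case of the Plücker relations and the reader is referred to \cite[Lemma~4.4]{BV88}. Your closing remark, that one simply invokes that lemma because its statement is exactly this identity, is precisely what the paper does, so in that sense your proposal matches. The independent exterior-algebra sketch you offer is in the right spirit (the one-slot Sylvester/shuffle relation, provable by a Laplace-type expansion and best proved as a polynomial identity in $\mathbb Z[X_{ij}]$ so that degenerate tuples with repeated or unordered entries are handled automatically), but the key step as you phrase it, namely that the transfer map applied to the basic tensor equals $k$ times the same tensor, is not an accurate formulation and would not survive a careful check; it should be replaced by the precise vanishing statement of the shuffle relation, for which you would in any case end up reproducing the proof of \cite[Lemma~4.4]{BV88}. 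Since the paper deliberately outsources the proof to that reference and you recognize this, there is no further comparison to make.
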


\subsection*{The opposite big cell}

Using the opposite Borel $B^-$, instead of $B$, we define \emph{opposite
Schubert cells} and \emph{opposite Schubert varieties}. We denote them with an
inverted circumflex, like $\check\Omega^\circ_I$ and $\check\Omega_\lambda$.

We are mainly interested in the \emph{opposite big cell}, which we will denote
by $\mathcal U = \check\Omega^\circ_{[n-k+1 \ldots n]}$. It is given by the
non-vanishing of the Plücker coordinate $[n-k+1 \ldots n]$, the determinant of
the last $k$ columns of $X$. A point in $\mathcal U$ is uniquely represented by
a matrix of the form $(X_{\mathcal U} | \Delta')$, where:
\[
    X_{\mathcal U} =
    \begin{pmatrix}
        x_{11} & x_{12} & \cdots & x_{1(n-k)} \\
        x_{21} & x_{22} & \cdots & x_{2(n-k)} \\
        \vdots & \vdots & \ddots & \vdots     \\
        x_{k1} & x_{k2} & \cdots & x_{k(n-k)} \\
    \end{pmatrix}
    \quad\text{and}\qquad
    \Delta' = 
    \begin{pmatrix}
        0      & \cdots  & 0      & 1 \\
        0      & \cdots  & 1      & 0 \\
        \vdots & \iddots & \vdots & \vdots \\
        1      & \cdots  & 0      & 0 \\
    \end{pmatrix}.
\]

We think of the entries $x_{ij}$ of $X_{\mathcal U}$ as variables. The
corresponding polynomial ring is the coordinate ring of $\mathcal U \simeq
\mathbb A^{k(n-k)}$, and will be denoted by $\C[\mathcal U] = \C[x_{ij}]$.

$\mathcal U$ and its Weyl translates form a natural system of affine charts for
the projective variety $G(k,n)$, and there is a natural ``de-homogenization
process'' from $\C[G(k,n)]$ to $\C[\mathcal U]$. Explicitly, this consists in
substituting the matrix $X$ with the matrix $(X_{\mathcal U} | \Delta')$.

We use the notation $[i_1 \ldots i_r | j_1 \ldots j_r ]$ for the minor of
$X_{\mathcal U}$ corresponding to the rows $i_s$ and the columns $j_s$. By
convention we set $[~|~] = 1$. Via the ``de-homogenizing'' substitution
mentioned above (and ignoring signs), we obtain a bijection between Plücker
coordinates $[i_1 \ldots i_k]$ and minors $[i_1 \ldots i_r | j_1 \ldots j_r ]$
(of arbitrary size) of $X_{\mathcal U}$. Some examples of this process are
shown in \cref{fig:de-hom}.

\begin{figure}[ht]
    \centering
    \includegraphics{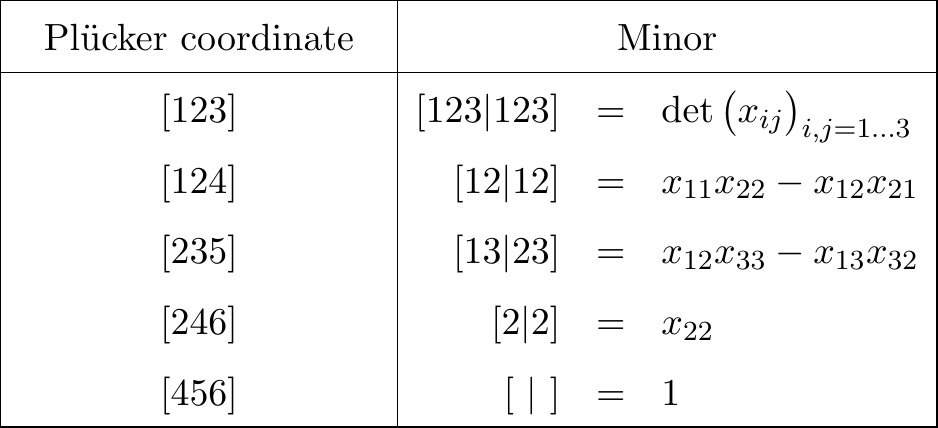}
    \caption{Examples of de-homogenizations of Plücker coordinates in $G(3,6)$.
    Signs have been ignored.}
    \label{fig:de-hom}
\end{figure}

Using the bijection with Plücker coordinates we endow the set of minors with an
order. If $M_1, M_2$ are minors, with corresponding Plücker coordinates $I_1,
I_2$, and corresponding partitions $\lambda_1, \lambda_2$, we have:
\[
    M_1 \leq M_2
    \qquad\Leftrightarrow\qquad
    I_1 \leq I_2
    \qquad\Leftrightarrow\qquad
    \lambda_1 \subseteq \lambda_2.
\]
Explicitly, if $M_1 = [i_1 \ldots i_r | j_1 \ldots j_r]$ and 
$M_2 = [a_1 \ldots a_s | b_1 \ldots b_s]$, then:
\begin{equation}
    \label{eq:minor-order}
    M_1 \leq M_2
    \qquad\Leftrightarrow\qquad
    \begin{array}{l}
        r \geq s \\
        i_u \leq a_u \\
        j_u \leq b_u \\
    \end{array}
    \quad\text{for $1\leq u \leq s$}.
\end{equation}
See the last diagram in \cref{fig:poset} for the example of $G(2,4)$.

All Schubert varieties intersect the opposite big cell, and the ideal of this
intersection can be determined using \cref{ideal-gens}. Let $I$ be a
multi-index, with corresponding minor $M$. Then the ideal of $\Omega_I \cap
\mathcal U$ in the ring $\C[\mathcal U] = \C[x_{ij}]$ is generated by the
minors $M_1$ such that $M_1 \ngeq M$.

\subsection*{Single Schubert conditions}

We denote by $(b^a) = (b \,\overset{a}\dots\, b)$ the rectangular partition
with $a$ rows and $b$ columns. A \emph{single-condition Schubert variety} is a
Schubert variety $\Omega_\lambda$ whose associated partition $\lambda$ has
rectangular shape. In such case, if $\lambda = (b^a)$, we have
\[
    \Omega_{(b^a)} = 
    \{ V \in G(k,n) \mid \dim V \cap F_{n-k+a-b} \geq a \}.
\]

Given an arbitrary partition $\lambda$, we define the \emph{Schubert
conditions} of $\lambda$ to be the maximal rectangular partitions contained in
$\lambda$. This definition is perhaps best illustrated by examples: see
\cref{fig:schubert-conditions}.

\begin{figure}[ht]
    \centering
    \includegraphics{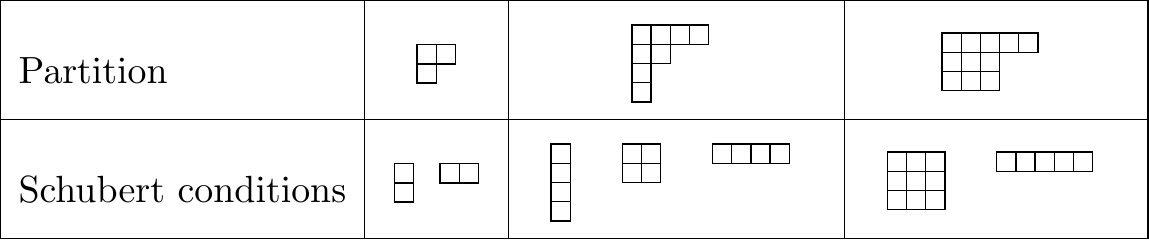}
    \caption{Schubert conditions}
    \label{fig:schubert-conditions}
\end{figure}

\begin{proposition}
    \label{schubert-ideals}
    Let $\lambda$ be a partition, and let $\mu_1, \dots, \mu_r$ be the Schubert
    conditions of $\lambda$. Then
    \[
        \Omega_\lambda = \Omega_{\mu_1} \cap \dots \cap \Omega_{\mu_r}
        \qquad\text{and}\qquad
        \mathcal I_\lambda = \mathcal I_{\mu_1} + \dots + \mathcal I_{\mu_r}.
    \]
\end{proposition}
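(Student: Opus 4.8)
The plan is to prove both statements at once by working in the homogeneous coordinate ring $\C[G(k,n)]$, using the combinatorial description of ideals of Schubert varieties furnished by \cref{ideal-gens}. Recall from \cref{ideal-gens} that for a multi-index $I$ with associated partition $\lambda$, the ideal $\mathcal I_\lambda$ is generated by all Plücker coordinates $[j_1 \dots j_k]$ with $[j_1\dots j_k]\ngeq I$. Equivalently, writing things in terms of partitions via the bijection recalled in the Bruhat order subsection, $\mathcal I_\lambda$ is generated by those Plücker coordinates whose partition $\nu$ does \emph{not} satisfy $\nu \subseteq \lambda$. So the whole statement becomes a purely combinatorial assertion about Young diagrams: a partition $\nu$ fails to be contained in $\lambda$ if and only if it fails to be contained in some Schubert condition $\mu_i$ of $\lambda$.

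First I would establish the set-theoretic containment statement $\Omega_\lambda = \Omega_{\mu_1}\cap\dots\cap\Omega_{\mu_r}$. The inclusion $\subseteq$ is immediate since each $\mu_i \subseteq \lambda$ gives $\Omega_\lambda \subseteq \Omega_{\mu_i}$. For the reverse inclusion, I would use the explicit flag description: $\Omega_{(b^a)} = \{V \mid \dim V\cap F_{n-k+a-b}\geq a\}$, and $\Omega_\lambda = \{V \mid \dim V\cap F_{n+1-i_s}\geq k+1-s,\ 1\leq s\leq k\}$. The point is that the defining inequalities of $\Omega_\lambda$ — one for each $s$, controlling $\dim V\cap F_{n+1-i_s}$ — are exactly the single Schubert conditions coming from the maximal rectangles in $\lambda$: the rectangle with corner at row $a$ and column $b$ records precisely the condition $\dim V\cap F_{n-k+a-b}\geq a$. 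Redundant conditions (those not coming from a maximal rectangle, i.e.\ an "interior" step of the staircase boundary of $\lambda$) are implied by the others, so intersecting over just the $\mu_i$ suffices. This is the kind of bookkeeping with the staircase profile of $\lambda$ that is best checked against \cref{fig:schubert-conditions}.

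For the ideal statement, the inclusion $\mathcal I_{\mu_1}+\dots+\mathcal I_{\mu_r} \subseteq \mathcal I_\lambda$ is again immediate from $\Omega_\lambda\subseteq\Omega_{\mu_i}$. For the reverse inclusion I need: every Plücker coordinate $[\nu]$ with $\nu\not\subseteq\lambda$ lies in some $\mathcal I_{\mu_i}$, i.e.\ $\nu\not\subseteq\mu_i$ for some $i$. Suppose for contradiction $\nu\subseteq\mu_i$ for all $i$; I want to conclude $\nu\subseteq\lambda$. Here is where the maximality of the rectangles $\mu_i$ does the work: the diagram of $\lambda$ is exactly the union of its maximal inscribed rectangles (every box of $\lambda$ lies in some maximal rectangle — take the largest rectangle with that box in its lower-right corner). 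So if a box $(p,q)$ of $\nu$ were not in $\lambda$, then since $\nu\subseteq\mu_i$ forces $(p,q)\in\mu_i\subseteq\lambda$, contradiction. Hence $\nu\subseteq\bigcup_i\mu_i=\lambda$. Combined with \cref{ideal-gens} this gives the generating sets of the two ideals coincide, so the ideals are equal.

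The main obstacle is purely combinatorial and lies in the claim that the diagram of $\lambda$ equals the union of its maximal inscribed rectangles, together with the parallel claim that the defining inequalities of $\Omega_\lambda$ reduce to those indexed by maximal rectangles. Both are elementary once one thinks in terms of the staircase boundary of $\lambda$ — the maximal rectangles are in bijection with the "outer corners" (the concave corners of the boundary path), and every cell of $\lambda$ is dominated by some outer corner — but stating and verifying this cleanly requires care with indices, and is exactly what \cref{fig:schubert-conditions} is meant to make transparent. I would organize the proof so that this lemma about Young diagrams is isolated and proved first, after which both the variety identity and the ideal identity follow formally from \cref{ideal-gens} and the flag description of Schubert varieties.
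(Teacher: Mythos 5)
Your approach is the same as the paper's --- reduce via \cref{ideal-gens} to a combinatorial statement about Young diagrams --- and you correctly identify the key combinatorial lemma (the diagram of $\lambda$ is the union of its maximal inscribed rectangles). However, you have the direction of the Bruhat order reversed, and this makes the central step of your ideal argument vacuous rather than correct.

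Concretely: with the paper's conventions ($\Omega_\lambda \subseteq \Omega_\mu \iff \lambda \supseteq \mu$, and $I \geq J \iff \Omega_I \subseteq \Omega_J$), \cref{ideal-gens} says that $\mathcal I_\lambda$ is generated by the Pl\"ucker coordinates whose partition $\mu$ satisfies $\mu \not\supseteq \lambda$, \emph{not} $\mu \not\subseteq \lambda$ as you write. (Quick sanity check in $G(2,4)$ with $\lambda = (2,1)$, so $I=[24]$: the generators are $[12],[13],[14],[23]$, i.e.\ the four partitions that do not contain $(2,1)$; your version would yield only $(2,2)$, a single generator, which is clearly wrong.) Because of this flip, the step you set out to prove --- ``$\nu\subseteq\mu_i$ for all $i$ implies $\nu\subseteq\lambda$'' --- is trivially true from $\mu_1\subseteq\lambda$ alone, and your union-of-maximal-rectangles lemma is not actually doing any work. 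The step that genuinely requires that lemma is the contrapositive of the correct statement: if $\mu \supseteq \mu_i$ for all $i$, then $\mu \supseteq \bigcup_i \mu_i = \lambda$. That is exactly the argument in the paper (``$\mu\not\supseteq\lambda$ iff $\mu\not\supseteq\mu_s$ for some $s$''), and it is where maximality of the $\mu_i$ enters essentially. Once you correct the direction of all the containments, your proof lines up with the paper's; the set-theoretic argument via the flag description $\Omega_{(b^a)} = \{V \mid \dim V\cap F_{n-k+a-b}\geq a\}$ is a perfectly reasonable alternative for the first equality, though the paper simply reads it off from the ideal equality.
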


\begin{proof}
This is an immediate consequence of \cref{ideal-gens}. Let $[j_1 \dots j_k]$ be
a multi-index with associated partition $\mu$. Then $[j_1 \dots j_k]$ is a
generator of $\mathcal I_\lambda$ if and only if $\mu \not\supseteq \lambda$.
But $\mu \not\supseteq \lambda$ if and only if $\mu \not\supseteq \mu_s$ for
some $1\leq s \leq r$, and the result follows.
\end{proof}

The ideals defining single-condition Schubert varieties have a particularly
simple structure, especially in the opposite big cell. Given $1 \leq a \leq k$
and $1 \leq b \leq n-k$, we denote by $M_{a,b}$ the minor with the biggest size
having first row $a$ and first column $b$. It is easy to see that $M_{a,b} = [a
\ldots a+r|b \ldots b+r]$, where $r = \min\{k-a, n-k-b\}$.  We will refer to
the minors obtained in this way as \emph{final minors}.

Given a final minor $M_{a,b}$, we consider the corresponding multi-index
$I_{a,b}$ and partition $\lambda_{a,b}$. They are given by
\begin{equation}
    \label{eq:final-plucker}
    I_{a,b} = \big[ ~ (b) \ldots (b+k-a) ~ (n-a+2) \ldots (n) ~ \big]
\end{equation}
and
\[
    \lambda_{a,b} = \big(~ (n-k) \ldots (n-k) ~ (b-1) \ldots (b-1) ~\big),
\]
where $(n-k)$ shows up $(a-1)$ times in $\lambda_{a,b}$, and $(b-1)$ shows up
$(k-a+1)$ times. More visually, $\lambda_{a,b}$ is the biggest partition not
containing the box in position $(a,b)$.

\begin{proposition}
    \label{gens-rect}
    Let $\lambda = (b^a)$ be a rectangular partition, and consider $I_{a,b}$
    and $M_{a,b}$ as above. Let $r = \min\{k-a,n-k-b\}$, so that the size of
    $M_{a,b}$ is $r+1$. Then:
    \begin{enumerate}
        \item The ideal $\mathcal I_\lambda \subset \C[G(k,n)]$ is generated by
        the Plücker coordinates $J$ that verify $J \leq I_{a,b}$.

        \item In the opposite big cell $\mathcal U$, the ideal of
        $\Omega_{\lambda}\cap\mathcal U$ is generated by the minors $M$ that
        verify $M \leq M_{a,b}$. Moreover, it is enough to consider only minors
        $M$ of size $r+1$.

        More precisely, if $(k-a)\leq(n-k-b)$ the generators are the minors of
        size $r+1$ in the first $b+r$ columns of $X_{\mathcal U}$. If $(k-a)
        \geq (n-k-b)$ the generators are the minors of size $r+1$ in the first
        $a+r$ rows of $X_{\mathcal U}$.
    \end{enumerate}
\end{proposition}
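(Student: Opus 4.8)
The plan is to derive part~(1) from \cref{ideal-gens}, and part~(2) from the description (recalled just before the statement) of the ideal of $\Omega_I\cap\mathcal U$ in $\C[\mathcal U]$, translating both through the bijection between partitions and multi-indexes, under which $\leq$ on multi-indexes matches $\subseteq$ on partitions, and likewise for the order \eqref{eq:minor-order} on minors. The geometry is therefore entirely contained in those earlier results; what remains is a small amount of combinatorics of partitions together with a Laplace-expansion argument.

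The combinatorial fact to isolate first is that a partition $\mu$ with at most $k$ parts of size at most $n-k$ contains the box in position $(a,b)$ exactly when $\mu_a\geq b$, equivalently when $\mu\supseteq(b^a)$; so $\mu$ omits that box exactly when $\mu\subseteq\lambda_{a,b}$, since $\lambda_{a,b}$ is by construction the largest partition omitting it. Applying \cref{ideal-gens} to the multi-index $I$ attached to $\lambda=(b^a)$, the Plücker generators of $\mathcal I_\lambda$ are the $[j_1\ldots j_k]\ngeq I$; writing $\mu$ for the partition of $J=[j_1\ldots j_k]$, the relation $J\ngeq I$ reads $\mu\not\supseteq(b^a)$, that is $\mu\subseteq\lambda_{a,b}$, that is $J\leq I_{a,b}$. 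This is part~(1). Passing to $\mathcal U$ and invoking the analogous statement — the ideal of $\Omega_I\cap\mathcal U$ is generated by the minors $M_1\ngeq M$, where $M$ is the minor attached to $\lambda=(b^a)$ — the same box-$(a,b)$ observation identifies $\{M_1\mid M_1\ngeq M\}$ with $\{M_1\mid M_1\leq M_{a,b}\}$, which is the first assertion of part~(2).

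It remains to cut this generating set down to minors of size $r+1$ and to identify them, and here I would unwind \eqref{eq:minor-order}: for a minor $M_1=[i_1\ldots i_s\,|\,j_1\ldots j_s]$ of size $s\geq r+1$, the relation $M_1\leq M_{a,b}$ is equivalent to the $r+1$ staircase inequalities $i_u\leq a+u-1$ and $j_u\leq b+u-1$ for $1\leq u\leq r+1$. Suppose $k-a\leq n-k-b$, so $r=k-a$ and $X_{\mathcal U}$ has exactly $a+r=k$ rows. Then $r+1$ strictly increasing row indices bounded by $k$ automatically satisfy $i_u\leq a+u-1$, so the row inequalities are vacuous for every minor of size $\geq r+1$; and for a minor of size exactly $r+1$ the column inequalities hold precisely when all its columns lie among the first $b+r$. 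Hence the size-$(r+1)$ minors below $M_{a,b}$ are exactly the size-$(r+1)$ minors in the first $b+r$ columns, and the symmetric argument when $k-a\geq n-k-b$ (where the column inequalities become automatic) gives the size-$(r+1)$ minors in the first $a+r$ rows. This is the ``more precise'' clause.

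The only step that is not a pure translation is the redundancy of minors of size $s\geq r+2$ lying below $M_{a,b}$, which I would settle by descending induction on $s$. Given such an $M_1$, Laplace-expand it along its \emph{last} column (resp.\ last row, in the symmetric case). Each cofactor minor has size $s-1\geq r+1$, keeps $j_1,\ldots,j_{r+1}$ as its first $r+1$ columns, and — the row inequalities being automatic — still satisfies every staircase inequality, so it is again $\leq M_{a,b}$; by the inductive hypothesis it lies in the ideal generated by the size-$(r+1)$ minors below $M_{a,b}$, and hence so does $M_1$ (the base case $s=r+1$ being vacuous). I expect the bookkeeping in this last step to be the main point requiring care — in particular the observation that one must delete the rightmost column, not an arbitrary one, so that the first $r+1$ columns (and hence the staircase inequalities) are preserved — whereas the rest is a direct transcription of the results recalled above.
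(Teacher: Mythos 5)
Your argument is correct, and for part~(1) and the identification of the ideal of $\Omega_\lambda\cap\mathcal U$ with the one generated by the minors $M\leq M_{a,b}$, it coincides with the paper's: apply \cref{ideal-gens} and observe that a partition $\mu$ with at most $k$ parts of size at most $n-k$ fails to contain $(b^a)$ exactly when it omits the box $(a,b)$, that is, when $\mu\subseteq\lambda_{a,b}$. Where you do noticeably more work is the reduction to minors of size exactly $r+1$: the paper folds all of part~(2) into the single sentence ``an immediate consequence of the first part and the definition of the order among minors,'' leaving implicit the classical determinantal-ideal fact that the larger admissible minors already lie in the ideal generated by the size-$(r+1)$ ones. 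You supply this by first noting which of the staircase inequalities in \cref{eq:minor-order} become vacuous (the row ones when $k-a\leq n-k-b$, since then $X_{\mathcal U}$ has only $a+r=k$ rows; the column ones in the symmetric case) and then giving a descending-induction Laplace expansion along the last column (resp.\ last row). You are also right to flag the subtlety of deleting the \emph{rightmost} column so that $j_1,\dots,j_{r+1}$, and hence the surviving constraint $j_{r+1}\leq b+r$, are preserved in each cofactor. In short: same route as the paper, with the compressed final step of part~(2) written out in full.
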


\begin{proof}
Let $I$ be the Plücker coordinate corresponding to $\lambda$. By
\cref{ideal-gens}, the ideal $\mathcal I_\lambda$ is generated by the Plücker
coordinates $J$ such that $J \ngeq I$. This is equivalent to $\mu \nsupseteq
\lambda$, where $\mu$ is the partition associated to $J$. Since $\lambda$ is
rectangular, this happens precisely when $\mu$ does not contain the box
$(a,b)$. By the discussion preceding the proposition, this is equivalent to
$\mu \subseteq \lambda_{a,b}$, and the first part follows. The second part is
an immediate consequence of the first part and the definition of the order
among minors (\cref{eq:minor-order}).
\end{proof}

\section{A decomposition of the arc space of the Grassmannian}

\label{sec:strata}

In this section we give a decomposition of the arc space of the Grassmannian
that resembles the Schubert cell decomposition of the Grassmannian itself. We
will call the pieces of this decomposition contact strata. Just as Schubert
cells are indexed by partitions, contact strata are indexed by plane
partitions.

Recall that we write $J_\infty G(k,n)$ and $J_m G(k,n)$ for the arc space and
jet schemes of $G(k,n)$. The universal property of the
Grassmannian~\cite[\S9.7]{GD71} tells us what the $\C$-valued points of
$J_\infty G(k,n)$ are: they correspond to $\psr$-submodules $\Lambda \subset
\psr^n$ for which the corresponding quotient $\psr^n/\Lambda$ is free of rank
$n-k$. Each such $\Lambda$ is itself free, and of rank $k$, so it can be
represented by a $k \times n$ matrix with coefficients in $\psr$:
\[
    \Lambda
    =
    \operatorname{row\ span}
    \begin{pmatrix}
        x_{11}(t) & x_{12}(t) & \hdots & x_{1n}(t) \\
        x_{21}(t) & x_{22}(t) & \hdots & x_{2n}(t) \\
        \vdots & \vdots & \ddots & \vdots \\
        x_{k1}(t) & x_{k2}(t) & \hdots & x_{kn}(t)
    \end{pmatrix},
    \qquad
    x_{ij}(t) \in \psr.
\]
The condition on the freeness of the quotient $\psr^n/\Lambda$ simply says that
one of the maximal minors of this matrix is a unit in $\psr$. Notice that the
matrix is only determined by $\Lambda$ up to multiplication on the right by an
element of $\GL_k(\psr)$. Despite this, we will often use the same symbol
$\Lambda$ to also denote any matrix representing $\Lambda$.

There is an analogous description for $\C$-valued points of the jet schemes
$J_m G(k,n)$.

\begin{definition}
    \label{cont-prof}
    Let $\Lambda$ be an arc in $J_\infty G(k,n)$. The collection
    \[
        \{ \ord_\Lambda(\Omega_\lambda) \}_{\lambda},
    \]
    where $\lambda$ ranges among all partitions with $\leq k$ parts of size
    $\leq n-k$, is called the \emph{contact profile} of $\Lambda$ (with respect
    to the Schubert varieties). Given a collection
    \[
        \alpha = \{ \alpha_\lambda \}_{\lambda},
    \]
    where $\alpha_\lambda \in [0,\infty]$ and $\lambda$ ranges as above, the
    \emph{contact stratum} of $J_\infty G(k,n)$ associated to $\alpha$ is the
    set of arcs that have contact profile $\alpha$. We define analogously
    contact profiles for jets in $J_m G(k,n)$, and contact strata in $J_m
    G(k,n)$.
\end{definition}

Not all collections of numbers $\{ \alpha_\lambda \}_\lambda$ appear as contact
profiles of arcs and jets. The following results address the issue of
enumerating all possible contact profiles. As we will see, contact profiles are
in bijection with certain plane partitions.

\begin{proposition}
    \label{ess-cont-prof}
    Let $\{\alpha_\lambda\}_\lambda$ be the contact profile of an arc or a jet
    in $G(k,n)$. Fix a partition $\lambda$, and let $\mu_1, \ldots, \mu_r$ be
    the Schubert conditions of $\lambda$ (as in \cref{schubert-ideals}). Then
    \[
        \alpha_\lambda = \min\{\alpha_{\mu_1}, \ldots, \alpha_{\mu_r}\}.
    \]
\end{proposition}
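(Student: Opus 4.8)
The plan is to use the structure result \cref{schubert-ideals}, which gives $\mathcal I_\lambda = \mathcal I_{\mu_1} + \dots + \mathcal I_{\mu_r}$ where the $\mu_s$ are the Schubert conditions of $\lambda$, and to translate it into a statement about orders of contact of arcs (or jets). The key elementary fact is that for any arc $\Lambda$ and any ideals $\mathfrak a, \mathfrak b$ we have $\ord_\Lambda(\mathfrak a + \mathfrak b) = \min\{\ord_\Lambda(\mathfrak a), \ord_\Lambda(\mathfrak b)\}$, and more generally $\ord_\Lambda(\mathfrak a_1 + \dots + \mathfrak a_r) = \min_s \ord_\Lambda(\mathfrak a_s)$. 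Since $\Omega_{\mu_s} \supseteq \Omega_\lambda$ corresponds to $\mathcal I_{\mu_s} \subseteq \mathcal I_\lambda$, applying this to the decomposition of $\mathcal I_\lambda$ gives exactly $\alpha_\lambda = \min\{\alpha_{\mu_1}, \dots, \alpha_{\mu_r}\}$.

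Concretely, the steps I would carry out are as follows. First I would recall the definition $\ord_\Lambda(\Omega_\lambda) = \ord_\Lambda(\mathcal I_\lambda)$, the order of contact along the ideal sheaf, which makes sense working locally. Second I would observe that $\Lambda$ is an arc whose generic point, if $\Lambda$ is fat, lies in the big cell (or, in any case, one can reduce to working in a suitable affine chart such as the opposite big cell $\mathcal U$, since every Schubert variety meets $\mathcal U$ and one can translate by a Weyl element so that the center of $\Lambda$ lies in the chart; alternatively just work with the homogeneous ideals and Plücker coordinates, pulling back by the arc $\operatorname{Spec}\psr \to G(k,n)$). Third I would invoke \cref{schubert-ideals} to write $\mathcal I_\lambda = \sum_s \mathcal I_{\mu_s}$. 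Fourth, the pullback of a sum of ideals under a ring map is the sum of the pullbacks, so $\Lambda^{-1}(\mathcal I_\lambda) = \sum_s \Lambda^{-1}(\mathcal I_{\mu_s})$; each summand is an ideal of the form $(t^{\alpha_{\mu_s}})$ in $\psr$, and a finite sum of such ideals is $(t^{\min_s \alpha_{\mu_s}})$. This yields $\ord_\Lambda(\mathcal I_\lambda) = \min_s \ord_\Lambda(\mathcal I_{\mu_s})$, i.e. the claimed identity. The same argument applies verbatim to jets, replacing $\psr$ by $\C[t]/(t^{m+1})$ and noting that a finite sum of ideals $(t^{e_s})$ in that ring is again $(t^{\min_s e_s})$ (with the convention $t^{\infty} = 0$, and $t^e = 0$ for $e \geq m+1$).

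The only genuinely delicate point is making sure the reduction to an affine chart (or to working with the homogeneous coordinate ring) is legitimate, since $\ord_\Lambda(\Omega_\lambda)$ is defined via the sheaf of ideals $\mathcal I_\lambda \subset \mathcal O_{G(k,n)}$ and one must check that pulling back commutes with the sum of ideal sheaves locally around the center $\Lambda(0)$. This is a routine fact: $\Lambda$ factors through $\operatorname{Spec} \mathcal O_{G(k,n), \Lambda(0)}$ (or through the affine chart containing $\Lambda(0)$), and on an affine scheme the global-sections functor is exact and takes sums of ideals to sums of ideals, so the pullback of $\mathcal I_{\mu_1} + \dots + \mathcal I_{\mu_r}$ equals the sum of the pullbacks. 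Since \cref{schubert-ideals} is an identity of ideal sheaves (equivalently, of ideals in $\C[G(k,n)]$, hence in every affine chart after de-homogenizing), there is no obstruction. I expect this bookkeeping about localization to be the main thing to spell out carefully; everything else is the observation that $\ord_\Lambda$ turns finite sums of ideals into minima.
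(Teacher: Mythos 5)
Your argument is correct and is exactly the one the paper intends: the paper's proof is a one-line appeal to \cref{schubert-ideals}, and what you have written out is precisely the standard observation that pulling back a sum of ideals along an arc (or jet) yields the minimum of the orders. The localization bookkeeping you flag is routine and is the right thing to mention, but it is the same route as the paper's.
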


\begin{proof}
This follows immediately from \cref{schubert-ideals}. 
\end{proof}

\begin{definition}
    \label{def:ess-cont-prof}
    For positive integers $i$ and $j$, recall that $(j^i)$ denotes the
    rectangular partition with $i$ rows and $j$ columns. If $\Lambda$ is an arc
    or a jet in $G(k,n)$, the matrix $\alpha$ of size $k \times (n-k)$ with
    entries
    \[
        \alpha_{i,j} = \ord_\Lambda(\Omega_{(j^i)})
    \]
    is called the \emph{essential contact profile} of $\Lambda$. Notice that
    \cref{ess-cont-prof} guarantees that the essential contact profile determines
    the contact profile.
\end{definition}

\begin{proposition}
    \label{A-restr}
    Let $\alpha=(\alpha_{i,j})$ be the essential contact profile of an arc
    $G(k,n)$. Then
    \begin{align*}
        & \alpha_{i,j} \geq \alpha_{i',j'}
        \quad \text{if } i \leq i' \text{ and } j \leq j'
    \\
        & \alpha_{i,j} + \alpha_{i+2,j+1}
        \geq \alpha_{i+1,j} + \alpha_{i+1,j+1}
    \\
        & \alpha_{i,j} + \alpha_{i+1,j+2}
        \geq \alpha_{i,j+1} + \alpha_{i+1,j+1}
    \end{align*}
    The same statements are true for essential contact profiles of $m$-jets,
    provided one replaces sums $x+y$ with $\min\{x+y, m+1\}$. 
\end{proposition}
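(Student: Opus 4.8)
The plan is to treat the first (monotonicity) inequality by a direct comparison of ideals, and the two hexagon inequalities by exhibiting them as consequences of containments between products of Schubert ideals, which are in turn proved using the Plücker relations of \cref{plucker}. For the first inequality, note that $i\le i'$ and $j\le j'$ give the containment of Young diagrams $(j^i)\subseteq(j'^{i'})$, hence $\Omega_{(j'^{i'})}\subseteq\Omega_{(j^i)}$ and, by \cref{ideal-gens}, $\mathcal I_{(j^i)}\subseteq\mathcal I_{(j'^{i'})}$ (every Plücker coordinate $\ngeq I_{(j^i)}$ is also $\ngeq I_{(j'^{i'})}$). Pulling ideals back along $\Lambda$ reverses inclusions once one passes to $t$-adic orders, so $\alpha_{i,j}=\ord_\Lambda(\mathcal I_{(j^i)})\ge\ord_\Lambda(\mathcal I_{(j'^{i'})})=\alpha_{i',j'}$, and the identical argument works for an $m$-jet.

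For the hexagon inequalities the key point is that $\ord_\Lambda$ turns products of ideal sheaves into sums of orders, $\ord_\Lambda(\mathfrak a\cdot\mathfrak b)=\ord_\Lambda\mathfrak a+\ord_\Lambda\mathfrak b$ for an arc (and $\min\{\ord_\Lambda\mathfrak a+\ord_\Lambda\mathfrak b,\,m+1\}$ for an $m$-jet, since $\gamma^{-1}(\mathfrak a\mathfrak b)=\gamma^{-1}(\mathfrak a)\gamma^{-1}(\mathfrak b)$ inside $\C[t]/(t^{m+1})$). Thus it suffices to establish the two Grassmannian containments
\[
    \mathcal I_{(j^i)}\cdot\mathcal I_{((j+1)^{i+2})} \subseteq \mathcal I_{(j^{i+1})}\cdot\mathcal I_{((j+1)^{i+1})}
\]
and
\[
    \mathcal I_{(j^i)}\cdot\mathcal I_{((j+2)^{i+1})} \subseteq \mathcal I_{((j+1)^i)}\cdot\mathcal I_{((j+1)^{i+1})}.
\]
Applying $\ord_\Lambda$ to the first yields $\alpha_{i,j}+\alpha_{i+2,j+1}\ge\alpha_{i+1,j}+\alpha_{i+1,j+1}$ and to the second $\alpha_{i,j}+\alpha_{i+1,j+2}\ge\alpha_{i,j+1}+\alpha_{i+1,j+1}$, with the advertised $\min\{x+y,m+1\}$ correction in the jet case. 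The two containments are transposes of one another under $G(k,n)\leftrightarrow G(n-k,n)$ (which transposes partitions), so it is enough to prove one of them.

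To prove, say, the second containment, recall from \cref{ideal-gens} that its left-hand side is generated by products $[P]\cdot[Q]$ of Plücker coordinates for which the partition of $P=[a_1<\dots<a_k]$ omits the box $(i,j)$ and that of $Q=[b_1<\dots<b_k]$ omits the box $(i+1,j+2)$; writing $p=k+1-i$, these conditions read $a_p\le j+k-i$ and $b_{p-1}\le j+k+1-i$. In particular $[P]$ already generates $\mathcal I_{((j+1)^i)}$, and if moreover $b_{p-1}\le j+k-i$ then $[Q]$ generates $\mathcal I_{((j+1)^{i+1})}$ and $[P][Q]$ lies in the target, so one may assume $b_{p-1}=j+k+1-i$. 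We then expand $[P]\cdot[Q]$ using \cref{plucker}, taking the pivot so that the overlarge entry $b_{p-1}$ of $Q$ is exchanged against the entries of $P$, and check term by term that each resulting summand $\pm[P'][Q']$ either vanishes (a repeated column index appears) or can be labelled so that $[P']$ generates one of $\mathcal I_{((j+1)^i)},\mathcal I_{((j+1)^{i+1})}$ and $[Q']$ generates the other; the verification amounts to counting how many entries of $P'$ and $Q'$ are $\le j+k-i$ and $\le j+k+1-i$, using that $P$ supplies at least $k-i+1$ entries $\le j+k-i$ while $Q$ supplies exactly $p-2$.

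The main obstacle is precisely this combinatorial step. A single application of \cref{plucker} produces degenerate terms (equal to $\pm[P][Q]$ itself, when $P$ happens to contain the value $b_{p-1}$) and, more seriously, terms whose modified factor is too large for \emph{either} target ideal; controlling these forces either a careful choice of pivot, a cancellation argument among the bad terms, or iterating the Plücker relations until every monomial has been pushed into the product ideal. An alternative route bypasses Plücker relations by using \cref{gens-rect} to rewrite $\alpha_{i,j}$ as $g_1+\dots+g_{k-i+1}$, where $g_1\le g_2\le\dots$ are the $t$-adic orders of the elementary divisors of the first $k-i+j$ columns of a matrix representing $\Lambda$ in a suitable chart (equivalently, the invariant factors of $\psr^n/(\Lambda+F_{n-k+i-j})$); then each hexagon inequality becomes a comparison of one elementary divisor of a matrix of power series with the corresponding one of the matrix obtained by deleting, resp. appending, a column. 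This is true, but — as one sees on examples — it does not follow formally from the interlacing of determinantal divisors and needs its own short argument.
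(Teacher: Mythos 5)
Your argument for the first inequality is the same as the paper's and is fine. For the two hexagon inequalities, though, you have set yourself a harder task than necessary, and you have correctly identified that you cannot finish it: you aim to prove the full ideal containment
\[
    \mathcal I_{(j^i)}\cdot\mathcal I_{((j+1)^{i+2})} \subseteq \mathcal I_{(j^{i+1})}\cdot\mathcal I_{((j+1)^{i+1})},
\]
but a single Plücker relation does not control all the products $[P]\cdot[Q]$ of generators, and as you note some resulting terms are either degenerate or too large. That is a real gap, not a cosmetic one; the ``alternative route'' via invariant factors that you sketch is also left unfinished by your own admission.

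The paper avoids the full ideal containment by a genericity reduction that you are missing. Since the contact profile is constant along $B$-orbits (the Schubert ideals are $B$-stable), one may replace $\Lambda$ by a generic right $B$-translate. Because $B$ acts by column operations in which left columns feed into right columns, and because the distinguished generator $I_{i,j}$ of $\mathcal I_{(j^i)}$ from \cref{gens-rect} is the largest generator in the order (all others $J$ satisfy $J\le I_{i,j}$), a generic translate achieves $\alpha_{i,j}=\ord_\Lambda(I_{i,j})$ simultaneously for all $(i,j)$. After this reduction it suffices to prove the much weaker statement
\[
    I_{i,j}\cdot I_{i+2,j+1} \in \mathcal I_{(j^{i+1})}\cdot\mathcal I_{((j+1)^{i+1})},
\]
and here the ``interval'' shape of the $I_{a,b}$ from \cref{eq:final-plucker} makes a single application of \cref{plucker} with pivot $u=k-i+1$ work cleanly: of the $k$ terms in the expansion, all but two vanish because of repeated column indices, and the two surviving summands $\pm I^v J^v$ visibly satisfy $I^v\le I_{i+1,j}$ and $J^v\le I_{i+1,j+1}$. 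This is precisely the step at which your attempt stalls, and it is the $B$-genericity trick that dissolves the difficulty. Without it, the term-by-term analysis you begin does face the obstacles you list, and your proposal is not a proof.
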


\begin{proof}
The first inequality follows from the inclusions $\Omega_{(j^i)} \supseteq
\Omega_{(j'^{i'})}$. We will prove the second inequality, the third one
following from similar arguments.

Let $\Lambda$ be an arc or a jet in $G(k,n)$ with essential contact profile
$(\alpha_{i,j})$. The Borel group $B$ has a right action on the arc space and
jet schemes, and all the elements in the orbit $\Lambda \cdot B$ have the same
contact profile.

Let $\mathcal I(i,j) = \mathcal I_{(j^i)}$ be the ideal of $\Omega_{(j^i)}$,
and let $I_{i,j}$ be distinguished Plücker coordinate of $\mathcal I(i,j)$, as
it appears in \cref{gens-rect}. The other generators $J$ of $\mathcal I(i,j)$
verify $J \leq I_{i,j}$, that is, the columns that appear in $J$ are to the
left of the columns that appear in $I_{i,j}$. Notice that $B$ acts by column
operations, in such a way that columns on the left affect columns on the right.
Therefore, after replacing $\Lambda$ by a generic $B$-translate, we can assume
that $\alpha_{i,j} = \ord_\Lambda (I_{i,j})$ for all $i,j$. The
\lcnamecref{A-restr} will be proven if we show that
\begin{equation}
    \label{eq:A-restr-goal}
    I_{i,j} \cdot I_{i+2,j+1}
    \in
    \mathcal I(i+1,j)
    \,
    \mathcal I(i+1,j+1).
\end{equation}

From \cref{eq:final-plucker} we see that:
\[
    \begin{array}{lccccccccccc}
        I_{i,j} & = &
        [\,\, j & j+1 & \dots & c-1 & c & \phantom{d} & \phantom{d+1} & d+2 & \dots & n \,\,],
    \\[.5em]
        I_{i+2,j+1} & = &
        [\,\, \phantom{j} & j+1 & \dots & c-1 & \phantom{c} & d & d+1 & d+2 & \dots & n \,\,],
    \\[.5em]
        I_{i+1,j} & = &
        [\,\, j & j+1 & \dots & c-1 & \phantom{c} & \phantom{d} & d+1 & d+2 & \dots & n \,\,],
    \\[.5em]
        I_{i+1,j+1} & = &
        [\,\, \phantom{j} & j+1 & \dots & c-1 & c & \phantom{d} & d+1 & d+2 & \dots & n \,\,],
    \end{array}
\]
where $c=j+k-i$ and $d=n-i$. We apply \cref{plucker} to the product $I_{i,j}
\cdot I_{i+2,j+1}$ using $u=k-i+1$. We obtain an expansion 
\[
    I_{i,j} \cdot I_{i+2,j+1}
    =
    \sum_v \pm\, I^v \cdot J^v,
\]
where $I^v$ and $J^v$ are obtained from $I_{i,j}$ and $I_{i+2,j+1}$ by swapping
the entry with value $c$ in $I_{i,j}$ with the $v$-th entry in $I_{i+2,j+1}$.
After the swap in most cases $I^v$ is zero (it has a repeated entry). The only
exceptions are $v=k-i$ and $v=k-i+1$, that is, when $c$ gets swapped with
either $d$ or $d+1$. In both cases it is easy to see that
\[
    I^v \leq I_{i+1,j}
    \qquad\text{and}\qquad
    J^v \leq I_{i+1,j+1}.
\]
Now \cref{eq:A-restr-goal} follows from \cref{gens-rect}, and the
\lcnamecref{A-restr} is proven.
\end{proof}

\begin{corollary}
    \label{ess-to-full}
    Let $\Lambda$ be an arc or a jet in $G(k,n)$, and let
    $\{\alpha_\lambda\}_\lambda$ and $(\alpha_{i,j})$ be the associated contact
    profile and essential contact profile. Then:
    \[
        \alpha_\lambda = \min\{ \alpha_{i,j} \mid (i,j) \in \lambda \}.
    \]
\end{corollary}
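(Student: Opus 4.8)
The plan is to combine \cref{ess-cont-prof} with the monotonicity part of \cref{A-restr}. Recall that the Schubert conditions $\mu_1, \ldots, \mu_r$ of $\lambda$ are, by definition, the maximal rectangular partitions contained in $\lambda$; writing $\mu_s = (j_s^{\,i_s})$, the box $(i_s, j_s)$ is the lower-right corner of $\mu_s$, and it is a box of $\lambda$. Since $\alpha_{\mu_s} = \alpha_{(j_s^{\,i_s})} = \alpha_{i_s, j_s}$ by the definition of the essential contact profile (\cref{def:ess-cont-prof}), \cref{ess-cont-prof} rewrites as
\[
    \alpha_\lambda = \min\{\alpha_{i_1, j_1}, \ldots, \alpha_{i_r, j_r}\}.
\]
As each $(i_s, j_s)$ lies in $\lambda$, the set $\{\alpha_{i_s, j_s}\}_{s=1}^r$ is contained in $\{\alpha_{i,j} \mid (i,j) \in \lambda\}$, so immediately $\alpha_\lambda \geq \min\{\alpha_{i,j} \mid (i,j) \in \lambda\}$.

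For the reverse inequality, fix an arbitrary box $(i,j) \in \lambda$. Then the rectangle $(j^i)$ is contained in $\lambda$, and since the Schubert conditions are precisely the \emph{maximal} rectangles contained in $\lambda$, there is an index $s$ with $(j^i) \subseteq \mu_s = (j_s^{\,i_s})$, i.e. $i \leq i_s$ and $j \leq j_s$. The first inequality of \cref{A-restr} then gives $\alpha_{i,j} \geq \alpha_{i_s, j_s} \geq \alpha_\lambda$. Taking the minimum over all boxes $(i,j) \in \lambda$ yields $\min\{\alpha_{i,j} \mid (i,j) \in \lambda\} \geq \alpha_\lambda$, and together with the previous paragraph this proves the claimed equality.

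The argument is identical for jets: \cref{ess-cont-prof} holds there, and the inequality $\alpha_{i,j} \geq \alpha_{i',j'}$ (for $(i,j)$ weakly northwest of $(i',j')$) involves no sums, so it is valid for $m$-jets without modification — only the convexity inequalities of \cref{A-restr}, which play no role here, need the replacement of $x+y$ by $\min\{x+y, m+1\}$. I do not expect any real obstacle: all the substance sits in \cref{A-restr}, and once monotonicity of the essential contact profile is in hand, the passage from an arbitrary box of $\lambda$ to a corner of a maximal rectangle inside $\lambda$ is the only move needed.
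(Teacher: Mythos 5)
Your proof is correct and is exactly the argument the paper leaves implicit: identify the Schubert conditions of $\lambda$ with the lower-right corners of the maximal rectangles inside $\lambda$ so that \cref{ess-cont-prof} gives $\alpha_\lambda$ as a minimum over those corners, and then use the monotonicity inequality from \cref{A-restr} to show that including the remaining boxes of $\lambda$ in the minimum cannot lower it. Your remark that the jet case needs only the monotonicity inequality (which is unaffected by the $\min\{x+y,m+1\}$ modification) is also accurate.
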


\begin{definition}
    \label{def:if-prof}
    Let $\Lambda$ be an arc in $G(k,n)$, and let $(\alpha_{i,j})$ be its
    essential contact profile. Consider the matrix $\beta$ of size $k \times
    (n-k)$ with entries
    \[
        \beta_{i,j} = \alpha_{i,j}-\alpha_{i+1,j+1},
    \]
    where we set $\alpha_{i',j'}=0$ when $i'>k$ or $j'>n-k$, and use the
    convention $\infty - x = \infty$. Then $\beta$ is called the
    \emph{invariant factor profile} of $\Lambda$. This choice of terminology
    will be justified in \cref{sec:schubert-cond}. Notice that we only define
    invariant factor profiles for arcs, not jets. For an extension of this
    notion to jets, see \cref{if-jets}.
\end{definition}

\begin{remark}[Plane partitions]
Recall that a \emph{plane partition} (sometimes also called a \emph{3d
partition}) is a matrix of non-negative integers whose entries are
non-increasing along each column and along each row. We slightly generalize
this notion and \emph{allow entries to be infinite}. Two plane partitions are
identified when they have the same non-zero entries, and therefore they are
often written by omitting the zero entries. The collection of non-zero entries
in a plane partition $\beta$ determines a (linear) partition $\lambda$, called
the \emph{base} (or \emph{shape}) of the plane partition, and gives the
\emph{number of columns} and the \emph{number of rows} of $\beta$. The biggest
entry of $\beta$ (the one in position $(1,1)$) is called the \emph{height} of
$\beta$. The \emph{number of boxes} in $\beta$ (or the \emph{sum} of $\beta$,
or the \emph{volume} of $\beta$) is the sum of the entries of $\beta$. A plane
partition $\beta = (\beta_{i,j})$ is often visualized via its
\emph{Ferrers-Young diagram}, a collection of boxes in space, with a pillar of
height $\beta_{i,j}$ on top of the square in the plane in position $(i,j)$. For
some examples see \cref{fig:plane-partition}.
\end{remark}

\begin{figure}
    \centering
    \includegraphics{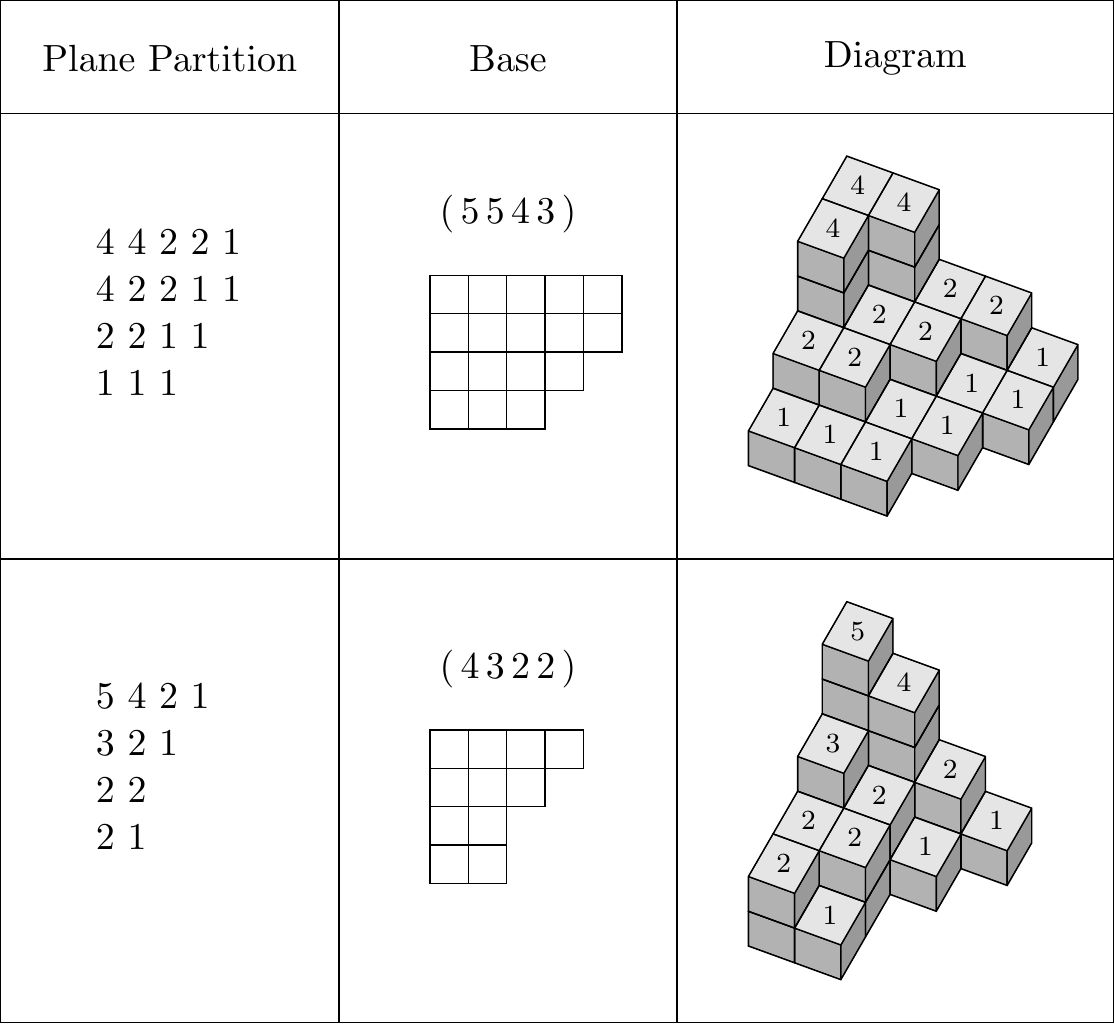}
    \caption{Some examples of plane partitions and their diagrams.}
    \label{fig:plane-partition}
\end{figure}

\begin{theorem}
    \label{if-prof}
    Let $\beta = (\beta_{i,j})$ be the invariant factor profile of an arc in
    $G(k,n)$. Then $\beta$ is a plane partition (possibly with infinite
    height), i.e.,
    \begin{align*}
        & \beta_{i,j} \geq 0,
        \\& \beta_{i,j} \geq \beta_{i+1,j},
        \\& \beta_{i,j} \geq \beta_{i,j+1}.
    \end{align*}
    Conversely, any plane partition with base contained in the rectangle of
    size $k \times (n-k)$ is the invariant factor profile of some arc in
    $G(k,n)$.
\end{theorem}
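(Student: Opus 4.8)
The plan is to prove the two halves separately. The first assertion --- that every invariant factor profile is a plane partition --- is a short unwinding of \cref{A-restr}, while the converse is the substance of the statement and will be reduced to an explicit construction with weighted planar networks.

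\textbf{Forward implication.} Let $\alpha=(\alpha_{i,j})$ be the essential contact profile of the given arc, and write $\beta_{i,j}=\alpha_{i,j}-\alpha_{i+1,j+1}$ as in \cref{def:if-prof}, with the convention $\alpha_{i',j'}=0$ for $i'>k$ or $j'>n-k$. Substituting this formula into the three inequalities that define a plane partition, they turn into $\alpha_{i,j}\ge\alpha_{i+1,j+1}$, then $\alpha_{i,j}+\alpha_{i+2,j+1}\ge\alpha_{i+1,j}+\alpha_{i+1,j+1}$, and finally $\alpha_{i,j}+\alpha_{i+1,j+2}\ge\alpha_{i,j+1}+\alpha_{i+1,j+1}$ --- that is, exactly the three inequalities of \cref{A-restr}. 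So the forward implication is immediate. The only point worth a remark is the behaviour on the lower and right boundary of the $k\times(n-k)$ rectangle, where one needs \cref{A-restr} with out-of-range indices read through the convention $\alpha_{i',j'}=0$; this extended form comes out of the very Plücker computation that proves \cref{A-restr} (interpreting an out-of-range $I_{i',j'}$ as the empty minor $1$), so no new idea is required. It is also useful to record that these substitutions are reversible: $\alpha$ satisfies the inequalities of \cref{A-restr} if and only if the associated $\beta$ is a plane partition, with $\alpha$ recovered from $\beta$ via the diagonal partial sums $\alpha_{i,j}=\sum_{s\ge 0}\beta_{i+s,j+s}$.

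\textbf{Reduction of the converse.} Given a plane partition $\beta$ with base inside the $k\times(n-k)$ rectangle, put $\alpha_{i,j}=\sum_{s\ge 0}\beta_{i+s,j+s}$. By the remark above this is a legitimate candidate for an essential contact profile, and by \cref{def:if-prof} any arc whose essential contact profile equals $\alpha$ automatically has invariant factor profile $\beta$. So it is enough to produce an arc $\Lambda$ with $\ord_\Lambda(\Omega_{(j^i)})=\alpha_{i,j}$ for all $1\le i\le k$ and $1\le j\le n-k$. We search for $\Lambda$ inside the opposite big cell $\mathcal U$, i.e.\ represented by a $k\times n$ matrix of the form $(\,Y(t)\mid\Delta'\,)$ with $Y(t)$ a $k\times(n-k)$ matrix over $\psr$; then $[n-k+1\ldots n](\Lambda)=\pm 1$ is a unit, so $\Lambda$ is an honest arc of $G(k,n)$ and $\ord_\Lambda(\Omega_{(j^i)})=\ord_\Lambda(\Omega_{(j^i)}\cap\mathcal U)$. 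By \cref{gens-rect} this order of contact is the minimum of $\ord_t$ over the explicit list of size-$(r+1)$ minors of $Y(t)$ that generate the ideal of $\Omega_{(j^i)}\cap\mathcal U$, where $r=\min\{k-i,\,n-k-j\}$. In this way the converse is reduced to a concrete problem: exhibit a $k\times(n-k)$ matrix over $\psr$ whose minors of each size vanish to prescribed orders, simultaneously for every rectangular Schubert condition.

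\textbf{The construction, and the main obstacle.} This last problem is handled by weighted planar networks (following \cite{FZ00}). One takes $Y(t)$ to be the weight matrix of a planar network whose edges carry weights of the form $c\,t^{e}$, with $c$ a generic nonzero constant and $e\in\{0,1,\ldots,\infty\}$ (so $t^\infty=0$ simply deletes an edge). A Lindstr\"om--Gessel--Viennot expansion then writes each minor of $Y(t)$ as a signed sum, over families of pairwise non-crossing paths, of products of edge weights; the generic constants prevent the lowest-order terms from cancelling, and an uncrossing argument shows that the minimum weight over non-crossing families coincides with the minimum over all path families, so that the order of vanishing of the minor is the minimal total weight of a path family --- a shortest-path problem on the network. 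It then remains to design the network, and the assignment of $t$-exponents to its edges, so that all of these minima come out to be exactly the numbers $\alpha_{i,j}=\sum_{s\ge 0}\beta_{i+s,j+s}$, for every $(i,j)$ at once; this is the business of \cref{sec:planar-networks}, and the verification that the resulting construction realizes the claimed orders of contact is \cref{c-works}, which completes the proof. The real difficulty is precisely this last step: one must match the orders \emph{exactly}, not merely up to inequality, and for all rectangular Schubert varieties simultaneously, while correctly handling the two degenerate regimes --- the infinite entries $\beta_{i,j}=\infty$ (which force $\alpha_{i,j}=\infty$ and correspond to thin arcs lying inside a Schubert divisor) and the requirement that $Y(t)$ still represent a point of the Grassmannian.
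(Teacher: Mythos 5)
Your proposal is correct and follows essentially the same route as the paper: the forward direction is a direct substitution into the inequalities of \cref{A-restr} (with the boundary cases handled by the same Plücker computation, noting that the out-of-range $I_{i+2,j+1}$ becomes $[n-k+1\ldots n]$, a unit on the opposite big cell), and the converse is deferred to the planar-network construction and \cref{c-works}, exactly as the paper does. One minor inaccuracy in your sketch of the network argument: the paper does not use an ``uncrossing'' lemma to reduce to a shortest-path problem over all path families; instead it shows (Lemmas~\ref{explicit-minors} and \ref{minor-projections}) that each \emph{final} minor is realized by a \emph{unique} non-crossing family and that every other minor $M\le M_{i,j}$ has order bounded below by that of the corresponding final minor --- which also means generic unit coefficients are not actually needed (the weighting $w_{ij}=t^{c_{i,j}}$ already works).
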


The first part of the previous \lcnamecref{if-prof} follows immediately from
\cref{A-restr}. The ``converse'' part, the fact that all plane partitions give
rise to a non-empty contact stratum is harder to prove, and requires some
preparation. The proof appears in \cref{sec:planar-networks} as a consequence
of \cref{c-works}.

\begin{notation}
    Given a plane partition $\beta$ (possibly with infinite height), we denote
    by $\mathcal C_\beta$ the contact stratum in $J_\infty G(k,n)$ whose arcs
    have invariant factor profile equal to $\beta$. The previous theorem
    guarantees that all contact strata in $J_\infty G(k,n)$ are of the form
    $\mathcal C_\beta$ for some $\beta$, and that $\mathcal C_\beta$ is
    non-empty precisely when $\beta$ has its base contained in the rectangle of
    size $k \times (n-k)$.
\end{notation}

\section{Contact strata and Schubert conditions}

\label{sec:schubert-cond}

Before finishing the proof of \cref{if-prof}, we discuss another interpretation
for the numbers that appear in the invariant factor profile of an arc. This
interpretation justifies our terminology.

The idea is to study ``Schubert conditions for lattices''. We start by
recalling how to do this for Schubert cells. As before, we consider the flag
$F_\bullet$ given by
\[
    0 = F_0 \subset F_1 \subset F_2 \subset \dots \subset F_n = \C^n,
\]
where $F_i$ is spanned by the last $i$ vectors in the standard basis of $\C^n$.
Then, given a point $V \in G(k,n)$, the numbers
\[
    \rho_s = \dim_{\C} ( V \cap F_s )
\]
determine, and are determined by, the Schubert cell $\Omega^\circ_I$ to which
$V$ belongs. Indeed, the $\rho_s$ are clearly invariant under $B$-translates,
so we can assume that $V=V_I = \langle e_{i_1}, \ldots, e_{i_k} \rangle$. In
this case it is easy to see that $k-\rho_s = \dim_\C (V_I+F_s/F_s)$ is the
number of entries in $I$ less than or equal to $n-s$.

We would like to characterize contact strata $\mathcal C_\beta$ in an analogous
way. But in order to do this, the above description using intersections $V\cap
F_\bullet$ is not convenient. It is better to consider the quotients $\C^n/(V +
F_\bullet)$. More precisely, for a point $V \in \Omega^\circ_I$ we consider the
partition $\mu = (\mu_0 \ldots \mu_n)$ given by
\begin{equation}
    \label{eq:schubert-mu}
    \dim_{\C}
    \left(
        \frac{\C^n}{V + F_s}
    \right)
    = \mu_s
\end{equation}
for $0 \leq s \leq n$. Then $n-s-\mu_s = \dim_{\C}(V+F_s/F_s) = k-\rho_s$,
and therefore the $\mu_s$ are determined by $I$ (and vice-versa). A direct
computation shows that $\mu_s$ can be computed more visually in the following
way. We consider the diagram of the partition $\lambda$ associated to $I$.
Above this diagram we place, upside-down, the diagram of $(n-k \ldots 1)$.
Then the entries in $\mu$ count the number of boxes in the diagonals of the
resulting arrangement of boxes. For examples of this computation see
\cref{fig:comp-mu}.

\begin{figure}[ht]
    \centering
    \includegraphics{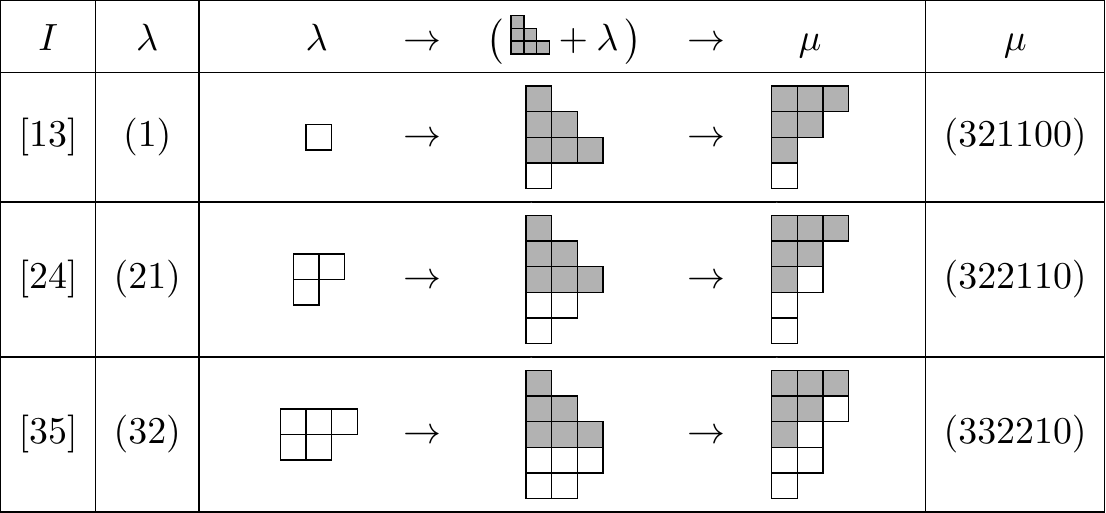}
    \caption{Computing $\mu$ in $G(2,5)$.}
    \label{fig:comp-mu}
\end{figure}

We interpret the above equations on dimensions as statements about the
isomorphism type of vector spaces. The elements of the arc space $J_\infty
G(k,n)$ are lattices, and the dimension (or rank) is no longer a complete
invariant of the isomorphism type of a $\psr$-module. Instead, using the
structure theory for finitely generated modules over a PID, we know that every
finitely generated $\psr$-module $\Gamma$ has a unique expression of the form
\[
    \Gamma \simeq
    \frac{\psr}{t^{\lambda_1}}
    \oplus
    \frac{\psr}{t^{\lambda_2}}
    \oplus
    \cdots
    \oplus
    \frac{\psr}{t^{\lambda_m}}
\]
where $\lambda_i \in \mathbb Z_{\geq 0} \cup \{\infty\}$ and $\lambda_1 \geq
\lambda_2 \geq \cdots \geq \lambda_m$. As usual, we are using the convention
$t^\infty = 0$, so the free rank of $\Gamma$ is the number of infinite terms
among the $\lambda_i$. The numbers $\lambda_i$ are called the \emph{invariant
factors} of $\Gamma$.

With a slight abuse of notation, we also denote by $F_\bullet$ the flag
\[
    0 = F_0 \subset F_1 \subset F_2 \subset \dots \subset F_n = \psr^n,
\]
where $F_i$ is the $\psr$-span of the last $i$ vectors in the standard
basis of $\psr^n$.

\begin{theorem}
    \label{schubert-cond-lattices}
    Let $\beta = (\beta_{i,j})$ be a plane partition (possibly with infinite
    height) whose base is contained in the rectangle of size $k \times (n-k)$.
    Extend $\beta$ by setting $\beta_{i,j} = \infty$ for $i<1$ and
    $\beta_{i,j}=0$ for $i>k$. Let $\Lambda$ be an arc in $G(k,n)$, thought as
    a $\psr$-submodule of $\psr^n$. Then $\Lambda$ belongs to the contact
    stratum $\mathcal C_\beta$ if and only if the quotient module
    \[
        \frac{\psr^n}{\Lambda + F_i}
    \]
    has invariant factors
    \[
        \beta_{i+k-n+1,1} \quad
        \beta_{i+k-n+2,2} \quad
        \cdots \quad
        \beta_{i-1,n-k-1} \quad
        \beta_{i,n-k}
    \]
    for $1 \leq i \leq n-1$.
\end{theorem}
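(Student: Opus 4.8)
The plan is to reduce the whole statement to a single computation about single-condition Schubert varieties, after which it follows by standard commutative algebra (Fitting ideals and Smith normal form over the discrete valuation ring $\psr$) together with a telescoping identity for the contact profile. Represent $\Lambda$ by a $k\times n$ matrix $M$ over $\psr$ with a unit maximal minor, and write $M_{[p]}$ for the submatrix of its first $p$ columns. Since $F_i$ is the $\psr$-span of the last $i$ standard vectors, $\psr^n/(\Lambda+F_i)$ is the cokernel of the $\psr$-linear map $\psr^k\to\psr^{\,n-i}$ sending the $\ell$-th basis vector to the $\ell$-th row of $M_{[n-i]}$; hence $\operatorname{Fitt}_j\bigl(\psr^n/(\Lambda+F_i)\bigr)=I_{(n-i)-j}\bigl(M_{[n-i]}\bigr)$, the ideal generated by the $\bigl((n-i)-j\bigr)$-minors of $M_{[n-i]}$. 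Fix a genuine single condition $\Omega_{(b^a)}$ (so $1\le a\le k$, $1\le b\le n-k$), put $q=k-a+1$, and let $i=n-k+a-b$, so that $(n-i)-(b-1)=q$ and $1\le i\le n-1$. The heart of the argument is the identity
\[
    \ord_\Lambda\!\bigl(\Omega_{(b^a)}\bigr)
    \;=\;
    \ord_t\operatorname{Fitt}_{b-1}\!\Bigl(\frac{\psr^n}{\Lambda+F_{\,i}}\Bigr)
    \;=\;
    \ord_t\, I_q\bigl(M_{[k-a+b]}\bigr).
\]

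To prove the identity it suffices to show that in each standard affine chart $V_{I_0}$ of $G(k,n)$ the ideal generated by the Plücker coordinates $[J]$ with $J\ngeq I^*$ (where $I^*$ is the multi-index of $(b^a)$), once dehomogenized, coincides with $I_q\bigl(M_{[k-a+b]}\bigr)$, $M$ now denoting the generic matrix of the chart normalized so that its $I_0$-columns form the identity; one then pulls back along $\Lambda$, whose center lies in some such chart precisely because $\psr^n/\Lambda$ is free, and notes that the Fitting ideal depends only on the isomorphism class of the module, hence not on the chosen representative. By \cref{ideal-gens} those $[J]$ generate $\mathcal I_{(b^a)}$, and an elementary check gives $J\ngeq I^*\iff|J\cap\{1,\dots,k-a+b\}|\ge q$. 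For such $J$, Laplace expansion of the $k\times k$ minor $[J]_M$ along $q$ columns chosen from $J\cap\{1,\dots,k-a+b\}$ exhibits it as an element of $I_q\bigl(M_{[k-a+b]}\bigr)$, giving one inclusion. Conversely, the flag description of \cref{sec:grass} identifies $\Omega_{(b^a)}$ set-theoretically with $\{V:\operatorname{rank}(\text{first }k-a+b\text{ columns})\le k-a\}$, so every $q$-minor of $M_{[k-a+b]}$ vanishes on $\Omega_{(b^a)}$; since $\Omega_{(b^a)}$ is reduced, these minors lie in its (radical) dehomogenized ideal, giving the other inclusion. (In the opposite big cell this ideal identity is exactly \cref{gens-rect}.)

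Granting the identity, write $\Gamma_i=\psr^n/(\Lambda+F_i)$. As a quotient of $\psr^n/\Lambda\cong\psr^{\,n-k}$ it has at most $n-k$ invariant factors, listed as $g^{(i)}_1\ge g^{(i)}_2\ge\dots\ge g^{(i)}_{n-k}$ in $\{0,1,\dots,\infty\}$ (padding with zeros; the infinite entries are the free summands). For any finitely generated $\psr$-module $\ord_t\operatorname{Fitt}_j=\sum_{\ell>j}(\text{$\ell$-th invariant factor})$, so the identity reads $\ord_\Lambda(\Omega_{(b^a)})=g^{(i)}_b+g^{(i)}_{b+1}+\dots+g^{(i)}_{n-k}$ with $i=n-k+a-b$. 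The condition $(a+1,b+1)$ has the same associated index $i$, so subtracting the two identities yields, with the conventions of \cref{def:if-prof},
\[
    g^{(i)}_b=\ord_\Lambda(\Omega_{(b^a)})-\ord_\Lambda(\Omega_{((b+1)^{a+1})})
    =\alpha_{a,b}-\alpha_{a+1,b+1}=\beta_{a,b}
    \qquad(a=i+k-n+b).
\]
Thus the nonextremal invariant factors of $\psr^n/(\Lambda+F_i)$ are exactly the $\beta_{i+k-n+b,b}$; the two extreme ranges of $b$ are handled directly — $\Gamma_i$ has free rank $\ge n-i-k$ (a quotient of $\psr^{\,n-i}$ by the rank-$\le k$ image of $\Lambda$) and is generated by $\le n-i$ elements — and match the prescribed conventions ($\infty$ where the first index is $<1$, $0$ where it is $>k$). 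Since $\Lambda\in\mathcal C_\beta$ means precisely that its essential contact profile $(\alpha_{a,b})=(\ord_\Lambda\Omega_{(b^a)})$, equivalently the whole invariant factor profile $(\beta_{a,b})$, takes the prescribed values, the asserted equivalence follows.

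The one delicate point is the second inclusion in the ideal identity, i.e.\ that the order of contact of $\Lambda$ with $\Omega_{(b^a)}$ is exactly — not merely at least — the order of the corresponding Fitting ideal; this is where the freeness of $\psr^n/\Lambda$ is essential, since it guarantees that the arc has an honest center in $G(k,n)$ and hence that the order of contact may be computed inside a genuine affine chart, in which the reducedness of $\Omega_{(b^a)}$ can be brought to bear. Everything else is formal.
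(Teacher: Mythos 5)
Your proof is correct and the core mechanism — identify $\ord_\Lambda(\Omega_{(b^a)})$ with the order of a Fitting ideal of $\Gamma_i=\psr^n/(\Lambda+F_i)$ and telescope to extract the invariant factors — is the same as the paper's. The execution differs in two places, both to your advantage. The paper first moves $\Lambda$ to the opposite big cell by a generic $B$-translate (justified by $B$-invariance of both sides of the statement), cites \cref{gens-rect} for generators of the Schubert ideal there, and then, for $i<k$, needs a block-diagonalization of the truncated matrix $\Lambda_i$ to match those generators against the minor ideal of the full $k\times(n-i)$ submatrix (which still retains part of the constant block $\Delta'$). You instead work directly in the chart containing the center of $\Lambda$ and re-derive the local ideal identity $\mathcal I_{(b^a)}|_{V_{I_0}}=I_q\bigl(M_{[k-a+b]}\bigr)$ from \cref{ideal-gens}: Laplace expansion gives one inclusion, and the rank description of $\Omega_{(b^a)}$ together with reducedness gives the other. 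Since that identity already involves the full column submatrix, it is precisely the Fitting-ideal presentation of $\Gamma_i$, so the $B$-translate and the block case both evaporate. The telescoping $\beta_{a,b}=g^{(i)}_b$ and the treatment of the boundary indices (free rank $\geq n-i-k$ forces the $\infty$'s, generation by $n-i$ elements forces the $0$'s) are handled the same way as in the paper. Net effect: the same idea with fewer case distinctions and no appeal to $B$-invariance.
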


See \cref{fig:schubert-cond-lattices} for a diagram explaining the content of
\cref{schubert-cond-lattices} in the particular case of $G(2,5)$. Notice how it
is a natural generalization of the description of Schubert cells given in
\cref{eq:schubert-mu}.

\begin{proof}
The action of the Borel group $B$ on $\C^n$ naturally induces an action on
$\psr^n$, and the flag $F_{\bullet}$ is fixed by $B$. In particular, for any
element $b \in B$ we obtain isomorphisms
\[
    \frac{\psr^n}{\Lambda + F_i}
    \simeq
    \frac{\psr^n}{(\Lambda \cdot b) + F_i}
\]
for all $i$. Also, both $\Lambda$ and $\Lambda \cdot b$ have the same contact
profile, and therefore the same invariant factor profile. Because of these
facts, in order to prove the \lcnamecref{schubert-cond-lattices} we are free to
replace $\Lambda$ with any of its $B$-translates.

Replace $\Lambda$ with a generic $B$-translate. This implies that $\Lambda$ is
contained in the opposite big cell, that is, it can be written as the row span
of a matrix $\Lambda_0$ of the form
\[
    \Lambda_0 = 
    \begin{pmatrix}
        x_{11} & x_{12} & \cdots & x_{1(n-k)} & 0      & \cdots  & 0      & 1 \\
        x_{21} & x_{22} & \cdots & x_{2(n-k)} & 0      & \cdots  & 1      & 0 \\
        \vdots & \vdots & \ddots & \vdots     & \vdots & \iddots & \vdots & \vdots \\
        x_{k1} & x_{k2} & \cdots & x_{k(n-k)} & 1      & \cdots  & 0      & 0 \\
    \end{pmatrix},
    \qquad
    x_{ij} \in \psr.
\]

We let $\Lambda_i$ be the matrix of size $k\times(n-i)$ obtained by removing
the last $i$ columns of $\Lambda_0$. After identifying $\psr^n/F_i$ with
$\psr^{n-i}$, the submodule $\Lambda+F_i/F_i$ can be obtained as the row span
of $\Lambda_i$. In particular, the invariant factors of $\psr^n/\Lambda+F_i$
can be computed by looking at ideals of minors of $\Lambda_i$.

More explicitly, for $1 \leq j \leq n-i$, let $\mathfrak d_{i,j}$ be the ideal
generated by the minors of size $n+1-i-j$ of $\Lambda_i$. Here we set a minor
equal to $0$ if its size is bigger than the size of $\Lambda_i$. We know that
$\mathfrak d_{i,j}$ is of the form $(t^{d_{i,j}})$ for some $d_{i,j} \in
[0,\infty]$. Then the invariant factors of $\psr^n/\Lambda+F_i$ are:
\[
    d_{i,1}-d_{i,2}
    \qquad
    d_{i,2}-d_{i,3}
    \qquad
    \cdots
    \qquad
    d_{i,n-i-1}-d_{i,n-i}
    \qquad
    d_{i,n-i}
\]

Let $(\alpha_{i,j})$ be the essential contact profile of $\Lambda$, extended by
setting $\alpha_{i,j} = \infty$ if $i<1$ and $\alpha_{i,j} = 0$ if $i>k$ or
$j>n-k$. From the previous discussion we see that the
\lcnamecref{schubert-cond-lattices} follows if we prove that
\[
    \alpha_{i+k-n+j,j} = d_{i,j}.
\]

Observe that when $k < n+1-i-j$ both $\alpha_{i+k-n+j,j}$ and $d_{i,j}$ are
equal to $\infty$. So we can assume that $i+k-n+j \geq 1$. Also, when $i<k$ the
matrix $\Lambda_i$ has minors of any size $\leq k-i$ that are equal to $1$.
Therefore, if $i < k$ and $j > n-k$ we see that $\mathfrak d_{i,j}$ must be the
unit ideal, since it contains all minors of size $n+1-i-j < k+1-i$. In
particular in this case $\alpha_{i+k-n+j,j}=d_{i,j}=0$. Since we always have $j
\leq n-i$, we can assume that $j\leq n-k$. Finally, also using that $j \leq
n-i$, we see that $i+k-n+j \leq k$.

We use \cref{gens-rect} with $a=i+k-n+j$ and $b=j$. Notice that the previous
paragraph guarantees that we can consider $1\leq a \leq k$ and $1\leq b \leq
n-k$. Let $\mathfrak a_{a,b}$ be the ideal of $\Omega_{(b^a)} \cap \mathcal U$.
We will show that $\mathfrak a_{a,b} = \mathfrak d_{i,j}$. Consider
\[
    r = \min \{ k-a, n-k-b \}
    = \min \{ n-i-j, n-k-j \} > 0,
\]
and recall that $\mathfrak a_{a,b}$ is generated by certain minors of size
$r+1$.

According to \cref{gens-rect}, we have two cases. If $i \geq k$, then $r=n-i-j$
and $\mathfrak a_{a,b}$ is generated by the minors of size $r+1$ in the first
$b + r$ columns. In this case $b+r = n-i$, and we see that $\mathfrak a_{a,b} =
\mathfrak d_{i,j}$.

The other possibility is $i<k$, which implies $r=n-k-j$. We can write
$\Lambda_i$ in block form:
\[
    \Lambda_i
    =
    \left(
    \begin{array}{c|c}
    A_i & 0 \\
    \hline
    B_i & C_i \\
    \end{array}
    \right),
\]
where $C_i$ is the anti-diagonal matrix of size $(k-i)\times(k-i)$. Then
$\mathfrak a_{a,b}$ is generated by the minors of $A_i$ of size $n+1-k-j$,
while $\mathfrak d_{i,j}$ contains all minors of $\Lambda_i$ of size $n+1-i-j$.
Since determinantal ideals are independent of the choice of basis, we can
perform column operations on $\Lambda_i$ until we obtain a matrix of the form
\[
    \widetilde\Lambda_i
    =
    \left(
        \begin{array}{c|c}
            A_i & 0 \\
            \hline
            0 & C_i \\
        \end{array}
    \right),
\]
and $\mathfrak d_{i,j}$ is still the ideal generated by all minors of
$\widetilde \Lambda_i$ of size $n+1-i-j$. But from the above block form we see
that $\mathfrak d_{i,j}$ must be generated by the minors of
$\widetilde\Lambda_i$ of size $n+1-i-j$ which contain the last $k-i$ columns,
and these are clearly equal to the minors of $A_i$ of size $n+1-k-j$. In other
words, $\mathfrak a_{a,b} = \mathfrak d_{i,j}$, and the
\lcnamecref{schubert-cond-lattices} follows.
\end{proof}

\begin{remark}[Invariant factor profiles for jets]
\label{if-jets}
We can use \cref{schubert-cond-lattices} to extend the definition of invariant
factor profile to jets. Given a jet $\Lambda \in J_m G(k,n)$ we define the
matrix $\beta = (\beta_{i,j})$ in such a way that each quotient module
\[
    \frac{\mathbb C[t]/(t^{m+1})}{\Lambda + F_i}
\]
has invariant factors
\[
    \beta_{i+k-n+1,1} \quad
    \beta_{i+k-n+2,2} \quad
    \cdots \quad
    \beta_{i-1,n-k-1} \quad
    \beta_{i,n-k}.
\]
Then $\beta$ is called the \emph{invariant factor profile} of $\Lambda$. The
same ideas of the proof of \cref{schubert-cond-lattices} show that the
essential contact profile $\alpha$ of $\Lambda$ can be recovered from the
invariant factor profile:
\[
    \alpha_{i,j} = \beta_{i,j} + \beta_{i+1,j+1} + \cdots.
\]
But notice that, in contrast with what happens for arcs, the invariant factor
profile of a jet cannot be recovered in general from its contact profile.

We still denote by $\mathcal C_\beta$ the set of jets with invariant factor
profile $\beta$. Each contact stratum is a union of the $\mathcal C_\beta$.
\end{remark}

\section{Orbits in the arc space}

\label{sec:orbits}

\cref{schubert-cond-lattices} provides a strong motivation for considering
contact strata, but there are other natural decompositions of the arc space of
the Grassmannian, mainly coming from groups actions.

Recall that the action of $\GL_n$ on $G(k,n)$ induces an action at the level of
arc spaces: $J_\infty\GL_n$ acts on $J_\infty G(k,n)$. The arc space
$J_\infty\GL_n$ is the group of invertible matrices with coefficients in
$\psr$, and its action on $J_\infty G(k,n)$ is by column operations (also with
coefficients in $\psr$). This identifies the arc space of the Grassmannian with
the homogeneous space
\[
    J_\infty G(k,n) 
    = \frac{J_\infty \GL_n}{J_\infty P_{k,n}}
    = \frac{\GL_n(\psr)}{P_{k,n}(\psr)},
\]
where $P_{k,n} \subset \GL_n$ is the parabolic subgroup described in
\cref{sec:grass}.

The above presentation of the arc space of the Grassmannian as a homogeneous
space should not be confused with other quotients of similar type that appear
frequently in the literature. The affine Grassmannian and the affine Flag
variety (in type A) are defined as
\[
    \mathcal Gr_n
    = \frac{\GL_n(\lsr)}{\GL_n(\psr)}
    \qquad\text{and}\qquad
    \mathcal Fl_n
    = \frac{\GL_n(\lsr)}{\mathcal B_n},
\]
where $\mathcal B_n = \Cont^{\geq 1}(B) \subset J_\infty\GL_n$ is the Iwahori
group. Denote by $Fl(n) = B \backslash\GL_n$ the Flag manifold. Then there are
natural fibrations relating the different quotients. We have
\[
    \xymatrix{
        \dfrac{\GL_n(\lsr)}{B(\psr)}
        \ar[rr]
        \ar@/_2pc/[rrrr]_{J_\infty Fl(n)}
        &&
        \mathcal Fl_n
        \ar[rr]^{Fl(n)}
        &&
        \mathcal Gr_n
    }
\]
for flags, and
\[
    \xymatrix{
        \dfrac{\GL_n(\lsr)}{P_{k,n}(\psr)}
        \ar[rr]
        \ar@/_2pc/[rrrr]_{J_\infty G(k,n)}
        &&
        \dfrac{\GL_n(\lsr)}{\Cont^{\geq 1}(P_{k,n})}
        \ar[rr]^-{G(k,n)}
        &&
        \mathcal Gr_n
    }
\]
for the Grassmannian. In the above two diagrams, the objects labeling the
arrows represent the fibers of the corresponding fibrations. Observe that, from
this point of view, the arc spaces $J_\infty G(k,n)$ and $J_\infty Fl(n)$ are
very different from $\mathcal Gr_n$ and $\mathcal Fl_n$. The ``affine'' objects
are well behaved representation theoretically: they are natural homogeneous
spaces associated to a Kac-Moody group. On the other hand, the group associated
to the arc spaces is $J_\infty \GL_n$, which is isomorphic to the product of a
reductive group, $\GL_n$, with an infinite dimensional solvable group,
$J_\infty\mathfrak {gl}_n = \Cont^{\geq 1}({\rm Id}) \subset J_\infty\GL_n$.

In principle, two subgroups of $J_\infty \GL_n$ should be relevant from the
point of view of the Grassmannian: the arc space of the Borel $J_\infty B$, and
the Iwahori subgroup $\mathcal B_n = \Cont^{\geq 1}(B) \subset J_\infty \GL_n$.
It is natural to consider the decomposition of $J_\infty G(k,n)$ into the
orbits of either of these two groups. Perhaps surprisingly, and in contrast
with what happens in $G(k,n)$, neither of these actions gives rise to contact
strata.

The Iwahori orbits are just $\Cont^{\geq 1}(\Omega^\circ_I)$, the inverse
images of the Schubert cells. They provide little insight into the structure of
$J_\infty G(k,n)$.

The orbits for $J_\infty B$ are more interesting, but we found them to be less
apt for our study than contact strata. The main difficulty is the lack of a
good combinatorial device parametrizing all the orbits. Notice that contact
strata are invariant under the action of $J_\infty B$, and therefore they are
unions of orbits. But there are a lot more orbits than contact strata. 

Even though we will not pursue a full study of the $J_\infty B$-orbits, we
would like to give an idea of their complexity. To simplify the discussion, we
restrict ourselves to $G(2,4)$, and we only consider arcs centered on the Borel
fixed point. Such arcs are contained in the opposite big cell, and are
represented by matrices of the form
\[
    \Lambda = 
    \begin{pmatrix}
        x_{11} & x_{12} & 0 & 1 \\
        x_{21} & x_{22} & 1 & 0 \\
    \end{pmatrix},
\]
where the coefficients are in the maximal ideal, $x_{ij} \in (t) \subset \psr$.
These arcs are characterized by having an invariant factor profile with base
the full rectangle of size $2\times 2$. Given an element $b \in J_\infty B$,
the translate $\Lambda \cdot b$ is an arc of the same type. In fact, there
exists a unique $g \in J_\infty \GL_2$ such that $\Lambda_b = g \cdot \Lambda
\cdot b$ is again a matrix of the same form as above.

The group $J_\infty B$ is generated by the arc spaces of the torus, $J_\infty
T$, and of the three unipotent subgroups corresponding to the positive roots,
$J_\infty U_{12}$, $J_\infty U_{23}$, and $J_\infty U_{34}$. After
straightforward computations, we can determine the explicit effect of the
action on the matrix $\Lambda$ for each of these generators. The results are
summarized in \cref{fig:borel-gens}, where we have used a particular torus
twist of $U_{23}$ (denoted $\widetilde U_{23}$) so that the expression is more
readable.

\begin{figure}[ht]
    \centering
    \includegraphics{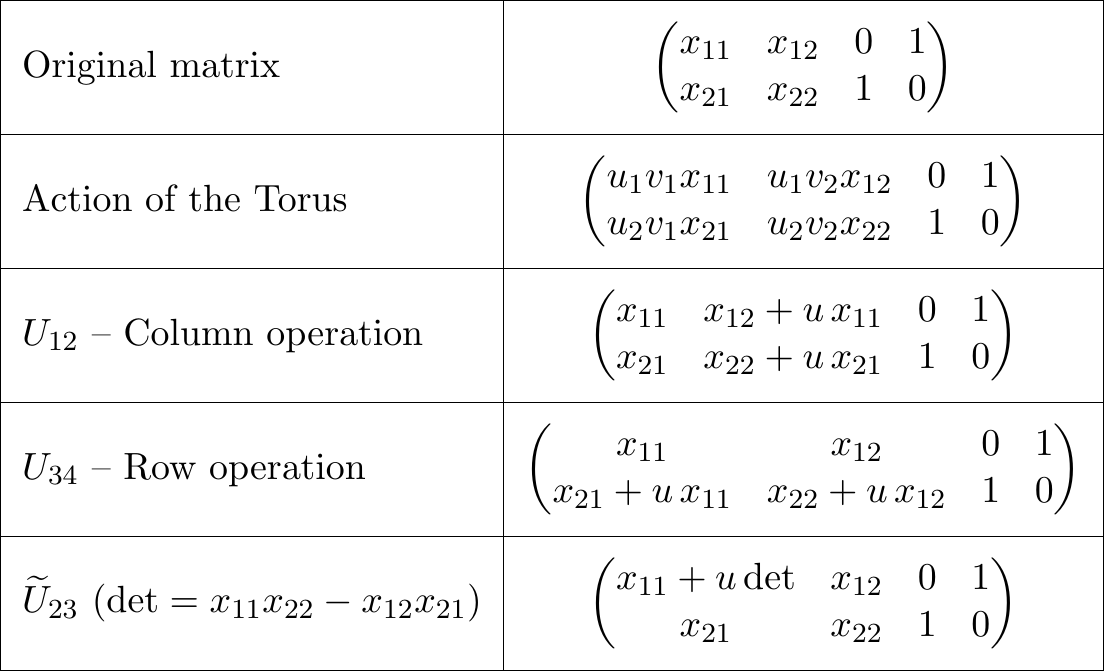}
    \caption{The generators of the action of $J_\infty B$ on $J_\infty G(2,4)$
    over the Borel fixed point. In the table $u_i$ and $v_i$ are units in
    $\psr$, and $u$ is an arbitrary power series.}
    \label{fig:borel-gens}
\end{figure}

After these remarks, it is easy to construct matrices in different orbits but
in the same contact stratum. An example is given by the arcs
\[
    \Lambda_1 =
    \begin{pmatrix}
        0 & t^2 & 0 & 1 \\
        t^2 & 0 & 1 & 0 \\
    \end{pmatrix}
    \quad\text{and}\quad
    \Lambda_2 =
    \begin{pmatrix}
        t^3 & t^2 & 0 & 1 \\
        t^2 & 0 & 1 & 0 \\
    \end{pmatrix},
\]
whose invariant factor profile is
\[
    \beta = 
    \begin{pmatrix}
        2 & 2 \\
        2 & 2 \\
    \end{pmatrix}.
\]
The moves in \cref{fig:borel-gens} do not allow to transform $\Lambda_1$ into
$\Lambda_2$. In order to change the $0$ in position $(1,1)$ with a $t^3$, the
only operation that could help would be $\widetilde U_{23}$. But the
determinant $\det$, even after any other move, is a multiple of $t^4$, so we
can never obtain the desired $t^3$.

Other examples display even more pathological behavior. We consider the
matrices
\[
    \Lambda_u = 
    \begin{pmatrix}
        u t^3 & t^2 & 0 & 1 \\
        t^2 & t & 1 & 0 \\
    \end{pmatrix},
    \qquad
    u \in \C \setminus \{0,1\},
\]
which clearly belong to the same contact stratum. We will show that they give
rise to different orbits. Notice that in this way we obtain a continuous family
of orbits, whereas the family of contact strata is countable. 

To see that no two among the $\Lambda_u$ belong to the same orbit, we could
argue as above, using the generators of $J_\infty B$. More directly, we proceed
as follows. Consider
\begin{equation}
    \label{eq:lambda_u_v}
    \Lambda_v = g \cdot \Lambda_u \cdot b,
\end{equation}
where $g \in J_\infty\GL_2$ and $b = (b_{ij}) \in J_\infty B$. We apply all the
Plücker coordinates $[ij]$ to both sides of \cref{eq:lambda_u_v}. After
explicit computations we get:
\[
    \begin{array}{l@{\colon\qquad}r@{\,}l@{\quad}c@{\quad}r@{\,}ll}
        [12]
        & t^4 & (v-1)
        & =
        & t^4 & (\det g)\, b_{11}\, b_{22}\, (u-1)
    \\[.5em]
        [13]
        & t^3 & v
        & \equiv
        & t^3 & (\det g)\, b_{11}\, b_{33}\, u
        & \mod t^4
    \\[.5em]
        [23]
        & t^2 &
        & \equiv
        & t^2 & (\det g)\, b_{22}\, b_{33}
        & \mod t^3
    \\[.5em]
        [14]
        & -t^2 &
        & \equiv
        & -t^2 & (\det g)\, b_{11}\, b_{44}
        & \mod t^3
    \\[.5em]
        [24]
        & -t\phantom{^1} &
        & \equiv
        & -t\phantom{^1} & (\det g)\, b_{22}\, b_{44}
        & \mod t^2
    \\[.5em]
        [34]
        & -1\phantom{^1} &
        & \equiv
        & -\phantom{t^0} & (\det g)\, b_{33}\, b_{44}
        & \mod t
    \\
    \end{array}
\]
Focusing on the terms of lowest degree, the above equations imply:
\[
    \begin{array}{l@{\qquad}l@{\qquad}l}
        b^0_{11} \, b^0_{22} \, \delta = \dfrac{v-1}{u-1},
        &
        b^0_{22} \, b^0_{33} \, \delta = 1,
        &
        b^0_{22} \, b^0_{44} \, \delta = 1,
    \\[1.5em]
        b^0_{11} \, b^0_{33} \, \delta = \dfrac{v}{u},
        &
        b^0_{11} \, b^0_{44} \, \delta = 1,
        &
        b^0_{33} \, b^0_{44} \, \delta = 1,
    \\
    \end{array}
\]
where $b_{ii}^0 \ne 0$ is the constant coefficient of $b_{ii}$, and $\delta \ne
0$ is the constant coefficient of $\det g$. But these equations imply that
$u=v$, as required.

We know very little about the $J_\infty B$-orbit decomposition of $J_\infty
G(k,n)$ in general. It would be interesting to understand these orbits better,
and to study contact strata from this point of view.

\section{Constructing arcs using planar networks}

\label{sec:planar-networks}

The goal of this section is to finish the proof of \cref{if-prof}, that is, we
want to show that all plane partitions appear as the invariant factor profile
of an arc in the Grassmannian. In order to do this, we need to find a way of
producing arcs with prescribed order of contact with respect to all the
(single-condition) Schubert varieties. This task seems to be highly non-trivial.
We have borrowed ideas form the theory of the totally positive Grassmannian: we
use planar networks to construct matrices, and use Lindström's Lemma to control
the behavior of the minors. Most of the results in this section are adaptations
to the case of arcs of the ideas in \cite{FZ00}.

\subsection*{Planar networks}

A \emph{planar network} $\Gamma$ is an acyclic directed finite graph with a
fixed embedding in the closed disk. We allow multiple edges, but no loops. We
identify planar networks when they are homotopy equivalent (respecting the
boundary of the disk). The vertices of $\Gamma$ can be naturally classified
into four types: \emph{sources}, \emph{sinks}, \emph{internal vertices}, and
\emph{isolated vertices}. In all the cases that we consider, $\Gamma$ has no
isolated vertices, and can be drawn inside of the disc in such a way that the
set of \emph{boundary vertices} is the union of the sources and the sinks.
Furthermore, $\Gamma$ will have $k$ sources, labeled from $1$ to $k$ clockwise
along the boundary, and $(n-k)$ sinks, labeled counterclockwise.

When discussing \emph{paths} on a planar network, we always assume that they
are directed. A path is called \emph{maximal} if it connects a source with a
sink. A collection of paths is said to be \emph{non-intersecting} if no two
paths in the collection share a vertex.

A \emph{chamber} of a planar network $\Gamma$ is a connected component of the
complement of $\Gamma$ in the closed disk. Let $p$ be a maximal path in
$\Gamma$. Then $p$ splits the disk in two connected components, which, taking
into account the natural orientation of $p$, are called the \emph{left} and
\emph{right} sides of $p$. Every chamber of $\Gamma$ is either to the left or
to the right of $p$.

A \emph{weighting} of a planar graph $\Gamma$ is a collection $w = \{ w_v,
w_e\}$, where $v$ ranges among the internal vertices of $\Gamma$ and $e$ ranges
among the edges of $\Gamma$. The elements of $w$ belong to some ring fixed in
advance, which in our case it will always be $\psr$. Using such a weighting
$w$, we define the \emph{weight of a path} $p$ in $\Gamma$ as the product of
the weights of all the vertices and all the edges in $p$,
\[
    w_p = \left( \prod_{v \in p} w_v \right)
          \left( \prod_{e \in p} w_e \right).
\]
The \emph{weight of a collection of paths} is the product of the weights of the
paths in the collection.

The \emph{weight matrix} $X(\Gamma,w)$ is the matrix of size $k \times (n-k)$
whose entry in position $(i,j)$ is the sum of the weights of all (maximal)
paths with source $i$ and sink $j$,
\[
    x_{ij} 
    = \sum_{p \in {\rm Paths}(i \to j)} w_p.
\]

The following result makes calculations with weight matrices particularly
convenient.

\begin{lemma}[Lindström Lemma]
    \label{lindstrom}
    Let $X=X(\Gamma, w)$ be the weight matrix of a weighted planar network, and
    let $[I|J]$ be the minor of $X$ with row set $I$ and column set $J$. Then
    $[I|J]$ is the sum of the weights of the collections of non-intersecting
    paths that connect the sources in $I$ with the sinks in $J$.
\end{lemma}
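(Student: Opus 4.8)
The Lindström Lemma is a classical result; here is how I would approach its proof in this setting.

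---

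The plan is to prove the identity by an inclusion–exclusion / sign-reversing involution argument on the set of all tuples of paths connecting the sources in $I$ to the sinks in $J$. First I would expand the determinant $[I|J]$ by its definition as an alternating sum over permutations: writing $I = \{i_1 < \cdots < i_r\}$ and $J = \{j_1 < \cdots < j_r\}$, we have
\[
    [I|J] = \sum_{\sigma \in S_r} \operatorname{sgn}(\sigma)
    \prod_{\ell=1}^r x_{i_\ell\, j_{\sigma(\ell)}}.
\]
Each factor $x_{i_\ell\, j_{\sigma(\ell)}}$ is itself a sum of path weights $\sum_{p} w_p$ over maximal paths $p$ from source $i_\ell$ to sink $j_{\sigma(\ell)}$; multiplying out and using multiplicativity of weights (the weight of a collection of paths is the product of the individual weights), we rewrite $[I|J]$ as a signed sum over all pairs $(\sigma, \mathbf{p})$, where $\mathbf{p} = (p_1, \ldots, p_r)$ is a tuple of maximal paths with $p_\ell$ running from $i_\ell$ to $j_{\sigma(\ell)}$, weighted by $\operatorname{sgn}(\sigma)\, w_{\mathbf p}$.

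Next I would split this signed sum into the contribution of \emph{non-intersecting} path families and the contribution of the rest. A tuple $\mathbf p$ whose paths are pairwise vertex-disjoint forces $\sigma$ to be the identity: since $\Gamma$ is embedded in the disk with the sources labeled clockwise and the sinks counterclockwise along the boundary, any non-identity permutation creates a topological crossing of two of the arcs, and two crossing directed paths in a planar embedding must share a vertex. (This is the one place where planarity and the boundary labeling conventions are essential, and it is the step I expect to require the most care — one must rule out that two paths can "cross" purely topologically while remaining vertex-disjoint, which uses that both are contained in the disk and that the cyclic order of their endpoints on the boundary is linked.) Hence the non-intersecting families contribute exactly $\sum_{\mathbf p \text{ n.i.}} w_{\mathbf p}$, with all signs positive, matching the claimed right-hand side.

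Finally I would exhibit a sign-reversing, weight-preserving involution on the set of pairs $(\sigma, \mathbf p)$ with $\mathbf p$ intersecting, showing their total contribution is zero. Given such a pair, choose the smallest index $\ell$ such that $p_\ell$ meets some other path, and among the paths it meets pick a canonical one, say $p_{\ell'}$ with $\ell'$ minimal; let $v$ be the first vertex along $p_\ell$ (in its natural orientation) that also lies on $p_{\ell'}$. Form $\mathbf p'$ by swapping the tails of $p_\ell$ and $p_{\ell'}$ after $v$: the new paths go from $i_\ell$ to $j_{\sigma(\ell')}$ and from $i_{\ell'}$ to $j_{\sigma(\ell)}$, so the associated permutation changes by the transposition $(\ell\ \ell')$ and the sign flips; the multiset of vertices and edges used is unchanged, so $w_{\mathbf p'} = w_{\mathbf p}$; and one checks this operation is an involution by verifying that the distinguished data $(\ell, \ell', v)$ is recovered unchanged from $(\sigma', \mathbf p')$. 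The involution has no fixed points on intersecting tuples, so those terms cancel in pairs, leaving precisely the non-intersecting contribution. This completes the proof.
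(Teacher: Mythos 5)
The paper does not prove this lemma; it refers the reader to Lemma~1 of \cite{FZ00}, remarking only that the edge-weighting argument there extends to general (vertex and edge) weightings. Your proposal follows the standard sign-reversing involution proof, and the overall plan is sound: the expansion of the determinant into a signed sum over pairs $(\sigma, \mathbf p)$, the use of planarity and the boundary labeling to force $\sigma = \mathrm{id}$ for non-intersecting families, and the inclusion of vertex weights alongside edge weights are all handled correctly.

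There is, however, a genuine gap in the selection rule for the tail-swap: as stated it is \emph{not} an involution. You pick $\ell$ minimal among indices whose path meets another, then $\ell'$ minimal among indices $m$ such that $p_\ell$ meets $p_m$ \emph{somewhere}, and only then $v$ as the first vertex of $p_\ell$ on $p_{\ell'}$. This ordering is unstable under the swap: suppose $p_\ell$ meets only $p_{\ell'}$, first at $v$, while some $p_m$ with $\ell < m < \ell'$ is disjoint from $p_\ell$ but meets $p_{\ell'}$ downstream of $v$. After swapping, $p'_\ell$ carries the old tail of $p_{\ell'}$ and hence now meets $p_m$; re-applying your rule picks the pair $(\ell, m)$ with $m < \ell'$ rather than $(\ell, \ell')$, so the map fails to be its own inverse. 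The repair is to reverse the last two choices: take $v$ to be the \emph{first vertex of $p_\ell$ lying on any other path of the family}, and then take $\ell'$ minimal with $v \in p_{\ell'}$. With that rule the triple $(\ell, v, \ell')$ depends only on the initial segments of the paths up to $v$ and on the set of paths through $v$, both of which the tail-swap leaves unchanged, so the operation becomes genuinely involutive, sign-reversing, weight-preserving, and fixed-point-free on intersecting families, which is what the cancellation argument requires.
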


For the proof we refer the reader to \cite[Lemma~1]{FZ00}. There it can be
found the proof for the case where only edge weightings are used, but the same
idea works for arbitrary weightings.

\subsection*{A particular network}

Given a plane partition $\beta$, we will produce arcs in the contact stratum
$\mathcal C_\beta$ using a particular weight matrix. We now describe the
corresponding planar network. The cases of $G(3,8)$ (with $k=3$ and $n-k=5$)
and $G(2,5)$ (with $k=2$ and $n-k=3$) are given in \cref{fig:main-network}. 

\begin{figure}[ht]
    \centering
    \includegraphics{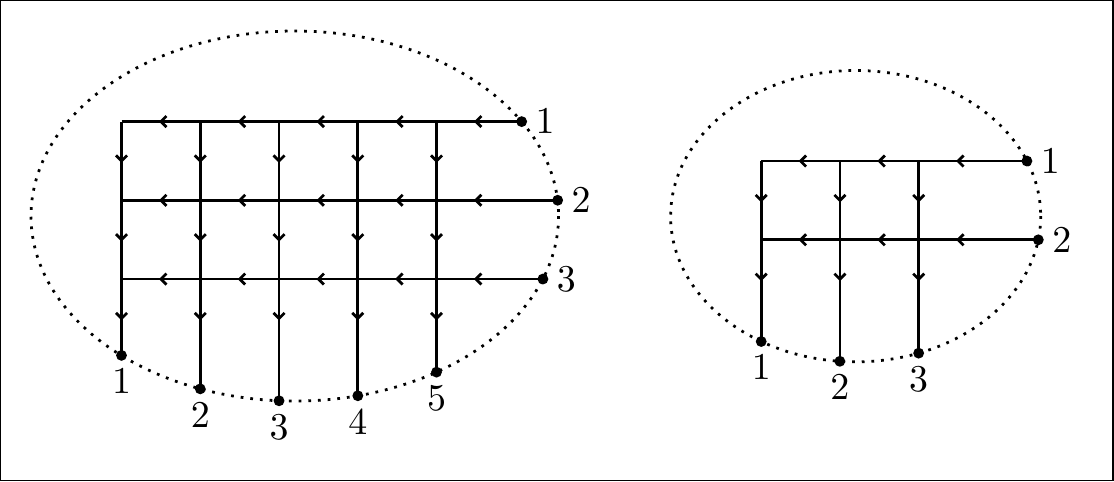}
    \caption{The network $\Gamma_0$.}
    \label{fig:main-network}
\end{figure}

The network we describe is denoted $\Gamma_0$. It has $k$ sources, $n-k$ sinks,
and $k \times (n-k)$ internal vertices. The internal vertices are arranged
using $k$ rows and $(n-k)$ columns, and $\Gamma_0$ has edges connecting the
internal edges to form a grid. The resulting $k$ horizontal lines are extended
towards the right until the boundary of the disk, where we place the $k$
sources. The $n-k$ vertical lines are extended towards the bottom, until the
$n-k$ sinks. The horizontal edges are oriented from right to left, and the
vertical ones from top to bottom. We get $k(n-k)+1$ chambers, one on top, and
the rest arranged in a grid of size $k \times (n-k)$. We label these last
chambers $C_{ij}$, using matrix indexing. 

\begin{figure}[ht]
    \centering
    \includegraphics{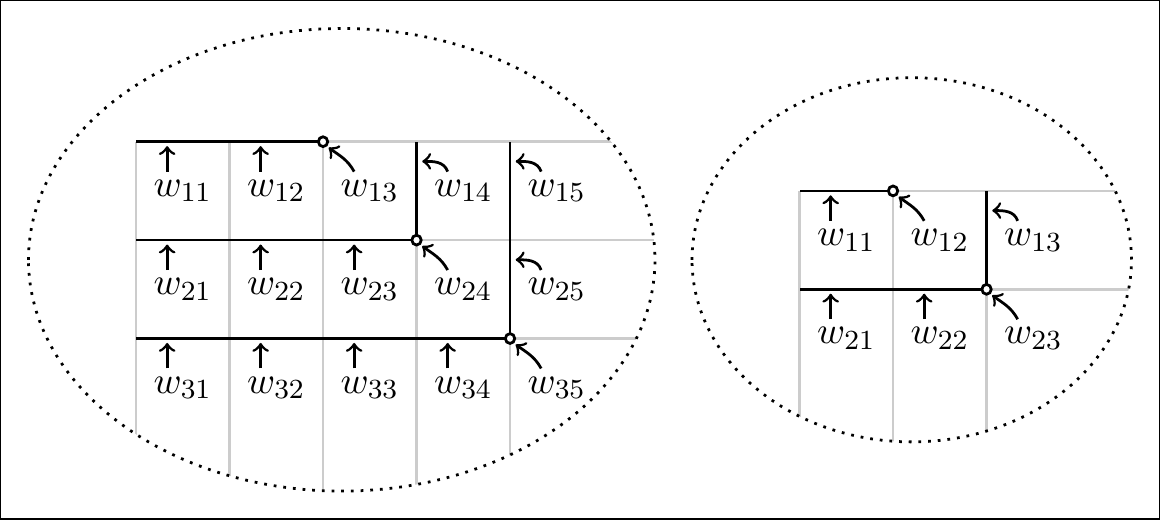}
    \caption{The weights of $\Gamma_0$. Only the dark edges and the marked
    vertices get a weight different from $1$.}
    \label{fig:network-weights}
\end{figure}

To assign weights to $\Gamma_0$, we consider a matrix $(w_{ij})$ of size $k
\times (n-k)$. If $k-i < n-k-j$ (resp.\ $k-i > n-k-j$), we assign weight
$w_{ij}$ to the edge on the top (resp.\ to the left) of the chamber $C_{ij}$.
If $k-i = n-k-j$, we assign weight $w_{ij}$ to the vertex in the top-left
corner of $C_{ij}$. All other edges and vertices get weight $1$. See
\cref{fig:network-weights} for some examples. Following \cite{FZ00}, we call a
weighting $w$ of $\Gamma_0$ obtained this way an \emph{essential weighting}.

To compute the weight matrix $X(\Gamma_0,w)$, only the marked edges and vertices
in \cref{fig:network-weights} contribute some weight. For example, in
$G(2,5)$ we obtain:
\[
    X(\Gamma_0,w) =
    \begin{pmatrix}
        w_{12}w_{11} + w_{12}w_{21} + w_{13}w_{23}w_{22}w_{21}
        & w_{12} + w_{13}w_{23}w_{22}
        & w_{13}w_{23}
    \\
        w_{23}w_{22}w_{21}
        & w_{23}w_{22}
        & w_{23}
    \end{pmatrix}.
\]

We remark that our network $\Gamma_0$ is slightly different from the one used
in \cite[Figure~2]{FZ00}. The reason is that the authors of \cite{FZ00} prefer
to use only edge weightings, while we decided to allow arbitrary weightings.
The network in \cite{FZ00} can be obtained from ours by replacing each weighted
vertex with a diagonal edge (oriented from top-right to bottom-left), and
moving weight from the vertices to these new edges. This process is explained
visually in \cref{fig:compareFZ}.

\begin{figure}[ht]
    \centering
    \includegraphics{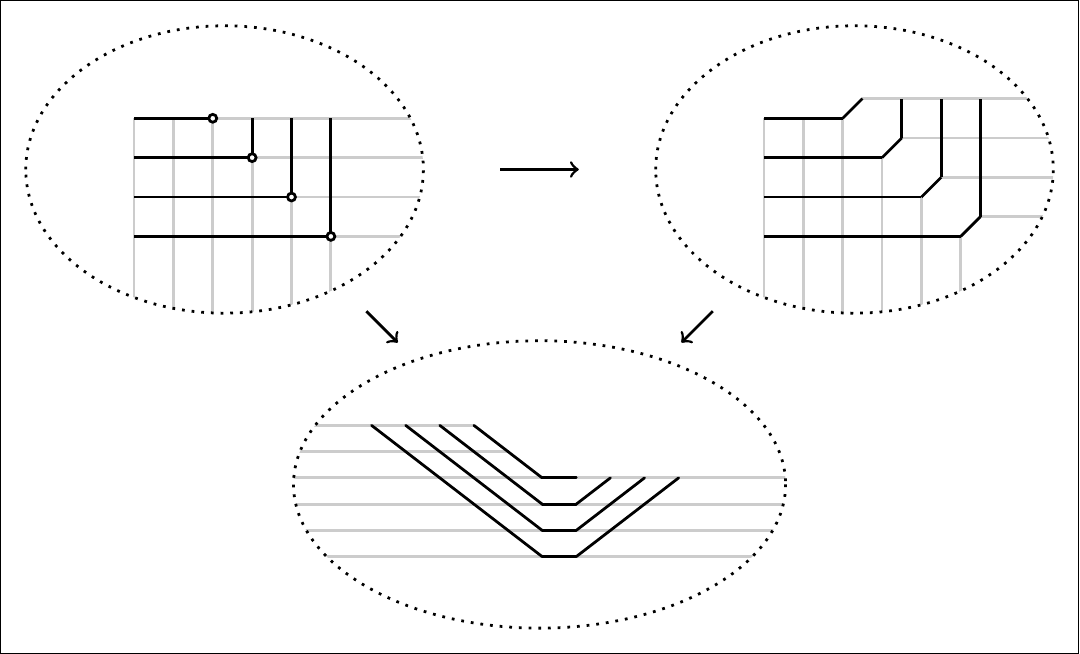}
    \caption{The relationship between our network and the one in \cite{FZ00}.}
    \label{fig:compareFZ}
\end{figure}

\subsection*{Final minors in the weight matrix}

From now on we assume that the weights $w_{ij}$ belong to a power series ring,
either $\psr$ or $K\llbracket t\rrbracket$, where $K$ is some field. We can
think of $X(\Gamma_0,w)$ as giving a ($K$-valued) arc in the Grassmannian. More
precisely, we consider
\[
    \Lambda(\Gamma_0,w) = 
        \Big( \, X(\Gamma_0,w) \, \Big| \,\Delta' \,\Big),
    \qquad\text{where}\qquad
    \Delta' = 
    \begin{pmatrix}
        0      & \cdots  & 0      & 1 \\
        0      & \cdots  & 1      & 0 \\
        \vdots & \iddots & \vdots & \vdots \\
        1      & \cdots  & 0      & 0 \\
    \end{pmatrix}.
\]
Notice that $\Lambda(\Gamma_0,w)$ is an arc in the opposite big cell $\mathcal U$.

We are interested in understanding the invariant factor profile of
$\Lambda(\Gamma_0,w)$. From \cref{gens-rect}, this involves studying the order
of $t$ in all the minors of $X(\Gamma_0,w)$. We start with the final minors
(recall our terminology from \cref{sec:grass}).

\begin{lemma}
    \label{explicit-minors}
    Let $M_{i,j}$ be a final minor of $X(\Gamma_0,w)$. Then there is a
    unique collection of non-intersecting paths of $\Gamma_0$ whose weight
    gives $M_{i,j}$. Moreover, if $(k-i) \leq (n-k-j)$ we have
    \[
        M_{i,j} = 
        \prod_{u=0}^{k-i} \prod_{v=0}^{v_{\rm max}}
        w_{i+u,\, j+u+v},
        \qquad
        v_{\rm max} = (n-k-j) - (k-i),
    \]
    and if $(k-i) \geq (n-k-j)$ we have
    \[
        M_{i,j} = 
        \prod_{u=0}^{u_{\rm max}} \prod_{v=0}^{n-k-j}
        w_{i+u+v,\, j+v},
        \qquad
        u_{\rm max} = (k-i) - (n-k-j).
    \]
\end{lemma}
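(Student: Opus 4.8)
The plan is to reduce the statement to two purely combinatorial facts via Lindström's Lemma (\cref{lindstrom}). Recall that the final minor $M_{i,j}$ is the minor of $X(\Gamma_0,w)$ with row set $I=\{i,i+1,\dots,i+r\}$ and column set $J=\{j,j+1,\dots,j+r\}$, where $r=\min\{k-i,\,n-k-j\}$. By \cref{lindstrom}, $M_{i,j}$ is the sum, over all non-intersecting families of maximal paths of $\Gamma_0$ joining the sources in $I$ to the sinks in $J$, of the weights of those families. Since the sources and the sinks lie on disjoint arcs of the boundary circle in the prescribed cyclic order, any such family is automatically non-crossing, hence joins source $i+u$ to sink $j+u$ for every $u$; in particular no signs occur, and the whole statement follows once we show that there is exactly one such family and that its weight is the asserted monomial.

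For uniqueness I would treat the case $k-i\le n-k-j$ (so $r=k-i$ and $I=\{i,i+1,\dots,k\}$ is the set of \emph{all} rows from $i$ downwards), the case $k-i\ge n-k-j$ being handled by the same argument with the roles of rows and columns, and of sources and sinks, interchanged. Every path of the family enters the grid at the rightmost column of internal vertices, each at a distinct one of the vertices in rows $i,i+1,\dots,k$; since a maximal path of $\Gamma_0$ can only move left or down, a path entering in row $i+u$ with $u<r$ cannot step down in that column without colliding with the path entering in row $i+u+1$. Hence in the rightmost column every path must turn left immediately, and an easy induction peeling off one column at a time shows that the path from source $i+u$ is forced to run left along row $i+u$ until it reaches column $j+u$, where it must turn down (a further step to the left would route it to a sink of index $<j+u$) and descend to sink $j+u$. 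This is the staircase family $\{P_0,\dots,P_r\}$ with $P_u$ the concatenation ``left along row $i+u$, then down along column $j+u$'', and it is the only one.

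It then remains to compute $\prod_{u=0}^{r} w_{P_u}$. Here I would match each edge and internal vertex traversed by $P_u$ with the chamber $C_{a,b}$ on whose boundary it lies (as top edge, left edge, or top-left vertex) and use the weighting rule to decide which of them carries a weight $w_{a,b}\ne 1$. In the case $k-i\le n-k-j$ the vertical leg of $P_u$ turns out to meet no weighted edge or vertex, while its horizontal leg, running along row $i+u$ from column $n-k$ to column $j+u$, contributes precisely the factors $w_{i+u,\,c}$ for $j+u\le c\le n-2k+i+u$; taking the product over $u=0,\dots,k-i$ and substituting $c=j+u+v$ gives $\prod_{u=0}^{k-i}\prod_{v=0}^{v_{\max}} w_{i+u,\,j+u+v}$ with $v_{\max}=(n-k-j)-(k-i)$, as claimed. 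The remaining case follows from the same computation after transposing.

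I expect the third step to be the main obstacle: the placement of the weights on $\Gamma_0$ (top edge versus left edge versus corner vertex of $C_{a,b}$, governed by the sign of $(k-i)-(n-k-j)$) is delicate enough that the bookkeeping of which of the $O(n)$ edges and vertices along the staircase $P_u$ is actually weighted must be organized carefully --- most cleanly, I think, by first tabulating for each chamber which single boundary cell carries its weight, and only then reading off the chambers whose boundary $P_u$ follows. By contrast the uniqueness argument is soft: the point is simply that, because $M_{i,j}$ is a \emph{final} minor, the family is forced to use an entire block of consecutive rows (or of columns), which leaves the non-intersecting paths no room to deviate from the staircase.
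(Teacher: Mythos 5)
Your proposal is correct and follows essentially the same route as the paper: apply Lindström's Lemma, argue that non-intersection forces the unique staircase family $\{P_u\}$, and read off the weight of each $P_u$ from the placement of weights on $\Gamma_0$. The only cosmetic difference is that the paper's uniqueness argument inductively peels off whole paths from the bottom row upward, whereas you peel off columns from the right; both are the same "no choice at each step" argument.
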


The unique collection of paths is described in the proof. For an example, see
\cref{fig:explicit-minors-example}.

\begin{figure}[ht]
    \centering
    \includegraphics{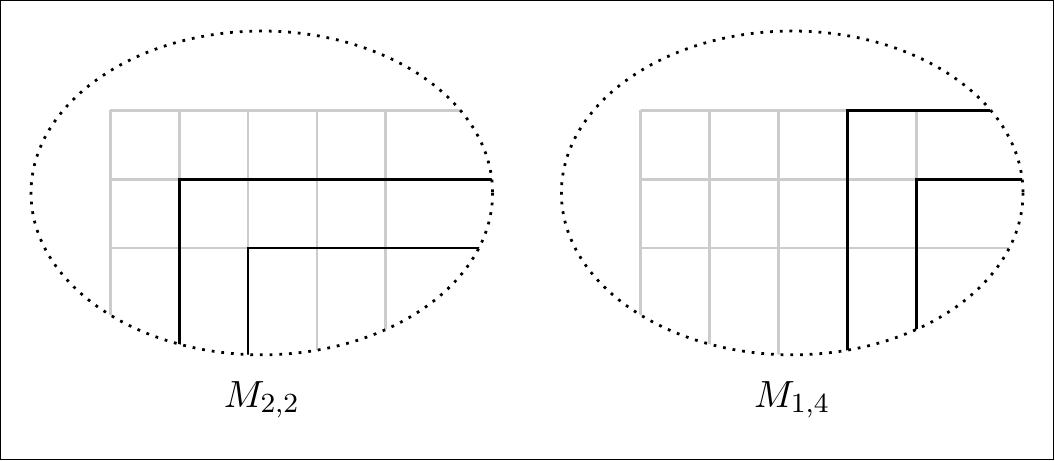}
    \caption{The unique collections of paths in $\Gamma_0$ realizing the final
    minors $M_{2,2}$ and $M_{1,4}$ in $G(3,8)$.}
    \label{fig:explicit-minors-example}
\end{figure}

\begin{proof}
We use \cref{lindstrom} to compute $M_{a,b}$. Recall that $M_{a,b} = [a \ldots
a+r | b \ldots b+r]$, where $r = \min\{k-a, n-k-b\}$. In particular, $M_{a,b}$
involves either the last $r+1$ rows (if $b \leq a+n-2k$) or the last $r+1$
columns (if $b \geq a+n-2k$). In both cases there is a unique collection of
non-intersecting paths of $\Gamma_0$ connecting the sources $\{a,\ldots,a+r\}$
with the sinks $\{b,\ldots,b+r\}$.

For example, assume $b \leq a+n-2k$. We need to connect $\{a, \ldots, k\}$ with
$\{b, \ldots, b+r\}$. There is a unique path $p_r$ in $\Gamma_0$ connecting the
source $k$ with the sink $b+r$. After removing $p_r$ from $\Gamma_0$, there is
a unique path $p_{r-1}$ from $k-1$ to $b+r-1$. Inductively, we produce the
unique collection of paths $\{p_0, \ldots, p_r\}$. To the weight of $p_r$ there
are only contributions from one marked vertex (the one in position $(k,n-k)$),
and the marked horizontal edges from $(k,n-k)$ to $(k,b+r)$. We get that
$w_{p_r} = \prod_{s=b+r}^{n-k} w_{k,s} = \prod_{v=0}^{v_{\rm max}}w_{k,b+r+v}$.
Similarly $w_{p_u} = \prod_{v=0}^{v_{\rm max}}w_{a+u,b+u+v}$, and the formula
for $M_{a,b}$ follows.

The case $b\geq a+n-2k$ is completely analogous.
\end{proof}

\subsection*{Weight exponents}

Given a plane partition $\beta = (\beta_{i,j})$ we define the \emph{weight
exponents} associated to $\beta$ as the matrix $c(\beta) = (c_{i,j})$ given by:
\begin{align*}
    c_{i,j} &= \beta_{i,j} 
    & \text{if $(k-i) = (n-k-j)$,}
\\
    c_{i,j} &= \beta_{i,j} - \beta_{i,j+1}
    & \text{if $(k-i) < (n-k-j)$,}
\\
    c_{i,j} &= \beta_{i,j} - \beta_{i+1,j}
    & \text{if $(k-i) > (n-k-j)$.}
\end{align*}
Here we use the convention that $\infty - x = \infty$. For example, in the case
of $G(3,6)$ we have:
\[
    c(\beta) =
    \begin{pmatrix}
        \beta_{1,1} & \beta_{1,2} - \beta_{2,2} & \beta_{1,3} - \beta_{2,3} \\
        \beta_{2,1} - \beta_{2,2} & \beta_{2,2} & \beta_{2,3} - \beta_{3,3} \\
        \beta_{3,1} - \beta_{3,2} & \beta_{3,2} - \beta_{3,3} & \beta_{3,3} \\
    \end{pmatrix}
\]

For a given essential weighting $w$ of $\Gamma_0$ associated to a matrix
$(w_{i,j})$ whose coefficients are power series, we define the \emph{weight
exponents} of $w$ as the matrix $c(w) = (c_{i,j})$ given by
\[
    c_{i,j} = \ord_{t} (w_{i,j}).
\]

The main goal of this section is to prove the following theorem, which
immediately concludes the proof of \cref{if-prof}.

\begin{theorem}
    \label{c-works}
    Let $\beta$ be a plane partition with associated weight exponents
    $c(\beta)$, and let $w$ be an essential weighting of\/ $\Gamma_0$ with
    weight exponents $c(w)$. If $c(\beta) = c(w)$, then the arc
    $\Lambda(\Gamma_0,w)$ belongs to the contact stratum $\mathcal C_\beta$.
\end{theorem}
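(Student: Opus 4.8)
The plan is to compute the essential contact profile of $\Lambda=\Lambda(\Gamma_0,w)$ directly and to match it with certain ``diagonal sums'' of $\beta$. Set $\sigma_{i,j}=\sum_{v\ge 0}\beta_{i+v,\,j+v}$, with the convention that $\beta_{i',j'}=0$ whenever $i'>k$ or $j'>n-k$. The goal is to prove that the essential contact profile $\alpha=(\alpha_{i,j})$ of $\Lambda$ satisfies $\alpha_{i,j}=\sigma_{i,j}$ for $1\le i\le k$ and $1\le j\le n-k$. Granting this, \cref{def:if-prof} computes the invariant factor profile of $\Lambda$ to be $\alpha_{i,j}-\alpha_{i+1,j+1}=\sigma_{i,j}-\sigma_{i+1,j+1}=\beta_{i,j}$, i.e.\ $\Lambda\in\mathcal C_\beta$, which is \cref{c-works}. (Because $\beta$ is a plane partition, every entry of $c(\beta)$ is non-negative, so the choice $w_{i,j}=t^{c_{i,j}(\beta)}$ is admissible; together with \cref{c-works} this also completes the proof of \cref{if-prof}.)

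Since $\Lambda$ lies in the opposite big cell, \cref{gens-rect} identifies $\alpha_{i,j}=\ord_\Lambda(\Omega_{(j^i)})$ with $\min_M\ord_t\big(M(X(\Gamma_0,w))\big)$, the minimum taken over the generating minors $M\le M_{i,j}$ of size $r+1$, where $r=\min\{k-i,\,n-k-j\}$. For the bound $\alpha_{i,j}\le\sigma_{i,j}$ I would use that the final minor $M_{i,j}$ is one of these generators: by \cref{explicit-minors} it evaluates on $X(\Gamma_0,w)$ to a single monomial in the $w_{i',j'}$, so $\ord_t M_{i,j}$ is the sum of the weight exponents $c_{i',j'}(\beta)$ over the positions appearing in that product. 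By the definition of $c(\beta)$, along each row (resp.\ column) of the product the off-diagonal exponents are consecutive first differences of $\beta$ while the final one is an on-diagonal value of $\beta$, so the sum telescopes to a single term $\beta_{i+v,\,j+v}$; adding these over $v$ yields exactly $\sigma_{i,j}$.

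The substance of the argument is the reverse inequality $\alpha_{i,j}\ge\sigma_{i,j}$, equivalently $\ord_t M\ge\sigma_{i,j}$ for \emph{every} generator $M=[I\,|\,J]$. By Lindström's Lemma (\cref{lindstrom}), $M$ is a sum of weights $w_P$ of non-intersecting path collections $P$ joining the sources in $I$ to the sinks in $J$, and $\ord_t w_P$ equals the sum of the weight exponents $c_{i',j'}(\beta)$ over the marked edges and vertices traversed by $P$; since the $t$-order of a sum is at least the minimum of the $t$-orders of the summands, it is enough to bound $\ell(P):=\ord_t w_P$ from below for each such $P$. Writing $I=\{i_1<\dots<i_{r+1}\}$ and $J=\{j_1<\dots<j_{r+1}\}$, the condition $[I\,|\,J]\le M_{i,j}$ reads $i_u\le i+u-1$ and $j_u\le j+u-1$ for all $u$ (\cref{eq:minor-order}), and non-intersection forces $P=\{p_1,\dots,p_{r+1}\}$ with $p_u\colon i_u\to j_u$. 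Thus everything reduces to the combinatorial claim that for every such collection $\ell(P)=\sum_u\ell(p_u)\ge\sum_{v=0}^{r}\beta_{i+v,\,j+v}$.

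This claim is the main obstacle, and I expect it to require a separate lemma inserted between \cref{explicit-minors} and \cref{c-works}. It is \emph{not} true that each individual path satisfies $\ell(p_u)\ge\beta_{i_u,j_u}$ — an off-diagonal weight exponent can be arbitrarily large, so an isolated ``low'' path can be cheap — so the bound genuinely uses the interplay of three things: the non-intersection of the paths, the fact that the collection realizes a generator of $\mathcal I_{(j^i)}$, and the ``diagonal'' placement of weights on $\Gamma_0$. The approach I would take is inductive, in the spirit of the proof of \cref{explicit-minors}: isolate the extreme (say bottom-most) path of $P$, bound its weight-order from below by a telescoping estimate, delete it from $\Gamma_0$, check that the remaining paths form a non-intersecting collection realizing a generator of the analogous ideal on a smaller grid (with weighting again of plane-partition type), and induct on $r$. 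The delicate parts — and the technical heart of the section — are the telescoping estimate for the peeled path and the verification that deletion produces a network and weighting of the same shape. Once the lemma is in place, combining it with the upper bound gives $\alpha_{i,j}=\sigma_{i,j}$ for all $1\le i\le k$ and $1\le j\le n-k$ (the cases in which a relevant index leaves the $k\times(n-k)$ rectangle being absorbed by the conventions on $\beta$ and on minors of excessive size), whence \cref{c-works}.
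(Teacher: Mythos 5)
Your overall architecture is sound and in fact matches the paper's: reduce to showing that the essential contact profile $\alpha_{i,j}$ of $\Lambda(\Gamma_0,w)$ equals the diagonal sum $\sigma_{i,j}=\sum_{v\ge 0}\beta_{i+v,j+v}$; get the upper bound from the final minor via \cref{explicit-minors} and telescoping of the weight exponents (this is exactly \cref{beta-final}); and deal with the lower bound via Lindström's Lemma. You also correctly diagnose that the lower bound is the substance, and that a separate lemma is needed between \cref{explicit-minors} and \cref{c-works}.

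Where the proposal diverges, and where there is a genuine gap, is in the strategy for that lemma. You propose to prove $\sum_u\ell(p_u)\ge\sigma_{i,j}$ directly by peeling off an extreme path and inducting on a ``smaller grid,'' but you do not establish (and I do not think it is true without substantial extra work) that the residual graph-plus-weighting has the same shape; after deleting a path, the available region is not a sub-grid of $\Gamma_0$ and the remaining weights are not obviously of plane-partition type, so the inductive hypothesis does not apply cleanly. You flag this yourself as ``the delicate parts,'' but that is precisely the gap. The paper avoids the recursion entirely, via \cref{minor-projections}: it interposes between $M$ and $M_{i,j}$ the final minors $M'=M_{k-r,\,b_r-r}$ and $M''=M_{a_r-r,\,n-k-r}$ (so that $M\le M'\le M_{i,j}$ or $M\le M''\le M_{i,j}$, depending on which side of the diagonal $(i,j)$ lies), and then compares the path collection realizing a term of $M$ pathwise against the explicit unique collection realizing $M'$ (or $M''$). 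The per-path estimate is a one-shot north-west-corner telescoping argument using that each $p_s$ lies to the north-west of the reference path $q_s$ (this is forced by non-intersection and the fact that $M\le M'$); no recursion on a modified grid is needed. Finally $\ord_t(M')\ge\ord_t(M_{i,j})$ is immediate from \cref{beta-final} and the monotonicity of the plane partition, since $M'$ and $M_{i,j}$ are both final minors whose diagonals can be compared entrywise. Your observation that a naive per-path bound of the form $\ell(p_u)\ge\beta_{i_u,j_u}$ fails is correct, but the paper's per-path bound is against $\beta$ evaluated at the diagonal of $M'$ rather than at $(i_u,j_u)$, and that version does hold. So the missing ingredient in your proposal is precisely this intermediate projection to a final minor and the accompanying monotone comparison of path collections, which replaces your unresolved induction.
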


\begin{figure}[ht]
    \centering
    \includegraphics{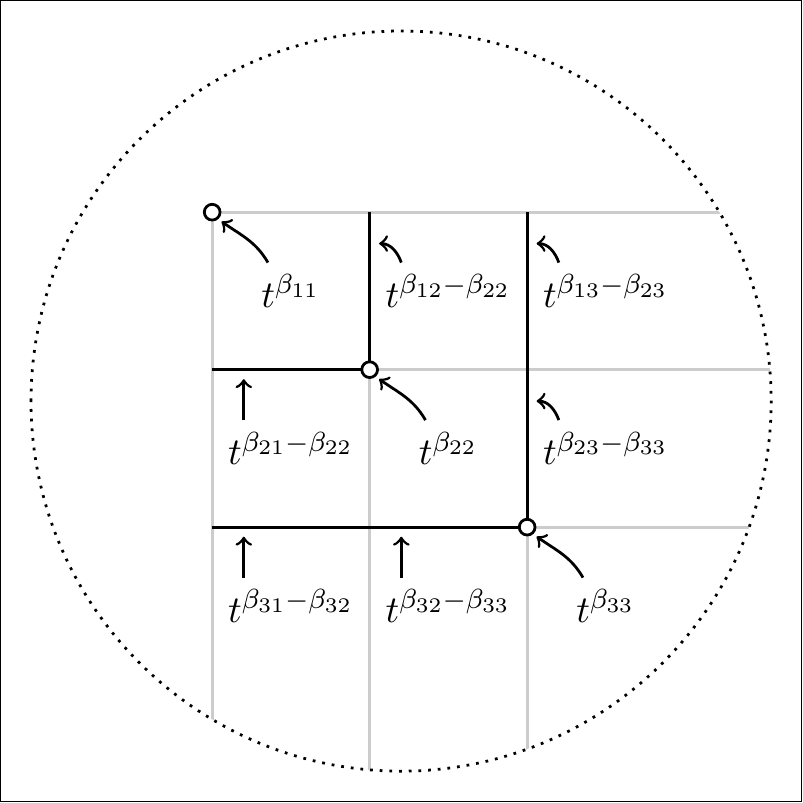}
    \caption{The weighted network $(\Gamma_0,t^{c_{i,j}})$ for $G(3,6)$.}
    \label{fig:network-g36}
\end{figure}

\begin{figure}[ht]
    \centering
    \includegraphics{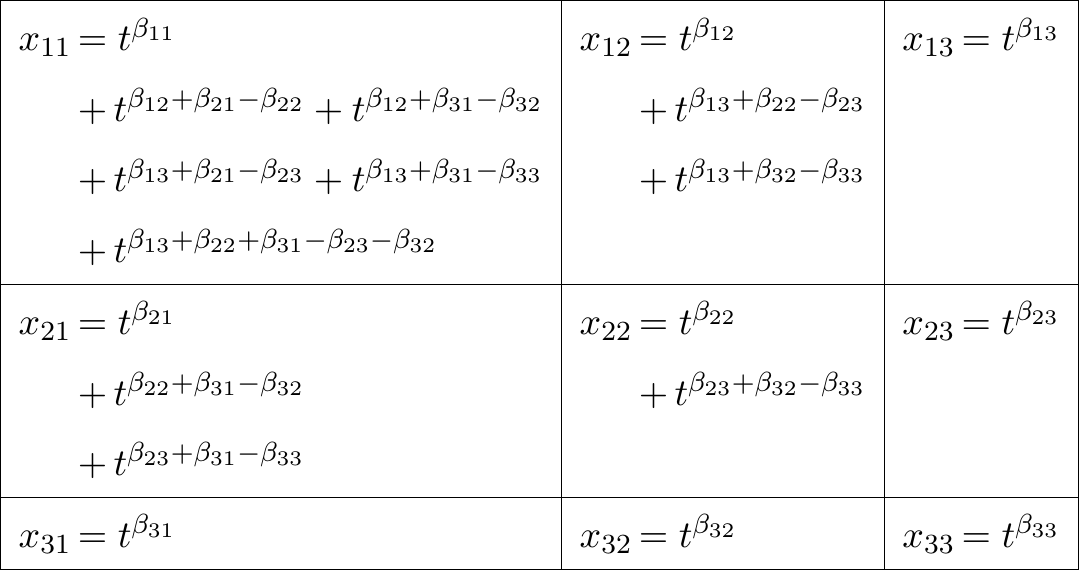}
    \caption{The matrix $X(\Gamma_0,t^{c_{i,j}})$ in $G(3,6)$ for a generic
    plane partition $\beta = (\beta_{i,j})$. It gives an arc in the contact
    stratum $\mathcal C_\beta$.}
    \label{fig:generic-g36}
\end{figure}

The simplest arc provided by \cref{c-works} is obtained by setting $w_{ij} =
t^{c_{i,j}}$. For example, the weighted network $(\Gamma_0,t^{c_{i,j}})$ for
$G(3,6)$ is given in \cref{fig:network-g36}, and the matrix
$X(\Gamma_0,t^{c_{i,j}}) = (x_{i,j})$ is given in \cref{fig:generic-g36}.
Notice how the entries of the resulting matrix have many terms. In some cases,
it is possible to exhibit simpler arcs in a contact stratum, in the sense that
their matrices are more sparse, have more zeros. For example, for the plane
partition
\[
    \beta = 
    \begin{pmatrix}
        2 & 2 \\
        2 & 1 \\
    \end{pmatrix}
\]
our method constructs the arc
\[
    \begin{pmatrix}
        t^2 + t^3 & t^2 & 0 & 1 \\ 
        t^2       & t   & 1 & 0 \\ 
    \end{pmatrix},
\]
but the following arc also belongs to $\mathcal C_\beta$:
\[
    \begin{pmatrix}
        t^2 & 0 & 0 & 1 \\ 
        0   & t & 1 & 0 \\ 
    \end{pmatrix}.
\]
In general, it seems hard to give an algorithm producing sparse examples.

We proceed now to prove \cref{c-works}. For the rest of the section we assume
the hypotheses of the \lcnamecref{c-works}: we fix a plane partition $\beta$
and an essential weighting $w$ with $c(\beta) = c(w)$. We start with the
following easy statement, which in fact motivated our definition of $c(\beta)$.

\begin{lemma}
    \label{beta-final}
    Let $M_{i,j}$ be a final minor of $X(\Gamma_0,w)$, and set
    $r=\min\{k-i,n-k-j\}$ Then:
    \[
        \ord_{t}(M_{i,j}) = 
        \beta_{i,j} + \beta_{i+1,j+1} + \cdots + \beta_{i+r,j+r}.
    \]
\end{lemma}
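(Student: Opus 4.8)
The plan is to combine the explicit product formula for final minors from \cref{explicit-minors} with the definition of the weight exponents $c(\beta)$ and the hypothesis $c(w) = c(\beta)$. First I would reduce to a bookkeeping identity: since $\ord_t(w_{i,j}) = c_{i,j}$ and the weight of a collection of paths is the product of the weights of its edges and vertices, \cref{explicit-minors} gives that $\ord_t(M_{i,j})$ is the sum of the $c_{i,j}$'s over exactly the index set appearing in that product formula (one must note that no cancellation occurs because, by \cref{explicit-minors}, there is a \emph{unique} collection of non-intersecting paths realizing $M_{i,j}$, so $M_{i,j}$ is a single monomial in the $w_{i,j}$, hence $\ord_t$ is literally additive). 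So the claim becomes the purely combinatorial statement
\[
    \sum_{(u,v) \in S_{i,j}} c_{u,v}
    \;=\;
    \beta_{i,j} + \beta_{i+1,j+1} + \cdots + \beta_{i+r,j+r},
\]
where $S_{i,j}$ is the index set from \cref{explicit-minors} (the ``hook-shaped'' region: in the case $k-i \leq n-k-j$ it is $\{\,(i+u,\,j+u+v) : 0 \leq u \leq k-i,\ 0 \leq v \leq v_{\max}\,\}$, and symmetrically in the other case).

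The second step is to prove this combinatorial identity. I would treat the two cases of \cref{explicit-minors} separately but symmetrically; take $k-i \leq n-k-j$, so $r = k-i$. The region $S_{i,j}$ decomposes by rows: for each $u$ with $0 \leq u \leq r$, row $i+u$ contributes the entries $c_{i+u,\,j+u},\, c_{i+u,\,j+u+1}, \ldots, c_{i+u,\,n-k}$ (the upper limit being $j+u+v_{\max} = n-k$). The key observation is that along row $i+u$, starting from column $j+u$ and moving right to column $n-k$, the weight exponents are \emph{consecutive differences} of the $\beta$'s in that row: indeed, for a fixed row $a := i+u$, as long as $k-a < n-k-j'$ we have $c_{a,j'} = \beta_{a,j'} - \beta_{a,j'+1}$, and exactly when $k-a = n-k-j'$ (i.e.\ at the anti-diagonal entry, which occurs at $j' = j+u$ precisely because $k-i = n-k-j$ forces $k-(i+u) = n-k-(j+u)$) we have $c_{a,j'} = \beta_{a,j'}$. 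Therefore the telescoping sum
\[
    \sum_{j'=j+u}^{n-k} c_{i+u,\,j'}
    = \beta_{i+u,\,j+u}
\]
holds, since all the difference terms $\beta_{a,j'}-\beta_{a,j'+1}$ for $j' > j+u$ telescope down to $\beta_{a,n-k+1} = 0$ by our boundary convention, and adding the anti-diagonal term $\beta_{a,j+u}$ rebuilds the head of the telescope (one checks the two subcases $k-a > n-k-j'$ versus $k-a \le n-k-j'$ meet correctly at the anti-diagonal). Summing over $u = 0, \ldots, r$ gives exactly $\beta_{i,j} + \beta_{i+1,j+1} + \cdots + \beta_{i+r,j+r}$, as desired. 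The case $k-i \geq n-k-j$ is identical with rows and columns exchanged, using the column-difference clause $c_{i,j} = \beta_{i,j} - \beta_{i+1,j}$.

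The only genuinely delicate point — and the step I expect to require the most care — is handling the anti-diagonal correctly: one must verify that along the relevant row (or column) of $S_{i,j}$ the weight-exponent formula switches from the ``difference'' form to the ``anti-diagonal'' form at exactly the leftmost (resp.\ topmost) entry of that row's (column's) portion of $S_{i,j}$, and that the hypothesis $k-i \le n-k-j$ versus $k-i \ge n-k-j$ is precisely what guarantees this. Once the indices are lined up, the telescoping is routine, and the lemma follows. A small figure tracing one hook and its telescoping (similar to \cref{fig:explicit-minors-example}) would make the argument transparent, but is not strictly needed.
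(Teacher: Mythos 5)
Your overall strategy — use the fact from \cref{explicit-minors} that $M_{i,j}$ is a single monomial in the $w_{a,b}$ to reduce the lemma to a telescoping identity in the weight exponents $c_{a,b}$, then sum row by row — is exactly the argument the paper has in mind (the paper declares the lemma ``an immediate consequence'' of \cref{explicit-minors}). However, your bookkeeping contains a genuine error that breaks the telescoping as you present it. In the case $(k-i) \leq (n-k-j)$ you claim the columns in row $a = i+u$ run from $j+u$ up to $n-k$, with the anti-diagonal of the grid sitting at the \emph{left} end $j' = j+u$. Both claims are wrong. The product formula of \cref{explicit-minors} gives columns $j+u+v$ for $0 \leq v \leq v_{\max}$, so the range is $[\,j+u,\; j+u+v_{\max}\,]$, and $j+u+v_{\max} = (n-k)-(k-i)+u$ equals $n-k$ only when $u = k-i$, i.e.\ only in the last row. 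Moreover the anti-diagonal in row $a$ is at column $n-2k+a = j+u+v_{\max}$, the \emph{right} end of the range; it coincides with $j+u$ only in the degenerate boundary case $k-i = n-k-j$, which is precisely what you invoke, but which is not the general situation. If one takes your parameterization at face value, the identity $\sum_{j'=j+u}^{n-k} c_{a,j'} = \beta_{a,j+u}$ is actually false: the terms with $j'$ strictly past the anti-diagonal satisfy $k-a > n-k-j'$ and hence carry the \emph{column}-difference formula $c_{a,j'} = \beta_{a,j'} - \beta_{a+1,j'}$, which does not telescope horizontally and leaves an uncancelled surplus.

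The repaired version of your argument is this. For row $a=i+u$ the columns $j'$ in the hook run only from $j+u$ to the anti-diagonal column $j_0 := n-2k+a = j+u+v_{\max}$. For every $j'$ in this range with $j' < j_0$ one has $k-a < n-k-j'$, hence $c_{a,j'} = \beta_{a,j'} - \beta_{a,j'+1}$, while at $j'=j_0$ one has $c_{a,j_0}=\beta_{a,j_0}$. Therefore
\[
\sum_{j'=j+u}^{j_0} c_{a,j'}
\;=\;
\bigl(\beta_{a,j+u} - \beta_{a,j_0}\bigr) + \beta_{a,j_0}
\;=\;
\beta_{a,j+u}
\;=\;
\beta_{i+u,\,j+u},
\]
with the anti-diagonal entry closing the telescope; no boundary convention at column $n-k+1$ is used or needed. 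Summing over $u=0,\dots,r$ gives the lemma, and the case $(k-i)\ge(n-k-j)$ is the transpose, telescoping vertically up to the anti-diagonal row. So the plan is sound and essentially the paper's intended argument, but the column range and the location of the anti-diagonal must be corrected for it to go through.
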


\begin{proof}
This is an immediate consequence of \cref{explicit-minors}.
\end{proof}

Let $M = [a_0 \ldots a_r|b_0 \ldots b_r]$ be a minor of $X(\Gamma_0,w)$ of size
$r+1$. We define
\[
    M' = M_{k-r,b_{r}-r} = [k-r \ldots k | b_r-r \ldots b_r]
\]
and
\[
    M'' = M_{a_r-r,n-k-r} = [a_r-r \ldots a_r| n-k-r \ldots n-k].
\]
Notice that both $M'$ and $M''$ are final minors, and that $M \leq M'$ and $M
\leq M''$ (according to the order on minors introduced in \cref{sec:grass}).

\begin{lemma}
    \label{minor-projections}
    Let $M$, $M'$, and $M''$ be as above. Then:
    \[
        \ord_t (M) \geq \ord_t(M')
        \qquad\text{and}\qquad
        \ord_t (M) \geq \ord_t(M'').
    \]
\end{lemma}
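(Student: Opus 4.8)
The plan is to translate the statement into a claim about non-intersecting path collections in $\Gamma_0$ and then prove it by path surgery. First I would apply \cref{lindstrom}: $M$ is the sum, over all collections $P$ of non-intersecting paths joining the sources $\{a_0,\dots,a_r\}$ to the sinks $\{b_0,\dots,b_r\}$, of the weights $w_P$. Since $\Gamma_0$ is planar and all its sources and all its sinks lie on the boundary circle in convex position, every admissible $P$ realizes the one non-crossing matching between $\{a_0,\dots,a_r\}$ and $\{b_0,\dots,b_r\}$, so every sign in Lindström's formula is $+$; moreover each $w_P$ is a monomial in the $w_{i,j}$, because a chamber's weight is carried by a single edge or vertex, which is used at most once by the vertex-disjoint paths of $P$. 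Writing $R_P$ for the set of chambers whose weighted edge/vertex is traversed by $P$, and $c=c(w)=c(\beta)$, we get $\ord_t w_P=\sum_{(i,j)\in R_P}c_{i,j}$, and hence $\ord_t M\ge \min_P \ord_t w_P$ (possibly strict if lowest-order terms cancel, which is harmless for us). By \cref{explicit-minors} the final minor $M'$ has a \emph{unique} admissible collection $P'$, so $\ord_t M'=\sum_{(i,j)\in R_{P'}}c_{i,j}$ exactly. Finally, all $c_{i,j}\ge 0$, since $\beta$ is a plane partition.

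It therefore suffices to show $\sum_{(i,j)\in R_P}c_{i,j}\ge\sum_{(i,j)\in R_{P'}}c_{i,j}$ for every admissible $P$. Because $M\le M'$ in the order on minors (\cref{eq:minor-order}), we have $a_u\le k-r+u$ and $b_u\le b_r-r+u$ for all $u$, so $P'$ sits ``in the south-east corner'' relative to $P$: its sources occupy the bottom block of rows and its sinks the block $\{b_r-r,\dots,b_r\}$, with the rightmost sink $b_r$ unchanged. I would transform $P$ into $P'$ by a finite sequence of elementary corner-ward moves, each rerouting a single path so that it starts one row lower, or ends one column further to the right, while keeping the family non-intersecting (the other paths leave the needed corridor free). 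The crux is the local estimate that such a move changes $\sum_{(i,j)\in R}c_{i,j}$ by a nonpositive amount: a path pushed toward the corner trades the weighted chambers it used for chambers lying weakly to the south-east, and the placement of the weights along the staircase — on the top edge strictly north-west of the anti-diagonal, on the left edge strictly south-east of it, on the top-left vertex on it — together with the plane-partition inequalities for $\beta$ (equivalently $c(\beta)\ge 0$ and the corner inequalities defining $c(\beta)$) forces the accumulated exponent not to increase. Iterating lands on $P'$, giving $\sum_{R_P}c\ge\sum_{R_{P'}}c=\ord_t M'$, which is the first inequality.

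The inequality $\ord_t M\ge\ord_t M''$ follows by repeating the same surgery with the roles of rows and columns interchanged — now pushing the sinks toward the last block of columns $\{n-k-r,\dots,n-k\}$ and the sources toward the bottom block of rows ending at $a_r$. (This is also the manifestation of the symmetry that reflects the grid of $\Gamma_0$ across its anti-diagonal, which is the planar network for $G(n-k,n)$, exchanges $k$ with $n-k$, and carries $M'$ to $M''$.)

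The step I expect to be the real obstacle is the local estimate in the second paragraph: checking, with the precise chamber conventions of $\Gamma_0$, that a corner-ward reroute of one path never raises the total weight exponent. This requires a case analysis according to whether the exchanged chambers lie north-west of, on, or south-east of the anti-diagonal, and it is exactly here that the hypothesis that $\beta$ is a plane partition is used. Everything else — Lindström together with planarity for positivity, the reduction to the unique collection $P'$, and the transpose symmetry — is routine.
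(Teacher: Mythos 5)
Your high-level framework is the same as the paper's: invoke Lindström's lemma to expand $M$ as a sum of monomials indexed by non-intersecting path collections, use \cref{explicit-minors} to identify the unique collection $P'$ computing $M'$, and reduce to showing that the weight of any admissible collection $P$ for $M$ has $t$-order at least $\ord_t(M')$. The observation that each $w_P$ is a monomial in the $w_{i,j}$ with exponent set $R_P$, and that $\ord_t(w_P)=\sum_{(i,j)\in R_P} c_{i,j}$, is correct, and the symmetry handling $M''$ is fine.

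Where you diverge from the paper — and where the gap is — is the core estimate. You propose to morph $P$ into $P'$ through a sequence of ``corner-ward'' reroutes and then argue that each elementary move does not increase $\sum_{(i,j)\in R}c_{i,j}$; you explicitly flag this local estimate as ``the real obstacle'' and do not prove it. As stated this is a genuine gap, and it is not a small one: the moves you describe change the source and sink of a path (since $M$ and $M'$ have different row and column sets), so the ``elementary move'' is itself nontrivial to formalize, and it is not obvious that a single SE-ward swap of chambers decreases the accumulated $c$-exponent, because $c_{i,j}$ is not monotone in the SE direction (it is a difference of $\beta$'s away from the anti-diagonal). What actually makes the inequality true is a telescoping phenomenon along each path, not a chamber-by-chamber comparison.

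The paper's proof avoids this entirely. It pairs the $s$-th path $p_s$ of $P$ (from source $a_s$) with the $s$-th path $q_s$ of $P'$ (from source $k-r+s$, hugging the SE corner) and proves the stronger per-path inequality $\ord_t(w_{p_s}) \geq \ord_t(w_{q_s})$ directly. The mechanism is: cut $p_s$ at its unique weighted (anti-diagonal) vertex, list the north-west corners $v_{i_1,j_1},\dots,v_{i_\ell,j_\ell}$ of the resulting tail, observe that
\[
\ord_t(w_{\tilde p_s}) = \beta_{i_1,j_1} + (\beta_{i_2,j_2}-\beta_{i_2,j_1}) + \cdots + (\beta_{i_\ell,j_\ell}-\beta_{i_\ell,j_{\ell-1}}) \geq \beta_{i_\ell,j_\ell},
\]
by the plane-partition inequalities, and then compare the last NW corner of $p_s$ with the single NW corner of $q_s$ using planarity (non-intersection forces $p_s$ to lie weakly NW of $q_s$). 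This one telescoping step is exactly the content your unverified local estimate would have to capture, but the paper gets it in a single shot without constructing a chain of intermediate path collections or proving admissibility/non-intersection is preserved along the way. If you want to salvage your surgery approach you would need to prove a precise version of the local estimate, and the argument you'd end up writing would essentially re-derive this telescoping bound; the direct per-path comparison is simply cleaner.
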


\begin{proof}
We focus on the first inequality, the proof of the second one is analogous.
Using \cref{lindstrom} we get an expansion
\[
    M = \sum w_{p_0} \cdots w_{p_r},
\]
the sum ranging among all non-intersecting collections of paths
$\{p_0,\ldots,p_r\}$ where $p_s$ connects the source $a_s$ with the sink $b_s$.
From \cref{explicit-minors} we know that
\[
    M' = w_{q_0} \cdots w_{q_r},
\]
where $q_s$ is the path that starts at the source $k-r+s$, then moves
horizontally to the left until the column $b_r-s$, and then moves vertically
down until the sink $b_r-s$. Observe that $\ord_t(w_{q_s}) =
\beta_{k-r+s,b_r-s}$. The \lcnamecref{minor-projections} will follow if we show
that $\ord_t(w_{p_s}) \geq \ord_t(w_{q_s})$ for all the possible collections of
paths $\{p_s\}$ and all $s$.

Fix one such collection $\{p_s\}$, and $0 \leq s \leq r$. In $\Gamma_0$, the
vertices that have weights are disposed along a diagonal, in such a way that
any maximal path must pass though one of them. Let $v^0$ be the (only) weighted
vertex in the path $p_s$, and let $\tilde p_s$ be the final part of the path
$p_s$ that connects $v^0$ with the sink $b_s$. Let $NW$ be the collection of
north-west corners of $\tilde p_s$; these are the vertices $v$ of $\tilde p_s$
for which no other vertex of $\tilde p_s$ is immediately above or immediately
to the left of $v$. We write $NW = \{v_{i_1,j_1}, \ldots v_{i_\ell,j_\ell}\}$,
where the vertices are ordered using the orientation of $\tilde p_s$, and
$v_{i,j}$ denotes the vertex of $\Gamma_0$ in the grid position $(i,j)$.
Observe that it could happen that $v^0 = v_{i_1,j_1}$. From the definitions it
follows that
\[
    \ord_t(w_{\tilde p_s})
    =
    \beta_{i_1,j_1}
    + \beta_{i_2,j_2} - \beta_{i_2,j_1}
    + \cdots
    + \beta_{i_\ell,j_\ell} - \beta_{i_\ell,j_{(\ell-1)}},
\]
and, using the fact that $\beta$ is a plane partition,
\[
    \ord_t(w_{\tilde p_s})
    \geq
    \beta_{i_\ell,j_\ell}.
\]

By construction, the path $p_s$ must be above the path $q_s$, and therefore the
lowest north-west corner of $p_s$ (with is $v_{i_\ell,j_\ell}$) must be to the
north-west of the (only) north-west corner of $q_s$ (which is
$v_{k-r+s,b_r-s}$). In particular
\[
    i_\ell \leq k-r+s
    \qquad\text{and}\qquad
    j_\ell \leq b_r-s.
\]
Using again that $\beta$ is a plane partition we see that
\[
    \ord_t(w_{p_s})
    \geq
    \ord_t(w_{\tilde p_s})
    \geq
    \beta_{i_\ell,j_\ell} 
    \geq 
    \beta_{k-r+s,b_r-s}
    =
    \ord_t(w_{q_s}),
\]
as required.
\end{proof}

\begin{proof}[Proof of \cref{c-works}]
The \lcnamecref{c-works} follows if we show that
\[
    \ord_\Lambda(\Omega_{j^i}) = \ord_t(M_{i,j}),
\]
where $\Lambda = \Lambda(\Gamma_0,w)$ and $M_{i,j}$ is a final minor of
$X(\Gamma_0,w)$. From \cref{gens-rect} it is enough to show that
\begin{equation}
    \label{eq:c-works-goal}
    \ord_t(M) \geq \ord_t(M_{i,j}),
\end{equation}
for all minors $M$ such that $M \leq M_{i,j}$. Consider $M'$ and $M''$ as
\cref{minor-projections}. Then, when $(k-i) \leq (n-k-j)$ we have that $M \leq M'
\leq M_{i,j}$, and when $(k-i) \geq (n-k-j)$ we have $M \leq M'' \leq M_{i,j}$.
Therefore, using \cref{minor-projections}, we see that it is enough to prove
\cref{eq:c-works-goal} in the case where $M$ is a final minor. But this case is
a consequence of \cref{beta-final} and the fact that $\beta$ is a plane
partition.
\end{proof}

\section{Schubert valuations}

\label{sec:valuations}

The main goal of this section is to prove the following statement.

\begin{theorem}
    \label{irred}
    Every contact stratum $\mathcal C_\beta$ is an irreducible subset of
    $J_\infty G(k,n)$.
\end{theorem}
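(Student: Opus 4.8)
The plan is to move everything into the opposite big cell $\mathcal U$, where $\mathcal C_\beta$ becomes an explicit locus of matrices over $\psr$, and then to realize a generic point of that locus by means of the network $\Gamma_0$ of \cref{sec:planar-networks}. First, the reduction to $\mathcal U$. Since Schubert varieties are $B$-stable, the right action of $J_\infty B=B(\psr)$ on $J_\infty G(k,n)$ preserves all orders of contact, hence preserves contact profiles, and so each $\mathcal C_\beta$ is $B(\psr)$-stable. On the other hand, as in the proof of \cref{schubert-cond-lattices}, a $B(\psr)$-translate of any arc can be taken to lie in $\mathcal U$: one has $\Omega^\circ_I\cap\mathcal U\neq\emptyset$ for every $I$, so already a constant $B$-translate moves the center into $\mathcal U$. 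Therefore $\mathcal C_\beta=B(\psr)\cdot(\mathcal C_\beta\cap\mathcal U)$, i.e.\ $\mathcal C_\beta$ is the image of $J_\infty B\times(\mathcal C_\beta\cap\mathcal U)$ under the action morphism; since $J_\infty B$ is integral (an inverse limit of affine bundles over the smooth connected group $B$), it suffices to prove that $\mathcal C_\beta\cap\mathcal U$ is irreducible.

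To do this I would parametrize $\mathcal C_\beta\cap\mathcal U$ using $\Gamma_0$. Identify $J_\infty\mathcal U$ with the integral scheme $J_\infty\mathbb A^{k(n-k)}$ of $k\times(n-k)$ matrices $X$ over $\psr$; by \cref{gens-rect}, $\mathcal C_\beta\cap\mathcal U$ is the set of those $X$ for which, for every $(i,j)$, the minimal order among the minors $M\le M_{i,j}$ of $X$ equals $\alpha_{i,j}=\sum_{\ell\ge 0}\beta_{i+\ell,\,j+\ell}$. Let $P_\beta$ be the scheme of essential weightings $w$ of $\Gamma_0$ with weight exponents $c(w)=c(\beta)$: writing $w_{i,j}=t^{c_{i,j}}\sum_{\ell\ge 0}a_{i,j,\ell}t^{\ell}$ with $a_{i,j,0}$ invertible when $c_{i,j}<\infty$, and $w_{i,j}=0$ when $c_{i,j}=\infty$, one gets that $P_\beta$ is a product of copies of $\mathbb G_m\times\mathbb A^{\mathbb N}$ (where $\mathbb A^{\mathbb N}$ is the spectrum of a polynomial ring in countably many variables), hence integral. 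The assignment $w\mapsto\Lambda(\Gamma_0,w)$ is a morphism $\psi\colon P_\beta\to J_\infty\mathcal U$ (the entries of $X(\Gamma_0,w)$ are polynomials in the $w_{i,j}$), and by \cref{c-works} its image is contained in $\mathcal C_\beta\cap\mathcal U$. Since $\psi(P_\beta)$ is then irreducible, the proof is reduced to showing that $\psi$ is dominant, that is, $\mathcal C_\beta\cap\mathcal U\subseteq\overline{\psi(P_\beta)}$.

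The dominance of $\psi$ is the step I expect to be the main obstacle, and it is where the combinatorics of $\Gamma_0$ is genuinely needed: one must show that a generic matrix $X$ with the prescribed orders of minors is a generization of some $X(\Gamma_0,w)$ with $c(w)=c(\beta)$ — in other words, one must invert the Lindström dictionary. For a generic such $X$ the final minors $M_{i,j}$ have order exactly $\alpha_{i,j}$ (the order is $\ge\alpha_{i,j}$ by \cref{gens-rect}, and generically it is an equality), so the product formula of \cref{explicit-minors} pins down the orders of the sought weights to be $c(\beta)$, and the higher coefficients of the $w_{i,j}$ can then be solved for recursively from the entries and minors of $X$ — exactly as in the case $\beta=0$, where $\psi$ is already birational onto $\operatorname{Mat}_{k\times(n-k)}(\psr)$. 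Carrying out this recursive inversion uniformly, and checking that the weighting it produces really lies in $P_\beta$, is the technical heart of the matter; I would organize it as an induction on the number of boxes of $\beta$, peeling off one diagonal layer of $\Gamma_0$ at a time and using \cref{lindstrom,explicit-minors,beta-final,minor-projections} to keep track of which non-intersecting path systems contribute to each minor.
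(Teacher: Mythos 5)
Your reduction to $\mathcal U$ and your choice of $\psi$ are essentially the paper's $\Phi_\beta$, and your observation that the crux is ``inverting the Lindström dictionary'' does correspond to a real piece of the paper's argument (Lemma~\ref{precise-irred}, part~\ref{precise-irred:1}, which is proved by exactly the sort of recursive unravelling of minors you describe). So the ingredients are right. But there is a genuine gap at the dominance step, and it is precisely where you flag trouble.

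You assert that ``for a generic such $X$'' in $\mathcal C_\beta\cap\mathcal U$ the final minors $M_{i,j}$ have order exactly $\alpha_{i,j}$. This is not something you can read off from \cref{gens-rect}: by definition $\alpha_{i,j}=\min_{M\le M_{i,j}}\ord(M)$, and nothing forces the minimum to be attained at the final minor $M_{i,j}$ itself. The phrase ``generically'' is doing all the work, and without already knowing that $\mathcal C_\beta\cap\mathcal U$ is irreducible you cannot speak of \emph{the} generic $X$; a priori some irreducible component could consist entirely of arcs whose final minors overshoot, and such a component would sit outside $\overline{\psi(P_\beta)}$, breaking the argument. Your recursive inversion only applies to arcs that already satisfy $\ord_\Lambda(\Omega_{(j^i)})=\ord_\Lambda(M_{i,j})$ for all $(i,j)$, and you have not shown that the rest of $\mathcal C_\beta\cap\mathcal U$ lies in the closure of those.

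The paper closes exactly this gap with the Borel action, and it is worth seeing why it is unavoidable. The right action of $b\in B$ replaces the final minor $M_{i,j}$ of $\Lambda$ by a linear combination, with coefficients polynomial in $b$, of \emph{all} minors $\tilde M\le M_{i,j}$; for generic $b$ all these minors occur with nonzero coefficient, so $\ord_{\Lambda b}(M_{i,j})$ drops to the minimum $\alpha_{i,j}$. Thus \emph{every} $\Lambda\in\mathcal C_\beta$ has some $B$-translate landing in the image of $\Phi_\beta$ (Lemma~\ref{precise-irred}, part~\ref{precise-irred:2}). The paper then never needs your dominance claim: it considers
\[
    \Psi_\beta\colon B\times J_\infty(\mathbb C^\times)^{k(n-k)}\longrightarrow J_\infty G(k,n),
    \qquad (b,u)\mapsto \Lambda(\Gamma_0,w(u,\beta))\cdot b,
\]
whose image is \emph{exactly} $\mathcal C_\beta$ (not just dense), and concludes by irreducibility of the source. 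You already invoke the $B$-action to reduce to $\mathcal U$; the fix to your proof is to also use it at the dominance step, e.g.\ by observing that for any $\Lambda\in\mathcal C_\beta\cap\mathcal U$ a generic $B$-translate stays in $\mathcal U$, has the final-minor property, and specialises to $\Lambda$ at $b=e$, so $\Lambda\in\overline{\psi(P_\beta)}$ after all. Without that additional observation the sketch as written does not go through.
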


\begin{definition}
    From \cref{irred} it follows that the closure $\overline{\mathcal C}_\beta$
    is the maximal arc set in $J_\infty G(k,n)$ associated to the
    semi-valuation $\ord_\beta$. These semi-valuations are called
    \emph{Schubert semi-valuations}.
\end{definition}

Observe that $\mathcal C_\beta$ is a contact locus precisely when $\beta$ is a
plane partition with finite height (no infinities allowed). In this case
$\ord_\beta$ is a valuation (and not just a semi-valuation). As we will see,
Schubert valuations are the most relevant from the point of view of the study
of the singularities of Schubert varieties. 

For a Schubert semi-valuation $\ord_\beta$, we can easily determine its home
and its center form the plane partition $\beta$. We let $\beta^1$ be the base
of $\beta$: the linear partition whose diagram contains the positions $(i,j)$
where $\beta_{i,j} \geq 1$. Analogously, $\beta^\infty$ is the partition
corresponding to the condition $\beta_{i,j} = \infty$. Then the home of
$\ord_\beta$ is the Schubert variety $\Omega_{\beta^\infty}$, and its center is
$\Omega_{\beta^1}$.

To prove \cref{irred} we will use the techniques developed in
\cref{sec:planar-networks} and produce explicitly the generic point of
$\mathcal C_\beta$. We consider the torus $(\mathbb C^\times)^{k(n-k)}$, and
its arc space $J_\infty (\mathbb C^\times)^{k(n-k)} = (\psr^\times)^{k(n-k)}$.
Notice that this arc space is a connected algebraic group, and in particular it
is irreducible. Its generic point is a matrix that we denote $(u_{i,j})$. Its
entries are of the form
\[
    u_{i,j} = 
    u^{[0]}_{i,j} +
    u^{[1]}_{i,j}\, t +
    u^{[2]}_{i,j}\, t^2 +
    \cdots +
    u^{[p]}_{i,j}\, t^p +
    \cdots
\]
where the coefficients $u^{[p]}_{i,j}$ are transcendentals generating the
function field of the arc space of the torus:
\[
    \mathbb C\left(J_\infty (\mathbb C^\times)^{k(n-k)}\right)
    =
    \mathbb C\left(~
        u_{i,j}^{[p]} 
    ~\middle|~
        {1 \leq i \leq k,} \quad
        {1 \leq j \leq n-k,} \quad
        {0 \leq p \leq \infty}
    ~\right).
\]

Fix a plane partition $\beta$, possibly with infinite height. With the
notations of \cref{sec:planar-networks}, we define an essential weighting $w =
w(\beta,u)$ on $\Gamma_0$ given by
\[
    w_{i,j} = t^{c_{i,j}} \, u_{i,j},
\]
where the $c_{i,j}$ are the weight exponents associated to $\beta$. Recall that
we use the notations
\[
    X(\Gamma_0, w(u,\beta))
    \qquad\text{and}\qquad
    \Lambda(\Gamma_0, w(u,\beta))
\]
for the weight matrix and arc associated to this weighting. We think of them as
giving a morphism between arc spaces:
\[
    \Phi_\beta \colon 
    J_\infty (\mathbb C^\times)^{k(n-k)}
    \to
    J_\infty G(k,n),
    \qquad 
    u \mapsto \Lambda(\Gamma_0,w(u,\beta)).
\]
Notice that it follows form \cref{c-works} that the image of $\Phi_\beta$ is
contained in $\mathcal C_\beta$. Also, from \cref{explicit-minors} it is easy
to see that the minors $M_{i,j}$ determine the weights $w_{i,j}$, and therefore
the morphism $\Phi_\beta$ is injective.

\begin{lemma}
    \label{precise-irred}
    Let $\Lambda$ be an arc in the contact stratum $\mathcal C_\beta$.
    \begin{enumerate}
        \item \label{precise-irred:1}
            $\Lambda$ belongs to the image of $\Phi_\beta$ if and only if it is
            contained in the opposite big cell and
            $\ord_\Lambda(\Omega_{(j^i)}) = \ord_\Lambda(M_{i,j})$ for each
            final minor $M_{i,j}$.
        \item \label{precise-irred:2}
            The Borel orbit $\Lambda \cdot B$ has non-empty intersection with
            the image of $\Phi_\beta$.
    \end{enumerate}
\end{lemma}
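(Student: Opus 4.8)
The plan is to prove part~(\ref{precise-irred:1}) first and then deduce part~(\ref{precise-irred:2}) from it by a genericity argument over the Borel. Write $\Lambda = (X \mid \Delta')$ whenever $\Lambda$ lies in the opposite big cell $\mathcal U$, and set $\alpha^\beta_{i,j} = \beta_{i,j} + \beta_{i+1,j+1} + \cdots$; recall that $\Lambda \in \mathcal C_\beta$ forces $\ord_\Lambda(\Omega_{(j^i)}) = \alpha^\beta_{i,j}$ for every final minor $M_{i,j}$. The ``only if'' direction of~(\ref{precise-irred:1}) is immediate: if $\Lambda = \Phi_\beta(u) = (X(\Gamma_0,w) \mid \Delta')$, then the Plücker coordinate $[n-k+1 \ldots n]$ equals $\det \Delta' = \pm 1$, so $\Lambda \in J_\infty\mathcal U$, and \cref{beta-final} gives $\ord_\Lambda(M_{i,j}) = \ord_t M_{i,j} = \alpha^\beta_{i,j}$, which by \cref{c-works} equals $\ord_\Lambda(\Omega_{(j^i)})$.

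For the ``if'' direction assume $\Lambda \in \mathcal C_\beta \cap J_\infty\mathcal U$ satisfies $\ord_\Lambda(M_{i,j}) = \ord_\Lambda(\Omega_{(j^i)}) = \alpha^\beta_{i,j}$ for all final minors, and suppose first that $\beta$ has finite height, so every $M_{i,j}(X)$ is a nonzero power series. The monomial formulas of \cref{explicit-minors} are invertible --- this is exactly the assertion, already noted after the definition of $\Phi_\beta$, that the minors $M_{i,j}$ determine the weights $w_{i,j}$ --- so applying the inverse Laurent monomials to the minors of $X$ produces a weighting $w = (w_{i,j})$ over $\lsr$ with $M_{i,j}(X(\Gamma_0,w)) = M_{i,j}(X)$ for all $i,j$. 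Applied instead to the standard weighting $w_{i,j} = t^{c_{i,j}(\beta)}$, \cref{beta-final} shows that these same Laurent monomials send $(\alpha^\beta_{a,b})_{a,b}$ to $(c_{i,j}(\beta))_{i,j}$; hence the hypothesis $\ord_t M_{a,b}(X) = \alpha^\beta_{a,b}$ yields $\ord_t w_{i,j} = c_{i,j}(\beta) \geq 0$, the inequality because $\beta$ is a plane partition. Thus $w$ is an essential weighting of $\Gamma_0$ with weight exponents $c(\beta)$, i.e.\ $w = w(\beta,u)$ for a point $u$ of $J_\infty(\C^\times)^{k(n-k)}$. It remains to see $X(\Gamma_0,w) = X$. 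Both matrices have the same final minors, all nonzero, and a $k \times (n-k)$ matrix with all final minors nonzero is determined by them: reconstruct its entries by descending induction, expanding each $M_{i,j} = [i \ldots i+r \mid j \ldots j+r]$ along its top row, where the cofactor of $x_{i,j}$ is the nonzero final minor $M_{i+1,j+1}$ and the remaining terms involve only entries already found. Hence $X(\Gamma_0,w) = X$ and $\Lambda = \Phi_\beta(u)$. If $\beta$ has infinite height the same argument works once the weights with infinite exponent are set to $0$ (equivalently, inside the Schubert variety $\Omega_{\beta^\infty}$, where $\Lambda$ reduces to a finite-height situation); I regard this as a routine modification.

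For part~(\ref{precise-irred:2}) I first move the center of $\Lambda$ into $\mathcal U$. The Schubert cell $\Omega^\circ_I$ containing $\Lambda(0)$ is a single $B$-orbit; since the Schubert variety $\Omega_I$ meets the opposite big cell (as recalled in \cref{sec:grass}), the nonempty open $\mathcal U \cap \Omega_I$ meets the dense cell $\Omega^\circ_I$, and transitivity of $B$ on $\Omega^\circ_I$ yields $b_0 \in B$ with $\Lambda(0)\,b_0 \in \mathcal U$, that is, $\Lambda b_0 \in J_\infty\mathcal U$. After replacing $\Lambda$ by $\Lambda b_0$, I rerun the genericity argument from the proof of \cref{A-restr}: by \cref{gens-rect} the ideal of $\Omega_{(j^i)} \cap \mathcal U$ is generated by the distinguished final minor $M_{i,j}$ together with minors supported on columns strictly to its left, and $B$ acts by column operations in which left columns affect columns to their right, so a sufficiently general $B$-translate $\Lambda b$ satisfies $\ord_{\Lambda b}(M_{i,j}) = \ord_{\Lambda b}(\Omega_{(j^i)})$ simultaneously for all $(i,j)$; such a translate remains in $J_\infty\mathcal U$ (an open condition, valid at the identity) and in $\mathcal C_\beta$ (contact profiles are $B$-invariant). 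By part~(\ref{precise-irred:1}), $\Lambda b$ lies in the image of $\Phi_\beta$, so $\Lambda \cdot B$ meets that image.

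The step I expect to be the main obstacle is the end of part~(\ref{precise-irred:1}): showing that the weighting reconstructed from the final minors reproduces $X$ itself, and not merely some matrix with the same final minors. This is where the combinatorics of $\Gamma_0$ and the precise form of \cref{explicit-minors} are indispensable, and also where the infinite-height case needs separate care. By contrast, part~(\ref{precise-irred:2}) is essentially a re-run of arguments already used for \cref{A-restr} and \cref{schubert-cond-lattices}.
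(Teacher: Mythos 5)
Your plan is structurally the same as the paper's: part~(\ref{precise-irred:2}) follows from part~(\ref{precise-irred:1}) by taking a generic $B$-translate, and the ``if'' direction of part~(\ref{precise-irred:1}) is proved by inverting the final-minor formulas of \cref{explicit-minors} to produce an essential weighting $w$ with $\ord_t w_{i,j} = c_{i,j}(\beta)$, and then verifying $X(\Gamma_0,w)=X$ by descending induction on $i+j$ using the Laplace expansion $M_{i,j} = x_{i,j} M_{i+1,j+1} + P(x)$. The ``only if'' direction and all of part~(\ref{precise-irred:2}) are fine.

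The one substantive gap is precisely where you flagged it, and it is more than ``routine''. When $\beta$ has infinite height, there are positions $(i,j)$ with $\beta_{i+1,j+1}=\infty$, so the cofactor $M_{i+1,j+1}$ vanishes and the expansion $M_{i,j}=x_{i,j}M_{i+1,j+1}+P(x)$ tells you nothing about $x_{i,j}$; the statement ``a matrix with all final minors nonzero is determined by them'' simply does not apply. Nor does $\Omega_{\beta^\infty}$ carry a ``finite-height situation'' in any obvious sense: it is a Schubert variety, not a Grassmannian, and setting the degenerate weights to $0$ does not collapse $\Gamma_0$ to the network of a smaller Grassmannian. The paper (adapting \cite[Lemma~7]{FZ00}) resolves this with an inner recursion along the diagonal: form $M' = [\,i\;\,i{+}2\;\cdots\;i{+}r\,|\,j\;\,j{+}2\;\cdots\;j{+}r\,]$ by deleting row $i+1$ and column $j+1$. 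Since $M' \leq M_{i+1,j+1}$, the hypothesis --- as rewritten in \cref{eq:hyp-rewrite}, which constrains \emph{all} minors $\leq M_{i,j}$, not only the final ones --- forces $M' = \tilde M' = 0$, and expanding $M'$ gives the new relation $x_{i,j}M_{i+2,j+2}+P'(x)=\tilde x_{i,j}M_{i+2,j+2}+P'(\tilde x)$. Iterating, one either hits a nonzero $M_{i+s,j+s}$ and concludes, or reaches the $1\times 1$ stage where $x_{i,j}=\tilde x_{i,j}=0$. This extra chain of sub-minors is the missing ingredient; without it the ``if'' direction of part~(\ref{precise-irred:1}) is only established for plane partitions of finite height.
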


\begin{proof}[Proof of \cref{irred}]
Consider the morphism:
\[
    \Psi_\beta \colon 
    B \times J_\infty (\mathbb C^\times)^{k(n-k)}
    \to
    J_\infty G(k,n),
    \qquad 
    (b,u) \mapsto \Lambda(\Gamma_0,w(u,\beta)) \cdot b.
\]
From \cref{precise-irred}, part \ref{precise-irred:2}, we see that the image of
$\Psi_\beta$ is the whole contact stratum $\mathcal C_\beta$. Since the domain
of $\Psi_\beta$ is irreducible, the \lcnamecref{irred} follows.
\end{proof}

\begin{proof}[Proof of \cref{precise-irred}, part \ref{precise-irred:2}]
Let $\Lambda'$ be a generic $B$-translate of $\Lambda$. Since all Schubert
varieties intersect the opposite big cell, we know that $\Lambda'$ is contained
in the opposite big cell. Let $M \leq M_{i,j}$ be the minor of $\Lambda'$ for
which $\ord_{\Lambda'}(\Omega_{(j^i)}) = \ord_{\Lambda'}(M)$. The action of $B$
on $\Lambda'$ transforms $M_{i,j}$ into a linear combination of minors $\tilde
M$ of the same size verifying $\tilde M \leq M_{i,j}$. Moreover, if the action
is by a generic element of $B$, all such minors appear in the linear
combination. In particular, since $\Lambda'$ is already a generic translate, we
see that $\ord_{\Lambda'}(M) = \ord_{\Lambda'}(M_{i,j})$. Now the result
follows from part \ref{precise-irred:1}.
\end{proof}

\begin{proof}[Proof of \cref{precise-irred}, part \ref{precise-irred:1}]
The necessary condition is an immediate consequence of \cref{minor-projections}
and \cref{beta-final}. In fact this was already shown during the proof of
\cref{c-works}, as \cref{eq:c-works-goal}.

For the sufficient condition, let $\Lambda$ be an arc satisfying the
hypothesis, and consider the final minors $M_{i,j}$ of $\Lambda$. It follows
form \cref{explicit-minors} that we can find a $u$ for which
$\Lambda(\Gamma_0,w(u,\beta))$ also has the $M_{i,j}$ as its final minors. The
result follows if we show that $\Lambda = \Lambda(\Gamma_0,w(u,\beta))$. For
this we use the argument of \cite[Lemma~7]{FZ00}, adapted to allow for the
possibility of some final minors being zero.

Let $X = (x_{i,j})$ and $X(\Gamma_0,w(u,\beta)) = (\tilde x_{i,j})$ be the
matrices determining $\Lambda$ and $\Lambda(\Gamma_0,w(u,\beta))$ in the
opposite big cell. We know that the final minors of these two matrices agree.
Also, notice that both $(x_{i,j})$ and $(\tilde x_{i,j})$ verify the
hypothesis, which we rewrite as
\begin{equation}
    \label{eq:hyp-rewrite}
    \begin{aligned}
        \ord_t(M_{i,j}) 
        &= 
        \min\big\{ 
            \ord_{t}(M) 
            \,\big|\,
            \text{$M$ a minor of $(x_{i,j})$, $M\leq M_{i,j}$}
        \big\}
    \\
        &= 
        \min\big\{ 
            \ord_{t}(\tilde M) 
            \,\big|\,
            \text{$\tilde M$ a minor of $(\tilde x_{i,j})$, $\tilde M\leq M_{i,j}$}
        \big\}.
    \end{aligned}
\end{equation}

We prove by induction that $x_{i,j} = \tilde x_{i,j}$. The base case is when
$i=k$ or $j=n-k$, which clearly implies $M_{i,j} = x_{i,j} = \tilde x_{i,j}$.
Otherwise we have an expansion
\[
    M_{i,j} = x_{i,j} M_{i+1,j+1} + P(x)
\]
where $P(x)$ is a polynomial in entries $x_{i',j'}$ where $i'\geq i$, $j'\geq
j$, and $(i',j') \ne (i,j)$. Analogously,
\[
    M_{i,j} = \tilde x_{i,j} M_{i+1,j+1} + P(\tilde x)
\]
where $P(\tilde x)$ is obtained from $P(x)$ by replacing each $x_{i',j'}$ with
$\tilde x_{i',j'}$. By induction we see that
\[
    x_{i,j} M_{i+1,j+1}
    = 
    \tilde x_{i,j} M_{i+1,j+1}.
\]
If $M_{i+1,j+1} \ne 0$ we conclude. Assume $M_{i+1,j+1} = 0$, and consider the
minor $M'$ obtained from $M_{i,j}$ by removing the row $i+1$ and the column
$j+1$ in $X$. Notice that $M' \leq M_{i+1,j+1}$, and by \cref{eq:hyp-rewrite}
this implies that $M'=0$. We construct similarly $\tilde M'$ from
$X(\Gamma_0,w(u,\beta))$, and we also get $\tilde M'=0$. If $M$ and $M'$ have
size $1\times 1$, we get $x_{i,j} = \tilde x_{i,j}= M = 0$ and we conclude.
Otherwise, expanding $M$ and $M'$ we get
\[
    x_{i,j} M_{i+2,j+2} + P'(x)
    = 
    \tilde x_{i,j} M_{i+2,j+2} + P'(\tilde x).
\]
Again the induction hypothesis implies that $P'(x) = P'(\tilde x)$. Now we can
repeat the same argument: we conclude if $M_{i+2,j+2} \ne 0$, and otherwise we
consider $M'' = \tilde M'' = 0$ by removing the row $i+2$ and the column $j+2$.
Eventually this process must stop, showing that $x_{i,j}=\tilde x_{i,j}$.
\end{proof}

\section{The generalized Nash problem for contact strata}

\label{sec:gen-nash}

In this section we start the analysis of the finer geometric structure of
contact strata. We consider the closures $\overline{\mathcal C}_\beta$ of
contact strata inside of the arc space of the Grassmannian, which we simply
call the \emph{closed contact strata}. We are mainly interested in the
following version of the \emph{generalized Nash problem}: 

\begin{problem}
    \label{problem:nash}
    Determine all possible containments among closed contact strata.
\end{problem}

As it often happens with Nash-type questions, this problems seems to be very
difficult. Nevertheless, we are able to show several types of containments
among closed contact strata, and that will be enough for the applications in
the rest of the paper.

\subsection*{The Plücker order}

We start with the easier direction: a necessary condition for a containment to
exist. Given a plane partition $\beta$, we consider the weighting $w(u,\beta)$
as in \cref{sec:valuations}, so that $\Lambda(\Gamma_0,w(u,\beta))$ is the
generic point of the contact stratum $\mathcal C_\beta$.

Now we consider a Plücker coordinate $[i_1 \ldots i_k]$. Then the number
\[
    \ord_\beta([i_1 \ldots i_k])
    =
    \ord_{\Lambda(\Gamma_0,w(u,\beta))}([i_1 \ldots i_k])
\]
is well-defined, in the sense that it only depends on $\beta$ and $[i_1 \ldots
i_k]$. Notice that there are combinatorial descriptions of $\Gamma_0$ and
$w(u,\beta)$, so one could implement an algorithm to compute these orders. We
use these numbers to define an order among plane partitions.

\begin{definition}
    We say that $\beta$ is less than or equal to $\beta'$ in the \emph{Plücker
    order}, written $\beta \trianglelefteq \beta'$, if
    \[
        \ord_\beta([i_1 \ldots i_k])
        \leq
        \ord_{\beta'}([i_1 \ldots i_k])
    \]
    for all Plücker coordinates $[i_1 \ldots i_k]$.
\end{definition}

The
following is obvious from the definitions.

\begin{lemma}
    \label{nash-trivial}
    Consider two plane partitions $\beta$ and $\beta'$. Then:
    \[
        \overline{\mathcal C}_{\beta} 
        \supseteq 
        \overline{\mathcal C}_{\beta'}
        \quad\Rightarrow\quad
        \beta \trianglelefteq \beta'.
    \]
    In particular, if $\alpha$ (resp.\ $\alpha'$) is the contact profile of any
    arc in $\mathcal C_{\beta}$ (resp.\ of any arc in $\mathcal C_{\beta'}$),
    then we have that
    \[
        \overline{\mathcal C}_{\beta} 
        \supseteq 
        \overline{\mathcal C}_{\beta'}
        \quad\Rightarrow\quad
        \alpha \leq \alpha'.
    \]
    Here $\alpha \leq \alpha'$ means that $\alpha_\lambda \leq \alpha'_\lambda$
    for all partitions $\lambda$.
\end{lemma}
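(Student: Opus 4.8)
The plan is to read the lemma off from two facts established earlier in the paper. First, by \cref{irred} (together with the explicit description of the generic point given in \cref{sec:valuations}) the closed contact stratum $\overline{\mathcal C}_\beta$ is irreducible with generic arc $\Lambda(\Gamma_0,w(u,\beta))$; thus $\ord_\beta=\ord_{\Lambda(\Gamma_0,w(u,\beta))}$ by definition and $\overline{\mathcal C}_\beta=\overline{\{\Lambda(\Gamma_0,w(u,\beta))\}}$, and likewise for $\beta'$. Second, for any closed subscheme $Y\subset G(k,n)$ and any $p$ the contact locus $\Cont^{\geq p}(Y)$ is Zariski closed in $J_\infty G(k,n)$, being the inverse image of a closed subset of a jet scheme (\cref{sec:arcs}); this says precisely that $\gamma\mapsto\ord_\gamma(Y)$ cannot decrease under specialization.

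First I would rewrite the hypothesis $\overline{\mathcal C}_\beta\supseteq\overline{\mathcal C}_{\beta'}$ as the assertion that $\Lambda(\Gamma_0,w(u,\beta'))$, which lies in $\overline{\mathcal C}_{\beta'}$, is a specialization of $\Lambda(\Gamma_0,w(u,\beta))$. Applying the semicontinuity above to the divisor cut out by an arbitrary Plücker coordinate $[i_1\ldots i_k]$ then gives $\ord_\beta([i_1\ldots i_k])\leq\ord_{\beta'}([i_1\ldots i_k])$, and since $[i_1\ldots i_k]$ was arbitrary this is exactly $\beta\trianglelefteq\beta'$. An equivalent, more self-contained phrasing of this step uses the formula $\ord_{\overline{\{\gamma\}}}=\min\{\ord_\alpha\mid\alpha\in\overline{\{\gamma\}}\}$ recalled in \cref{sec:arcs}: the minimum over the larger set $\overline{\mathcal C}_\beta$ is at most the minimum over the smaller set $\overline{\mathcal C}_{\beta'}$.

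For the remaining claim I would pass from Plücker coordinates to Schubert varieties. By \cref{ideal-gens}, for a partition $\lambda$ with multi-index $I$ the ideal $\mathcal I_\lambda$ is generated by the Plücker coordinates $[j_1\ldots j_k]\ngeq I$, so $\ord_\gamma(\Omega_\lambda)$ is the minimum of $\ord_\gamma$ over this fixed generating set, for every arc $\gamma$. Since every arc of $\mathcal C_\beta$ has contact profile $\alpha$ and $\Lambda(\Gamma_0,w(u,\beta))\in\mathcal C_\beta$ by \cref{c-works}, this identifies $\alpha_\lambda$ with $\min\{\ord_\beta([j_1\ldots j_k])\mid[j_1\ldots j_k]\ngeq I\}$, and similarly $\alpha'_\lambda$ with the corresponding minimum for $\beta'$; taking minima in the inequalities just proved yields $\alpha_\lambda\leq\alpha'_\lambda$ for all $\lambda$, i.e.\ $\alpha\leq\alpha'$.

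I do not expect a genuine obstacle here, in line with the text's remark that the lemma is \emph{obvious from the definitions}. The only point worth stating explicitly is the monotonicity of order of contact under specialization (equivalently, that contact loci are Zariski closed); everything else is bookkeeping with the two $\min$-formulas for $\ord$ (over a generating set of an ideal, and over a family of arcs).
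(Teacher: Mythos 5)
The paper offers no proof beyond the remark that the lemma is ``obvious from the definitions,'' so there is no official argument to compare against; your proposal is the natural fleshing-out of that claim, and it is correct. The one load-bearing step is exactly the one you isolate: $\Cont^{\geq p}(Y)$ is closed (it is the preimage under a truncation map of $J_{p-1}Y\subset J_{p-1}G(k,n)$), hence $\gamma\mapsto\ord_\gamma(Y)$ is lower semicontinuous and cannot drop under specialization. Applied to the closed subscheme cut out by each Plücker coordinate, this gives $\beta\trianglelefteq\beta'$; and the ``in particular'' clause then follows either by your route through the generating-set $\min$-formula, or, slightly more directly, by applying the same semicontinuity to $\Omega_\lambda$ itself (both are fine). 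One tiny precision: it's cleaner to phrase the specialization step scheme-theoretically --- the generic point $\gamma_{\beta'}$ of $\overline{\mathcal C}_{\beta'}$ lies in the closed set $\overline{\mathcal C}_\beta\cap\Cont^{\geq p}([I])$ whenever $\ord_\beta([I])\geq p$, since that closed set contains the generic point $\gamma_\beta$ --- rather than invoking the $\min$-over-a-family formula, which is stated in the paper only for $\mathbb C$-valued arcs; but this is cosmetic and your argument goes through either way.
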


The condition in \cref{nash-trivial} is not sufficient in general to guarantee
a containment of closed contact strata. An example of this is given in $G(3,6)$
by the following plane partitions:
\[
    \beta =
    \begin{pmatrix}
        3 & 2 & 1 \\
        2 & 1 & 1 \\
        1 & 1 & 0 \\
    \end{pmatrix},
    \qquad
    \beta' =
    \begin{pmatrix}
        2 & 2 & 1 \\
        2 & 2 & 1 \\
        1 & 1 & 0 \\
    \end{pmatrix}.
\]
It is possible to check (although quite tedious) that $\beta \trianglelefteq
\beta'$. But the partitions have the same number of boxes, and we will see in
\cref{codim} that this prevents the existence of a containment.

On the other hand, in the special case of $G(2,4)$ the Plücker order
completely characterizes containments.

\begin{proposition}
    \label{nash-2}
    Consider two contact strata $\mathcal C_\beta$ and $\mathcal C_{\beta'}$ in
    $J_\infty G(2,4)$. Then:
    \[
        \overline{\mathcal C}_{\beta} 
        \supseteq 
        \overline{\mathcal C}_{\beta'}
        \quad\Leftrightarrow\quad
        \beta \trianglelefteq \beta'.
    \]
\end{proposition}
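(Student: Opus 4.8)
The plan is to make the Plücker order on $2\times 2$ plane partitions completely explicit and then run a short combinatorial induction. Write a plane partition in $G(2,4)$ as $\beta=(\beta_{i,j})_{1\le i,j\le 2}$, so $\beta_{1,1}\ge\beta_{1,2}\ge\beta_{2,2}$ and $\beta_{1,1}\ge\beta_{2,1}\ge\beta_{2,2}$. The implication $\overline{\mathcal C}_\beta\supseteq\overline{\mathcal C}_{\beta'}\Rightarrow\beta\trianglelefteq\beta'$ is exactly \cref{nash-trivial}, so the whole content is the converse.

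\medskip
\noindent\emph{Step 1: the six Plücker orders.} A point of $J_\infty G(2,4)$ in the opposite big cell $\mathcal U$ is a matrix $(X_{\mathcal U}\mid\Delta')$ with $X_{\mathcal U}=(x_{i,j})$, $x_{i,j}\in\psr$, and the de-homogenisation process turns the six Plücker coordinates into, up to sign, the six minors of $X_{\mathcal U}$: the constant $1$ (this is $[34]$), the entries $x_{1,1}=[13]$, $x_{2,1}=[14]$, $x_{1,2}=[23]$, $x_{2,2}=[24]$, and $\det X_{\mathcal U}=[12]$. I would evaluate each of these on the generic arc $\Lambda(\Gamma_0,w(u,\beta))$ of $\mathcal C_\beta$ from \cref{sec:valuations}. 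The coordinate $[34]$ has order $0$. The minors $\det X_{\mathcal U}$, $x_{1,2}$, $x_{2,1}$, $x_{2,2}$ are the final minors $M_{1,1},M_{1,2},M_{2,1},M_{2,2}$, so \cref{beta-final} gives their orders $\beta_{1,1}+\beta_{2,2}$, $\beta_{1,2}$, $\beta_{2,1}$, $\beta_{2,2}$. The entry $x_{1,1}=[13]$ is the unique non-final minor; in $\Gamma_0$ for $G(2,4)$ there are exactly two directed paths from the first source to the first sink, so the weight matrix gives directly $x_{1,1}=w_{1,1}+w_{1,2}w_{2,2}w_{2,1}$, and plugging in the essential weighting $w_{i,j}=t^{c_{i,j}}u_{i,j}$ yields
\[
    \ord_\beta([13])=\min\{\beta_{1,1},\ \beta_{1,2}+\beta_{2,1}-\beta_{2,2}\}
\]
with the convention $\infty-x=\infty$. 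Hence $\beta\trianglelefteq\beta'$ is equivalent to the five inequalities $\beta_{1,2}\le\beta'_{1,2}$, $\beta_{2,1}\le\beta'_{2,1}$, $\beta_{2,2}\le\beta'_{2,2}$, $\beta_{1,1}+\beta_{2,2}\le\beta'_{1,1}+\beta'_{2,2}$, and $\min\{\beta_{1,1},\beta_{1,2}+\beta_{2,1}-\beta_{2,2}\}\le\min\{\beta'_{1,1},\beta'_{1,2}+\beta'_{2,1}-\beta'_{2,2}\}$.

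\medskip
\noindent\emph{Step 2: the converse.} Assuming the five inequalities, I would first perform the finitely many ``push a pillar to infinite height'' containments (one of the elementary moves), reducing to the case where $\beta$ and $\beta'$ both have finite entries, and then induct on the defect $\sum_{i,j}(\beta'_{i,j}-\beta_{i,j})\ge 0$. The inductive claim is: if $\beta\ne\beta'$ and the five inequalities hold, there is a $2\times 2$ plane partition $\tilde\beta$ with an elementary containment $\overline{\mathcal C}_\beta\supseteq\overline{\mathcal C}_{\tilde\beta}$ (one of those supplied by \cref{nash-hard,nash-c}, raising a single box), with $\tilde\beta\trianglelefteq\beta'$ and defect one less; transitivity of containment then finishes. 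Since a $2\times2$ plane partition is determined by the quadruple $(\beta_{2,2},\beta_{2,1},\beta_{1,2},\beta_{1,1})$ subject only to $\beta_{1,1}\ge\beta_{1,2}\ge\beta_{2,2}$ and $\beta_{1,1}\ge\beta_{2,1}\ge\beta_{2,2}$, this is a bounded case analysis: raise $\beta_{2,2}$ first, then $\beta_{1,2}$ and $\beta_{2,1}$, and only then $\beta_{1,1}$, at each stage picking the unique single-box move that overshoots none of the five targets.

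\medskip
\noindent\emph{The main obstacle.} The subtle ingredient is the fifth inequality, coming from the Plücker coordinate $[13]=x_{1,1}$, the only one whose order is a genuine minimum and hence the one that makes $\trianglelefteq$ strictly finer than the order ``$\alpha\le\alpha'$'' on essential contact profiles. It is not implied by the other four: for instance $\beta=\bigl(\begin{smallmatrix}5&3\\3&0\end{smallmatrix}\bigr)$ and $\beta'=\bigl(\begin{smallmatrix}5&3\\3&2\end{smallmatrix}\bigr)$ satisfy the first four but not the fifth, and accordingly there is no containment. I expect the real work to be choosing the order of the single-box moves in Step 2 so that the running value of $\min\{\beta_{1,1},\beta_{1,2}+\beta_{2,1}-\beta_{2,2}\}$ never exceeds $\min\{\beta'_{1,1},\beta'_{1,2}+\beta'_{2,1}-\beta'_{2,2}\}$, and confirming that the elementary containments of \cref{nash-hard,nash-c} really do realize every move this requires; everything else is either \cref{nash-trivial} or the explicit network computation of Step 1.
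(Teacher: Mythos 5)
Your Step 1 is correct and matches the paper: the explicit network computation gives exactly the five inequalities you list, and the ``trivial'' direction is indeed just \cref{nash-trivial}.

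The gap is in Step 2. You plan to realize the containment by a chain of \emph{single-box} moves, each taken from \cref{nash-hard} or \cref{nash-c}. But the Plücker order on $2\times 2$ plane partitions is not generated by box additions: it allows $\beta_{1,1}>\beta'_{1,1}$. Concretely, take
\[
    \beta=\begin{pmatrix}4&1\\1&1\end{pmatrix},
    \qquad
    \beta'=\begin{pmatrix}3&2\\2&2\end{pmatrix}.
\]
All five inequalities hold, so $\beta\trianglelefteq\beta'$, yet $\beta_{1,1}=4>3=\beta'_{1,1}$. From $\beta$ the only box additions that stay in the Plücker order below $\beta'$ are to $(1,2)$ or $(2,1)$, say giving $\tilde\beta=\bigl(\begin{smallmatrix}4&2\\1&1\end{smallmatrix}\bigr)$. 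From $\tilde\beta$ every single-box move either leaves the set of plane partitions or violates one of the five inequalities (the box at $(2,1)$ breaks the fifth: $\min\{4,\,2+2-1\}=3>2$; the box at $(1,1)$ breaks the fourth; the box at $(1,2)$ breaks the second). The only cover of $\tilde\beta$ below $\beta'$ is $\beta'$ itself, obtained by the ``mixed'' move $\bigl(\begin{smallmatrix}-1&0\\1&1\end{smallmatrix}\bigr)$, which simultaneously \emph{decreases} $\beta_{1,1}$ and raises $\beta_{2,1}$ and $\beta_{2,2}$. This move is not a plateau-box addition, and it also fails the hypothesis $c(\beta)\le c(\beta')$ of \cref{nash-c} (here $c_{1,1}$ drops from $4$ to $3$). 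So neither of the elementary moves you allow can produce it, and your induction stalls.

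The paper's proof identifies exactly the six covers of the Plücker order, including the two mixed moves $\bigl(\begin{smallmatrix}-1&1\\0&1\end{smallmatrix}\bigr)$ and $\bigl(\begin{smallmatrix}-1&0\\1&1\end{smallmatrix}\bigr)$, and handles those by writing down an \emph{explicit one-parameter family} $X(s)$ of matrices (a wedge), with $X(0)$ the generic arc of $\mathcal C_{\beta'}$ and $X(s)$ in $\mathcal C_\beta$ for generic $s$. That ad hoc wedge construction is the piece of content your outline is missing; without something like it, Step 2 cannot reach every $\beta'\trianglerighteq\beta$. The rest of your proposal (the Plücker-order computation, the reduction via defect, and the observation that the fifth inequality is the delicate one) is sound and closely parallels the paper.
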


\begin{proof}
Using the notation of \cref{sec:valuations}, we consider the matrix
$X = X(\Gamma_0,w(u,\beta))$. A direct computation gives the following:
\[
    X =
    \begin{pmatrix}
        u_{11} t^{\beta_{11}} + u_{12} u_{21} u_{22}
        t^{\beta_{12}+\beta_{21}-\beta_{22}}
        & u_{12} u_{22} t^{\beta_{12}}
    \\
        u_{21} u_{22} t^{\beta_{21}}
        & u_{22} t^{\beta_{22}}
    \end{pmatrix}.
\]
Therefore, the orders of contact with respect to the Plücker coordinates are:
\[
    \renewcommand*{\arraystretch}{1.6}
    \begin{array}{|l||l|l|l|l|l|l|}
    \hline
    [ij] & [12] & [13] & [14] & [23] & [24] & [34]
    \\[.1em] \hline
    \ord_\beta([ij])
    & \beta_{11} + \beta_{22}
    & \min\{\beta_{11}, \beta_{12} + \beta_{21} - \beta_{22}\}
    & \beta_{21}
    & \beta_{12}
    & \beta_{22}
    & 0
    \\ \hline
    \end{array}
\]
and the Plücker order is given by:
\[
    \beta \trianglelefteq \beta'
    \quad\Leftrightarrow\quad
    \bigg\{
    \begin{array}{c}
        \min\{\beta_{11}, \beta_{12} + \beta_{21} - \beta_{22}\}
        \leq \min\{\beta'_{11}, \beta'_{12} + \beta'_{21} - \beta'_{22}\},
        \\[.5em] \beta_{11} + \beta_{22} \leq \beta'_{11} + \beta'_{22},
        \quad \beta_{21} \leq \beta'_{21},
        \quad \beta_{12} \leq \beta'_{12},
        \quad \beta_{22} \leq \beta'_{22}.
    \end{array}
\]

From this explicit description, it is easy to determine the covers for the
Plücker order. In precise terms, if $\beta \vartriangleleft \beta'$, then we can
find a partition $\beta^*$ verifying $\beta \vartriangleleft \beta^*
\trianglelefteq \beta'$, and such that the difference $\beta^* - \beta$ is one
of the following matrices:
\[
    \begin{pmatrix}
        1 & 0 \\
        0 & 0
    \end{pmatrix},
    \quad
    \begin{pmatrix}
        0 & 1 \\
        0 & 0
    \end{pmatrix},
    \quad
    \begin{pmatrix}
        0 & 0 \\
        1 & 0
    \end{pmatrix},
    \quad
    \begin{pmatrix}
        0 & 0 \\
        0 & 1
    \end{pmatrix},
    \quad
    \begin{pmatrix}
       -1 & 1 \\
        0 & 1
    \end{pmatrix},
    \quad
    \begin{pmatrix}
       -1 & 0 \\
        1 & 1
    \end{pmatrix}.
\]
It is therefore enough to prove the \lcnamecref{nash-2} assuming $\beta'-\beta$
is one of the six matrices above. The first four cases are easy, and the last
two are transposed of each other, so we will focus on the last case:
\[
    \begin{array}{l@{\qquad\quad}l}
    \beta'_{11} = \beta_{11}-1,
    & \beta'_{12} = \beta_{12},
    \\[.5em] \beta'_{21} = \beta_{21} + 1,
    & \beta'_{22} = \beta_{22} + 1.
    \end{array}
\]
Observe that since $\beta \vartriangleleft \beta'$, we must have that
$\beta_{11} > \beta_{12}+\beta_{21}-\beta_{22}$.

Consider the matrix $X' = X(\Gamma_0,w(u',\beta'))$ giving the generic point of
$\mathcal C_{\beta'}$:
\[
    X' =
    \begin{pmatrix}
        x'_{11} & x'_{12} \\
        x'_{21} & x'_{22}
    \end{pmatrix}
    =
    \begin{pmatrix}
        u'_{11} t^{\beta_{11}-1} + u'_{12} u'_{21} u'_{22}
        t^{\beta_{12}+\beta_{21}-\beta_{22}}
        & u'_{12} u'_{22} t^{\beta_{12}}
    \\[.5em]
        u'_{21} u'_{22} t^{\beta_{21}+1}
        & u'_{22} t^{\beta_{22}+1}
    \end{pmatrix},
\]
and let $X(s)$ be the one-parameter family of matrices given by
\[
    X(s) = 
    \begin{pmatrix}
        x'_{11} & x'_{12} \\
        x'_{21} + s x'_{11} t^v
        & x'_{22} + s x'_{12} t^v
    \end{pmatrix}
\]
where $v = \beta_{22}-\beta_{12}$. Clearly $X(0) = X'$, and direct computation
shows that $X(s) \in \mathcal C_{\beta}$ for generic $s$. For this, the only
non-obvious thing to check is that
\begin{equation}
    \label{eq:nash-2-goal}
    \ord_t(x'_{21} + s x'_{11} t^v) = \beta_{21}.
\end{equation}
But we have that
\[
    \ord_t(x'_{21} + s x'_{11} t^v) =
    \min\{ \beta_{21}+1, \beta_{11}+\beta_{22}-\beta_{12}-1, \beta_{21} \},
\]
and hence \cref{eq:nash-2-goal} follows form the inequality
$\beta_{11} > \beta_{12}+\beta_{21}-\beta_{22}$.

We have constructed a family $X(s)$ whose special point is the generic point of
$\mathcal C_{\beta'}$ and whose generic point is in $\mathcal C_{\beta}$.
Therefore 
$\overline{\mathcal C}_{\beta} \supseteq {\mathcal C}_{\beta'}$, as required.
\end{proof}

\subsection*{Nash containments via weight exponents}

The easiest way to give a general sufficient condition for a containment
$\overline{\mathcal C}_{\beta} \supseteq \overline{\mathcal C}_{\beta'}$ is via
an analysis of the weight exponents.

\begin{theorem}
    \label{nash-c}
    Let $\beta$ and $\beta'$ be plane partitions, and let $c(\beta) =
    (c_{i,j})$ and $c(\beta') = (c'_{i,j})$ be the corresponding weight
    exponents (as in \cref{sec:planar-networks}). Assume that $c(\beta) \leq
    c(\beta')$ (i.e., that $c_{i,j} \leq c'_{i,j}$ for all $i,j$). Then
    $\overline{\mathcal C}_{\beta} \supseteq \overline{\mathcal C}_{\beta'}$.
\end{theorem}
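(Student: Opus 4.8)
The plan is to exhibit a one-parameter family of arcs whose generic fiber lands in $\mathcal C_\beta$ and whose special fiber is the generic point of $\mathcal C_{\beta'}$, following the same philosophy as the proof of \cref{nash-2}. The starting observation is that, by \cref{c-works}, the arc $\Lambda(\Gamma_0, w(u,\beta'))$ is the generic point of $\mathcal C_{\beta'}$, and it is obtained from the essential weighting $w_{i,j} = t^{c'_{i,j}} u_{i,j}$ on the network $\Gamma_0$. Since we are assuming $c_{i,j} \leq c'_{i,j}$ for all $i,j$, we can write $c'_{i,j} = c_{i,j} + d_{i,j}$ with $d_{i,j} \geq 0$, and consider the weighting
\[
    w_{i,j}(s) = t^{c_{i,j}} \bigl( u_{i,j} + s\, t^{d_{i,j}-1} \cdot (\text{unit}) \bigr)
\]
or, more simply, a weighting that interpolates between $t^{c_{i,j}} u_{i,j}$ (generic in $\mathcal C_\beta$, by \cref{c-works}) at $s$ generic and $t^{c'_{i,j}} u_{i,j} = t^{c_{i,j}+d_{i,j}} u_{i,j}$ (generic in $\mathcal C_{\beta'}$) at $s = 0$. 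The key point is that \cref{c-works} tells us that any essential weighting with weight exponents exactly $c(\beta)$ produces an arc in $\mathcal C_\beta$; so as long as the interpolating weighting has weight exponent $c_{i,j}$ in position $(i,j)$ for generic $s$, the generic fiber lies in $\mathcal C_\beta$ automatically, with no case analysis on minors needed.

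Concretely, I would set $w_{i,j}(s) = t^{c_{i,j}}\bigl(s\, u_{i,j} + t^{d_{i,j}} u'_{i,j}\bigr)$, where $(u_{i,j})$ and $(u'_{i,j})$ are two independent generic torus arcs, viewed as a morphism from $\mathbb A^1_s \times J_\infty(\mathbb C^\times)^{k(n-k)} \times J_\infty(\mathbb C^\times)^{k(n-k)}$ (or a suitable localization) to $J_\infty G(k,n)$, $u \mapsto \Lambda(\Gamma_0, w(s,u,u'))$. For $s$ a unit, $w_{i,j}(s)$ is $t^{c_{i,j}}$ times a unit, hence has weight exponent $c_{i,j}$; the unit part, $s u_{i,j} + t^{d_{i,j}} u'_{i,j}$, together with the $s$-dependence, is still generic enough that \cref{c-works} applies and places the arc in $\mathcal C_\beta$ (one should check the coefficients remain algebraically independent over $\mathbb C(s)$, which is clear). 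Setting $s = 0$ gives $w_{i,j}(0) = t^{c_{i,j}+d_{i,j}} u'_{i,j} = t^{c'_{i,j}} u'_{i,j}$, which by \cref{c-works} (and the injectivity/genericity discussion in \cref{sec:valuations}) gives the generic point of $\mathcal C_{\beta'}$. Since the total space of the family is irreducible, its image is irreducible, contains $\mathcal C_{\beta'}$'s generic point in its closure and meets $\mathcal C_\beta$, so $\overline{\mathcal C}_\beta \supseteq \overline{\mathcal C}_{\beta'}$.

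The main obstacle is the verification that the generic fiber of this family really is the \emph{generic} point of $\mathcal C_\beta$ — or at least a fat enough point whose closure is all of $\overline{\mathcal C}_\beta$ — rather than landing in some proper closed subset. The cleanest way around this is to not demand genericity at all: it suffices that the generic fiber lies in $\mathcal C_\beta$, because then $\overline{\{$generic fiber$\}} \subseteq \overline{\mathcal C}_\beta$, while the special fiber, being the generic point of $\mathcal C_{\beta'}$, forces $\overline{\mathcal C}_{\beta'} \subseteq \overline{\{$generic fiber$\}}$, whence $\overline{\mathcal C}_{\beta'} \subseteq \overline{\mathcal C}_\beta$. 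So the only thing that genuinely needs checking is the claim ``$w_{i,j}(s)$ has weight exponents $c(\beta)$ for $s$ generic,'' which is immediate since adding $t^{d_{i,j}} u'_{i,j}$ with $d_{i,j} \geq 0$ to $s u_{i,j}$ does not lower the order when $s \neq 0$. I would also double-check the degenerate situations where $\beta$ (or $\beta'$) has infinite height, so that some $c_{i,j} = \infty$; there the convention $\infty - x = \infty$ makes $d_{i,j}$ ill-defined, but one can just treat those entries separately (the corresponding weight is $0$ throughout the family), and the argument goes through unchanged.
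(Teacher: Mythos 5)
Your proof is correct and takes essentially the same approach as the paper's, namely specializing an arc in $\mathcal C_\beta$ to the generic point of $\mathcal C_{\beta'}$ by inserting the powers $t^{d_{i,j}}$ made available by $c(\beta)\le c(\beta')$. The paper does this even more directly: it takes $\Lambda(\Gamma_0,w(u,\beta))$ itself (the generic point of $\mathcal C_\beta$) and specializes the coordinate variables $u^{[p]}_{i,j}$ by shifting, i.e.\ $u^{[p]}_{i,j}\mapsto 0$ for $p<d_{i,j}$ and $u^{[p]}_{i,j}\mapsto u'^{[p-d_{i,j}]}_{i,j}$ otherwise, which sends $w_{i,j}=t^{c_{i,j}}u_{i,j}$ to $t^{c'_{i,j}}u'_{i,j}=w'_{i,j}$; since the source point is the generic point of $\mathcal C_\beta$, the containment of closures follows immediately. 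Your version inserts an auxiliary parameter $s$, which is not needed, but your observation that it suffices for the generic fiber merely to lie in $\mathcal C_\beta$ (not to be its generic point) is exactly the reason this extra latitude costs nothing, and it is supplied directly by \cref{c-works}. Your remark about handling the entries with $c_{i,j}=\infty$ is a sensible special case the paper leaves implicit; there $c'_{i,j}=\infty$ as well and the corresponding weight is identically zero, so nothing needs to be specialized.
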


\begin{proof}
In the notation of \cref{sec:valuations}, we write the generic points of
$\mathcal C_\beta$ and $\mathcal C_{\beta'}$ as
\[
    \Lambda(\Gamma_0,w)
    \qquad\text{and}\qquad
    \Lambda(\Gamma_0,w')
\]
where $w = w(u,\beta)$ and $w' = w(u',\beta')$ are the essential weightings of
$\Gamma_0$ given by
\[
    w_{i,j} = t^{c_{i,j}} u_{i,j}
    \qquad\text{and}\qquad
    w'_{i,j} = t^{c'_{i,j}} u'_{i,j}.
\]
Also, recall that each $u_{i,j}$ is a power series of the form
\[
    u_{i,j} = 
    u^{[0]}_{i,j} +
    u^{[1]}_{i,j}\, t +
    u^{[2]}_{i,j}\, t^2 +
    \cdots +
    u^{[p]}_{i,j}\, t^p +
    \cdots
\]
where the $u^{[p]}_{i,j}$ are variables. We have an analogous description of
$u'_{i,j}$.

The \lcnamecref{nash-c} will follow if we write a specialization of
$\Lambda(\Gamma_0,w)$ to $\Lambda(\Gamma_0,w')$. To do this, it is enough to
describe how each variable $u^{[p]}_{i,j}$ specializes to a function of the
$u'^{[p]}_{i,j}$. We consider the specialization given by
\[
    u^{[p]}_{i,j} 
    \mapsto
    \begin{cases}
        0 & \text{if $p < c'_{i,j} - c_{i,j}$,}\\
        u'^{[p-q]}_{i,j} & \text{if $p \geq q = c'_{i,j} - c_{i,j}$.}
    \end{cases}
\]
Observe that under this specialization we have
\[
    w_{i,j} = t^{c_{i,j}} u_{i,j}
    \quad\mapsto\quad
    t^{c'_{i,j}} u'_{i,j} = w'_{i,j},
\]
and therefore $\Lambda(\Gamma_0,w)$ specializes to $\Lambda(\Gamma_0,w')$, as
required.
\end{proof}

\subsection*{Nash containments via plateaux}

Given plane partitions $\beta \vartriangleleft \beta'$, we are going to give
some subtle conditions on the shapes of $\beta$ and $\beta'$ that guarantee the
existence of a containment $\overline{\mathcal C}_{\beta} \supsetneq
\overline{\mathcal C}_{\beta'}$. For this we need some definitions.

\begin{definition}
    We say that a plane partition $\beta$ has a \emph{plateau} up to position
    $(a,b)$ if there is a number $h$ such that $\beta_{i,j} = h$ when $i \leq
    a$, $j \leq b$, and $(i,j) \ne (a,b)$. The position $(a,b)$ is called the
    \emph{corner} of the plateau, and $h$ is called the \emph{height} of the
    plateau. If $(a,b) = (1,1)$ we set $h = \infty$. If the height is finite,
    the difference $h - \beta_{a,b}$ is called the \emph{fall}. If the height
    is infinite, we say that the fall is $0$ if $\beta_{a,b} = \infty$, and
    that it is $\infty$ if $\beta_{a,b} < \infty$.
\end{definition}

Notice that the above definition does not impose any condition on the entry in
the corner (the one in position $(a,b)$). Also, all plane partitions with
finite height have a plateau with infinite fall and corner at $(1,1)$. For some
examples see \cref{fig:plateau}.

\begin{figure}[ht]
    \centering
    \includegraphics{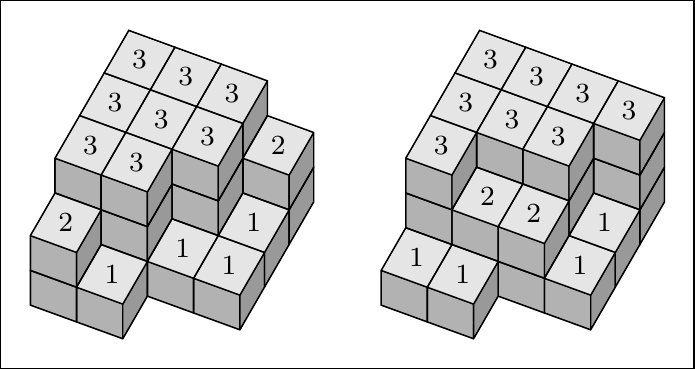}
    \caption{Some examples of plateaux in $G(4,8)$. The plane partition on the
    left has three plateaux with positive finite fall: their corners are in
    positions $(1,4)$, $(3,3)$, and $(4,1)$. On the right, $(2,4)$, $(3,2)$,
    and $(4,1)$ are the corners of the plateaux with positive finite fall. All
    of these plateaux have height $3$.}
    \label{fig:plateau}
\end{figure}

Let $\beta$ be a plane partition with a plateau with corner at $(a,b)$. Let $h$
and $f$ be the height and fall of the plateau. We want to understand the orders
of the weights in the network $(\Gamma_0, w(u,\beta))$. These are determined by
the weight exponents $c(\beta) = (c_{i,j})$. From the definitions we see that
the plateau imposes some conditions on $c(\beta)$. More precisely, if $(i,j)$
verifies
\[
    i \leq a,
    \qquad
    j \leq b,
    \qquad\text{and}\qquad
    (i,j) \not\in \{(a,b),(a-1,b),(a,b-1)\},
\]
then we have
\begin{align*}
    c_{i,j} &= 0 \qquad \text{if $(k-i) \ne (n-k-j)$, and} \\
    c_{i,j} &= h \qquad \text{if $(k-i) = (n-k-j)$.}
\end{align*}
We also have some information on the values of $c_{a-1,b}$ and $c_{a,b-1}$. We
have three possibilities:
\begin{equation}
    \label{eq:c-3pos}
    \begin{aligned}
    &
        (k-a) = (n-k-b)
        \quad \Rightarrow  \quad
        c_{a-1,b} = c_{a,b-1} = f,
    \\ &
        (k-a) < (n-k-b)
        \quad \Rightarrow  \quad
        c_{a-1,b} = f,
    \\ &
        (k-a) > (n-k-b)
        \quad \Rightarrow  \quad
        c_{a,b-1} = f.
    \end{aligned}
\end{equation}
In the cases $a=1$ or $b=1$, the above equations that involve $c_{a-1,b}$ and
$c_{a,b-1}$ are to be ignored.

We want to explain the consequences of having the plateau in terms of the
weighted planar network. We denote by $NW_{a,b}$ the part of the grid in
$\Gamma_0$ to the north-west of the vertex in position $(a,b)$. We let
$L_{a,b}$ be the open south-east corner of $NW_{a,b}$, containing the vertex in
position $(a,b)$, the vertical edge above $(a,b)$, and the horizontal edge to
the left of $(a,b)$. $L_{a,b}$ only contains the interior of the two edges, so
there is only one vertex in $L_{a,b}$. We set $NW_{a,b}^\circ = NW_{a,b}
\setminus L_{a,b}$.

The conditions found above on the weight exponents $c_{i,j}$ can be translated
in terms of the network as follows. Inside $NW_{a,b}^\circ$ all the edges have
weights of order $0$, and all the vertices have weights of order $0$ or $h$. The
edges of $L_{a,b}$ have weights of order either $0$ or $f$, corresponding to
the different cases of \cref{eq:c-3pos}. See \cref{fig:plateau-cons}.

\begin{figure}[ht]
    \centering
    \includegraphics{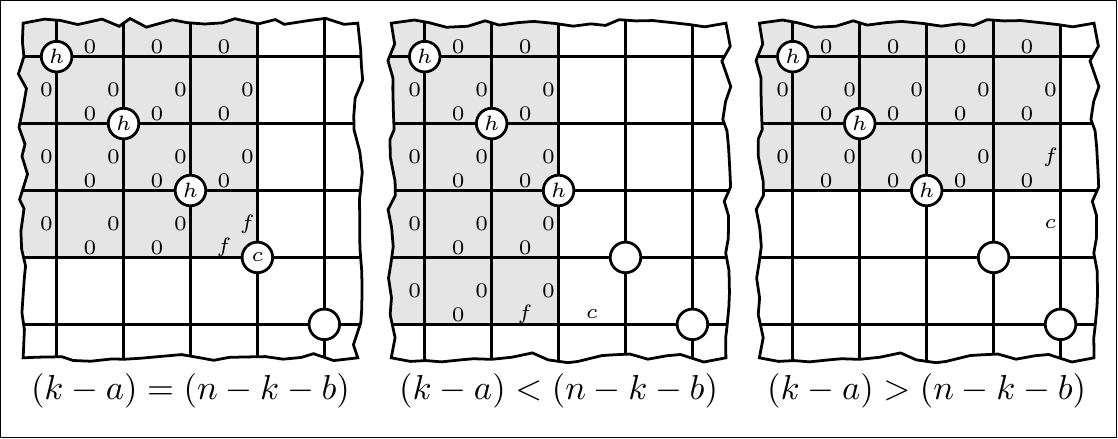}
    \caption{The consequences of having a plateau in $\beta$. The shaded region
    corresponds to $NW_{a,b}$. The edges and weighted vertices are labeled with
    the order of the weight. $h$ and $f$ are the height and fall of the
    plateau, and $c = c_{a,b}$.}
    \label{fig:plateau-cons}
\end{figure}

\begin{theorem}
    \label{nash-hard}
    Let $\beta$ be a plane partition having a plateau with corner at $(a,b)$.
    Assume the plateau has positive fall, and let $\beta'$ be the plane
    partition obtained by adding one box in position $(a,b)$. Then there is a
    containment $\overline{\mathcal C}_{\beta} \supsetneq \overline{\mathcal
    C}_{\beta'}$.
\end{theorem}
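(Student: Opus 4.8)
The plan is to produce, in the spirit of the proof of \cref{nash-2}, an explicit one-parameter family of arcs whose special member is the generic point of $\mathcal{C}_{\beta'}$ and whose general member lies in $\mathcal{C}_\beta$; this gives $\overline{\mathcal{C}}_\beta \supseteq \overline{\mathcal{C}}_{\beta'}$, and strictness is automatic because $\beta \ne \beta'$ forces $\mathcal{C}_\beta \cap \mathcal{C}_{\beta'} = \emptyset$, so the two irreducible closed sets of \cref{irred} cannot coincide. First, $\beta'$ is again a plane partition with base inside the $k\times(n-k)$ rectangle: the positivity of the fall gives $\beta_{a,b}+1 \le h = \beta_{a-1,b} = \beta_{a,b-1}$ (and trivially $\beta_{a,b}+1 \ge \beta_{a+1,b},\beta_{a,b+1}$), so $\mathcal{C}_{\beta'}$ is a non-empty contact stratum by \cref{if-prof}. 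By \cref{irred,precise-irred}, after a generic $B$-translate we may represent the generic point of $\mathcal{C}_{\beta'}$ by $X' = X(\Gamma_0, w(u',\beta'))$ in the opposite big cell, where $u'$ is the generic point of the arc space of the torus and $w'_{i,j} = t^{c'_{i,j}}u'_{i,j}$ with $c' = c(\beta')$. A short computation from the definition of $c(\,\cdot\,)$ shows that $\beta$ and $\beta'$ have identical weight exponents except that $c'_{a,b} = c_{a,b}+1$, while $c'_{a-1,b} = c_{a-1,b}-1$ and/or $c'_{a,b-1} = c_{a,b-1}-1$ according to the trichotomy of \cref{eq:c-3pos}; since $c(\beta)\le c(\beta')$ fails, \cref{nash-c} does not apply and we must argue directly.

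The plateau is exactly what makes $X'$ tractable near the corner. By the discussion preceding the statement (leading to \cref{eq:c-3pos} and \cref{fig:plateau-cons}), inside $NW_{a,b}^{\circ}$ every edge-weight of $(\Gamma_0,w')$ has order $0$, the diagonal vertices there carry order $h$, and the two edges of $L_{a,b}$ carry order $f$; consequently, by \cref{explicit-minors}, the entries and minors of $X'$ that involve the position $(a,b)$ are explicit monomials in the $u'$'s times powers of $t$ whose exponents are read off from $h$, $f$ and $\beta_{a,b}$. I then define $X(s)$ by a single elementary modification of $X'$ supported near $(a,b)$ --- concretely, adding $s\,t^{\beta_{a,b}}$ to the $(a,b)$-entry of $X'$ (and, when $(a,b)$ lies off the diagonal of weighted vertices, carrying out one auxiliary row or column operation to compensate the shift of the single neighbouring weight exponent). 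By construction $X(0) = X'$.

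It remains to check that for generic $s$ the arc $X(s)$ lies in $\mathcal{C}_\beta$. By \cref{gens-rect,ess-to-full} this amounts to showing that $\ord_{X(s)}(\Omega_{(j^i)}) = \beta_{i,j} + \beta_{i+1,j+1} + \cdots$ for all $(i,j)$, which in turn reduces (using \cref{minor-projections}) to verifying that for generic $s$ the modification lowers the $t$-orders of exactly the final minors $M_{a,b}, M_{a-1,b-1}, \dots$ by one while leaving every other final minor unchanged. Because the plateau pins down the relevant entries of $X'$ near $(a,b)$, this can be checked by a direct computation of the low-order terms of these minors: the exponent $\beta_{a,b}$ is precisely the one that shifts a single unit of $t$-order across the corner, and no cancellation occurs for generic $s$. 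This last verification --- choosing the auxiliary operation correctly in the off-diagonal cases and ruling out an unintended order drop in some nearby minor --- is the step I expect to be the main obstacle; the rest is formal.
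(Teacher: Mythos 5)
Your overall strategy is the right one, and it agrees with the paper's in spirit: use the opposite big cell and the planar network $\Gamma_0$ to parametrize the generic point of $\mathcal C_{\beta'}$, then build an explicit one-parameter family $X(s)$ with $X(0)$ generic in $\mathcal C_{\beta'}$ and $X(s)$ generic in $\mathcal C_\beta$ for $s\neq 0$. Your argument for strictness (distinct invariant factor profiles give disjoint strata, and by \cref{irred} both closures are irreducible, so they cannot coincide) is fine. Your observation that \cref{nash-c} does not apply in general is essentially correct, though not universally: in boundary configurations the inequality $c(\beta)\le c(\beta')$ can actually hold (e.g.\ when $a=1$ or $b=1$ and the relevant neighboring position $(a-1,b)$ or $(a,b-1)$ either does not exist or sits on the diagonal), so \cref{nash-c} already settles some instances.

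The gap is in the step you yourself flag. Your proposed modification is to add $s\,t^{\beta_{a,b}}$ to the single entry $x'_{a,b}$, possibly together with an unspecified ``auxiliary row or column operation.'' With that modification, the only tool you have to compute $\ord_t(M_{a-\ell,b-\ell}(s))$ is a Laplace expansion along the $(a,b)$ entry: $M_{a-\ell,b-\ell}(s) = M_{a-\ell,b-\ell}(0) + s\,t^{\beta_{a,b}} C_{a,b}$, where $C_{a,b}$ is the cofactor. But $C_{a,b}$ is a minor of $X'$ whose row and column sets skip $(a,b)$, so it is \emph{not} a final minor, and its exact $t$-order is not directly given by \cref{explicit-minors} or \cref{minor-projections}; you would need a fresh estimate (upper and lower) tailored to this non-contiguous minor, and you would also have to rule out an unintended drop in the non-final minors $M\le M_{i,j}$ that \cref{gens-rect} tells you to inspect. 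You explicitly call this ``the main obstacle'' and do not carry it out --- but that obstacle \emph{is} the theorem. The paper avoids this difficulty by modifying the network rather than the matrix: it forms $\Gamma_1$ from $\Gamma_0$ by inserting a diagonal edge from $v_{a,b+1}$ to $v_{a+1,b}$ with weight $s\,t^{c_{a,b}}$. This simultaneously perturbs all entries $x_{i,j}$ with $i\le a$, $j\le b$ (not only $x_{a,b}$), but in exactly the coherent way that keeps Lindström's lemma applicable: every minor of $X(\Gamma_1,w(s))$ is still a sum of weights of non-intersecting path collections, the plateau hypothesis (encoded as in \cref{eq:c-3pos} and \cref{fig:plateau-cons}) pins down the $t$-orders of all weights in $NW_{a,b}$, and the only new path collections are the ones substituting the corner path $p_\ell$ by its shortcut $\hat p_\ell$, whose weight order drops by exactly one. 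Your single-entry perturbation destroys this path structure and with it the clean mechanism the paper uses to control all the relevant minors at once; it might be repairable with serious additional work, but as written the verification step is absent and the proposal does not constitute a proof.
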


\begin{proof}
We will produce a wedge realizing the containment. That is, we will construct a
one-parameter family of arcs $\Lambda_s$ such that the generic point of the
family belongs to $\mathcal C_\beta$ and the special point of the family is the
generic point of $\mathcal C_{\beta'}$.

Let $s$ be a transcendental, meant to be the parameter of the family we are
about to construct.

We let $\Gamma_1$ be the planar network obtained from $\Gamma_0$ by adding one
diagonal edge joining $v_{a,b+1}$ with $v_{a+1,b}$. Here $v_{i,j}$ denotes the
internal vertex of $\Gamma_0$ in position $(i,j)$, $v_{i,n-k+1}$ is the source
labeled $i$, and $v_{k+1,j}$ is the sink labeled $j$. We construct a weighting
$w(s)$ of $\Gamma_1$ in the following way. The new diagonal edge of $\Gamma_1$
gets weight $s t^{c_{a,b}}$, where $c(\beta) = (c_{i,j})$ are the weight
exponents of $\beta$. The rest of edges of $\Gamma_1$, and all the vertices,
get the same weight as in $(\Gamma_0,w(u,\beta'))$. See \cref{fig:add-edge} for
the local structure of $(\Gamma_1,w(s))$ around the added diagonal edge.

\begin{figure}[ht]
    \centering
    \includegraphics{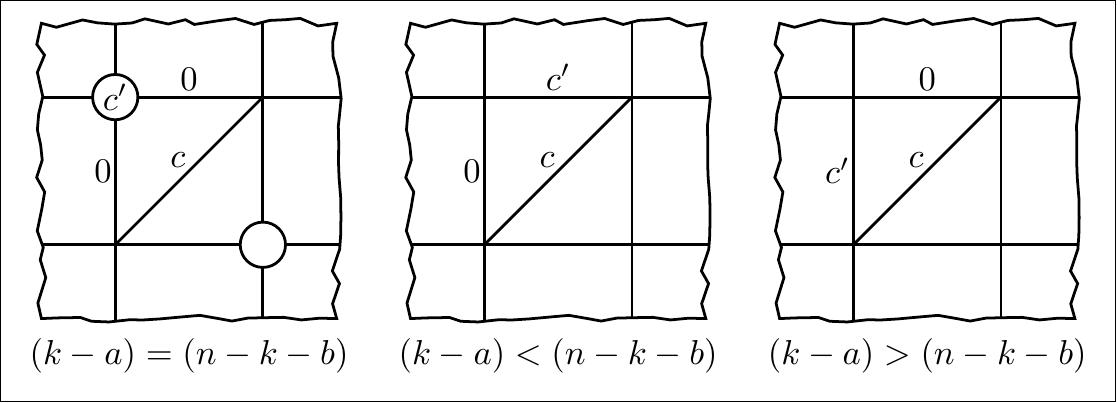}
    \caption{The local structure of $(\Gamma_1,w(s))$. Edges and weighted
    vertices are labeled with the order of the weight. $c = c_{a,b}$ and $c' =
    c'_{a,b}$ are weight exponents for $\beta$ and $\beta'$.}
    \label{fig:add-edge}
\end{figure}

We have a morphism
\[
    \mathbb A^1
    \longrightarrow
    J_\infty G(k,n),
    \qquad
    s \mapsto \Lambda(\Gamma_1, w(s)).
\]
When $s=0$, the added diagonal edge in $\Gamma_1$ gets weight $0$, and
therefore it does not affect the weight matrix. Hence $\Lambda(\Gamma_1,w(0)) =
\Lambda(\Gamma_0,w(u,\beta'))$. The \lcnamecref{nash-hard} follows if we show
that $\Lambda(\Gamma_1,w(s))$ belongs to $\mathcal C_\beta$ for generic $s$.

To lighten notation, we will simply denote by $w_p$ the weight of a path $p$ in
the weighted network $(\Gamma_0,w(u,\beta))$, and by $w'_q$ the weight of a
path in $(\Gamma_1,w(s))$. Notice that if the path $p$ is contained in
$\Gamma_0$, then $w'_p$ coincides with the weight of $p$ with respect to
$(\Gamma_0,w(u,\beta'))$.

To determine the invariant factor profile of $\Lambda(\Gamma_1,w(s))$ we first
compute the orders of its final minors. Let $M = M_{i_0,j_0}$ be a final minor
of $\Lambda(\Gamma_1,w(s))$ with row set $I=[i_0 \ldots i_r]$ and column set
$J=[j_0 \ldots j_r]$. Our goal is to show the following:
\begin{equation}
    \label{eq:nash-hard-goal}
    \ord_t(M)
    = \beta_{i_0,j_0} + \cdots + \beta_{i_r,j_r}.
\end{equation}

We know that there is a unique collection of paths $P = \{p_0, \ldots, p_r\}$
in $\Gamma_0$ connecting $I$ with $J$. These paths verify:
\[
    \ord_t(w'_{p_0}) = \beta'_{i_0,j_0},
    \quad
    \ord_t(w'_{p_1}) = \beta'_{i_1,j_1},
    \quad\ldots\quad
    \ord_t(w'_{p_r}) = \beta'_{i_r,j_r}.
\]
Also, recall that:
\begin{align*}
    & \beta'_{i,j} = \beta_{i,j} \quad\qquad \text{if $(i,j) \ne (a,b)$, and}
    \\ & \beta'_{a,b} = \beta_{a,b} + 1 .
\end{align*}

If none of the pairs $(i_0, j_0)$ equals $(a,b)$, then $P$ is also the unique
collection of paths in $\Gamma_1$ connecting $I$ with $J$. In this case it is
easy to show that \cref{eq:nash-hard-goal} holds:
\[
    \ord_t(M) = \ord_t(w'_P)
    = \beta'_{i_0,j_0} + \cdots + \beta'_{i_r,j_r}
    = \beta_{i_0,j_0} + \cdots + \beta_{i_r,j_r}.
\]

Assume that $(i_\ell,j_\ell) = (a,b)$ for some $0 \leq \ell \leq r$. Write $M =
\sum w'_Q$, where $Q = \{q_0, \ldots, q_r\}$ ranges among the collections of
paths in $\Gamma_1$ connecting $I$ with $J$. For each such $Q$ the last paths
agree with the ones of $P$, more precisely:
\begin{equation}
    \label{eq:nash-hard-step-1}
    p_{\ell+1} = q_{\ell+1}, \qquad
    p_{\ell+2} = q_{\ell+2}, \qquad
    \ldots\qquad
    p_r = q_r.
\end{equation}
There are two possibilities for $q_\ell$. It could be $q_\ell = p_\ell$, in
which case we also have $P=Q$. Otherwise $q_\ell$ is the path $\hat p_\ell$
obtained from $p_\ell$ by removing the north-west corner and adding the
diagonal edge of $\Gamma_1$. From the computation in \cref{fig:add-edge} we see
that the weight of $\hat p_\ell$ is given by:
\[
    \ord_t(w'_{\hat p_\ell}) = \ord_t(w'_{p_\ell}) - c'_{a,b} + c_{a,b} =
    \beta'_{a,b} - c'_{a,b} + c_{a,b} = \beta_{a,b}.
\]
We consider the collection $\hat P$ obtained from $P$ by replacing $p_\ell$
with $\hat p_\ell$. Observe that:
\[
    \ord_t(w'_{\hat P})
    = \ord_t(w'_P) - \beta'_{a,b} + \beta_{a,b}
    = \beta_{i_0,j_0} + \cdots + \beta_{i_r,j_r}.
\]
Therefore, in order to finish the proof of \cref{eq:nash-hard-goal}, it is
enough to show that
\begin{equation}
    \label{eq:nash-hard-goal-2}
    \ord_t(w'_Q) \geq \ord_t(w'_{\hat P}).
\end{equation}

If $q_\ell = p_\ell$, then $Q = P$ and \cref{eq:nash-hard-goal-2} follows from
the fact that $\beta'_{a,b} = \beta_{a,b} + 1$. We can therefore assume that
$q_\ell = \hat p_\ell$.

The paths $q_0,\ldots,q_{\ell-1}$ are contained in $\Gamma_0$, and agree with
the paths $p_0,\ldots,p_{\ell-1}$ away from the region $NW_{a,b}$. From the
discussion summarized in \cref{fig:plateau-cons}, we see that we have
\begin{equation}
    \label{eq:nash-hard-step-2}
    \ord_t(w'_{q_0}) = \ord_t(w'_{p_0}),\quad
    \ldots\quad
    \ord_t(w'_{q_{\ell-2}}) = \ord_t(w'_{p_{\ell-2}}).
\end{equation}
For $q_{\ell-1}$ there are two possibilities. We can have $q_{\ell-1} =
p_{\ell-1}$, which implies $Q = \hat P$. Otherwise $q_{\ell-1}$ is obtained
from $p_{\ell-1}$ by removing the north-west corner, and replacing it with the
corresponding south-west corner. Again using \cref{fig:plateau-cons}, we see
that
\[
    \ord_t(w'_{q_{\ell-1}}) =
    \ord_t(w'_{p_{\ell-1}}) + f'
\]
if $(k-a) \ne (n-k-b)$, and
\[
    \ord_t(w'_{q_{\ell-1}}) =
    \ord_t(w'_{p_{\ell-1}}) - h' + c'_{a,b} + 2f' 
\]
if $(k-a) = (n-k-b)$. Here $h' \geq 0$ and $f' \geq 0$ are the height and fall
of the plateau in the plane partition $\beta'$. Observe that when $(k-a) =
(n-k-b)$ we have that $h' = \beta'_{a-1,b-1}$, $c'_{a,b} = \beta'_{a,b}$, and
$f' = \beta'_{a-1,j-1} - \beta'_{a,b}$. In all instances, we see that
\begin{equation}
    \label{eq:nash-hard-step-3}
    \ord_t(w'_{q_{\ell-1}}) \geq
    \ord_t(w'_{p_{\ell-1}}).
\end{equation}
Combining \cref{eq:nash-hard-step-1,eq:nash-hard-step-2,eq:nash-hard-step-3} we
get \cref{eq:nash-hard-goal-2}, and therefore \cref{eq:nash-hard-goal} is
proven.

Recall that we need to prove that $\Lambda(\Gamma_1,w(s))$ has invariant factor
profile $\beta$. Using \cref{eq:nash-hard-goal}, this is amounts to showing
that
\[
    \ord_{\Lambda(\Gamma_1,w(s))}(\Omega_{(j^i)})
    =
    \ord_t(M_{i,j}),
\]
where $M_{i,j}$ is a final minor of $\Lambda(\Gamma_1,w(s))$. Equivalently, we
need to show that
\begin{equation}
    \label{eq:nash-hard-goal-3}
    \ord_t(M)
    \geq
    \ord_t(M_{i,j}),
\end{equation}
for all minors $M$ of $\Lambda(\Gamma_1,w(s))$ such that $M \leq M_{i,j}$ (in
the order of minors defined in \cref{sec:grass}). Notice that
\cref{eq:nash-hard-goal-3} is implied by a version of \cref{explicit-minors}
for the weighed network $(\Gamma_1,w(s))$. But the networks $\Gamma_0$ and
$\Gamma_1$ are very similar, and the proof of \cref{explicit-minors} can be
adapted easily to give the result that we need.
\end{proof}

\section{Log discrepancies}

In this section we compute log discrepancies for Schubert valuations. This will
be essential for later sections, where we study log canonical thresholds of
pairs involving Schubert varieties. The following definitions will be useful.

\begin{definition}
    Let $\beta$ be a plane partition. The \emph{floor} at level $s$ of $\beta$
    is the plane partition of height $1$ containing the boxes of $\beta$ at
    height $s$. More precisely, if we denote by $\mu^s$ such floor, we have
    \[
        \mu^s_{i,j} = 1
        \qquad\Leftrightarrow\qquad
        \beta_{i,j} \geq s.
    \]
    Notice that a plane partition of height $1$ is determined by its base, so
    we can also think of the $\mu^s$ as linear partitions. A plane partition is
    determined by its floors. In fact, given a nested sequence of plane
    partitions of height $1$ (or a nested sequence of linear partitions)
    \[
        \mu^1 \supseteq \mu^2 \supseteq \mu^2 \supseteq \cdots
        \supseteq \mu^h
    \]
    there is a unique plane partition $\beta$ having $\mu^s$ as the floor at
    level $s$. Notice that
    \[
        \beta = \mu^1 + \mu^2 + \mu^3 + \cdots
        + \mu^h
    \]
    Geometrically, we think of $\beta$ as obtained by stacking the floors on
    top of each other, with $\mu^1$ on the bottom and $\mu^h$ on top.
\end{definition}

\begin{definition}
    Let $\beta$ be a plane partition. The \emph{pillar} of $\beta$ in position
    $(i,j)$ is the collection of boxes which lay above the position $(i,j)$ in
    the plane. Notice that the number of boxes of the pillar in position
    $(i,j)$ is $\beta_{i,j}$, so a plane partition is determined by its
    pillars.
\end{definition}

\begin{proposition}
    \label{codim}
    Let $\beta = (\beta_{i,j})$ be a plane partition, possibly with infinite
    height. Then the codimension of $\mathcal C_\beta$ in $J_\infty G(k,n)$ is
    the number of boxes in $\beta$:
    \[
    \codim(\mathcal C_\beta, J_\infty G(k,n)) = \sum_{i,j} \beta_{i,j}
    \]
\end{proposition}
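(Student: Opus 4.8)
The plan is to reduce the statement, by the usual cylinder/truncation techniques, to a computation in a high enough jet scheme, and then to compute the codimension of the contact stratum there by exhibiting the stratum (or a dense open subset of it) as a bundle over something understandable. Concretely, first I would invoke the description of $\mathcal C_\beta$ via the morphism $\Psi_\beta\colon B\times J_\infty(\mathbb C^\times)^{k(n-k)}\to J_\infty G(k,n)$ from the proof of \cref{irred}: its image is all of $\mathcal C_\beta$. Since the codimension of a contact stratum is well-defined as the codimension of a sufficiently high truncation (the stratum is a cylinder — it is cut out by contact conditions with respect to finitely many Schubert varieties, by \cref{ess-cont-prof} and \cref{ess-to-full}), it suffices to work in $J_m G(k,n)$ for $m \gg |\beta|$ (for $\beta$ of infinite height one truncates both $\beta$ and the jet level simultaneously, reducing to the finite-height case). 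So the first step is to set up this truncation carefully, using that $G(k,n)$ is smooth of dimension $k(n-k)$, so $\dim J_m G(k,n) = (m+1)k(n-k)$.

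Next I would compute the dimension of the truncated stratum directly in the opposite big cell $\mathcal U$. By $B$-invariance of contact strata and the fact that every Schubert variety meets $\mathcal U$, a dense subset of $\mathcal C_\beta$ lies in $J_\infty\mathcal U \cong J_\infty\mathbb A^{k(n-k)}$, and there, thanks to \cref{gens-rect}, the invariant-factor-profile condition becomes a system of conditions $\ord_t(M_{i,j}) = \beta_{i,j} + \beta_{i+1,j+1} + \cdots$ on the final minors together with inequalities $\ord_t(M) \ge \ord_t(M_{i,j})$ for all $M \le M_{i,j}$. The key computational input is the inductive structure used in the proof of \cref{precise-irred} part \ref{precise-irred:1}: the entry $x_{i,j}$ is, modulo lower-indexed data, recovered from $M_{i,j}$, $M_{i+1,j+1}$, and a polynomial $P$ in entries with larger indices. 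I would run this induction from the south-east corner to the north-west to count, at each position $(i,j)$, how many of the $m+1$ coefficients of the power series $x_{i,j}$ are forced to vanish: the claim to establish is that exactly $\alpha_{i,j} = \beta_{i,j}+\beta_{i+1,j+1}+\cdots$ coefficients of the $(1,1)$-type are pinned down "for free" but more precisely that the stratum in $J_m\mathcal U$ has codimension $\sum_{i,j}\beta_{i,j}$. A clean way to see the count: \cref{def:if-prof} and \cref{schubert-cond-lattices} express $\alpha_{i,j}$ as the partial sum of the pillar of $\beta$, and $\sum_{i,j}\beta_{i,j}$ telescopes against the essential contact profile — I would phrase the whole count in terms of the essential contact profile $\alpha$ and then use $\sum_{i,j}\beta_{i,j} = \sum \beta_{i,j}$ directly, since $\beta$ is by definition the invariant factor profile and $|\beta| = \sum\beta_{i,j}$ is its volume.

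The cleanest route, and the one I would actually write up, is to use the explicit parametrization: the morphism $\Psi_\beta$ is generically finite onto $\mathcal C_\beta$ (indeed $\Phi_\beta$ is injective, and the $B$-action adds only finitely many extra dimensions that are absorbed — one has to check that the generic fiber of $\Psi_\beta$ has dimension $\dim B - $ (dimension of a generic stabilizer), which must work out so that $\dim \mathcal C_\beta = \dim \operatorname{image}\Phi_\beta + (\text{orbit correction})$). Since $\Phi_\beta$ lands in the opposite big cell and is injective with image consisting of arcs whose $(i,j)$-entry $x_{i,j}$ has a free unit factor $u_{i,j}$ times $t^{c_{i,j}}$ with $c_{i,j}$ the weight exponents of \cref{sec:planar-networks}, truncating at level $m$ shows the image of $\Phi_\beta$ has codimension $\sum_{i,j} c_{i,j}$ inside $J_m\mathcal U$; and $\sum_{i,j} c_{i,j} = \sum_{i,j}\beta_{i,j}$ because the weight-exponent formulas are precisely a telescoping of $\beta$ along diagonals (each $\beta_{i,j}$ is counted once as one traverses the three cases $k-i < n-k-j$, $k-i = n-k-j$, $k-i > n-k-j$). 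Then the $B$-orbit direction: the generic $B$-translate of a $\Phi_\beta$-arc sweeps out exactly the full stratum (\cref{precise-irred} part \ref{precise-irred:2}), and since $B$ acts on $J_\infty G(k,n)$ with the same generic stabilizer dimension on $\mathcal C_\beta$ as on the generic arc of $J_\infty G(k,n)$ itself — here one uses that the generic arc of $\mathcal C_\beta$ and the generic arc of $J_\infty G(k,n)$ both have trivial $B$-stabilizer at the level of jets, because $B$ acts freely on the big cell — the codimension is unchanged by passing from $\operatorname{image}\Phi_\beta$ to $\mathcal C_\beta$, giving $\codim(\mathcal C_\beta, J_\infty G(k,n)) = \sum_{i,j}\beta_{i,j}$.

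I expect the main obstacle to be the $B$-orbit bookkeeping: verifying that spreading $\operatorname{image}\Phi_\beta$ out by the $B$-action does not change the codimension, i.e., that the "new directions" contributed by $B$ are exactly compensated by the fibers of $\Psi_\beta$. The safe way to handle this is to avoid dimension-counting on $B$ altogether and instead argue intrinsically inside $J_m\mathcal U$: show (via the inductive minor expansion of \cref{precise-irred}\ref{precise-irred:1}, applied not just to final minors but run over the whole matrix) that the locus in $J_m\mathcal U$ of jets with essential contact profile $\alpha$ is irreducible of codimension $\sum_{i,j}\beta_{i,j}$, where $\beta_{i,j} = \alpha_{i,j} - \alpha_{i+1,j+1}$. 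This makes the computation a self-contained induction on the matrix positions, with the telescoping sum $\sum(\alpha_{i,j} - \alpha_{i+1,j+1}) = \sum\beta_{i,j}$ falling out at the end, and sidesteps the group-theoretic subtlety entirely.
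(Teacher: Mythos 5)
Your proposed dimension count contains a concrete error that the proof does not recover from, and the paper's actual argument is of a completely different character.

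The key mistake is in the step where you claim the image of $\Phi_\beta$ has codimension $\sum_{i,j} c_{i,j}$ in $J_m\mathcal U$, and that $\sum c_{i,j} = \sum\beta_{i,j}$ ``because the weight-exponent formulas are precisely a telescoping of $\beta$ along diagonals.'' Both halves of this are false. First, the identity fails already in $G(2,4)$: from the definition of $c(\beta)$ one has $c_{11}=\beta_{11}$, $c_{12}=\beta_{12}-\beta_{22}$, $c_{21}=\beta_{21}-\beta_{22}$, $c_{22}=\beta_{22}$, so $\sum c_{i,j} = \sum\beta_{i,j} - 2\beta_{22} \ne \sum\beta_{i,j}$ whenever $\beta_{22}>0$. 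Second, the premise that the codimension is $\sum c_{i,j}$ comes from the misidentification of the matrix entries $x_{i,j}$ with the edge/vertex weights $w_{i,j}=t^{c_{i,j}}u_{i,j}$; in reality $x_{i,j}$ is a sum over paths of products of the $w$'s (e.g., already in $G(2,4)$, $x_{11} = w_{11} + w_{12}w_{21}w_{22}$), so the ``free unit factor times $t^{c_{i,j}}$'' description of $x_{i,j}$ is wrong. A careful fiber-dimension computation for $\Phi_\beta^{(m)}$ (solving iteratively for the $w_{i,j}$ from the $x_{i,j}$) does land on $\sum\beta_{i,j}$ in small examples, so a version of this route could in principle be made to work, but you would have to carry out an induction controlling precisely how many coefficients of each $u_{i,j}$ are determined by $X\bmod t^{m+1}$, which is delicate and not what your argument does; the telescoping identity you invoke is the wrong one. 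The secondary gap you flagged yourself — quantifying the effect of sweeping by the finite-dimensional $B$ — also remains open in your write-up.

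By contrast, the paper does not attempt a direct dimension count at all. It uses the plateau criterion of \cref{nash-hard} to build a maximal chain of closed contact strata
\[
\overline{\mathcal C}_{\beta^{0}} \supsetneq \overline{\mathcal C}_{\beta^{1}} \supsetneq \cdots \supsetneq \overline{\mathcal C}_{\beta^{N}},\qquad N = h\,k(n-k),
\]
passing through $\beta = \beta^{c}$ with $c=\sum\beta_{i,j}$, and obtained by adding one box at a time. Irreducibility (\cref{irred}) gives $\codim\mathcal C_{\beta^r} \ge r$, and the top of the chain $\overline{\mathcal C}_{\beta^N}$ is identified with the contact locus $\Cont^{\ge h}(\Omega_\mu)$ for the Schubert point $\Omega_\mu$, whose codimension is visibly $N$. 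The sandwich then forces $\codim\mathcal C_{\beta^r}=r$ for all $r$, and in particular $\codim\mathcal C_\beta = c$. This route entirely sidesteps the subtleties about fibers of $\Phi_\beta$ and the $B$-action that trip up your proposal, at the cost of leaning on the generalized Nash machinery (\cref{nash-hard}) from the preceding section.
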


\begin{proof}
If $\beta$ has infinite height, then $\mathcal C_\beta \subset J_\infty
\Omega_{\lambda}$, where $\lambda$ is the (linear) partition marking the
infinite pillars of $\beta$ (i.e., the diagram of $\lambda$ contains the box in
position $(i,j)$ if and only if $\beta_{i,j} = \infty$). Since $J_\infty
\Omega_\lambda$ has infinite codimension in $J_\infty G(k,n)$, the
\lcnamecref{codim} follows in this case.

Assume that $\beta$ has finite height, let $h = \beta_{1,1}$ be this height,
and let $c = \sum \beta_{i,j}$ be the number of boxes in $\beta$. We let $N = h
k (n-k)$. We denote by $\beta^0$ the empty plane partition, and by $\beta^N$
the constant plane partition with height $h$ (i.e., $\beta^0_{i,j} = 0$ and
$\beta_{i,j}^N = h$ for all $i$,$j$). We will give an algorithm to construct a
sequence of nested plane partitions
\[
    \beta^0 \subset \beta^1 \subset \cdots \subset \beta^{c-1}
    \subset
    \beta^{c} \subset \beta^{c+1} \subset \cdots \subset \beta^N,
\]
such that $\beta^c = \beta$, and such that there are containments
$\overline{\mathcal C}_{\beta^r} \supsetneq \overline{\mathcal
C}_{\beta^{r+1}}$ for all $0 \leq r < N$.

The sequence $\beta^0 \subset \cdots \subset \beta^c$ corresponds to a process
of building the plane partition $\beta$ by adding one box at a time, and in
such a way that the boxes in lower floors are added before the boxes in higher
floors. To make this explicit, we order the pillars of a plane partition
lexicographically according to their positions. Assuming $\beta^r$ has been
constructed, we consider the boxes of $\beta$ which are not contained in
$\beta^r$. Among these, we select the one box which is in the lowest possible
floor and in the lexicographically smallest pillar. We add this box to
$\beta^r$ to get $\beta^{r+1}$. Notice that in this way the pairs $\beta^r
\subset \beta^{r+1}$ satisfy the hypotheses of \cref{nash-hard}: $\beta^{r+1}$
is obtained from $\beta^r$ by adding one box in the corner of a plateau with
fall $1$. In particular we get containments $\overline{\mathcal C}_{\beta^r}
\supsetneq \overline{\mathcal C}_{\beta^{r+1}}$.

The sequence $\beta^c \subset \cdots \subset \beta^N$ corresponds to a process
of building the plane partition $\beta^N$ starting from $\beta^c$ by adding one
box at a time. This can also be done compatibly with \cref{nash-hard}. For
example, we order the pillars of $\beta$ lexicographically, and at each step we
add one box to the pillar which is lexicographically smallest among those
having height less than $h$. Again, we get containments $\overline{\mathcal
C}_{\beta^r} \supsetneq \overline{\mathcal C}_{\beta^{r+1}}$.

For an example of how this algorithm works, see \cref{fig:codim-example}.

\newsavebox{\smlmat}
\savebox{\smlmat}{$\beta = \left(\begin{smallmatrix}3&2\\1&1\end{smallmatrix}\right)$}
\begin{figure}[ht]
    \centering
    \includegraphics{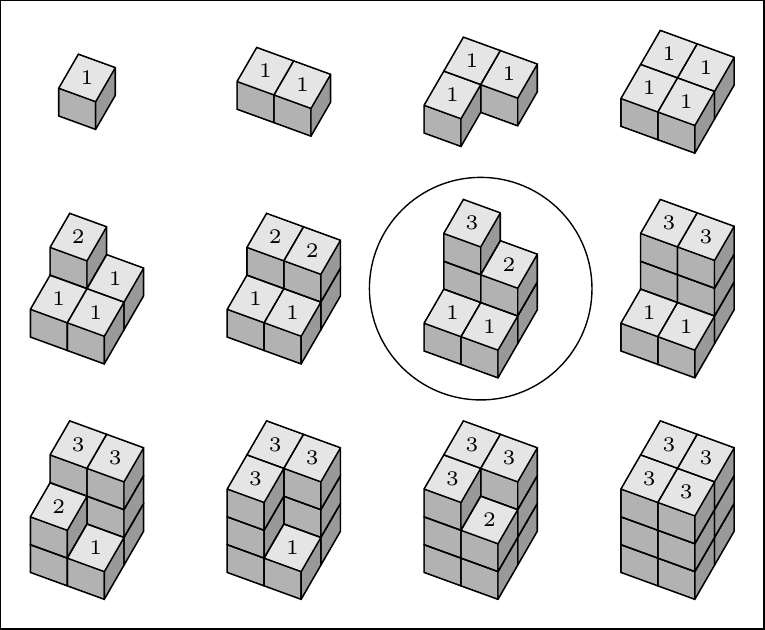}
    \caption{The algorithm of the proof of \cref{codim} for the plane partition
    \usebox\smlmat\ in $G(2,4)$.}
    \label{fig:codim-example}
\end{figure}

We have produced a nested sequence of closed contact strata:
\[
    \overline{\mathcal C}_{\beta^{0}}
    \supsetneq
    \overline{\mathcal C}_{\beta^{1}}
    \supsetneq
    \cdots
    \supsetneq
    \overline{\mathcal C}_{\beta^{N}}
\]
Since closed contact strata are irreducible (by \cref{irred}), we get lower
bounds
\[
    \codim \mathcal C_{\beta^r} \geq r.
\]

Let $\mu = ((n-k)^{k})$ be the linear partition whose diagram is the rectangle
of size $k\times(n-k)$. Then the Schubert variety $\Omega_\mu$ is just a point
(the Borel-fixed point), and it is easy to check that $\Cont^{\geq
h}(\Omega_\mu)$ is irreducible and
\[
    \codim \Cont^{\geq h}(\Omega_\mu) = h k (n-k) = N.
\]

Let $\Lambda$ be the generic point of $\Cont^{\geq h}(\Omega_\mu)$, and let
$\alpha$ be the contact profile of $\Lambda$. Notice that $\alpha$ is the
smallest possible contact profile of all arcs in $\Cont^{\geq h}(\Omega_\mu)$,
that is, the smallest contact profile for which $\alpha_\mu \geq h$. This
implies that $\alpha$ corresponds to the invariant factor profile $\beta^N$,
and therefore $\Cont^{\geq h}(\Omega_\mu) \subseteq \overline{\mathcal
C}_{\beta^N}$. In fact, from the above dimensional considerations, we get the
equality $\Cont^{\geq h}(\Omega_\mu) = \overline{\mathcal C}_{\beta^N}$.
Finally, this implies that $\codim \mathcal C_{\beta^r} = r$, and the
\lcnamecref{codim} follows.
\end{proof}

The following result is an immediate application of
\cref{eq:discrepancy-formula}.

\begin{corollary}
    \label{discrepancy}
    Let $\beta = (\beta_{i,j})$ be a plane partition with finite height. Let
    $\ord_\beta$ denote the corresponding Schubert valuation, $q_\beta$ its
    multiplicity, and $k_\beta = k_{\ord_\beta}(G(k,n))$ its discrepancy. Then
    \[
        k_\beta + q_\beta = \sum_{i,j} \beta_{i,j}.
    \]
\end{corollary}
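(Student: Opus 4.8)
The plan is to read off the statement from the arc-theoretic discrepancy formula \eqref{eq:discrepancy-formula} once $\codim(\mathcal C_\beta, J_\infty G(k,n))$ is known, which is precisely the content of \cref{codim}. The two preliminary points to settle are that the ambient variety is smooth --- so that \eqref{eq:discrepancy-formula} applies --- and that $\ord_\beta$ is a genuine divisorial valuation --- so that the multiplicity $q_\beta$ and the discrepancy $k_\beta$ appearing in the statement are defined.

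Smoothness of $G(k,n)$ is standard. For divisoriality I would use the hypothesis that $\beta$ has finite height: then the base $\beta^\infty$ of the infinite entries is empty, so the home of $\ord_\beta$ is the whole Grassmannian and $\ord_\beta$ is an honest valuation, not merely a semi-valuation. Moreover, as observed after \cref{irred}, the contact stratum $\mathcal C_\beta$ is a contact locus in this case; concretely, writing $\alpha_{i,j} = \beta_{i,j} + \beta_{i+1,j+1} + \cdots$ for the essential contact profile attached to $\beta$, one has $\mathcal C_\beta = \bigcap_{i,j} \Cont^{=\alpha_{i,j}}(\Omega_{(j^i)})$, a finite intersection of contact loci and in particular a constructible set with $\ord_{\mathcal C_\beta} = \ord_\beta$. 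By the characterization of divisorial valuations via constructible sets recalled in \cref{sec:arcs}, $\ord_\beta$ is therefore divisorial.

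Now I would apply \eqref{eq:discrepancy-formula} to $X = G(k,n)$ and $v = \ord_\beta$. By the Definition following \cref{irred}, the maximal arc set $\mathcal C_{\ord_\beta}$ is the closed contact stratum $\overline{\mathcal C}_\beta$, so the formula reads $q_\beta + k_\beta = \codim(\overline{\mathcal C}_\beta, J_\infty G(k,n))$. Since a subset and its closure have the same codimension, the right-hand side equals $\codim(\mathcal C_\beta, J_\infty G(k,n))$, which is $\sum_{i,j}\beta_{i,j}$ by \cref{codim}. This yields the asserted identity.

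There is essentially no obstacle: all the substantive work sits in \cref{codim}, which itself rests on the irreducibility of contact strata (\cref{irred}) and the plateau containments (\cref{nash-hard}). The only matters requiring care are bookkeeping --- keeping straight the contact stratum, its closure, and the maximal divisorial set --- and checking the hypotheses under which \eqref{eq:discrepancy-formula} is valid, namely smoothness of the ambient and divisoriality of the valuation; the finite-height assumption is exactly what turns $\ord_\beta$ into a divisorial valuation rather than merely a divisorial semi-valuation.
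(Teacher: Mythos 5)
Your proof is correct and follows essentially the same route the paper has in mind: read the corollary off from the arc-space discrepancy formula \eqref{eq:discrepancy-formula} combined with the codimension computation of \cref{codim}, after noting that finite height makes $\ord_\beta$ a divisorial valuation and that $\overline{\mathcal C}_\beta$ is the associated maximal divisorial set. The paper compresses all of this into ``an immediate application''; you have simply made the hypothesis checks explicit.
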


\section{Log canonical thresholds}

\label{sec:lct}

In this section we study log canonical thresholds of pairs involving Schubert
varieties. As mentioned in the introduction, for this we need to introduce a
polytope, ${\rm SV}(k,n)$, which we call the \emph{polytope of normalized
Schubert valuations}. We start by expanding the discussion of the introduction,
and describe ${\rm SV}(k,n)$ in detail.

\subsection*{The cone of plane partitions}

A \emph{plane $\mathbb R$-partition} is a matrix $\beta = (\beta_{i,j})$ of
size $k \times (n-k)$, with real coefficients, and verifying the inequalities
\[
    \beta_{i,j} \geq 0,
    \qquad
    \beta_{i,j} \geq \beta_{i+1,j},
    \qquad\text{and}\qquad
    \beta_{i,j} \geq \beta_{i,j+1},
\]
whenever they make sense. We denote by $\mathbb R{\rm PP}(k,n) \subset \mathbb
R^{k(n-k)}$ the set of plane $\mathbb R$-partitions, and we write ${\rm
PP}(k,n) = \mathbb R{\rm PP}(k,n) \cap \mathbb Z^{k(n-k)}$ for its intersection
with the standard lattice. Notice that the elements of ${\rm PP}(k,n)$ are the
(usual) plane partitions. 

By definition, $\mathbb R{\rm PP}(k,n)$ is a pointed rational convex polyhedral
cone, with vertex at the origin (corresponding to the empty plane partition).
It is the convex hull of ${\rm PP}(k,n)$.

Recall that given a linear partition $\mu$, there is a unique plane partition
of height $1$ having $\mu$ as its base. We will use the same notation, $\mu$,
for this one-floor plane partition. We consider what we call \emph{chains of
floors}: these are sets of non-empty floors which are totally ordered with
respect to inclusion. We write them in the form $\mu^\bullet = \{ \mu^1
\supsetneq \mu^2 \supsetneq \dots \supsetneq \mu^h \}$, and call
$\ell(\mu^\bullet) = h$ the \emph{length} of the chain $\mu^\bullet$.

Given a chain of floors $\mu^\bullet$ of length $h$ and non-negative integers
$a_1, a_2, \ldots, a_h$, we have a plane partition $\beta \in {\rm PP}(k,n)$
given by
\[
    \beta = a_1 \mu^1 + a_2 \mu^2 + \cdots + a_h \mu^h.
\]
Moreover, any plane partition whose set of floors is contained in $\mu^\bullet$
has a unique expression of the above form. We denote by ${\rm PP}(\mu^\bullet)$
the set of plane partitions obtained in this way, and by $\mathbb R{\rm
PP}(\mu^\bullet)$ the corresponding convex cone.

The following facts follow from straightforward computations:
\begin{enumerate}
    \item A chain of floors $\mu^\bullet$ can always be completed to a basis of
        the lattice $\mathbb Z^{k(n-k)}$. In particular the cones $\mathbb
        R{\rm PP}(\mu^\bullet)$ are non-singular (in the sense of
        \cite[\S2.1]{Ful93}), and of dimension $\ell(\mu^\bullet)$.
   \item Intersections of cones correspond to intersections of chains: given
       two chains $\mu^\bullet$ and $\mu^\bullet$, we have $\mathbb R{\rm
       PP}(\mu^\bullet) \cap \mathbb R{\rm PP}(\nu^\bullet) = \mathbb R{\rm
       PP}(\mu^\bullet \cap \nu^\bullet)$. In particular, $\mathbb R{\rm
       PP}(\nu^\bullet) \subseteq \mathbb R{\rm PP}(\mu^\bullet)$ if and only
       if $\nu^\bullet \subseteq \mu^\bullet$.
    \item Given a floor $\mu$, the one-dimensional cone $\mathbb R{\rm
        PP}(\{\mu\}) = \mathbb R_{\geq 0}\cdot \mu$ is an extremal ray of
        $\mathbb R{\rm PP}(k,n)$.
\end{enumerate}

The collection of cones $\mathbb R{\rm PP}(\mu^\bullet)$, where $\mu^\bullet$
ranges among all chains of floors, gives a non-singular fan whose support is
$\mathbb R{\rm PP}(k,n)$. The one-dimensional cones in this fan are exactly the
extremal rays of $\mathbb R{\rm PP}(k,n)$.

\subsection*{The polytope of normalized Schubert valuations}

Given a plane $\mathbb R$-partition $\beta \in \mathbb R{\rm PP}(k,n)$, we
denote by $|\beta| = \sum \beta_{i,j}$ the sum of the entries in $\beta$, and
call it the \emph{volume} of $\beta$. Notice that the volume of an element in
${\rm PP}(k,n)$ agrees with the log discrepancy of the corresponding valuation.
We set
\[
    {\rm SV}(k,n)
    =
    \{\,\, 
    \beta \in \mathbb R{\rm PP}(k,n)
    \,\,\mid\,\,
    |\beta|=1
    \,\,\},
\]
and call it the \emph{polytope of normalized Schubert valuations}. Analogously,
if $\mu^\bullet \ne \emptyset$ is a non-empty chain of floors, we set
\[
    {\rm SV}(\mu^\bullet)
    =
    \{\,\, 
    \beta \in \mathbb R{\rm PP}(\mu^\bullet)
    \,\,\mid\,\,
    |\beta|=1
    \,\,\}.
\]

Using the above description of $\mathbb R{\rm PP}(k,n)$ as the support of a
non-singular fan, we immediately get a simplicial structure on ${\rm SV}(k,n)$.
More precisely, ${\rm SV}(k,n)$ is a bounded rational convex polytope whose
extremal points are of the form $\mu/|\mu|$, where $\mu$ ranges among all
non-empty linear partitions with at most $k$ parts of size at most $n-k$. For a
non-empty chain of floors $\mu^\bullet$, the polytope ${\rm SV}(\mu^\bullet)$
is a simplex of dimension $\ell(\mu^\bullet)-1$, and its faces correspond to
sub-chains $\nu^\bullet \subseteq \mu^\bullet$. This collection of simplices
endows ${\rm SV}(k,n)$ with the structure of a simplicial complex. See
\cref{fig:polytope} for an example.

We let ${\rm Bru}^*(k,n)$ denote the poset of partitions with at most $k$ parts
of size $n-k$ endowed with the Bruhat order (containment of partitions), and
consider ${\rm Bru}(k,n) = {\rm Bru}^*(k,n) \setminus \{\emptyset\}$. Then the
simplicial complex ${\rm SV}(k,n)$ coincides with the nerve (in the sense of
category theory) of the poset ${\rm Bru}(k,n)$. But notice that ${\rm SV}(k,n)$
is not just an abstract simplicial complex, it has a natural geometric
realization embedded in $\mathbb R^{k(n-k)}$.

\subsection*{The Arnold multiplicity}

Fix a Schubert variety $\Omega_\lambda$, and let $(b_1^{a_1})$, \ldots,
$(b_r^{a_r})$ be the Schubert conditions of $\lambda$ (as defined before
\cref{schubert-ideals}). Notice that $(a_1,b_1)$, \ldots, $(a_r,b_r)$ are the
South-East corners of the diagram of $\lambda$. It follows from
\cref{ess-cont-prof} and \cref{def:if-prof} that
\begin{equation}
    \label{eq:ord-lambda}
    \ord_\beta(\Omega_\lambda)
    =
    \min_{s=1 \ldots r}
    \{
        \beta_{a_s,b_s}
        + \beta_{a_s+1,b_s+1}
        + \beta_{a_s+2,b_s+2}
        + \cdots
    \}
\end{equation}
for any plane partition $\beta$. In the above formula, for each $s$ we are
summing the entries of $\beta$ corresponding to the half-diagonal emanating
from the corner $(a_s,b_s)$. See \cref{fig:diagonals} for some examples.

We use \cref{eq:ord-lambda} to define $\ord_\beta(\Omega_\lambda)$ when $\beta$
is a plane $\mathbb R$-partition. We obtain a function on $\mathbb R{\rm
PP}(k,n)$, which we denote $\ord(\lambda)$:
\[
    \ord(\lambda)\colon \mathbb R{\rm PP}(k,n) \to \mathbb R,
    \qquad
    \beta \mapsto \ord(\lambda)(\beta) = \ord_\beta(\Omega_\lambda).
\]
By construction $\ord(\lambda)$ is a concave piecewise-linear function on
$\mathbb R{\rm PP}(k,n)$. We denote by $H_\lambda$ the biggest linear subspace
of $\mathbb R^{k(n-k)}$ where $\ord(\lambda)$ is linear. This is the biggest
linear subspace contained in the corner locus of $\ord(\lambda)$, and its
equations are
\[
    \beta_{a_s,b_s}
    + \beta_{a_s+1,b_s+1}
    + \beta_{a_s+2,b_s+2}
    + \cdots
    =
    \beta_{a_{s'},b_{s'}}
    + \beta_{a_{s'}+1,b_{s'}+1}
    + \beta_{a_{s'}+2,b_{s'}+2}
    + \cdots
\]
where $s$ and $s'$ range in $\{1, \ldots, r\}$. Observe that when $\lambda$ is
rectangular, $\ord(\lambda)$ is linear, and therefore $H_\lambda = \mathbb
R^{k(n-k)}$. See \cref{fig:diagonals} for some examples.

\begin{theorem}
    \label{arnold}
    Let $\Omega_\lambda$ be a Schubert variety in $G(k,n)$. Then the Arnold
    multiplicity of the pair $(G(k,n),\Omega_\lambda)$ is the maximum of
    $\ord(\lambda)$ in ${\rm SV}(k,n) \cap H_\lambda$.
\end{theorem}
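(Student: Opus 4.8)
The plan is to feed the results already established about contact strata into the arc-space formula \eqref{eq:arnold-formula} for the Arnold multiplicity, then to trade the cone of plane partitions for the compact polytope ${\rm SV}(k,n)$, and finally to show that the maximum is attained on the linear slice cut out by $H_\lambda$.

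\textbf{Step 1: reduction to a supremum over plane partitions.} Since $G(k,n)$ is smooth, \eqref{eq:arnold-formula} applies with $X=G(k,n)$, $Z=\Omega_\lambda$, the relevant $\mathcal C$ being the fat irreducible components of the contact loci $\Cont^{\geq p}(\Omega_\lambda)$. The order of contact of an arc with $\Omega_\lambda$ depends only on its invariant factor profile: combining \cref{ess-cont-prof} with the formula $\ord_\beta(\Omega_\lambda)=\ord(\lambda)(\beta)=\min_s\{\beta_{a_s,b_s}+\beta_{a_s+1,b_s+1}+\cdots\}$, each $\Cont^{\geq p}(\Omega_\lambda)$ is the (closed) union of those $\mathcal C_\beta$ with $\ord(\lambda)(\beta)\geq p$. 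By \cref{irred} and \cref{codim} each $\overline{\mathcal C}_\beta$ is irreducible of codimension $|\beta|$, so the fat components of $\Cont^{\geq p}(\Omega_\lambda)$ are certain $\overline{\mathcal C}_\beta$ with $\beta$ of finite height. Conversely, for any finite-height $\beta$ the set $\overline{\mathcal C}_\beta$ lies in $\Cont^{\geq p}(\Omega_\lambda)$ for $p=\ord(\lambda)(\beta)$, hence inside a fat component $\overline{\mathcal C}_{\beta'}$; then $|\beta'|\leq|\beta|$ and $\ord(\lambda)(\beta')\geq p$, so $\ord(\lambda)(\beta')/|\beta'|\geq\ord(\lambda)(\beta)/|\beta|$. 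Together with \eqref{eq:arnold-formula} and the identification of maximal divisorial sets with fat components of contact loci, this gives
\[
    \operatorname{Arnold-mult}(G(k,n),\Omega_\lambda)=\sup_{\beta}\ \frac{\ord(\lambda)(\beta)}{|\beta|},
\]
the supremum over nonempty plane partitions $\beta$ with base in the $k\times(n-k)$ rectangle.

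\textbf{Step 2: passage to the polytope.} The function $\ord(\lambda)=\min_s L_s$, where $L_s$ is the sum along the half-diagonal from the corner $(a_s,b_s)$, is concave and positively homogeneous of degree $1$ (a minimum of homogeneous linear functionals). Hence $\ord(\lambda)(\beta)/|\beta|=\ord(\lambda)(\beta/|\beta|)$ and $\beta/|\beta|\in{\rm SV}(k,n)$; as $\beta$ varies, $\beta/|\beta|$ sweeps out exactly the rational points of ${\rm SV}(k,n)$, which are dense. Since $\ord(\lambda)$ is continuous and ${\rm SV}(k,n)$ is compact, the supremum in Step 1 equals $\max_{{\rm SV}(k,n)}\ord(\lambda)$. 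It therefore suffices to prove $\max_{{\rm SV}(k,n)}\ord(\lambda)=\max_{{\rm SV}(k,n)\cap H_\lambda}\ord(\lambda)$. The inequality $\geq$ is trivial, and the right-hand side is a maximum over a nonempty set: viewing $\lambda$ itself as a one-floor plane partition, from each south-east corner $(a_s,b_s)$ of $\lambda$ the half-diagonal contained in $\lambda$ consists of the single box $(a_s,b_s)$, so $L_s(\lambda)=1$ for all $s$ and $\lambda/|\lambda|\in{\rm SV}(k,n)\cap H_\lambda$. (Recall that by construction $H_\lambda=\{L_s=L_{s'}\ \forall s,s'\}$ is the largest linear subspace on which $\ord(\lambda)$ is linear.)

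\textbf{Step 3: the maximum is attained in $H_\lambda$.} Let $M=\max_{{\rm SV}(k,n)}\ord(\lambda)$ and choose a generic point $\beta^{*}$ of the optimal face $\{\gamma\in{\rm SV}(k,n):L_s(\gamma)\geq M\ \forall s\}$; then $\beta^{*}$ lies in the relative interior of its minimal cell $\mathbb R{\rm PP}(\mu^{\bullet})$ in the triangulation of $\mathbb R{\rm PP}(k,n)$, and the set $S=\{s:L_s(\beta^{*})=M\}$ of active corners is the set of corners active on the whole optimal face. If $S=\{1,\dots,r\}$ then $\beta^{*}\in H_\lambda$ and we are done. Otherwise some corner $s_0$ has $L_{s_0}(\beta^{*})>M$, and I would derive a contradiction by constructing a tangent direction $d$ at $\beta^{*}$ — that is, $\sum_{i,j}d_{i,j}=0$ and $d$ respects the plane-partition inequalities that are equalities at $\beta^{*}$ — along which every $L_s$ with $s\in S$ strictly increases, so that $\ord(\lambda)(\beta^{*}+\varepsilon d)>M$ for small $\varepsilon>0$. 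The construction of $d$ is the crux: because each $L_s$ is a sum along a half-diagonal, the corner diagonals are pairwise distinct, and chains of floors span unimodular sublattices of $\mathbb Z^{k(n-k)}$, one can move a small amount of mass out of the region of the strictly slack diagonal $s_0$ and redistribute it toward the active diagonals while remaining a plane $\mathbb R$-partition of the same volume; this is where the combinatorics of plane partitions and the geometry of the corners of $\lambda$ really enter, and it is the main obstacle, the rest being formal. Once the maximum of $\ord(\lambda)$ is known to be attained on ${\rm SV}(k,n)\cap H_\lambda$, linearity of $\ord(\lambda)$ on $H_\lambda$ gives the stated reformulation as a maximum over the rational polytope ${\rm SV}(k,n)\cap H_\lambda$ (attained at a vertex, hence computable by linear programming).
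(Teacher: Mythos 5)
Your Steps 1 and 2 are sound and, modulo a harmless reordering (the paper first restricts to $H_\lambda$ among integral plane partitions and only then normalizes to ${\rm SV}(k,n)$, whereas you normalize first), they match the paper's reduction of the Arnold multiplicity to the optimization problem $\sup_\beta \ord(\lambda)(\beta)/|\beta| = \max_{{\rm SV}(k,n)} \ord(\lambda)$ via contact strata, \cref{codim}, and homogeneity. The observation that $\lambda/|\lambda|\in{\rm SV}(k,n)\cap H_\lambda$, so the restricted maximum is over a nonempty set, is also correct.

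Step 3 is a genuine gap, and you flag it yourself. Everything hinges on producing, at a maximizer $\beta^*\in{\rm SV}(k,n)\setminus H_\lambda$, a volume-preserving tangent direction $d$ that respects the facets of ${\rm SV}(k,n)$ active at $\beta^*$ and along which every active half-diagonal sum $L_s$, $s\in S$, strictly increases. You describe this construction as "the main obstacle" and do not carry it out; the appeals to pairwise distinct corner diagonals and unimodularity of chain-of-floors sublattices are hints, not an argument. Until $d$ is exhibited, the contradiction never materializes and the restriction to $H_\lambda$ — the only nontrivial content of the theorem beyond \cref{codim} and homogeneity — is unproved. The paper handles the same reduction at the discrete level and more briefly: for an integral plane partition $\beta\notin H_\lambda$, one can remove a box from $\beta$ without changing $\ord(\lambda)(\beta)$ (again exploiting the disjointness of the corner half-diagonals: one looks for a removable box on a slack diagonal or off all corner diagonals), so the ratio $\ord(\lambda)(\beta)/|\beta|$ strictly increases; iterating this lands in ${\rm PP}(k,n)\cap H_\lambda$, and homogeneity plus rationality of ${\rm SV}(k,n)\cap H_\lambda$ finish. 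The two strategies are morally the same, but the discrete one is the one that closes cleanly; as written, your proof is incomplete at precisely the step that carries the weight.
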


Notice that $\ord(\lambda)$ is a linear function on the convex polytope ${\rm
SV}(k,n) \cap H_\lambda$, and therefore the Arnold multiplicity is achieved at
one of the extremal points of the polytope.

\begin{proof}
Notice that the contact loci $\Cont^{\geq p}(\Omega_\lambda)$ are unions of
contact strata, and in particular their irreducible components are closed
contact strata. Therefore, using \cref{eq:arnold-formula} we see that the
Arnold multiplicity is given by
\[
    \operatorname{Arnold-mult}(G(k,n), \Omega_\lambda) = 
    \max_\beta \left\{ \frac{\ord_\beta(\Omega_\lambda)}{\codim(\mathcal C_\beta,J_\infty G(k,n))} \right\}
    =
    \max_\beta \left\{ \frac{\ord(\lambda)(\beta)}{|\beta|} \right\},
\]
where $\beta$ ranges among all plane partitions in ${\rm PP}(k,n)$. If $\beta
\notin H_\lambda$, it is possible to decrease the number of boxes in $\beta$
without changing $\ord(\lambda)(\beta)$, and we see that the maximum is
achieved in ${\rm PP}(k,n) \cap H_\lambda$. Observe that $\ord(\lambda)$ is
homogeneous:
\[
    \frac{\ord(\lambda)(\beta)}{|\beta|}
    =
    \ord(\lambda)\left(\frac{\beta}{|\beta|}\right).
\]
Therefore the Arnold multiplicity is the maximum of $\ord(\lambda)$ on ${\rm
SV}(k,n) \cap H_\lambda \cap \mathbb Q^{k(n-k)}$. The \lcnamecref{arnold} now
follows from the fact that ${\rm SV}(k,n) \cap H_\lambda$ is a rational
polytope.
\end{proof}

\subsection*{The rectangular case}

\cref{arnold} can be improved slightly when $\lambda$ is rectangular. That is
the content of \cref{intro-lct-sq}.

\begin{proof}[Proof of \cref{intro-lct-sq}]
Let $\lambda = (b^a)$ be a rectangular partition. Then $H_\lambda = \mathbb
R^{k(n-k)}$, and \cref{arnold} says that the Arnold multiplicity (and therefore
the log canonical threshold) is achieved at one of the extremal points of ${\rm
SV}(k,n)$. These extremal points are of the form $\mu/|\mu|$, where $\mu$ is a
floor (determined by a linear partition). The \lcnamecref{intro-lct-sq} follows
if we show that we can restrict $\mu$ to be in the set $\lambda^0$, \ldots,
$\lambda^r$, where $\lambda^s = ((b+s)^{a+s})$ and $r=\min\{k-a,n-k-b\}$.

Let $\mu$ be arbitrary, and let $s$ be the biggest index such that $\lambda^s
\subset \mu$. Then $\ord(\lambda)(\mu) = \ord(\lambda)(\lambda^s)$ and $|\mu|
\geq |\lambda^s|$, and the \lcnamecref{intro-lct-sq} follows.
\end{proof}

As we saw in \cref{gens-rect}, the ideal of a Schubert variety of rectangular
shape is essentially equivalent to the ideal of a generic determinantal
variety. The log canonical thresholds of determinantal varieties were first
computed in \cite{Joh03}. For an approach to the singularities of generic
determinantal varieties using arcs, see \cite{Doc13}.

\begin{figure}[ht]
    \centering
    \parbox{11cm}{
    \begin{framed}
        $\begin{array}{lll}
            \text{Maximize:} 
            & \beta_{1,4} + \beta_{2,5}
            &
        \\[.6em]
            \text{Subject to:} 
            & \beta_{1,4} + \beta_{2,5} = \beta_{2,2} + \beta_{3,3}
            &
        \\
            & \beta_{1,4} + \beta_{2,5} = \beta_{3,1}
            &
        \\[.6em]
            & \sum_{i=1 \ldots 3 ~ j=1 \ldots 5} \beta_{i,j} = 1
            &
        \\[.6em]
            & \beta_{i,j} \geq \beta_{i+1,j} 
            \quad \forall i \in \{1,2\}
            & \forall j \in \{1,2,3,4,5\}
        \\
            & \beta_{i,j} \geq \beta_{i,j+1} 
            \quad \forall i \in \{1,2,3\}
            & \forall j \in \{1,2,3,4\}
        \\[.6em]
            &\multicolumn{2}{l}{\begin{array}{@{}lllll}
                  \beta_{1,1} \geq 1
                & \beta_{1,2} \geq 1
                & \beta_{1,3} \geq 1
                & \beta_{1,4} \geq 1
                & \beta_{1,5} \geq 0
            \\
                  \beta_{2,1} \geq 1
                & \beta_{2,2} \geq 1
                & \beta_{2,3} \geq 0
                & \beta_{2,4} \geq 0
                & \beta_{2,5} \geq 0
            \\
                  \beta_{3,1} \geq 1
                & \beta_{3,2} \geq 0
                & \beta_{3,3} \geq 0
                & \beta_{3,4} \geq 0
                & \beta_{3,5} \geq 0
            \end{array}}
        \end{array}$
    \end{framed}%
    \vspace{-.75em}%
    }
    \caption{The linear program for $\lambda = (421)$ in $G(3,8)$.}
    \label{fig:linear-program}
\end{figure}

\subsection*{Linear programming} \cref{arnold} does not give a closed formula
for the Arnold multiplicity. To get an actual value, one needs to solve a
linear programming problem: maximize the linear function $\ord(\lambda)$ on the
polytope ${\rm SV}(k,n) \cap H_\lambda$. In the present case, we believe this
is a task better left to a computer. The equations defining ${\rm SV}(k,n) \cap
H_\lambda$ and $\ord(\lambda)$ are easy to describe to a computer, and the
complexity of the resulting linear program, which is high when approached
manually, is perfectly manageable by modern machines. See
\cref{fig:linear-program} for an example of the input that would be fed to a
linear programming solver. Notice that in \cref{fig:linear-program} we have
added the constraints $\beta_{i,j} \geq 1$ for $(i,j) \in \lambda$. This is
done so the computer does not need to spend time searching for an initial
extremal point of the polytope, and can focus on just maximizing the objective
function.

In small cases, it is possible to run a linear programming solver by hand. We
sketch the idea of the standard algorithm (the simplex method), which is
straightforward. We start with $\beta^1$, the one-floor plane partition with
base $\lambda$. Notice that $\beta^1/|\beta^1|$ is an extremal point of ${\rm
SV}(k,n) \cap H_\lambda$. Assuming we have constructed $\beta^s$, we try to
find $\beta^{s+1}$ verifying
\begin{equation}
    \label{eq:candidate-cond}
   \frac{|\beta^{s+1}|}{s+1}
   <
   \frac{|\beta^{s}|}{s}.
\end{equation}
The possible candidates $\beta^{s+1}$ are obtained from $\beta^s$ by first
adding one box to each of the half-diagonals determined by $\lambda$ (see
\cref{fig:diagonals}), and then completing with more boxes away from the
half-diagonals in order to obtain a plane partition. If none of the candidates
verifies \cref{eq:candidate-cond}, we stop the algorithm and the log canonical
threshold is $|\beta^s|/s$. Otherwise we choose $\beta^{s+1}$ such that
$|\beta^{s+1}|/(s+1)$ is minimal among all the candidates. In this process,
$\beta^s/|\beta^s|$ iterates among extremal points of the polytope ${\rm
SV}(k,n) \cap H_\lambda$, and therefore the algorithm finishes after a finite
number of steps. An example of the execution of this algorithm appears in
\cref{fig:lp-example}.

\begin{figure}[ht]
    \centering
    \includegraphics{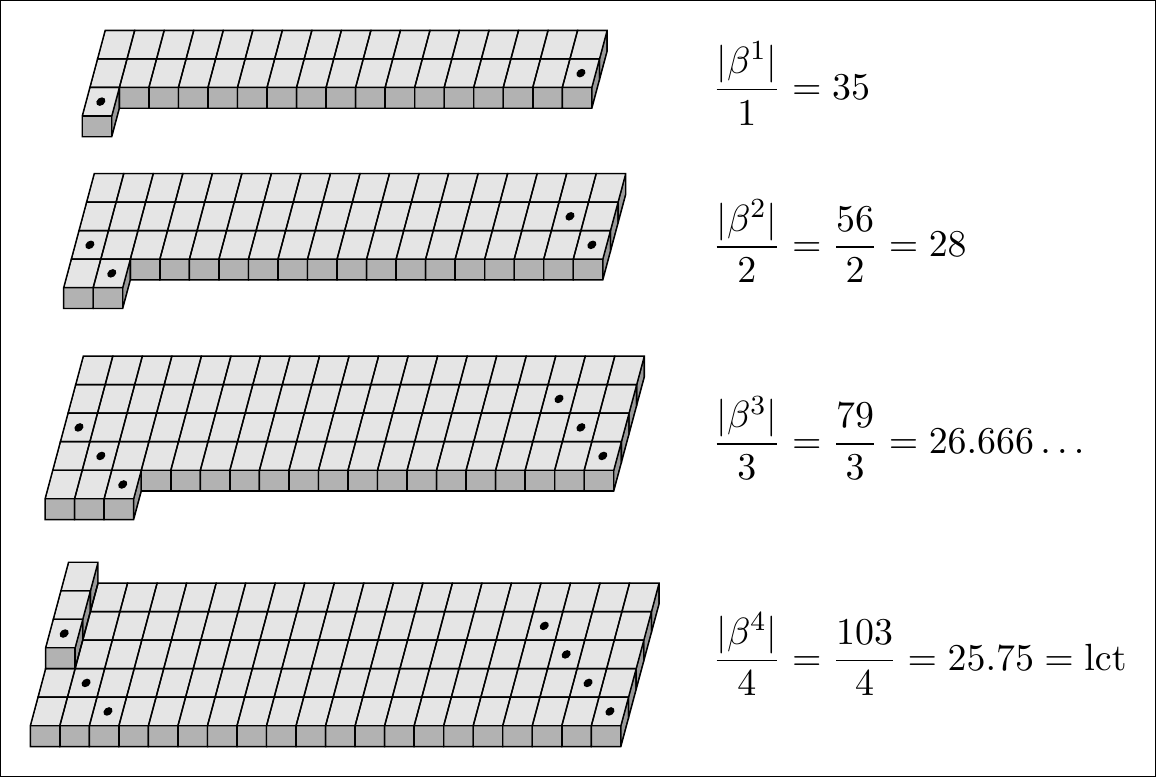}
    \caption{Plane partitions visited by the linear programming algorithm for
    $\lambda = (17,17,1)$ in any $G(k,n)$ with $k\geq 5$ and $n-k \geq 20$.}
    \label{fig:lp-example}
\end{figure}

From the above description of the algorithm, we immediately get log canonical
thresholds of many partitions ``with small number of boxes''. To make this
precise, consider a partition $\lambda$ with at most $k-1$ parts of size at
most $n-k-1$. Then let $r(\lambda)$ be the \emph{rim size} of $\lambda$: the
number of boxes in the rectangle $k\times(n-k)$ which touch $\lambda$ (possibly
just in a vertex) but are not contained in $\lambda$. Then, if $|\lambda| \leq
r(\lambda)$, one can see that the algorithm stops at $\beta^1$. Therefore: 
\[
    \lct(G(k,n),\Omega_\lambda) = |\lambda|
    \quad
    \Leftrightarrow
    \quad
    |\lambda| \leq r(\lambda).
\]
Notice that $|\lambda| = \codim(\Omega_\lambda,G(k,n))$ is the maximal possible
value for the log canonical threshold. In a sense, the Schubert varieties
$\Omega_\lambda$ for which $|\lambda| \leq r(\lambda)$ are the least singular.

\subsection*{Log resolutions} An alternative, more direct, approach for the
computation of log canonical thresholds would be to use the definition with log
resolutions. Unfortunately, we do not know a usable description of log
resolutions for all pairs $(G(k,n),\Omega_\lambda)$.\footnote{Notice that log
resolutions for the Schubert varieties themselves are well-known: they are
given by a construction of Bott and Samelson (for a nice description in modern
language, see \cite{AM09}). But it seems that the case of log resolutions for
pairs $(G(k,n),\Omega_\lambda)$ has not been studied.}

A natural candidate would be the one provided in \cite{Bou93}. The construction
resembles the one for generic determinantal varieties. We start with the
Grassmannian $X_0 = G(k,n)$ and we let $X_1$ be the blowing-up of $X_0$ along
the Schubert point (the Borel-fixed point). Then $X_2$ is the blowing-up of
$X_1$ along the strict transform of the one-dimensional Schubert variety. In
$X_2$, the strict transforms of the two-dimensional Schubert varieties are
smooth and disjoint, so we can blow them up (in any order) and obtain $X_3$.
Recursively, $X_{s+1}$ is obtained from $X_s$ by blowing up the strict
transforms of the $s$-dimensional Schubert varieties (which, as is shown in
\cite{Bou93}, are smooth and disjoint in $X_s$). At the end we obtain $X =
X_{k(n-k)-1}$. The variety $X$ has $\binom{n}{k}-1$ exceptional divisors, each
one corresponding to a Schubert variety in $G(k,n)$ (except $G(k,n)$ itself).
We have not worked this out in complete detail, but there is strong evidence
suggesting that the polytope ${\rm SV}(k,n)$ is the dual complex of the
resolution $X \to G(k,n)$.

In \cite{Bou93} it is not studied whether $X$ is a log resolution for all
possible pairs: we do not know if the exceptional locus is a simple normal
crossings divisor, and if the scheme-theoretic inverse image of
$\Omega_\lambda$ is a divisor. In fact, it seems that $X$ is not a log
resolution, at least for some of Schubert varieties. If it were, the log
canonical threshold would be computed by one of the valuations corresponding to
the exceptional divisors in $X$. In terms of the arc space, these valuations
correspond to one-floor plane partitions. But there are Schubert varieties for
which the log canonical threshold is computed by a plane partition with more
than one floor. We already saw one example in \cref{fig:lp-example}:
$\Omega_{(17,17,1)}$ in $G(5,25)$. A smaller example is $\Omega_{(54441)}$ in
$G(5,10)$.

It would be interesting to construct a resolution of $G(k,n)$ which is a
simultaneous log resolution for all pairs $(G(k,n), \Omega_\lambda)$. The dual
complex of such resolution would be a simplicial subdivision of ${\rm
SV}(k,n)$, in such a way that all the slices ${\rm SV}(k,n) \cap H_\lambda$
would be simplicial sub-complexes.

\begin{remark}[dlt models] As pointed out by the referee, it is plausible that
the above $X$ gives a dlt model for all pairs $(G(k,n), \Omega_\lambda)$. In
particular, ${\rm SV}(k,n)$ should be related to the polytopes constructed in
\cite{NX} or \cite{dFKX}, and the techniques of those papers might help for the
computations of log canonical thresholds. It would be very interesting to
explore this possible connection. 
\end{remark}

\section{The Nash problem for Schubert varieties}

In this section we study the Nash problem for Schubert varieties in the
Grassmannian. The results here are largely independent from the rest of the
paper, as it turns out that the structure of the arc space plays a small role
towards the solution of the Nash problem. 

The main tool that we use are certain resolutions of singularities of Schubert
varieties. We show that their exceptional components are in bijection with the
irreducible components of the exceptional locus. 

\begin{figure}[ht]
    \centering
    \includegraphics{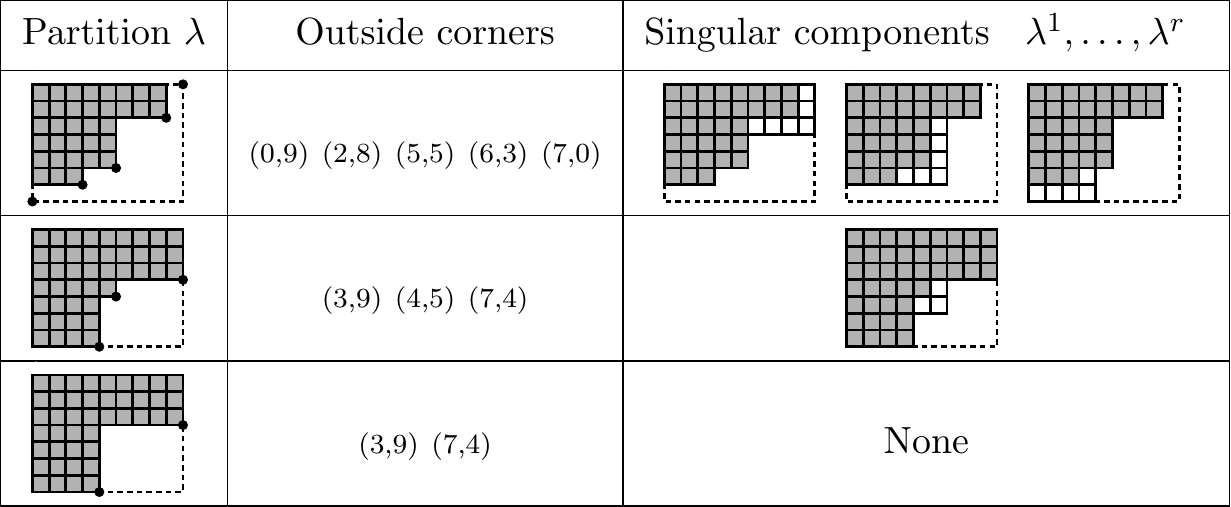}
    \caption{Some examples of outside corners of partitions in $G(7,16)$.}
    \label{fig:corners}
\end{figure}

\subsection*{The singular locus} 

Before studying resolutions, we need to recall what is the singular locus of a
Schubert variety, and fix some notations. We let $\Omega_\lambda$ be a Schubert
variety in $G(k,n)$. A \emph{proper outside corner} of $\lambda$ is a pair
$(a,b)$, such that the partition $(b^a)$ is a Schubert condition of $\lambda$
(as it was discussed in \cref{sec:grass}). In other words, we say that $(a,b)$
is a proper outside corner of $\lambda$ if the rectangle of size $a \times b$
is a maximal sub-rectangle of the diagram of $\lambda$. If no proper outside
corner of $\lambda$ is of the form $(a,n-k)$, we say that $(0,n-k)$ is a
\emph{virtual outside corner} of $\lambda$. Analogously, $(k,0)$ is a
\emph{virtual outside corner} of $\lambda$ if no proper outside corner is of
the form $(k,b)$. An \emph{outside corner} of $\lambda$ is either a proper
outside corner or a virtual outside corner of $\lambda$.

With $\lambda$ fixed, we denote the outside corners of $\lambda$ by:
\[
    (a_0, b_0),
    \quad
    (a_1, b_1),
    \quad
    \ldots,
    \quad
    (a_r, b_r),
    \quad
    (a_{r+1}, b_{r+1}).
\]
Here we assume that the corners are ordered from North-East to South-West, that
is:
\[
    a_0 < a_1 < \cdots < a_r < a_{r+1},
    \quad\text{and}\quad
    b_0 > b_1 > \cdots > b_r > b_{r+1}.
\]
These outside corners determine completely the partition $\lambda$, and
therefore the Schubert variety $\Omega_\lambda$. In fact, as we saw in
\cref{schubert-ideals}, a point $V \in G(k,n)$ belongs to $\Omega_\lambda$
precisely when
\[
    \dim V \cap F_{n-k+a_s-b_s} \geq a_s
\]
for all $0 \leq s \leq r+1$. Here $F_\bullet$ is the complete flag fixed by the
Borel subgroup, as usual.

For $1 \leq s \leq r$, we construct a partition $\lambda^s$ from $\lambda$ by
adding a rim of boxes around the corner $(a_s, b_s)$. More precisely, the
diagram of $\lambda^s$ is the union of the diagram of $\lambda$ and the diagram
of the rectangular partition with proper outside corner $(a_s+1, b_s+1)$.
Notice that when $r=-1$ (which only happens when $\lambda = ((n-k)^k)$) and
when $r=0$ we do not construct any partition $\lambda^s$. For examples, see
\cref{fig:corners}.

Notice that the Borel subgroup acts on $\Omega_\lambda$, and therefore the
singular locus ${\rm Sing}(\Omega_\lambda)$ must be Borel-invariant, i.e., it
is a union of Schubert varieties. The next theorem identifies the irreducible
components of ${\rm Sing}(\Omega_\lambda)$ as the Schubert varieties given by
the partitions $\lambda^1, \ldots, \lambda^r$ defined above. For a proof we
refer the reader to \cite[Thm.~5.3]{LW90}, or to \cite[Sec.~6.B]{BV88}.

\begin{theorem}
    \label{sing-locus}
    With the notations introduced above, the singular locus of the Schubert
    variety $\Omega_\lambda$ is given by:
    \[
        {\rm Sing}(\Omega_\lambda) 
        = \Omega_{\lambda^1} \cup \cdots \cup \Omega_{\lambda^r}.
    \]
    In particular, $\lambda$ is smooth if and only if $r \leq 0$, and otherwise
    ${\rm Sing}(\Omega_\lambda)$ has $r$ irreducible components. In general, a
    point $V \in G(k,n)$ belongs to the smooth locus of $\Omega_\lambda$ if and
    only if
    \[
        \dim V \cap F_{n-k+a_s-b_s} = a_s
    \]
    for each outside corner $(a_s, b_s)$ of $\lambda$.
\end{theorem}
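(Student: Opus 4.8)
The plan is to localise the problem at the torus-fixed points and then identify the local model combinatorially. Since the Borel subgroup $B$ acts on $\Omega_\lambda$, the singular locus $\mathrm{Sing}(\Omega_\lambda)$ is a closed $B$-invariant subset, hence a union of Schubert varieties $\Omega_\mu$ with $\mu \supseteq \lambda$; and because $\Omega_\mu^{\circ}$ is a single $B$-orbit dense in $\Omega_\mu$, we have $\Omega_\mu \subseteq \mathrm{Sing}(\Omega_\lambda)$ if and only if the torus-fixed point $V_\mu$ is a singular point of $\Omega_\lambda$. Next I would record that the two assertions of the \lcnamecref{sing-locus} are equivalent: for an outside corner $(a_s,b_s)$ the locus where $\dim V\cap F_{n-k+a_s-b_s}$ jumps above $a_s$ is, by \cref{schubert-ideals}, the intersection of $\Omega_\lambda$ with the single-condition Schubert variety $\Omega_{((b_s+1)^{a_s+1})}$, i.e.\ the Schubert variety of $\lambda\cup((b_s+1)^{a_s+1})$; for $1\le s\le r$ this is exactly $\Omega_{\lambda^s}$, while for the two extreme corners $s=0$ and $s=r+1$ the rectangle $(b_s+1)^{a_s+1}$ does not fit in the $k\times(n-k)$ box, so the corresponding locus is empty. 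Therefore it suffices to prove $\mathrm{Sing}(\Omega_\lambda)=\Omega_{\lambda^1}\cup\cdots\cup\Omega_{\lambda^r}$.

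To study $V_\mu$ I would pass to the affine chart obtained by translating the opposite big cell $\mathcal U$ by a Weyl-group element carrying $V_{[n-k+1,\dots,n]}$ to $V_\mu$. This puts $V_\mu$ at the origin of an $\mathbb A^{k(n-k)}$ whose coordinates form a $k\times(n-k)$ matrix, with $\Omega_\mu^{\circ}$ a coordinate subspace; by \cref{schubert-ideals} and \cref{gens-rect} the Schubert conditions $(b_s^{a_s})$ of $\lambda$ become rank conditions on sub-blocks of this matrix, and those that hold identically at $V_\mu$ impose no local equation. A transverse slice to $\Omega_\mu^{\circ}$ through $V_\mu$ inside $\Omega_\lambda$ then becomes the opposite big cell of a Schubert variety $\Omega_{\lambda'}$ in a smaller Grassmannian $G(k',n')$, with $V_\mu$ corresponding to the Borel-fixed (deepest) point there; the passage $\lambda\rightsquigarrow\lambda'$ is the combinatorial operation of deleting the rows and columns along which $\mu$ agrees with the flag (for this reduction one may use the local analysis of Schubert varieties in \cite[Ch.~II]{ACGH85}). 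Consequently $V_\mu$ is smooth on $\Omega_\lambda$ if and only if $\Omega_{\lambda'}$ is smooth, which by \cref{gens-rect} is a question about generic determinantal varieties: it holds precisely when $\lambda'$ reduces to a single Schubert condition cutting out a sub-Grassmannian (equivalently, $\lambda'$ has at most two outside corners).

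For the inclusion $\mathrm{Sing}(\Omega_\lambda)\supseteq\bigcup_{s=1}^r\Omega_{\lambda^s}$, I would take $\mu=\lambda^s$ and check from the local model that the slice still carries a nontrivial determinantal condition $\{\operatorname{rank}M\le\rho\}$ with $\rho\ge 1$; adjoining a full rim around the corner $(a_s,b_s)$ is exactly what keeps the two rank conditions flanking that corner coupled at $V_{\lambda^s}$, producing such an $M$, and since $\{\operatorname{rank}M\le\rho\}$ is singular along $\{\operatorname{rank}M\le\rho-1\}\ne\emptyset$, which contains the origin, $V_{\lambda^s}$ is singular on $\Omega_\lambda$. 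For the reverse inclusion, I would show that if $\dim V_\mu\cap F_{n-k+a_s-b_s}=a_s$ for every outside corner, then in the local model the surviving rank conditions each cut out a sub-Grassmannian smoothly and do so on pairwise disjoint blocks of coordinates, so the slice is a transverse product of smooth pieces; a dimension count using $\dim\Omega_\lambda=k(n-k)-|\lambda|$ and the codimension formula for generic determinantal varieties then forces smoothness.

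The main obstacle is the combinatorial bookkeeping inside the local model: one must track, as $\mu$ ranges over the partitions containing $\lambda$, exactly which Schubert conditions of $\lambda$ stay active at $V_\mu$ and how their coordinate blocks overlap, and verify that ``no jump above $a_s$ at any outside corner'' translates precisely into ``the active blocks are disjoint and each surviving rank condition is the trivial, sub-Grassmannian one.'' Because the local model is an intersection of determinantal varieties rather than a single one, this matching is delicate and is where essentially all the work lies; a fully detailed treatment is given in \cite[Thm.~5.3]{LW90} and \cite[Sec.~6.B]{BV88}, to which the reader may be referred.
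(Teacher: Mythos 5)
The paper does not prove \cref{sing-locus} at all; immediately before the statement it writes ``For a proof we refer the reader to \cite[Thm.~5.3]{LW90}, or to \cite[Sec.~6.B]{BV88},'' and that is the whole of its treatment. Your proposal sketches a plausible local-model strategy --- reduce to torus-fixed points via $B$-invariance, translate $V_\mu$ to the origin of an opposite big cell, identify the transverse slice with a determinantal local model --- which is indeed the kind of argument carried out in those references, but you then explicitly concede the essential combinatorial step to the same two sources. So your treatment and the paper's coincide: both are ultimately a citation to Lakshmibai--Weyman and Bruns--Vetter.

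One point in your outline is inaccurate and worth correcting. Smoothness of $\Omega_{\lambda'}$ is \emph{not} equivalent to ``$\lambda'$ reduces to a single Schubert condition cutting out a sub-Grassmannian.'' When $\lambda'$ has exactly two outside corners and both are proper --- one reaching the right edge of the ambient $k' \times (n'-k')$ rectangle, the other the bottom edge --- then $\lambda'$ carries \emph{two} Schubert conditions; each separately defines a sub-Grassmannian, and their transverse intersection is again a Grassmannian, hence smooth. For instance $\lambda'=(4,4,2)$ in $G(3,7)$ has the two Schubert conditions $(4^2)$ and $(2^3)$ and gives $\Omega_{\lambda'}=\{V: F_2\subseteq V\subseteq F_5\}\cong\mathbb P^2$, which is smooth but is cut out by two conditions. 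The parenthetical criterion ``at most two outside corners'' is the correct one; the ``single Schubert condition'' rephrasing is too narrow.
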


\subsection*{Resolution of singularities}

We now describe a resolution of singularities of $\Omega_\lambda$. The
construction is well-known to the experts, and it appears for example in
\cite{Zel83}. But notice that the main goal of \cite{Zel83} is to show that
$\Omega_\lambda$ admits small resolutions in the sense of intersection
homology. For this, Zelevinsky gives a construction of several resolutions of
singularities, including the one that we discuss below. For
our purposes, the smallness of the resolution has no relevance, so we will
focus in a particular case, which is easy to describe.

We consider the manifold ${\rm Fl}(a_1, a_2, \ldots, a_r, k; n)$ of partial
flags in $\mathbb C^n$ of the form
\[
    U_1 \subset U_2 \subset \cdots \subset U_r \subset V \subset \mathbb C^n,
\]
where
\[
    \dim U_s = a_s,
    \qquad\text{and}\qquad
    \dim V = k.
\]
We let $Y \subset {\rm Fl}(a_1, a_2, \ldots, a_r, k; n)$ be the subvariety
corresponding to those flags that verify
\[
    U_s \subseteq F_{n-k+a_s-b_s}
\]
for all $1 \leq s \leq r$. It is straightforward to check that $Y$ is a tower
of Grassmann bundles, and in particular it is smooth and irreducible.

There is a natural projection $f \colon Y \to G(k,n)$ obtained by sending a
flag $U_1 \subset \cdots U_r \subset V$ to just $V$. For any flag in $Y$, we
have that $U_s \subseteq V \cap F_{n-k+a_s-b_s}$, and therefore $\dim V \cap
F_{n-k+a_s-b_s} \geq a_s$. This shows that the image of $f$ is exactly $f(Y) =
\Omega_\lambda$. Moreover, for $V$ in the smooth locus of $\Omega_\lambda$,
\cref{sing-locus} shows that there is exactly one flag in $Y$ mapping to $V$,
the one for which $U_s = V \cap F_{n-k+a_s-b_s}$. In other words, the morphism
$f \colon Y \to \Omega_\lambda$ is a resolution of singularities, and $f$ is an
isomorphism over the smooth locus of $\Omega_\lambda$.

We are now ready to prove the main result of this section.

\begin{proposition}
    \label{zelevinsky-exceptional}
    Let $\Omega_\lambda$ be a Schubert variety in $G(k,n)$, and let $f \colon Y
    \to \Omega_\lambda$ be the resolution of singularities described above.
    Then the exceptional components of $f$ are in bijection with the
    irreducible components of ${\rm Sing}(\Omega_\lambda)$.
\end{proposition}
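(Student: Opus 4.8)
The plan is to analyze the fiber structure of the map $f\colon Y \to \Omega_\lambda$ over each Schubert stratum of $\Omega_\lambda$, identify exactly where $f$ fails to be an isomorphism, and match the resulting exceptional components to the components $\Omega_{\lambda^1},\dots,\Omega_{\lambda^r}$ of $\cref{sing-locus}$. First I would recall that $Y$ sits inside the partial flag manifold ${\rm Fl}(a_1,\dots,a_r,k;n)$ as the locus where $U_s\subseteq F_{n-k+a_s-b_s}$, and that the generic fiber of $f$ is a single point (the flag $U_s = V\cap F_{n-k+a_s-b_s}$), so $f$ is birational. For a point $V\in\Omega_\lambda$, the fiber $f^{-1}(V)$ consists of all chains $U_1\subseteq\cdots\subseteq U_r\subseteq V$ with $\dim U_s = a_s$ and $U_s\subseteq V\cap F_{n-k+a_s-b_s}$; writing $d_s(V) = \dim(V\cap F_{n-k+a_s-b_s})\geq a_s$, this fiber is a closed subvariety of the flag variety ${\rm Fl}(a_1,\dots,a_r; V)$ cut out by the conditions $U_s\subseteq V\cap F_{n-k+a_s-b_s}$, and it is positive-dimensional precisely when $d_s(V) > a_s$ for at least one $s$ — that is, exactly when $V\in{\rm Sing}(\Omega_\lambda)$ by $\cref{sing-locus}$.

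Next I would organize the exceptional locus by the individual corners. For each $s$ with $1\leq s\leq r$, consider the divisor $E_s\subset Y$ defined by the incidence condition that $U_s$ is a \emph{proper} subspace of the relevant intersection in a suitable sense — more precisely, one takes the locus in $Y$ where the $s$-th Grassmann-bundle coordinate degenerates. Because $Y$ is built as a tower of Grassmann bundles ${\rm Fl}(a_1,\dots,a_r,k;n)\to\cdots$, at the $s$-th stage one is choosing $U_s$ inside a bundle whose fiber over the previously chosen data is a Grassmannian; the locus where the image under $f$ lands in $\Omega_{\lambda^s}$ rather than its open stratum is an irreducible divisor $E_s$ in $Y$, since it is (the total space of) a Grassmann sub-bundle over an irreducible base. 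I would check that $f(E_s) = \Omega_{\lambda^s}$ — the containment $f(E_s)\subseteq\Omega_{\lambda^s}$ follows because $d_s\geq a_s+1$ forces $\dim V\cap F_{n-k+a_s-b_s}\geq a_s+1 = $ the $s$-th corner condition for $\lambda^s$, and equality of images follows by comparing dimensions using that $\Omega_{\lambda^s}$ has codimension one more than $\Omega_\lambda$ inside the stratum structure (this is where I'd use that adding the rim of boxes around $(a_s,b_s)$ increases $|\lambda|$ appropriately). Conversely, I would argue that every prime exceptional divisor of $f$ is one of the $E_s$: since $f$ is an isomorphism over the smooth locus (as established in the excerpt) and over the smooth locus of each larger Schubert variety $\Omega_\mu\subsetneq\Omega_\lambda$ the fiber structure is again governed by which corners satisfy $d_s>a_s$, a prime exceptional divisor must dominate some $\Omega_{\lambda^s}$, and the generic fiber over $\Omega_{\lambda^s}$ is a single projective space of positive dimension exactly in the $s$-th coordinate, pinning down the divisor as $E_s$.

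Finally I would assemble: the map $s\mapsto E_s$ is a bijection from $\{1,\dots,r\}$ (equivalently, from the set of irreducible components of ${\rm Sing}(\Omega_\lambda)$, by $\cref{sing-locus}$) to the set of prime exceptional divisors of $f$, with $f(E_s) = \Omega_{\lambda^s}$. The main obstacle I anticipate is the careful bookkeeping in the second step: verifying that $E_s$ is irreducible and is genuinely a divisor (codimension exactly one in $Y$), and that the generic fiber of $f|_{E_s}\colon E_s\to\Omega_{\lambda^s}$ is irreducible of the correct dimension so that no two distinct $s$ give the same divisor and no $E_s$ is contained in another. This requires tracking the Grassmann-bundle dimensions through the tower $Y$ and comparing with the codimensions of $\Omega_{\lambda^s}$ inside $\Omega_\lambda$; the combinatorics of the rim-addition that produces $\lambda^s$ from $\lambda$ (adding the rectangle with corner $(a_s+1,b_s+1)$) is exactly what makes the dimension count work out, so I would state that count explicitly as a lemma before concluding.
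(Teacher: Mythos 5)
Your proposal contains a genuine error at its core: you repeatedly call the exceptional components $E_s$ \emph{divisors} in $Y$ and base your argument on dimension counts intended to show $\codim(E_s,Y)=1$. This is false. The resolution $f\colon Y\to\Omega_\lambda$ described in the paper is the Zelevinsky (small/semismall) resolution, and its exceptional locus has codimension at least two. Concretely, over the generic point of $\Omega_{\lambda^s}$ the fiber of $f$ is a projective space of dimension $a_s-a_{s-1}$, while $\codim(\Omega_{\lambda^s},\Omega_\lambda) = (a_s-a_{s-1})+(b_s-b_{s+1})+1$, so $\codim(E_s,Y) = (b_s-b_{s+1})+1 \geq 2$. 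For the same reason, your claim that ``$\Omega_{\lambda^s}$ has codimension one more than $\Omega_\lambda$ inside the stratum structure'' is also wrong; the rim added around the corner $(a_s,b_s)$ contains $(a_s-a_{s-1})+(b_s-b_{s+1})+1$ boxes, not one. The proposition is about irreducible \emph{components} of the exceptional locus, not exceptional prime divisors, so your dimension bookkeeping is aimed at establishing a property that is both false and unnecessary.

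The paper's proof avoids these issues entirely. For each $s$ it introduces an auxiliary partial flag variety $Z_s$ (adding an extra subspace $U_s^*$ of dimension $a_s+1$ squeezed between $U_s$ and $U_{s+1}$ and contained in $F_{n-k+a_s-b_s}$), observes that $Z_s$ is smooth and irreducible because it is again a tower of Grassmann bundles, and defines $E_s$ as the image of the forgetful map $g_s\colon Z_s\to Y$. Irreducibility of $E_s$ is then automatic, and $f(E_s)=\Omega_{\lambda^s}$ is read off directly from the flag conditions. For the converse inclusion $f^{-1}(\mathrm{Sing}\,\Omega_\lambda)\subseteq E_1\cup\cdots\cup E_r$, the paper picks, for a given flag $U_1\subset\cdots\subset U_r\subset V$ with $V$ singular, the \emph{largest} index $s$ with $\dim(V\cap F_{n-k+a_s-b_s})>a_s$ and produces the required $U_s^*$; maximality of $s$ guarantees $U_s^*\subseteq U_{s+1}$ (or $s=r$), so the flag lies in $E_s$. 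This gives the bijection with no reference to divisors or to $\codim(E_s,Y)$. Your structural outline (produce irreducible $E_s$ with $f(E_s)=\Omega_{\lambda^s}$ and show they cover the exceptional locus) is in the right spirit, but your method of verifying each step does not survive the fact that the resolution is not divisorial, and the description of $E_s$ as ``the total space of a Grassmann sub-bundle'' is not accurate: it is a degeneracy locus, and irreducibility requires a separate argument such as the paper's $Z_s$ construction.
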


\begin{proof}
Fix $1 \leq s \leq r$. We consider the subvariety
\[
    Z_s \subset 
    {\rm Fl}(a_1, \ldots, a_s, a_s+1, a_{s+1}, \ldots, a_r, k; n)
\]
given by those flags
\[
    U_1 
    \subset \cdots 
    \subset U_s \subset U_s^* \subseteq U_{s+1}
    \subset \cdots 
    U_r \subset V \subset \mathbb C^n
\]
that verify
\[
    \dim U_1 = a_1,
    \quad\ldots,\quad
    \dim U_r = a_r,
    \quad
    \dim V = k,
    \quad
    \dim U_s^* = a_s+1,
\]
and
\[
    U_1 \subseteq F_{n-k+a_1-b_1},
    \quad\ldots,\quad
    U_r \subseteq F_{n-k+a_r-b_r},
    \quad
    U_s^* \subseteq F_{n-k+a_s-b_s}.
\]
Similar considerations as the ones discussed above for $Y$ show that $Z_s$ is
smooth and, more importantly for us, irreducible. There is a natural morphism
$g_s \colon Z_s \to Y$, and the image of the composition $f \circ g_s$ is
$\Omega_{\lambda^s}$, one of the irreducible components of the singular locus
of $\Omega_\lambda$. We denote by $E_s \subset Y$ the image of the morphism
$g_s$. Notice that each of the $E_s$ is irreducible. We have shown that $f(E_1
\cup \cdots \cup E_r) = {\rm Sing}(\Omega_\lambda)$.

Consider now a flag $U_1 \subset \cdots \subset U_r \subset V$ in $Y$, and
assume that $V \in {\rm Sing}(\Omega_\lambda)$. Let $s$ be the largest index
such that $\dim V \cap F_{n-k+a_s-b_s} > a_s$. We know that such $s$ exists,
because $V \in {\rm Sing}(\Omega_\lambda)$. Let $U_s^*$ be any vector space of
dimension $a_s+1$ containing $U_s$ and contained in $V \cap F_{n-k+a_s-b_s}$.
From the construction of $s$, we must have that either $s=r$ or otherwise
\[
    U_{s+1} = V \cap F_{n-k+a_{s+1}-b_{s+1}} \supseteq U_s^*.
\]
Therefore $U_s^*$ can be used to define a flag in $Z_s$, and we see that the
original flag $U_1 \subset \cdots \subset U_r \subset V$ belongs to $E_s$. We have
shown that $f^{-1}({\rm Sing}(\Omega_\lambda)) = E_1 \cup \cdots \cup E_r$, as
required.
\end{proof}

\begin{corollary}
    \label{nash-zelevinsky}
    Let $\Omega_\lambda$ be a Schubert variety in $G(k,n)$. Then the Nash map
    for $\Omega_\lambda$ is bijective.
\end{corollary}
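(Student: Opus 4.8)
The plan is to deduce the bijectivity of the Nash map directly from \cref{zelevinsky-exceptional}, together with the general characterization of the Nash map recalled in \cref{sec:arcs}. Recall that the Nash map is always injective (this is the content of the classical result that the Nash families are among the essential valuations), so the only thing to prove is surjectivity: every essential divisorial valuation of $\Omega_\lambda$ arises as a Nash valuation. First I would recall what ``essential'' means: a divisorial valuation $v$ over $\Omega_\lambda$ is essential if its center on \emph{every} resolution of singularities is an irreducible component of the exceptional locus of that resolution. In particular, if $v$ is essential, then its center on the specific resolution $f\colon Y \to \Omega_\lambda$ constructed above must be one of the exceptional divisors $E_1, \ldots, E_r$.

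The key input is \cref{zelevinsky-exceptional}: the resolution $f\colon Y \to \Omega_\lambda$ has exactly $r$ exceptional divisors $E_1, \ldots, E_r$, and $f(E_s) = \Omega_{\lambda^s}$, so that the map $E_s \mapsto f(E_s)$ is a bijection between the exceptional divisors of $f$ and the irreducible components of ${\rm Sing}(\Omega_\lambda)$ (namely $\Omega_{\lambda^1}, \ldots, \Omega_{\lambda^r}$, by \cref{sing-locus}). Since $f$ is an isomorphism over the smooth locus, the exceptional locus of $f$ is precisely $f^{-1}({\rm Sing}(\Omega_\lambda)) = E_1 \cup \cdots \cup E_r$. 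Now, on the one hand, each valuation $\val_{E_s}$ is essential: indeed, its center on $\Omega_\lambda$ is the component $\Omega_{\lambda^s}$ of the singular locus, and since the singular locus is blown up (or at least modified) by any resolution, $\val_{E_s}$ must appear on every resolution — this is the standard argument that divisorial valuations whose center is a component of ${\rm Sing}$ and which have discrepancy forcing them to appear are essential; more directly, $r$ is an upper bound for the number of essential valuations because $Y$ realizes all of them, and a lower bound because each component of the singular locus contributes at least one. On the other hand, each $E_s$ \emph{is} a Nash family: the contact locus $\Cont^{\geq 1}({\rm Sing}(\Omega_\lambda))$ has $f_*$-image components dominating the $\Omega_{\lambda^s}$, and the maximal divisorial set associated to $\val_{E_s}$ is the closure in $J_\infty \Omega_\lambda$ of the arcs whose lift to $Y$ is tangent to $E_s$; these are exactly the fat irreducible components of $\Cont^{\geq 1}({\rm Sing})$, hence Nash families.

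Putting this together: the number of Nash valuations equals the number of fat irreducible components of $\Cont^{\geq 1}({\rm Sing}(\Omega_\lambda))$, which maps via the birational morphism $f$ to the components of $f^{-1}({\rm Sing}(\Omega_\lambda)) = \bigcup E_s$, hence equals $r$; and the number of essential valuations is also $r$, bounded above because they all have center on $Y$ equal to some $E_s$ and bounded below by the components of ${\rm Sing}$. Since the Nash map is an injection between two sets of the same finite cardinality $r$, it is a bijection. The main obstacle is bookkeeping rather than a genuine difficulty: one must be careful that each $E_s$ gives a \emph{single} essential valuation (no essential valuation over $\Omega_\lambda$ has center a \emph{proper} subvariety of some $\Omega_{\lambda^s}$ that is nonetheless essential — this is exactly what \cref{zelevinsky-exceptional} rules out, since every essential valuation has center on $Y$ among the $E_s$ and these have codimension one), and that distinct $E_s$ give distinct valuations (clear, since $f(E_s) = \Omega_{\lambda^s}$ are distinct). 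With \cref{sing-locus} and \cref{zelevinsky-exceptional} in hand, the argument is essentially formal.
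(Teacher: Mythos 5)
Your proposal is correct and follows essentially the same route as the paper: bound the number of Nash families from below by $r$ (one for each irreducible component of ${\rm Sing}(\Omega_\lambda)$), bound the number of essential valuations from above by $r$ via \cref{zelevinsky-exceptional}, and conclude via injectivity of the Nash map. The middle of your write-up tries to argue directly that each $\val_{E_s}$ is essential and that the $E_s$ are precisely the Nash families; these are consequences of the final counting argument rather than independently justified intermediate steps, and the heuristic offered there (``the singular locus is blown up by any resolution, so $\val_{E_s}$ must appear'') is not a valid argument for essentiality on its own, so that portion should be excised — the clean counting argument you end with is the whole proof.
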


\begin{proof}
Recall from \cref{sec:arcs} that the Nash families are the irreducible
components of $\Cont^{\geq 1}({\rm Sing}(\Omega_\lambda))$, and that the Nash
map is an injection that associates to each Nash family an essential valuation.
Since ${\rm Sing}(\Omega_\lambda)$ has $r$ irreducible components, we see that
there are at least $r$ Nash families. \cref{zelevinsky-exceptional} shows that
there are at most $r$ essential valuations. The \lcnamecref{nash-zelevinsky}
follows from the fact that the Nash map is injective.
\end{proof}

\subsection*{Nash valuations}

For a Schubert variety $\Omega_\lambda$ in $G(k,n)$, the Nash valuations
(which, as we saw above, agree with the essential valuations) can be described
using contact strata. Recall that the arc space of $\Omega_\lambda$ is the
union of contact strata:
\[
    J_\infty \Omega_\lambda = \bigcup_\beta \mathcal C_\beta,
\]
where $\beta$ ranges among all plane partitions that have infinite height on
$\lambda$:
\[
    \beta_{i,j} = \infty
    \quad
    \forall (i,j) \in \lambda.
\]
As above, consider the partitions $\lambda^1, \ldots, \lambda^r$ giving the
irreducible components of the singular locus:
\[
    {\rm Sing}(\Omega_\lambda) 
    = \Omega_{\lambda_1} \cup \cdots \cup \Omega_{\lambda^r}.
\]
For each $1 \leq s \leq r$, we let $\beta^s$ be the plane partition with bottom
floor equal to $\lambda^s$ and an infinite number of floors equal to $\lambda$.
In other words, $\beta^s$ is given by
\[
    \beta_{i,j}^s
    =
    \begin{cases}
        \infty & \text{if $(i,j) \in \lambda$},\\
        1 & \text{if $(i,j) \in \lambda^s$ and $(i,j) \notin \lambda$},\\
        0 & \text{otherwise}.\\
    \end{cases}
\]
Then it follows from the above discussion on the Nash problem that the
Nash/essential valuations for $\Omega_\lambda$ are precisely
\[
    \ord_{\beta^1},\quad
    \ord_{\beta^2},\quad
    \ldots,\quad
    \ord_{\beta^r}.
\]
Notice that these are only semi-valuations on $G(k,n)$ (they have infinite
terms), but they are valuations on $\Omega_\lambda$.

We would like to remark that it is also possible to show directly that
\[
    \Cont^{\geq 1}({\rm Sing}(\Omega_\lambda))
    =
    \overline{\mathcal C}_{\beta^1}
    \cup \cdots \cup
    \overline{\mathcal C}_{\beta^r},
\]
without using resolutions. This is in fact an easy consequence of
\cref{nash-c}.


\vspace{2em}

\bibliographystyle{bib-style}
\bibliography{bibliography}

\end{document}